\newtheorem{theorem}{Theorem}[section]
\newtheorem{lemma}[theorem]{Lemma}
\newtheorem{definition}[theorem]{Definition}
\newtheorem{corollary}[theorem]{Corollary}
\newtheorem{proposition}[theorem]{Proposition}
\newenvironment{example}
  {\pushQED{\qed}\examplex}
  {\popQED\endexamplex}
\theoremstyle{remark}
\newtheorem{remark}{Remark}
\newcommand*{\myproofname}{Proof}
\newenvironment{subproof}[1][\myproofname]{\begin{proof}[#1]}{\end{proof}}
\renewcommand\Re{\operatorname{Re}}
\renewcommand\Im{\operatorname{Im}}
\newcommand\End{\operatorname{End}} 
\newcommand\GldR{\mbox{Gl}_d(\mathbb{R})}
\newcommand\Gl{\operatorname{Gl}} 
\newcommand\OdR{\mbox{O}(\mathbb{R}^d)} 
\newcommand\Sym{\operatorname{Sym}}
\newcommand\Exp{\operatorname{Exp}}
\newcommand\diag{\operatorname{diag}}
\newcommand\supp{\operatorname{Supp}}
\newcommand\Spec{\operatorname{Spec}}
\renewcommand\det{\operatorname{det}}
\newcommand\Interior{\operatorname{Int}}
\newcommand\R{\mathbb{R}}
\newcommand{\lp}{\left(}
\newcommand{\rp}{\right)}
\newcommand{\lb}{\left[}
\newcommand{\rb}{\right]}
\newcommand{\p}{\partial}
\newcommand{\f}[2]{\frac{#1}{#2}}
\newcommand{\Vol}{\operatorname{Vol}}
\newcommand{\iprod}{\mathbin{\lrcorner}}
\newcommand{\al}{\alpha}
\newcommand{\be}{\beta}
\newcommand{\slantedslash}{\mathbin{\rotatebox[origin=c]{23}{$-$}}}
\newcommand{\docircint}[2]{%
  \ifx#1\displaystyle
    \displayrint
  \else
    \normalrint{#1}%
  \fi
}
\newcommand{\displayrint}{\displaystyle \slantedslash \mkern-18mu}
\newcommand{\normalrint}[1]{%
  \smallerc{#1}\ifx#1\textstyle\mkern-9mu\else\mkern-8.2mu\fi
}
\newcommand{\smallerc}[1]{%
  \vcenter{\hbox{$\ifx#1\textstyle\scriptstyle\else\scriptscriptstyle\fi \slantedslash $}}%
}
\author{Huan Q. Bui\\
\normalsize  Department of Mathematics \& Statistics\\[-0.8ex]
\normalsize Colby College
\and
Evan Randles\thanks{Corresponding author: erandles@colby.edu}\\
\normalsize  Department of Mathematics \& Statistics\\[-0.8ex]
\normalsize Colby College\\\\}
\title{A generalized polar-coordinate integration formula with applications to the study of convolution powers of complex-valued functions on $\mathbb{Z}^d$. }
\date{\today}
\begin{document}
\maketitle

\abstract{In this article, we consider a class of functions on $\mathbb{R}^d$, called positive homogeneous functions, which interact well with certain continuous one-parameter groups of (generally anisotropic) dilations. Generalizing the Euclidean norm, positive homogeneous functions appear naturally in the study of convolution powers of complex-valued functions on $\mathbb{Z}^d$. As the spherical measure is a Radon measure on the unit sphere which is invariant under the symmetry group of the Euclidean norm, to each positive homogeneous function $P$, we construct a Radon measure $\sigma_P$ on $S=\{\eta \in \mathbb{R}^d:P(\eta)=1\}$ which is invariant under the symmetry group of $P$. With this measure, we prove a generalization of the classical polar-coordinate integration formula and deduce a number of corollaries in this setting. We then turn to the study of convolution powers of complex functions on $\mathbb{Z}^d$ and certain oscillatory integrals which arise naturally in that context. Armed with our integration formula and the Van der Corput lemma, we establish sup-norm-type estimates for convolution powers; this result is new and partially extends results of \cite{randles_convolution_2015} and \cite{randles_convolution_2017}.}\\

\noindent{\small\bf Keywords:} Polar-coordinate Integration Formula,  Spherical Measure, Oscillatory Integrals, Convolution Powers.\\

\noindent{\small\bf Mathematics Subject Classification:} Primary 28A25 \& 58C35; Secondary 42B20 \& 42A85

\section{Introduction}\label{sec:Introduction}

The spherical measure and the related polar-coordinate integration formula are important tools and objects of study in several areas of mathematical analysis \cite{stein_harmonic_1993, baker_integration_1997,helms_potential_2009}. To have them at our fingertips, let us first denote by $\mathbb{S}$ and $\mathbb{B}$ the unit sphere and open unit ball in $\mathbb{R}^d$, respectively, let $m$ be the Lebesgue measure on $\mathbb{R}^d$, and write $dx=m(dx)=dm(x)$. The spherical measure is the canonical Radon measure on $\mathbb{S}$ for which $\Theta(\mathbb{S})=d\cdot m(\mathbb{B})$ and $\Theta(OF)=\Theta(F)$ for every orthogonal transformation $O$ and Borel set $F\subseteq\mathbb{S}$. With this measure, we state the classical polar coordinate integration formula as follows: For every $f\in L^1(\mathbb{R}^d)$ (or non-negative measurable $f$),
\begin{equation}\label{eq:StandardPolarIntegrationFormula}
\int_{\mathbb{R}^d}f(x)\,dx=\int_{\mathbb{S}}\left(\int_0^\infty f(r\eta)r^{d-1}\,dr\right)\,\Theta(d\eta)=\int_0^\infty\left(\int_\mathbb{S}f(r\eta)\Theta(d\eta)\right)r^{d-1}\,dr.
\end{equation}
Precise formulations of this classical result can be found in \cite{stein_real_2009} and \cite{folland_real_2013}. For two interesting applications which provide some useful context, we encourage the reader to see \cite{baker_integration_1997} and \cite{folland_how_2001}.  In this article, we generalize the polar coordinate integration formula \eqref{eq:StandardPolarIntegrationFormula} and the spherical measure in a way that is well-adapted to the analysis of certain oscillatory integrals which appear in the study of convolution powers of complex-valued functions on $\mathbb{Z}^d$. Our generalized formula will prove to be a useful tool in the analysis of such oscillatory integrals, leading to the advertised sup-norm-type estimate (Theorem \ref{thm:ConvolutionPowerEstimate}) for convolution powers of complex-valued functions and, in a forthcoming article, a theory of local limit theorems.\\

\noindent To describe the generalized polar coordinate integration formula treated in this article, we must first introduce a class of functions on $\mathbb{R}^d$ which share several desirable properties with the Euclidean norm. The first such property is that such functions ``play well" with spatial dilations of the following form: Let $\{T_r\}_{r>0}\subseteq \Gl(\mathbb{R}^d)$ be a continuous one-parameter group, i.e., $T$ is a continuous group homomorphism from the multiplicative group of positive real numbers into the general linear group, $\Gl(\mathbb{R}^d)$. It is well-known (c.f., \cite{randles_convolution_2017,engel_one-parameter_2000,engel_short_2006}) that every continuous one-parameter group $\{T_r\}$ has the unique representation
\begin{equation*}
T_r=r^E=\exp((\ln r) E)=\sum_{k=0}^\infty \frac{(\ln r)^k}{k!}E^k
\end{equation*}
for some $E\in\End(\mathbb{R}^d)$; here,  $\End(\mathbb{R}^d)$ is the algebra of endomorphisms of $\mathbb{R}^d$ which we take equipped with the operator norm $\|\cdot\|$. $E$ is called the (infinitesimal) generator of $\{T_r\}$ and $\{T_r\}$ is said to be generated by $E$. This, of course, gives a one-to-one correspondence between $\End(\mathbb{R}^d)$ and the collection of continuous one-parameter groups.  A continuous one-parameter group $\{T_r\}$ is said to be \textbf{contracting} if
\begin{equation*}
\lim_{r\to 0}\|T_r\|=0. 
\end{equation*}
We note that, if $E$ is the generator of a contracting group $\{T_r\}$, then $\tr E=\tr(E)>0$. This fact (Proposition \ref{prop:ContractingTrace}) and its proof can be found in the appendix along with some other basic results on one-parameter groups. Also, we encourage the reader to look at the excellent texts \cite{engel_one-parameter_2000} and \cite{engel_short_2006} on one-parameter (semi) groups.\\

\noindent Given a function $P:\mathbb{R}^d\to\mathbb{R}$, we shall call
\begin{equation*}
    S=\{\eta\in\mathbb{R}^d:P(\eta)=1\}
\end{equation*}
\textbf{the unital level set of $P$}. We say that $P$ is \textbf{positive definite} if $P$ is non-negative and $P(x)=0$ only when $x=0$. Given a continuous one-parameter group $\{T_r\}$, we say that $P$ is \textbf{homogeneous with respect to $\{T_r\}$} if
\begin{equation}\label{eq:IntroductionHomogeneous}
    rP(x)=P(T_r x)=P(r^Ex)
\end{equation}
for all $r>0$ and $x\in\mathbb{R}^d$. By an abuse of language, we will also say that $P$ is homogeneous with respect to $E$ whenever \eqref{eq:IntroductionHomogeneous} is satisfied. The set of all such $E\in \End(\mathbb{R}^d)$ for which \eqref{eq:IntroductionHomogeneous} holds is denoted by $\Exp(P)$ and called the \textbf{exponent set of $P$}. We have the following characterization whose proof can be found in Section \ref{sec:Homogeneous}.

\begin{proposition}\label{prop:PositiveHomogeneousCharacterization}
Let $P:\mathbb{R}^d\to\mathbb{R}$ be continuous, positive definite, and have $\Exp(P)\neq \varnothing$. The following are equivalent:
\begin{enumerate}[label=(\alph*), ref=(\alph*)]
\item\label{cond:SisCompact} $S$ is compact.
\item\label{cond:PisAboveOne} There is a positive number $M$ for which
\begin{equation*}
P(x)>1
\end{equation*}
for all $|x|\geq M$. 
\item\label{cond:Contracting} For each $E\in\Exp(P)$, $T_r=r^E$ is contracting.
\item\label{cond:ThereExistsContracting} There exists $E\in\Exp(P)$ for which $T_r=r^E$ is contracting.
\item\label{cond:InfiniteLimit} We have
\begin{equation*}
\lim_{x\to\infty}P(x)=\infty.
\end{equation*}
\end{enumerate}
\end{proposition}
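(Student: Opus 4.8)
The plan is to prove the chain of equivalences by establishing a cycle together with a couple of direct implications, leaning on the basic facts about contracting one-parameter groups quoted from the appendix (in particular Proposition~\ref{prop:ContractingTrace} and the standard estimates on $\|r^E\|$). The natural route is \ref{cond:SisCompact} $\Rightarrow$ \ref{cond:PisAboveOne} $\Rightarrow$ \ref{cond:ThereExistsContracting}, then \ref{cond:ThereExistsContracting} $\Rightarrow$ \ref{cond:Contracting} (this one is almost definitional once one knows any two exponents differ by something in the Lie algebra of the symmetry group, but it can also be gotten directly), then \ref{cond:Contracting} $\Rightarrow$ \ref{cond:InfiniteLimit}, and finally \ref{cond:InfiniteLimit} $\Rightarrow$ \ref{cond:SisCompact}. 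Throughout, the homogeneity identity $rP(x)=P(r^E x)$ and the continuity and positive definiteness of $P$ are the only structural inputs, and the hypothesis $\Exp(P)\neq\varnothing$ is what lets us quantify over exponents.

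For \ref{cond:SisCompact} $\Rightarrow$ \ref{cond:PisAboveOne}: since $P$ is continuous and positive definite, $P$ is bounded below by a positive constant on the compact sphere $\{|x|=1\}$ and bounded above there as well; using $S$ compact (hence contained in some ball $|x|\le R$) and the fact that every nonzero $x$ can be written as $T_{P(x)}\eta$ for a unique $\eta\in S$ when $P(x)>0$ — here I would first argue that $t\mapsto P(r^E\eta)=rP(\eta)$ already handles the scaling — one deduces a lower bound of the form $P(x)\ge c|x|^{1/\mu}$ for $|x|$ large, where $\mu$ comes from the operator-norm growth rate of $r^E$. Concretely, for $r\ge 1$ we have $\|r^E\|\le C r^{a}$ for some $a>0$ (standard, from the Jordan form of $E$), so $|r^E\eta|\le CrR$ whenever $\eta\in S$; inverting, $P(x)>1$ once $|x|$ exceeds $CR$. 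The implication \ref{cond:PisAboveOne} $\Rightarrow$ \ref{cond:ThereExistsContracting} is where the real content sits: fix any $E\in\Exp(P)$ and suppose $r^E$ is not contracting; then there are unit vectors $v_n$ and $r_n\to 0$ with $|r_n^E v_n|$ bounded below, equivalently (taking inverses) there is a sequence $s_n\to\infty$ and unit vectors with $|s_n^E w_n|$ bounded, i.e.\ $s_n^E w_n$ lies in a fixed ball; but $P(s_n^E w_n)=s_n P(w_n)\to\infty$ since $P(w_n)$ is bounded below on the unit sphere, contradicting \ref{cond:PisAboveOne} as soon as $|s_n^E w_n|$ fails to stay large — one must massage this so that the points whose $P$-value blows up also have large Euclidean norm, which follows because $P$ is bounded on bounded sets (continuity) so $P\to\infty$ forces $|\cdot|\to\infty$. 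That last observation, "$P$ continuous $\Rightarrow$ $P$ bounded on bounded sets $\Rightarrow$ ($P(x_n)\to\infty \Rightarrow |x_n|\to\infty$)", is worth isolating as it is reused in \ref{cond:InfiniteLimit} $\Rightarrow$ \ref{cond:SisCompact}.

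For \ref{cond:ThereExistsContracting} $\Rightarrow$ \ref{cond:Contracting}: given one contracting exponent $E_0$ and an arbitrary $E\in\Exp(P)$, I would show $r^E$ is contracting by comparing $P(r^E x)=rP(x)=P(r^{E_0}x)$; since $r^{E_0}x\to 0$ as $r\to 0$ uniformly on the unit sphere, and $P$ is positive definite and continuous, $P(r^{E_0}x)\to 0$, hence $P(r^E x)\to 0$ uniformly on $|x|=1$, and then positive definiteness plus continuity of $P$ (use that $P>0$ on the complement of any neighborhood of $0$, by compactness of spheres) forces $r^E x\to 0$. Then \ref{cond:Contracting} $\Rightarrow$ \ref{cond:InfiniteLimit}: for $|x|$ large write, using homogeneity in the other direction, $P(x)=P(r^E(r^{-E}x))$ — better, fix $E\in\Exp(P)$, and for a sequence $x_n\to\infty$ set $r_n=P(x_n)$ if $P(x_n)>0$; if $P(x_n)$ stayed bounded along a subsequence, then $\eta_n:=r_n^{-E}x_n=T_{1/r_n}x_n$ would satisfy $P(\eta_n)=1$, i.e.\ $\eta_n\in S$, while $|\eta_n|=|T_{1/r_n}x_n|\ge \|T_{r_n}\|^{-1}|x_n|$ and $\|T_{r_n}\|$ is bounded above (since $r_n$ is bounded and $r^E$ contracting makes $\|r^E\|$ bounded on bounded $r$-intervals), so $|\eta_n|\to\infty$; but $S\subseteq\{P=1\}$ and the contracting property forces $S$ bounded (same argument as the last implication), a contradiction. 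Finally \ref{cond:InfiniteLimit} $\Rightarrow$ \ref{cond:SisCompact}: $S=P^{-1}(\{1\})$ is closed by continuity, and bounded because $P(x)\to\infty$ as $x\to\infty$ means $P$ is eventually $>1$ outside a ball. The main obstacle, as flagged, is \ref{cond:PisAboveOne} $\Rightarrow$ \ref{cond:ThereExistsContracting}: one has to pass correctly between "not contracting" and the existence of a sequence escaping to infinity in Euclidean norm on which $P$ nevertheless stays controlled, and this requires care with the operator-norm growth estimates for $r^E$ and the interplay between the $P$-topology and the Euclidean topology near both $0$ and $\infty$; everything else is a short continuity-and-homogeneity argument.
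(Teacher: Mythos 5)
Your implication \ref{cond:SisCompact}$\Rightarrow$\ref{cond:PisAboveOne} has a genuine gap. You write $x=r^E\eta$ with $r=P(x)$ and $\eta\in S\subseteq\overline{\mathbb{B}_R}$, invoke $\|r^E\|\le Cr^a$ for $r\ge 1$, and ``invert.'' But the inversion only gives $P(x)>1$ for $|x|$ large \emph{under the hypothesis $P(x)\ge 1$}, which is precisely the conclusion sought. Nothing in your argument rules out $|x|$ being large while $r=P(x)<1$; there the relevant quantity is $\|r^E\|$ for $r<1$, and without knowing that $\{r^E\}$ is contracting (not yet available at this stage of your chain), $\|r^E\|$ may blow up as $r\to 0$, so $|x|=|r^E\eta|$ can be arbitrarily large with $P(x)$ small. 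The paper closes this with a connectedness argument: since $S\subseteq\mathbb{B}_M$, $P\neq 1$ on $\mathbb{R}^d\setminus\mathbb{B}_M$, and path-connectedness (for $d>1$; $d=1$ is treated separately) plus the intermediate value theorem force $P$ to lie entirely above $1$ or entirely below $1$ there; the alternative $P<1$ on $\{|x|\ge M\}$ is then ruled out because the trajectory $r\mapsto r^Ey$ satisfies $P(r^Ey)=rP(y)\to\infty$ yet, under the alternative, must stay in the compact set $\overline{\mathbb{B}_M}$ for all large $r$, so a convergent subsequence yields a point where $P$ is infinite. Some version of this dichotomy is needed; the one-sided operator-norm estimate alone does not close the argument.

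Two further observations. First, your argument for \ref{cond:PisAboveOne}$\Rightarrow$\ref{cond:ThereExistsContracting} (which in fact fixes an arbitrary $E\in\Exp(P)$, so really proves \ref{cond:Contracting}) is correct, and --- despite your explanation suggesting otherwise --- it never actually uses \ref{cond:PisAboveOne}. Normalizing $w_n=y_n/|y_n|$ with $y_n=r_n^Ev_n$, $|y_n|\ge\epsilon$, and $s_n=1/r_n\to\infty$, one has $|s_n^Ew_n|=1/|y_n|\le 1/\epsilon$ bounded, while $P(s_n^Ew_n)=s_nP(w_n)\ge s_n\min_{\mathbb{S}}P\to\infty$, which already contradicts boundedness of the continuous $P$ on $\overline{\mathbb{B}_{1/\epsilon}}$. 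This is cleaner than the paper's proof, which splits $\{r_k^Ex\}$ into bounded and unbounded cases and only needs \ref{cond:PisAboveOne} in the unbounded case; your inversion handles both at once. Second, your \ref{cond:ThereExistsContracting}$\Rightarrow$\ref{cond:Contracting} step has the same type of gap as \ref{cond:SisCompact}$\Rightarrow$\ref{cond:PisAboveOne}: $P(r^Ex)\to 0$ uniformly on $\mathbb{S}$ does not force $r^Ex\to 0$ unless one also rules out $r^Ex$ escaping to infinity, for which compactness of Euclidean spheres is insufficient; the step is moot here only because your previous argument already yields \ref{cond:Contracting}. Finally, in \ref{cond:Contracting}$\Rightarrow$\ref{cond:InfiniteLimit}, the appeal to ``$S$ bounded, same argument as the last implication'' needs to be made precise; the paper's route --- write $x_k=r_k^E\eta_k$ with $\eta_k\in\mathbb{S}$ and $r_k\to\infty$ via Proposition~\ref{prop:ScaleFromSphere}, then estimate $P(x_k)\ge r_k\inf_{\mathbb{S}}P$ --- is shorter and avoids parametrizing over $S$ altogether.
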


\begin{definition}
Let $P:\mathbb{R}^d\to\mathbb{R}$ be continuous, positive definite and have $\Exp(P)\neq \varnothing$. If any one (and hence all) of the equivalent conditions in Proposition \ref{prop:PositiveHomogeneousCharacterization} are fulfilled, we say that $P$ is positive homogeneous.
\end{definition}

\noindent Before we introduce several examples of positive homogeneous functions, it is helpful to fix some notation and introduce some basic topological objects connected to positive homogeneous functions. We shall denote by $\mathbb{Z}$, $\mathbb{N}$, and $\mathbb{N}_+$ the set of integers, (non-negative) natural numbers, and positive natural numbers, respectively; the $d$-tuples formed by elements of these sets will be denoted by $\mathbb{Z}^d$, $\mathbb{N}^d$, and $\mathbb{N}_+^d$. Throughout this article, our setting is the $d$-dimensional Euclidean space $\mathbb{R}^d$ with coordinates $(x^1,x^2,\dots,x^d)$ and equipped with the inner product $x\cdot y=\sum_{k=1}^d(x^k)(y^k)$ and associated Euclidean norm $|x|=\sqrt{(x^1)^2+(x^2)^2+\cdots+(x^d)^2}$. We take $\mathbb{R}^d$ to be equipped with its usual topology and oriented smooth structure. Given $x\in\mathbb{R}^d$ and $r>0$, the open (Euclidean) ball with center $x$ and radius $r$ is denoted by $\mathbb{B}_r(x)$ and its corresponding sphere is denoted by $\mathbb{S}_r(x)$. In the case that $x=0$, we write $\mathbb{B}_r=\mathbb{B}_r(0)$ and $\mathbb{S}_r=\mathbb{S}_r(0)$ for $r>0$. For a subset $A$ of a topological space, we denote by $\Interior{(A)}$, $\overline{A}$, and $\partial A$ its interior, closure, and boundary, respectively. Given a positive homogeneous function $P$, we define
\begin{equation*}
B_r=\{\xi\in\mathbb{R}^d:P(\xi)<r\}\hspace{1cm}\mbox{and}\hspace{1cm}A_s^r=\{\xi\in\mathbb{R}^d:s\leq P(\xi)<r\}
\end{equation*}
for $r>0$ and $0\leq s<r$; these are $P$-adapted analogues of the Euclidean ball, $\mathbb{B}_r$, and the annulus of inner radius $s$ and outer radius $r$, respectively. In view of of Proposition \ref{prop:PositiveHomogeneousCharacterization} and the continuity of $P$, we see that, for each $r>0$, $B_r$ is open and $\overline{B_r}$ is compact. Further, by setting $B=B_1$, it is a straightforward exercise to see that $\overline{B}=B\cup S$ where $\partial B=S$ is the unital level set associated to $P$.

\begin{example}\label{exp:EuclideanNorm}\normalfont
For any $\alpha>0$, the $\alpha$th-power of the Euclidean norm $x\mapsto |x|^\alpha$ is positive homogeneous.  In this case, the unital level set $S$ is the standard unit sphere $\mathbb{S}=\mathbb{S}_1$, $B=\mathbb{B}=\mathbb{B}_1$, and
\begin{equation*}
    \Exp(|\cdot|^\alpha)=\frac{1}{\alpha}I+\mathfrak{o}(d)
\end{equation*}
where $I$ is the identity and $\mathfrak{o}(d)$ is the Lie algebra of the orthogonal group $\OdR$ and is characterized by the set of skew-symmetric matrices. 
\end{example}

\begin{example}\label{exp:Polynomial}\normalfont
In the language of L. H\"{o}rmander \cite{hormander_analysis_1983}, consider semi-elliptic polynomial of the form
\begin{equation}\label{eq:SemiEllipticIntro}
    P(x)=\sum_{|\alpha:\mathbf{n}|=1}a_\alpha x^\alpha,
\end{equation}
where $\mathbf{n}=(n_1,n_2,\dots,n_d)$ is a $d$-tuple of positive even natural numbers\footnote{In Subsection \ref{subsec:Examples}, we will write this as $\mathbf{n}=2\mathbf{m}$ for $\mathbf{m}\in\mathbb{N}_+^d$.}, and, for each multi-index $\alpha =(\alpha_1,\alpha_2,\dots,\alpha_d)\in\mathbb{N}^d$,
\begin{equation*}
    |\alpha:\mathbf{n}|:=\sum_{k=1}^d\frac{\alpha_k}{n_k},
\end{equation*}
and
\begin{equation*}
    x^\alpha=\left(x^1\right)^{\alpha_1}\left(x^2\right)^{\alpha_2}\cdots\left(x^d\right)^{\alpha_d}
\end{equation*}
for $x=\left(x^1,x^2,\dots,x^d\right)\in\mathbb{R}^d$. If we consider $E\in\End(\mathbb{R}^d)$ whose standard matrix representation is $\diag(1/n_1,1/n_2,\dots,1/n_d)$, we have
\begin{equation*}
    P\left(r^Ex\right)=\sum_{|\alpha:\mathbf{n}|=1}a_{\alpha}\left(r^{1/n_1}x^1\right)^{\alpha_1}\left(r^{1/n_2}x^2\right)^{\alpha_2}\cdots\left(r^{1/n_d}x^d\right)^{\alpha_d}=\sum_{|\alpha:\mathbf{n}|=1}a_\alpha r^{|\alpha:\mathbf{n}|}x^\alpha=rP(x)
\end{equation*}
for all $x\in\mathbb{R}^d$ and $r>0$ and therefore $E\in\Exp(P)$. It is easy to see that $T_r=r^E$ is a contracting group and so we have the following statement by virtue of Proposition \ref{prop:PositiveHomogeneousCharacterization}: \begin{center}\textit{If a semi-elliptic polynomial $P(x)$ of the form \eqref{eq:SemiEllipticIntro} is positive definite, then it is positive homogeneous.}
\end{center}

\noindent For two concrete examples, consider the polynomials $P_1$ and $P_2$ on $\mathbb{R}^2$ defined by
\begin{equation*}
    P_1(x,y)=x^2+y^4\hspace{1cm}\mbox{and}\hspace{1cm}P_2(x,y)=x^2+\frac{3}{2}xy^2+y^4
\end{equation*}
defined for $(x,y)\in\mathbb{R}^2$. It is straightforward to see that $P_1$ and $P_2$ are both positive definite and semi-elliptic of the form \eqref{eq:SemiEllipticIntro} with $\mathbf{n}=(2,4)$. Figure \ref{fig:PoneAndtwo} illustrates $P_1$ and $P_2$ along with their associated unital level sets $S_1$ and $S_2$ and corresponding sets $B_1=\{(x,y)\in\mathbb{R}^2:P_1(x,y)<1\}$ and $B_2=\{(x,y)\in\mathbb{R}^2:P_2(x,y)<1\}$ written with a slight abuse of notation.
\begin{figure}[!htb]
    \centering
    \hspace{10pt}
    \begin{subfigure}{0.5\textwidth}
    \centering
    \includegraphics[scale=0.6]{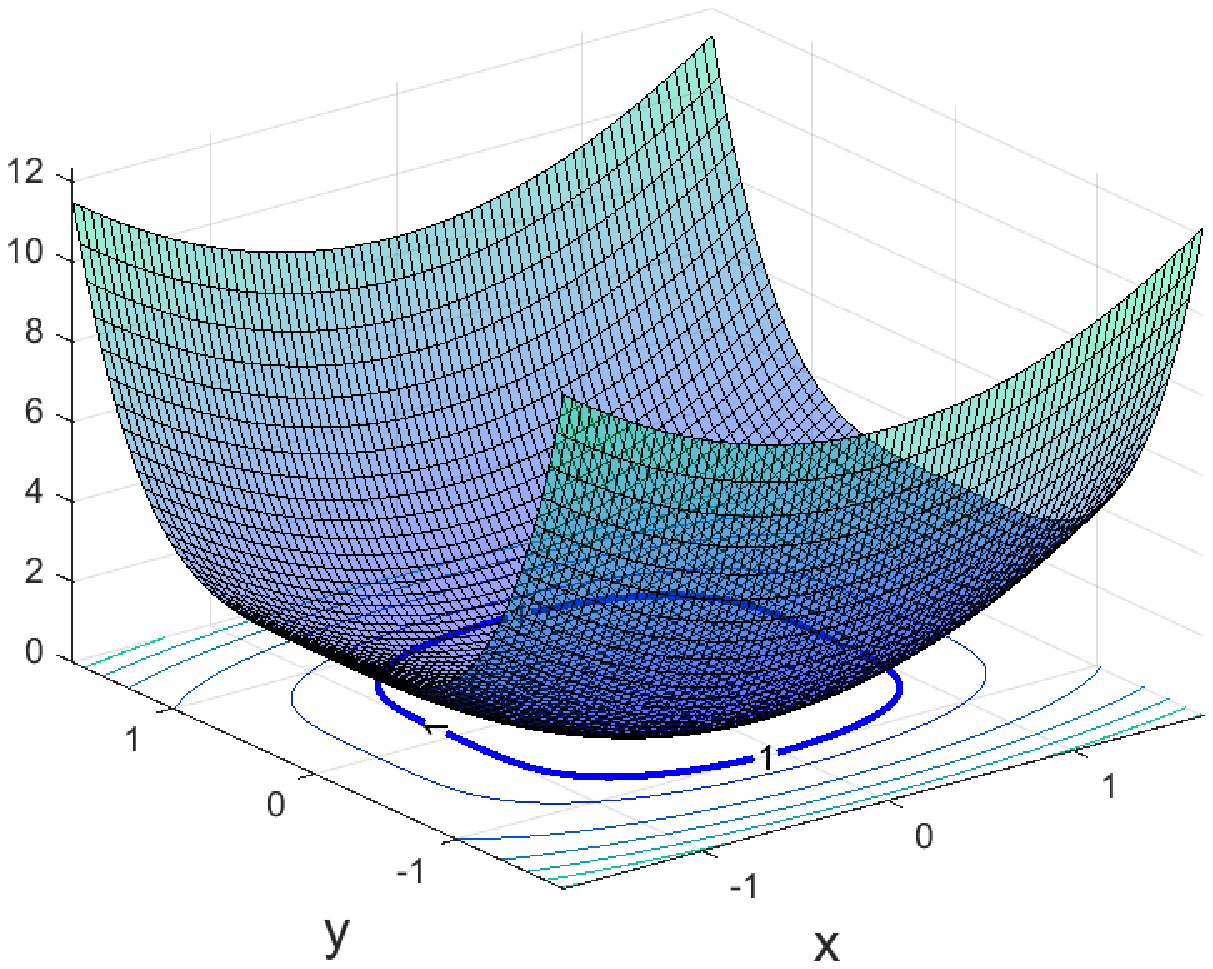}
    \vspace{-10pt}
    \includegraphics[scale=0.6]{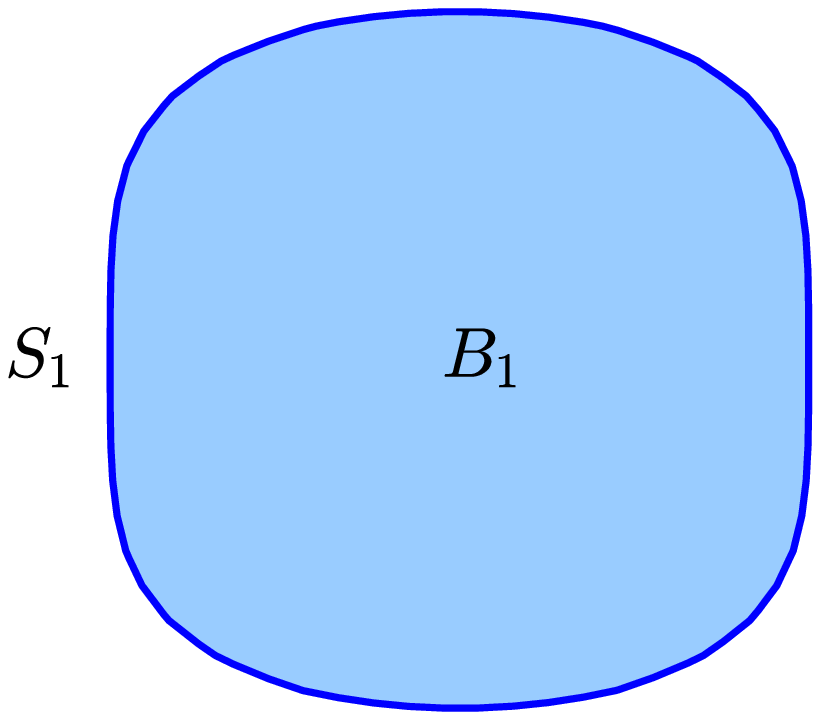}
    \end{subfigure}%
    \hspace{-20pt}
    \begin{subfigure}{0.5\textwidth}
    \centering
    \includegraphics[scale=0.6]{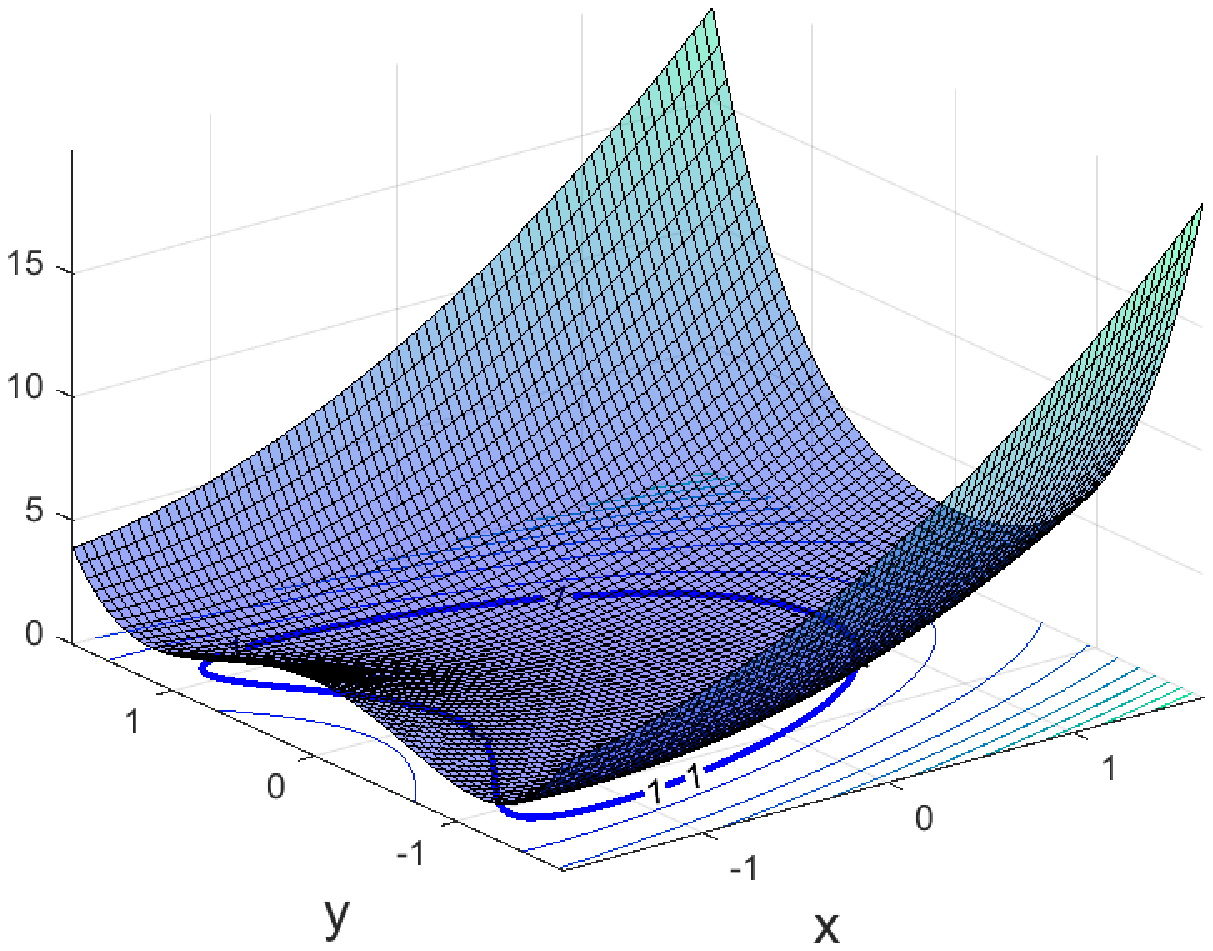}
    \vspace{-10pt}
    \includegraphics[scale=0.6]{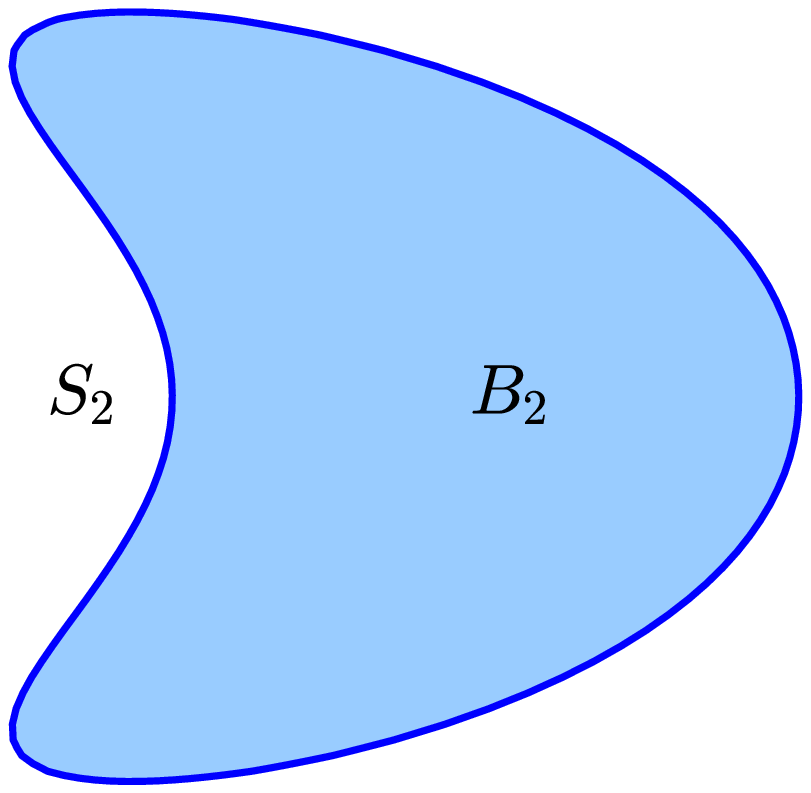}
    \end{subfigure}
    \caption{The left column illustrates the graph of $P_1$ with its associated $S_1$ and convex $B_1$. The right column illustrates the graph of $P_2$ with its associated $S_2$ and non-convex $B_2$.}
    \label{fig:PoneAndtwo}
\end{figure}
\begin{remark}\label{rmk:PositiveHomogeneousPolynomialsVSFunctions}
In \cite{randles_convolution_2017}, a positive homogeneous polynomial $P$ is, by definition, a complex-valued multivariate polynomial on $\mathbb{R}^d$ for which $\Exp(P)$ contains an element of $\End(\mathbb{R}^d)$ whose spectrum is purely real and for which $R=\Re P$ is positive definite (see Proposition \ref{prop:PosHomSufficientCondition} below). By virtue of Proposition 2.2 of \cite{randles_convolution_2017}, for each such polynomial $P$ and $E\in\Exp(P)$ with real spectrum, there exists $A\in\Gl(\mathbb{R}^d)$ (representing a change of basis of $\mathbb{R}^d$) and a $d$-tuple of even positive natural numbers $\mathbf{n}=(n_1,n_2,\dots,n_d)\in\mathbb{N}_+^d$ for which $A^{-1}EA$ has standard matrix representation $\diag(1/n_1,1/n_2,\dots,1/n_d)$ and $(P\circ A)(x)$ is semi-elliptic of the form \eqref{eq:SemiEllipticIntro} with, in this case, complex coefficients. It follows that every real-valued positive homogeneous polynomial (in the sense of \cite{randles_convolution_2017}) is a positive homogeneous function in the sense of the present article. Of course, the semi-elliptic polynomials discussed above are positive homogeneous polynomials in the sense of \cite{randles_convolution_2017} where $A=I$. We refer the reader to Section 7.3 of \cite{randles_convolution_2017} which presents a real-valued positive homogeneous polynomial which is not semi-elliptic (and so $A\neq I$).
\end{remark}
\end{example}

\begin{example}\label{exp:Weierstrass}\normalfont
Let $Q$ be a positive homogeneous function with exponent set $\Exp(Q)$ and compact unital level set $S_Q=\{\eta:Q(\eta)=1\}$. Given any $f\in C^0(S_Q)$ for which $f(\eta)>0$ for all $\eta\in S_Q$ and $E\in \Exp(Q)$, define $P=P_{f,E,Q}:\mathbb{R}^d\to\mathbb{R}$ by
\begin{equation*}
P(x)=\begin{cases}
Q(x)f\left((Q(x))^{-E}x\right) & x\neq 0\\
0 & x=0
\end{cases}
\end{equation*}
for $x\in\mathbb{R}^d$. We claim that $P$ is positive homogeneous and $E\in\Exp(P)$.

\begin{subproof}To see this, we first observe that, for any $x\in\mathbb{R}^d\setminus \{0\}$, $Q((Q(x))^{-E}x)=Q(x)/Q(x)=1$ and hence $Q(x)^{-E}x\in S_Q$ and so the above formula makes sense and ensures that $P$ is continuous on $\mathbb{R}^d\setminus\{0\}$. Furthermore, because $f$ is continuous and positive on the compact set $S_Q$, we have $0<\min f\leq \max f<\infty$. From this it follows that $P$ is positive definite and, by virtue of the squeeze theorem, continuous at $x=0$. For any $r>0$ and $x\in\mathbb{R}^d$, we have
\begin{equation*}
P(r^Ex)=Q(r^Ex)f(Q(r^Ex)^{-E}r^Ex)=rQ(x)f(Q(x)^{-E}x)=rP(x)
\end{equation*}
and therefore $E\in\Exp(P)$. Upon noting that $\{r^E\}$ is contracting by virtue of Proposition \ref{prop:PositiveHomogeneousCharacterization}, we conclude that $P$ is positive homogeneous.
\end{subproof}
\noindent The utility of this construction allows us to see that ``most'' positive homogeneous functions are not smooth. To see this, we fix a positive homogeneous function $Q\in C^{\infty}(\mathbb{R}^d)$ and remark that $S_Q$ is necessarily a compact smooth embedded hypersurface of $\mathbb{R}^d$ (see Proposition \ref{prop:InnerProdIsOne}). If $P=P_{f,Q}$ is $C^\infty(\mathbb{R}^d)$, $P\vert_{S_Q}=f$ is necessarily $C^\infty(S_Q)$. It follows that $P\notin C^\infty(\mathbb{R}^d)$ whenever $f$ is chosen from $C^0(S_Q)\setminus C^\infty(S_Q)$. By precisely the same argument, we see that $P\in C^0(\mathbb{R}^d)\setminus C^k(\mathbb{R}^d)$ whenever $f\in C^0(S_Q)\setminus C^k(S_Q)$ for each $k\in\mathbb{N}_+$.\\

\noindent  As a straightforward example, consider $Q(x,y)=|(x,y)|=\sqrt{x^2+y^2}$ on $\mathbb{R}^2$ with $S_Q=\mathbb{S}$ and define
\begin{equation*}
f(x,y)=w(\mbox{Arg}(x,y))+3
\end{equation*}
where $w:\mathbb{R}\to\mathbb{R}$ is defined by
\begin{equation*}
    w(t) = \sum_{n=0}^\infty 2^{-n} \cos\lp 3^n t \rp
\end{equation*}
for $t\in\mathbb{R}$; $w$ is a continuous $2\pi$-periodic version of the Weierstrass function. The resulting positive homogeneous function $P$ is continuous but nowhere differentiable. Figure \ref{fig:Weierstrass} illustrates this function $P$ alongside $Q$ and together with their associated unital level sets. We note that $S_P\neq S_Q$ and this is generally the case unless $f\equiv 1$.

\begin{figure}[!htb]
    \centering
    \hspace{10pt}
    \begin{subfigure}{0.5\textwidth}
    \centering
    \includegraphics[scale=0.6]{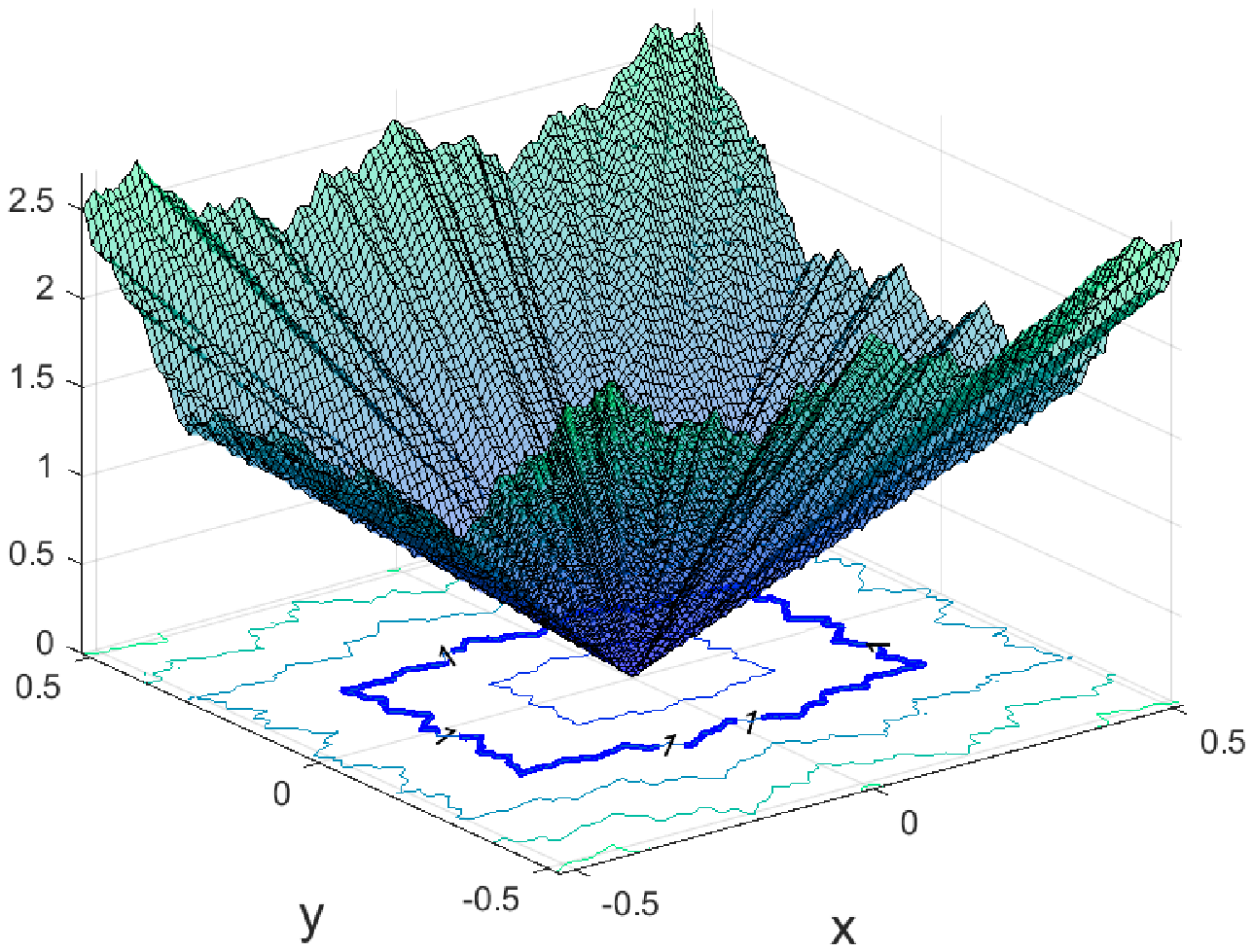}
    \vspace{-10pt}
    \includegraphics[scale=0.6]{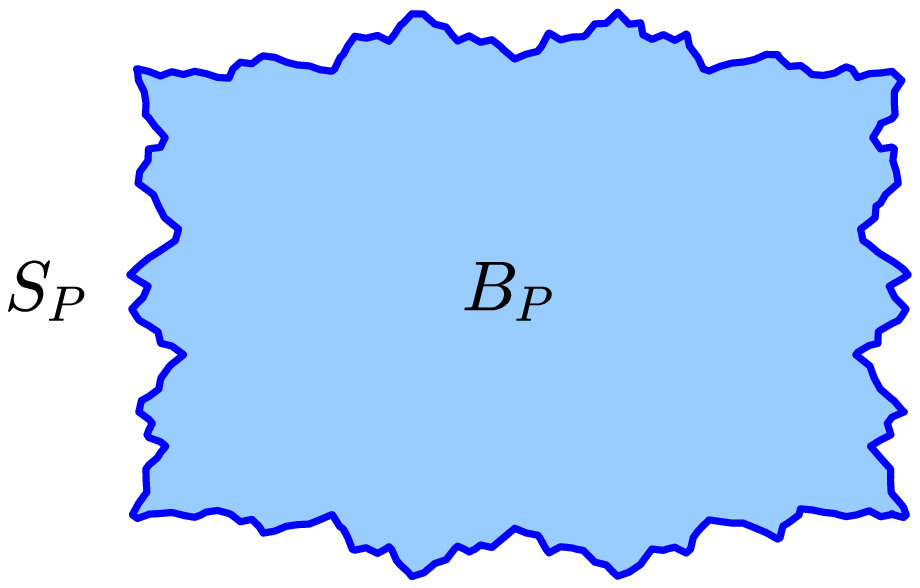}
    \end{subfigure}%
    \hspace{-20pt}
    \begin{subfigure}{0.5\textwidth}
    \centering
    \includegraphics[scale=0.6]{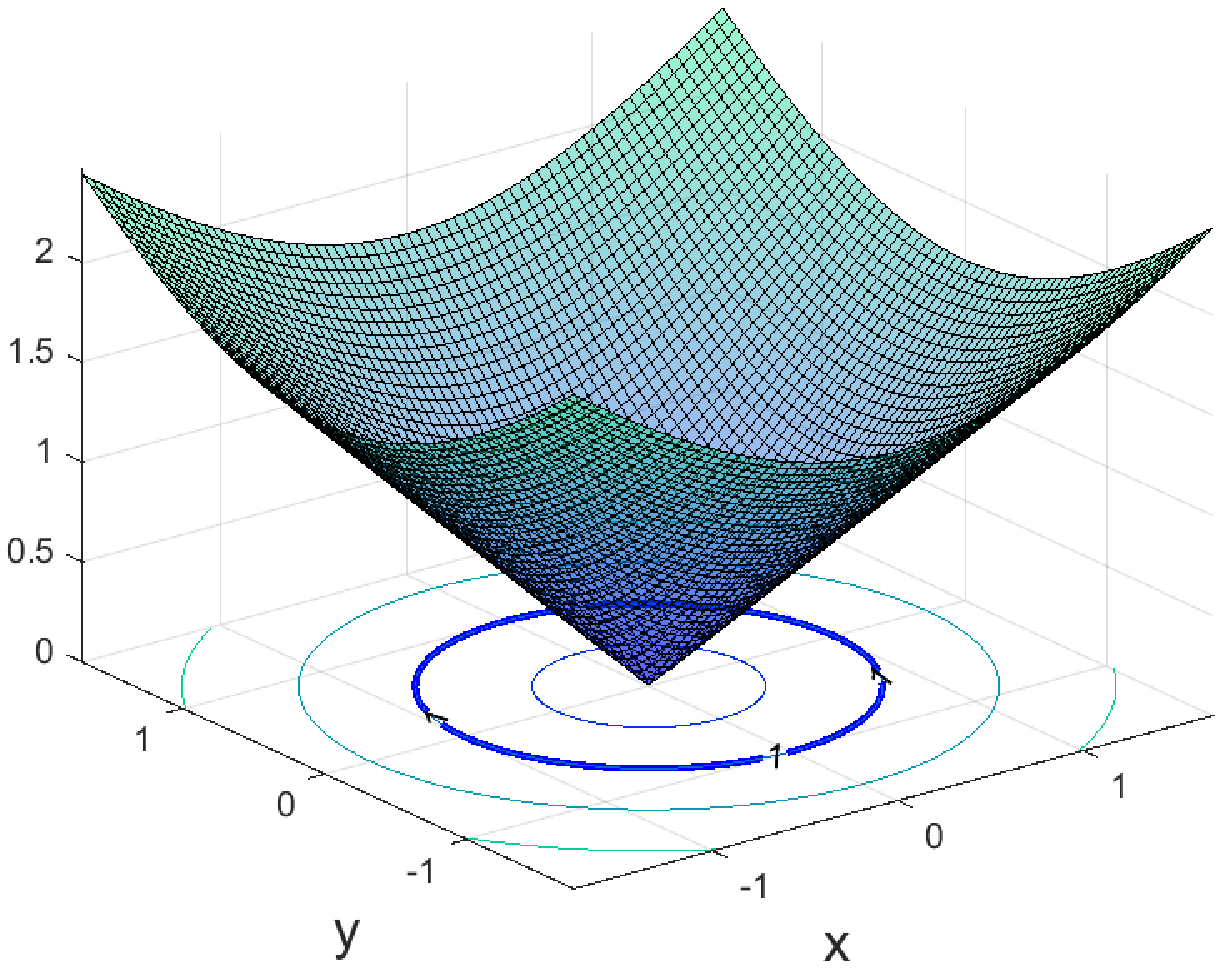}
    \vspace{-10pt}
    \includegraphics[scale=0.6]{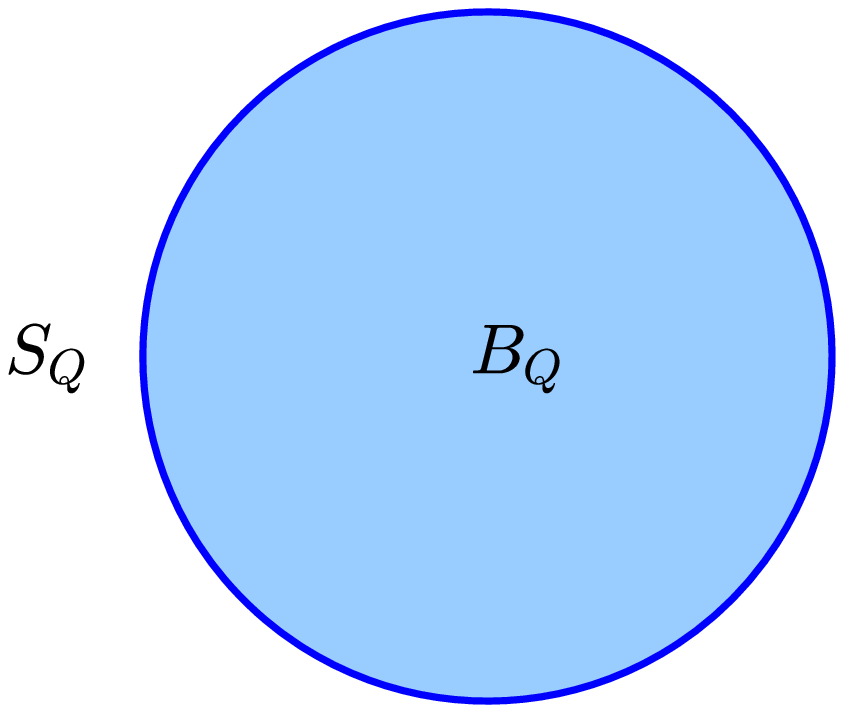}
    \end{subfigure}
    \caption{The left column illustrates $P$'s graph and associated level set $S_P$ containing $B_P$. The right column illustrates $Q$'s graph and associated level set $S_Q$ containing $B_Q$.}
    \label{fig:Weierstrass}
\end{figure}
\end{example}

\noindent Given a positive homogeneous function $P$, let $\Sym(P)$ be the set of $O\in\End(\mathbb{R}^d)$ for which
\begin{equation*}
P(Ox)=P(x)
\end{equation*}
for all $x\in\mathbb{R}^d$ and observe that $OS\subseteq S$ whenever $O\in\Sym(P)$. By virtue of the positive definiteness of $P$, it is straightforward to verify that $\Sym(P)$ is a subgroup of $\Gl(\mathbb{R}^d)$. For this reason, $\Sym(P)$ is said to be the \textbf{symmetry group associated to $P$}. In fact, we will show that $\Sym(P)$ is a compact subgroup of $\Gl(\mathbb{R}^d)$ and hence a subgroup of the orthogonal group; this is Proposition \ref{prop:SymCompact}. As a consequence of this, we will prove that $\tr E=\tr E'$ for all $E,E'\in\Exp(P)$ (Corollary \ref{cor:TraceisInvariant}) and this allows us to define the \textbf{homogeneous order of $P$} to be the unique positive number $\mu_P$ for which
\begin{equation*}
\mu_P=\tr E
\end{equation*}
for all $E\in\Exp(P)$. As we will see, the ``radial" measure $r^{d-1}\,dr$ in \eqref{eq:StandardPolarIntegrationFormula} will be replaced by $r^{\mu_P-1}\,dr$ in our generalized polar coordinate integration formula; they coincide when $P$ is the Euclidean norm.

\begin{example}\normalfont
In Example \ref{exp:EuclideanNorm}, $\Sym(|\cdot|^\alpha)$ is precisely the orthogonal group $\OdR$ and $\mu_{|\cdot|^\alpha}=d/\alpha$. In Example \ref{exp:Polynomial}, the symmetric set of a semi-elliptic polynomial $P$ of the form \eqref{eq:SemiEllipticIntro} depends on the specific nature of the polynomial in question. Concerning the polynomials $P_1$ and $P_2$ in that example, it is easily shown that $\Sym(P_1)$ is the four-element dihedral group $D_2$ and $\Sym(P_2)$ is the two-element group consisting of the identity and the transformation $(x,y)\mapsto (x,-y)$. For a semi-elliptic polynomial $P$ of the form \eqref{eq:SemiEllipticIntro}, 
\begin{equation*}
    \mu_P=|\mathbf{1}:\mathbf{n}|=\frac{1}{n_1}+\frac{1}{n_2}+\cdots+\frac{1}{n_d}
\end{equation*}
and, in particular, $\mu_{P_1}=\mu_{P_2}=1/2+1/4=3/4.$
\end{example}

\noindent Armed with the notion of positive homogeneous functions and their associated contracting groups, we are ready to introduce our generalization of spherical measure and the polar-coordinate integration formula. To this end, denote by $\mathcal{M}_d$ the Lebesgue $\sigma$-algebra on $\mathbb{R}^d\setminus\{0\}$ and by $\mathcal{L}$ the Lebesgue $\sigma$-algebra on $(0,\infty)$. Given a positive homogeneous function $P$ with homogeneous order $\mu_P>0$, we let $\lambda_P$ denote the $\sigma$-finite measure on $((0,\infty),\mathcal{L})$ with $\lambda_P(dr)=r^{\mu_P-1}\,dr$. Our main theorem is as follows\footnote{We refer the reader to Section 3.6 and 9.2 of \cite{bogachev_measure_2007} which provides some basic context and vocabulary.}.

\begin{theorem}\label{thm:BestIntegrationFormula} Let $P$ be a positive homogeneous function on $\mathbb{R}^d$ and let $S$, $\Exp(P)$, $\Sym(P)$, and $\mu_P$ be $P$'s associated unital level set, exponent set, symmetric group, and homogeneous order, respectively.
There exists a $\sigma$-algebra $\Sigma_P$ on $S$ containing the Borel $\sigma$-algebra on $S$, $\mathcal{B}(S)$,
and a finite Radon measure $\sigma_P$ on $(S,\Sigma_P)$ which satisfies the following properties:
\begin{enumerate}
\item\label{property:Completion} $(S,\Sigma_P,\sigma_P)$ is the completion of $(S,\mathcal{B}(S),\sigma_P)$. In particular, $(S,\Sigma_P,\sigma_P)$ is a complete measure space.
\item\label{property:Invariance} For any $F\in\Sigma_P$ and $O\in\Sym(P)$, $OF\in\Sigma_P$ and $\sigma_P(OF)=\sigma_P(F)$.
\item\label{property:DefiningConditionofsigma} For $F\subseteq S$, $F\in\Sigma_P$ if and only if $\widetilde{F_E}:=\{r^E\eta\in\mathbb{R}^d\setminus\{0\}:0<r<1,\eta\in F\}\in\mathcal{M}_d$ for every $E\in \Exp(P)$. In this case
\begin{equation*}
    \sigma_P(F)=\mu_P\cdot m(\widetilde{F_E})
\end{equation*}
for all $E\in\Exp(P)$.
\end{enumerate}
Further, denote by $\left((0,\infty)\times S,(\mathcal{L}\times\Sigma_P)',\lambda_P\times\sigma_P\right)$ the completion of the product measure space \break $((0,\infty)\times S,\mathcal{L}\times\Sigma_P,\lambda_P\times\sigma_P)$. We have
\begin{enumerate}
\item\label{property:BestPointIsomorphism} Given any $E\in \Exp(P)$, the map $\psi_E:(0,\infty)\times S\to\mathbb{R}^d\setminus\{0\}$, defined by $\psi_E(r,\eta)=r^E\eta$ for $r>0$ and $\eta\in S$, is a point isomorphism of the measure spaces $\left((0,\infty)\times S,(\mathcal{L}\times\Sigma_P)',\lambda_P\times\sigma_P\right)$ and $(\mathbb{R}^d\setminus\{0\},\mathcal{M}_d,m)$. That is
\begin{equation*}
\mathcal{M}_d=\left\{A\subseteq \mathbb{R}^d\setminus\{0\}:\psi_E^{-1}(A)\in (\mathcal{L}\times\Sigma_P)'\right\}
\end{equation*}
and, for each $A\in\mathcal{M}_d$,
\begin{equation*}
m(A)=(\lambda_P\times\sigma_P)(\psi_E^{-1}(A)).
\end{equation*}
\item\label{property:BestIntegrationFormula} Given any Lebesgue measurable function $f:\mathbb{R}^d\to\mathbb{C}$ and $E\in \Exp(P)$, $f\circ \psi_E$ is $(\mathcal{L}\times\Sigma_P)'$-measurable and the following statements hold:
\begin{enumerate}
\item If $f\geq 0$, then
\begin{equation}\label{eq:BestIntegrationFormula}
\int_{\mathbb{R}^d}f(x)\,dx=\int_0^\infty\left(\int_S f(r^E\eta)\,\sigma_P(d\eta)\right)r^{\mu_P-1}\,dr=\int_S\left(\int_0^\infty f(r^E\eta)r^{\mu_P-1}\,dr\right)\sigma_P(d\eta).
\end{equation}
\item When $f$ is complex-valued, we have 
\begin{equation*}
    f\in L^1(\mathbb{R}^d)\hspace{.1cm}\mbox{ if and only if }\hspace{.1cm}f\circ\psi_E \in  L^1\left((0,\infty)\times S,(\mathcal{L}\times\Sigma_P)',\lambda_P\times\sigma_P\right)
\end{equation*} and, in this case, \eqref{eq:BestIntegrationFormula} holds.
\end{enumerate}
\end{enumerate}
\end{theorem}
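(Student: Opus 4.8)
The plan is to carry out the whole construction relative to a single, fixed exponent $E\in\Exp(P)$, and only at the end to verify that the resulting measure is independent of that choice. The starting point is that homogeneity makes $\psi_E$ a \emph{homeomorphism} of $(0,\infty)\times S$ onto $\R^d\setminus\{0\}$: for $x\neq 0$ the equation $x=r^E\eta$ with $\eta\in S$ forces $r=P(x)$ and $\eta=P(x)^{-E}x$, so $\psi_E$ is a bijection, and continuity of $\psi_E^{-1}(x)=(P(x),P(x)^{-E}x)$ is immediate. For Borel $F\subseteq S$ I would then \emph{define} $\sigma_P^{(E)}(F):=\mu_P\,m(\widetilde{F_E})$; injectivity of $\psi_E$ makes this countably additive on $\mathcal B(S)$, and it is finite since $\widetilde{S_E}=B\setminus\{0\}$ has finite Lebesgue measure. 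The one genuinely computational ingredient is that $\{T_r\}$ is a group: it gives $\psi_E((0,b)\times F)=b^E\,\widetilde{F_E}$, whence $m(\psi_E((0,b)\times F))=|\det b^E|\,m(\widetilde{F_E})=b^{\mu_P}m(\widetilde{F_E})$ since $\det b^E=b^{\tr E}=b^{\mu_P}$. Combined with $m(\{P=t\})=m(t^ES)=t^{\mu_P}m(S)=0$ — the vanishing of $m(S)$ following from $\overline B=B\cup S$ and $m(\overline B)=\inf_{\varepsilon>0}m(B_{1+\varepsilon})=m(B)$ — this yields $m(\psi_E((a,b)\times F))=\lambda_P((a,b))\,\sigma_P^{(E)}(F)$ for all $0\le a<b$. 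The rectangles $(a,b)\times F$ form a $\pi$-system generating $\mathcal B((0,\infty))\times\mathcal B(S)=\mathcal B((0,\infty)\times S)$ on which the two $\sigma$-finite measures $A\mapsto m(\psi_E(A))$ and $\lambda_P\times\sigma_P^{(E)}$ agree, so $m=(\psi_E)_*(\lambda_P\times\sigma_P^{(E)})$ on the Borel $\sigma$-algebras; a routine change of variables together with Tonelli's theorem then already gives \eqref{eq:BestIntegrationFormula} for non-negative (and, by linearity, for integrable) Borel $f$, with $\sigma_P^{(E)}$ in place of $\sigma_P$.

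The \textbf{main obstacle} is to show that $\sigma_P^{(E)}$ is $\Sym(P)$-invariant; everything else then follows easily. Fix $O\in\Sym(P)$; since $\Sym(P)$ is compact (Proposition \ref{prop:SymCompact}) we have $|\det O|=1$, so $x\mapsto Ox$ preserves $m$. Applying the polar formula of the previous paragraph to $f(x)=\mathbf 1_{(a,b)}(P(x))\,g(P(x)^{-E}x)$ with $g\in C(S)$, and using $P(Ox)^{-E}Ox=h_{P(x)}\bigl(P(x)^{-E}x\bigr)$ where $h_r:=r^{-E}Or^E\in\Sym(P)$, the trivial identity $\int f(Ox)\,dx=\int f(x)\,dx$ turns into $\int_a^b\bigl(\int_S g(h_r\eta)\,\sigma_P^{(E)}(d\eta)\bigr)r^{\mu_P-1}\,dr=\int_a^b\bigl(\int_S g\,d\sigma_P^{(E)}\bigr)r^{\mu_P-1}\,dr$ for every $0<a<b$, so for each $g$ the two integrands agree for a.e.\ $r$. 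Letting $g$ run through a countable sup-norm-dense subset of $C(S)$ (which exists, $S$ being a compact metric space) produces a single $\lambda_P$-null set off which $\int_S g\circ h_r\,d\sigma_P^{(E)}=\int_S g\,d\sigma_P^{(E)}$ for all $g\in C(S)$. The delicate point is that $r\mapsto\sigma_P^{(E)}(h_rF)$ need not be continuous, so one cannot simply substitute $r=1$; instead I would take $r_n\to1$ avoiding that null set, note $h_{r_n}\to O$ in operator norm, hence $g\circ h_{r_n}\to g\circ O$ uniformly on $S$, and conclude $\int_S g\circ O\,d\sigma_P^{(E)}=\int_S g\,d\sigma_P^{(E)}$ for every $g\in C(S)$ — which by uniqueness in the Riesz representation theorem gives $\sigma_P^{(E)}(OF)=\sigma_P^{(E)}(F)$ for all Borel $F$. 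Independence of $E$ then comes for free: $\psi_{E'}^{-1}\circ\psi_E$ is the map $(r,\eta)\mapsto(r,g_r\eta)$ with $g_r:=r^{-E'}r^E\in\Sym(P)$, and transporting $m=(\psi_E)_*(\lambda_P\times\sigma_P^{(E)})$ through it and invoking Tonelli together with the invariance just proved gives $\sigma_P^{(E')}(F)=\mu_P\int_0^1\sigma_P^{(E)}(g_r^{-1}F)\,r^{\mu_P-1}\,dr=\sigma_P^{(E)}(F)$. I therefore set $\sigma_P:=\sigma_P^{(E)}$; as a finite Borel measure on the compact metric space $S$ it is Radon, I let $\Sigma_P$ be its $\sigma_P$-completion (property \ref{property:Completion}), and property \ref{property:Invariance} passes to $\Sigma_P$ because every $O\in\Sym(P)$ preserves both $\mathcal B(S)$ and the $\sigma_P$-null sets.

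Next I would transfer the picture to the completed product space. Since $\mathcal L$ and $\Sigma_P$ are the $\lambda_P$- and $\sigma_P$-completions of $\mathcal B((0,\infty))$ and $\mathcal B(S)$, the standard fact that for $\sigma$-finite measures the completion of a product coincides with the completion of the product of the completions (see \cite{bogachev_measure_2007}) identifies $\bigl((0,\infty)\times S,(\mathcal L\times\Sigma_P)',\lambda_P\times\sigma_P\bigr)$ with the completion of the Borel product measure space. Being a homeomorphism, $\psi_E$ is a Borel isomorphism pushing $\lambda_P\times\sigma_P$ forward to $m$ on Borel sets, hence also on the completions (null sets corresponding to null sets) — this is precisely the point-isomorphism assertion, property \ref{property:BestPointIsomorphism}. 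Property \ref{property:DefiningConditionofsigma} then falls out: if $F\in\Sigma_P$ then $(0,1)\times F\in\mathcal L\times\Sigma_P$, so $\widetilde{F_E}=\psi_E((0,1)\times F)\in\mathcal M_d$ with $m(\widetilde{F_E})=(\lambda_P\times\sigma_P)((0,1)\times F)=\sigma_P(F)/\mu_P$, for every $E\in\Exp(P)$; conversely, if $\widetilde{F_E}\in\mathcal M_d$ for one $E$ then $(0,1)\times F=\psi_E^{-1}(\widetilde{F_E})\in(\mathcal L\times\Sigma_P)'$, and since $\lambda_P$-almost every section of a set in the completion of $\mathcal L\times\Sigma_P$ lies in $\Sigma_P$ while every $r$-section of $(0,1)\times F$ with $r\in(0,1)$ equals $F$, we conclude $F\in\Sigma_P$.

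Finally, the integration formula is a formal consequence of the point isomorphism. For Lebesgue-measurable $f:\R^d\to\mathbb C$ the restriction of $f$ to $\R^d\setminus\{0\}$ is $\mathcal M_d$-measurable, so $f\circ\psi_E$ is $(\mathcal L\times\Sigma_P)'$-measurable. For $f\ge0$, the change-of-variables formula for the image measure $m=(\psi_E)_*(\lambda_P\times\sigma_P)$ gives $\int_{\R^d}f\,dm=\int_{(0,\infty)\times S}f\circ\psi_E\,d(\lambda_P\times\sigma_P)$ (the origin being $m$-null), and Tonelli's theorem on the completed product space yields both iterated integrals in \eqref{eq:BestIntegrationFormula}. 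Applying this with $|f|$ in place of $f$ gives the stated $L^1$ equivalence for complex-valued $f$, after which Fubini's theorem delivers \eqref{eq:BestIntegrationFormula} once more. The only place where real work is required is, as indicated, the passage in the second paragraph from the a.e.-in-$r$ identity to genuine $\Sym(P)$-invariance.
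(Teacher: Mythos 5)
Your proof is correct, and it takes a genuinely different route from the paper's. Both start by fixing $E\in\Exp(P)$ and setting $\sigma_{P,E}(F)=\mu_P\,m(\widetilde{F_E})$, and both must eventually show this is independent of $E$ and $\Sym(P)$-invariant; the essential differences are in the machinery for the point isomorphism and, more importantly, in the logical order of the two invariance statements. For the isomorphism, the paper proceeds via a monotone-class argument built on explicit lemmas about $G_\delta$/$F_\sigma$ rectangles, an open-cover lemma, and a general completion-isomorphism lemma (Lemmas \ref{lem:SpecialRectangle}--\ref{lem:PushforwardLemma}); you shortcut this with a $\pi$-system/uniqueness argument on the Borel level, using that $\psi_E$ is a homeomorphism (hence a Borel isomorphism) and that $A\mapsto m(\psi_E(A))$ and $\lambda_P\times\sigma_{P,E}$ agree on the generating $\pi$-system of rectangles, then passing to completions at the end. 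More significantly, the paper proves $E$-independence first (Proposition \ref{prop:Endependence}, via regularity, Urysohn's lemma, and the fundamental theorem of calculus) and then deduces $\Sym(P)$-invariance (Proposition \ref{prop:SymInvariance}) from the conjugation identity $\Exp(P)=O^*\Exp(P)O$ of Proposition \ref{prop:ExpP}; you reverse this, proving $\Sym(P)$-invariance directly by pushing the orthogonal invariance of $m$ through the polar formula, and then deriving $E$-independence from the observation that $\psi_{E'}^{-1}\circ\psi_E$ acts fiberwise by the elements $g_r=r^{-E'}r^E\in\Sym(P)$. Both orderings work; yours arguably makes the structure more transparent, since the orthogonal invariance of Lebesgue measure is exactly the input one expects to drive the invariance of $\sigma_P$. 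One small simplification you could make: the map $r\mapsto\int_S g(h_r\eta)\,\sigma_P^{(E)}(d\eta)$ is already continuous for $g\in C(S)$ (because $h_r\to h_{r_0}$ in operator norm and $g$ is uniformly continuous on the compact set $S$), so once it agrees with the constant $\int_S g\,d\sigma_P^{(E)}$ almost everywhere it agrees at $r=1$; the countable dense subset of $C(S)$ and the sequence $r_n\to 1$ are then unnecessary.
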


\noindent For comparison with the above theorem, we would like to highlight two related (but distinct) results which also generalize \eqref{eq:StandardPolarIntegrationFormula} and have found utility in their respective contexts. The first appears in the context of Riemannian manifolds and can be found as (III.3.4) of \cite{chavel_riemannian_2006}. In that context, $S$ is replaced by the geodesic unit sphere $S_q$ centered at a point $q$ in a Riemannian manifold $M$ (with metric $g$ and connection $\nabla$), the paths $r\mapsto r^E\eta$ are replaced by geodesics $r\mapsto \gamma_\eta(r)$ for each $\eta\in S_q$, and the product measure $r^{\mu_P-1}\,dr\,\sigma_P(d\eta)$ is replaced by a measure of the form $\det(\mathcal{A}(r:\eta))\,dr\, d\mbox{Vol}_{S_q}(\eta)$ where $d\mbox{Vol}_{S_q}$ is the Riemannian volume measure on $S_q$ and $\mathcal{A}(r;\eta)$ is determined by the Riemannian curvature $R$ along the geodesic paths $\gamma_{\eta}(r)$. The second related result appears in the context of homogeneous (Lie) groups and can be found as Proposition 1.15 of \cite{folland_hardy_1982}. In that context, $P$ is replaced by a homogeneous norm $|\cdot|_G$ on the homogeneous group $G$, $S$ is replaced by  the unit sphere $S_G$ in the homogeneous norm, and $\mu_P$ is replaced by the homogeneous dimension of $G$. Perhaps obviously, the second result on homogeneous groups shares the most in common with Theorem \ref{thm:BestIntegrationFormula} but differs in context and in that we ask much less of $P$ and, consequently, $S$. In particular, $P$ can be continuous yet nowhere differentiable (in contrast to $|\cdot|_G$) and $S$ can be non-smooth and $B$ non-convex (in contrast to $S_G$ and $B_G$).\\

\noindent

\noindent Before we discuss the ideas behind the construction of $\sigma_P$ and the proof of Theorem \ref{thm:BestIntegrationFormula}, we first discuss two corollaries with well-known analogues in the classical setting. 

\begin{corollary}
Given $0\leq a<b$, suppose that $f:\overline{A_a^b}\to\mathbb{C}$ is continuous and define $\mathcal{I}:[a,b]\to\mathbb{C}$ by
\begin{equation*}
\mathcal{I}(r)=\int_{A_a^r}f(x)\,dx.
\end{equation*}
for $a\leq r\leq b$. Then $\mathcal{I}$ is continuously differentiable and
\begin{equation*}
\mathcal{I}'(r)=r^{\mu_P-1}\int_S f(r^E\eta)\sigma_P(d\eta)
\end{equation*}
on $[a,b]$ (or on $(0,b]$ provided that $a=0$ and $\mu_P<1$) where $E\in\Exp(P)$. In particular, if $f$ is a complex-valued function which is continuous on some open neighborhood of $\overline{B}$, then
\begin{equation*}
\int_S f(\eta)\sigma_P(\eta)=\frac{d}{dr}\left(\int_{B_r}f(x)\,dx\right)\bigg\vert_{r=1}
\end{equation*}
where this derivative is two-sided.
\end{corollary}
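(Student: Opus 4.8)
The plan is to apply the generalized polar-coordinate integration formula of Theorem~\ref{thm:BestIntegrationFormula} to the restriction of $f$ to the annulus $A_a^r$, and then differentiate in $r$. First I would fix $E\in\Exp(P)$ and, for each $r\in(a,b]$, apply \eqref{eq:BestIntegrationFormula} to the function $f\cdot\mathbf{1}_{A_a^r}$ (extended by zero outside $\overline{A_a^b}$, which is compact by Proposition~\ref{prop:PositiveHomogeneousCharacterization}). Since $P(s^E\eta)=s$ for $\eta\in S$, the inner shell $\{s^E\eta:\eta\in S\}$ lies in $A_a^r$ precisely when $a\le s<r$; hence the formula collapses to
\begin{equation*}
\mathcal{I}(r)=\int_a^r\left(\int_S f(s^E\eta)\,\sigma_P(d\eta)\right)s^{\mu_P-1}\,ds.
\end{equation*}
The integrand here is the key object: set $g(s)=s^{\mu_P-1}\int_S f(s^E\eta)\,\sigma_P(d\eta)$.

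Next I would argue that $g$ is continuous on $[a,b]$ (or on $(0,b]$ when $a=0$). Continuity of $s\mapsto\int_S f(s^E\eta)\,\sigma_P(d\eta)$ follows from the dominated convergence theorem: the map $(s,\eta)\mapsto f(s^E\eta)$ is continuous on the compact set $[a,b]\times S$ (using that $\sigma_P$ is a \emph{finite} Radon measure, which provides the constant dominating function $\sup|f|$), so as $s_k\to s$ the integrands converge pointwise and are uniformly bounded. Multiplying by the continuous factor $s^{\mu_P-1}$ preserves continuity on $[a,b]$; when $a=0$ one needs $\mu_P<1$ only to handle behavior near $s=0$ if one wants the formula at $r=0$, but for $r>0$ the fundamental theorem of calculus applies directly on any $[\varepsilon,r]$. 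Then $\mathcal{I}(r)=\int_a^r g(s)\,ds$ with $g$ continuous, so by the fundamental theorem of calculus $\mathcal{I}$ is $C^1$ with $\mathcal{I}'(r)=g(r)=r^{\mu_P-1}\int_S f(r^E\eta)\,\sigma_P(d\eta)$, as claimed.

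For the final ``in particular'' statement, I would take $a=0$, so $A_0^r=B_r$ and $\mathcal{I}(r)=\int_{B_r}f(x)\,dx$ for $f$ continuous on a neighborhood of $\overline{B}$. The one-sided derivative from the left at $r=1$ is $\mathcal{I}'(1^-)=\int_S f(\eta)\,\sigma_P(d\eta)$ by the computation above (here $1^E\eta=\eta$). For the two-sided claim one extends $f$ to a neighborhood of $\overline{B_{1+\delta}}$ and runs the same argument on $[0,1+\delta]$, which shows $\mathcal{I}$ is $C^1$ on a neighborhood of $1$; the derivative at $1$ is then genuinely two-sided and equals $\int_S f(\eta)\,\sigma_P(d\eta)$.

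The main obstacle I anticipate is the measurability/Fubini bookkeeping needed to justify the collapse of \eqref{eq:BestIntegrationFormula} to the iterated integral $\int_a^r(\cdots)s^{\mu_P-1}\,ds$: one must check that $f\cdot\mathbf{1}_{A_a^r}$ composed with $\psi_E$ is $(\mathcal{L}\times\Sigma_P)'$-measurable and integrable so that part~\ref{property:BestIntegrationFormula} applies, and that the resulting inner $\sigma_P$-integral is a Lebesgue-measurable (indeed continuous) function of $s$ so the outer integral makes sense as a Riemann/Lebesgue integral. This is routine given that $\overline{A_a^b}$ is compact, $f$ is bounded there, and $\sigma_P$ is finite, but it is the step requiring the most care. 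The differentiation step itself, once $g$ is known continuous, is immediate.
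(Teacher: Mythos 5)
Your proof follows the same route as the paper's: write $\mathcal{I}(r)=\int_a^r\bigl(\int_S f(s^E\eta)\,\sigma_P(d\eta)\bigr)s^{\mu_P-1}\,ds$ via Theorem~\ref{thm:BestIntegrationFormula}, observe the inner integral is a continuous function of $s$ (using compactness of $S$, finiteness of $\sigma_P$, and continuity of $f$), and invoke the fundamental theorem of calculus. That part is fine, and your worries about the Fubini/measurability bookkeeping are indeed handled by the hypotheses (boundedness of $f$ on the compact set $\overline{A_a^b}$, finiteness of $\sigma_P$, $\sigma$-finiteness of $\lambda_P$). Your remark on the $a=0$, $\mu_P<1$ case gets the logic slightly backwards --- $\mu_P<1$ is precisely the case where $g(s)=s^{\mu_P-1}\int_S f(s^E\eta)\,\sigma_P(d\eta)$ blows up as $s\to 0^+$, which is why the conclusion is only claimed on $(0,b]$ there --- but the differentiation statement you end up asserting is the correct one.

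The one real gap is in the ``in particular'' step. You write that one ``extends $f$ to a neighborhood of $\overline{B_{1+\delta}}$,'' but there is nothing to extend: $f$ is already given on an open neighborhood $\mathcal{O}$ of $\overline{B}$. What you actually need is the existence of some $\epsilon>0$ with $\overline{B_{1+\epsilon}}\subseteq\mathcal{O}$, so that the first part of the corollary can be applied on $[0,1+\epsilon]$ and the derivative at $r=1$ is genuinely two-sided. This is not automatic from the definition of $B_r$ because the sets $B_r$ are level sets of $P$, not metric neighborhoods; the paper proves it by a contradiction argument: if no such $\epsilon$ existed, pick $x_n\in B_{1+1/n}\setminus\mathcal{O}$, use compactness of $\overline{B_2}$ and Bolzano--Weierstrass to pass to a convergent subsequence $x_n\to x$, note $P(x)=1$ so $x\in S\subseteq\mathcal{O}$, and conclude that the open set $\mathcal{O}$ contains an accumulation point of its complement, a contradiction. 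You should include some such argument to close the step.
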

\begin{proof}
By virtue of Theorem \ref{thm:BestIntegrationFormula}, we have
\begin{equation*}
\mathcal{I}(r)=\int_0^\infty\int_S \chi_{A_a^r}(t^E\eta)f(t^E\eta)\,\sigma_P(d\eta) t^{\mu_P-1}\,dt=\int_a^r g(t)t^{\mu_P-1}\,dt
\end{equation*}
where
\begin{equation*}
g(t)=\int_S f(t^E\eta)\,\sigma_P(d\eta).
\end{equation*}
By virtue of the continuity of $f$ and the compactness of $S$, it is easy to see that $g(t)$ is continuous on $[a,b]$. By an appeal to the fundamental theorem of calculus, it follows that $\mathcal{I}$ is differentiable on $[a,b]$ (or on $(0,b]$ provided that $a=0$ and $\mu_P<1$) and
\begin{equation*}
\mathcal{I}'(r)=g(r)r^{\mu_P-1}=r^{\mu_P-1}\int_S f(r^E\eta)\,\sigma_P(d\eta).
\end{equation*}
To prove the final assertion, we first claim that there exists $\epsilon>0$ for which $\overline{B}\subseteq B_{1+\epsilon}\subseteq \mathcal{O}$. To see this, we assume to reach a contradiction that, for each $n\in\mathbb{N}$, there exists $x_n \in B_{1+1/n} \setminus \mathcal{O}$. Because $\overline{B_2}$ is compact, the sequence $\{x_n\}$ is bounded and thus has a convergent subsequence by the Bolzano–Weierstrass theorem. By a (possible) reassignment of $\{x_n\}$, we may therefore assume that $\lim_{n\to \infty} x_n = x$. Because $P$ is continuous and $1 \leq P(x_n) \leq 1 + 1/n$ for all $n\in \mathbb{N}$, $P(x)=\lim_{n\to\infty}P(x_n)=1$ and therefore $x\in S \subseteq \overline{B} \subseteq \mathcal{O}$. This implies that $\mathcal{O}$ contains an accumulation point $x$ of $\mathbb{R}^d\setminus\mathcal{O}$ which is impossible because $\mathcal{O}$ is an open set. As a consequence, the second assertion of the corollary follows immediately from the first where the derivative at $r=1$ is two-sided.
\end{proof}

\noindent As an application of the preceding corollary, information can be exchanged between the Fourier transforms $\{\widehat{\chi_{B_r}}\}_{r>0}$ of the characteristic functions $\{\chi_{B_r}\}_{r>0}$ and the Fourier transform of the surface measure $\sigma_P$ defined by
\begin{equation*}
\widehat{\sigma_P}(x)=\frac{1}{(2\pi)^d}\int_{S} e^{-i\eta\cdot x}\,\sigma_P(d\eta)
\end{equation*}
for $x\in\mathbb{R}^d$. Specifically, we have
\begin{equation}\label{eq:FTofSurfaceMeasureRelation}
\widehat{\sigma_P}(x)=\frac{d}{dr}\widehat{\chi_{B_r}}(x)\Big\vert_{r=1}\hspace{0.5cm}\mbox{and}\hspace{0.5cm}\widehat{\chi_B}(x)=\int_0^1\widehat{\sigma_P}(r^{E^*}x)r^{\mu_P-1}\,dr
\end{equation}
for $x\in\mathbb{R}^d$ where $E^*$ is the transpose of $E\in\Exp(P)$. It is well known that decay estimates for the Fourier transform of surface-carried measures (of the form $\sigma_P$) have rich applications to the theory of maximal averages \cite{stein_harmonic_1993}.  With such applications in mind, we refer the reader to the recent article \cite{greenblatt_fourier_2021} which skillfully utilizes relationships of the form \eqref{eq:FTofSurfaceMeasureRelation}.

\begin{corollary}\label{cor:IntegrateOnS}
Given $g:S\to\mathbb{C}$ and $E\in\Exp(P)$, define $f:\mathbb{R}^d\to\mathbb{C}$ by
\begin{equation*}
f(x)=\begin{cases}
\mu_P\cdot \chi_{(0,1)}(P(x))g(P(x)^{-E}x) & \mbox{ for }x\neq 0\\
0 & \mbox{ for }x=0
\end{cases}
\end{equation*}
for $x\in\mathbb{R}^d$ where $\chi_{(0,1)}(\cdot)$ is the indicator function of the interval $(0,1)$. Then $g\in L^1(S,\Sigma_P,\sigma_P)$ if and only if $f\in L^1(\mathbb{R}^d)$ and, in this case, 
\begin{equation}\label{eq:ACharacterizationofsigma}
    \int_{\mathbb{R}^d}f(x)\,dx=\int_Sg(\eta)\sigma_P(d\eta).
\end{equation}
\end{corollary}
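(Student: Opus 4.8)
The plan is to deduce the corollary from the integration formula \eqref{eq:BestIntegrationFormula} and the point-isomorphism property of $\psi_E$ established in Theorem \ref{thm:BestIntegrationFormula}, the bridge being one short computation. Fix $E\in\Exp(P)$. For $r>0$ and $\eta\in S$, homogeneity of $P$ gives $P(r^E\eta)=rP(\eta)=r$; moreover $\eta\neq 0$ (as $P(\eta)=1\neq P(0)$) and $r^E\in\Gl(\mathbb{R}^d)$, so $r^E\eta\neq 0$ and $P(r^E\eta)^{-E}(r^E\eta)=r^{-E}(r^E\eta)=\eta$ because $r^{-E}=(r^E)^{-1}$. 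Hence, setting $\widetilde{g}(r,\eta):=\mu_P\,\chi_{(0,1)}(r)\,g(\eta)$ on $(0,\infty)\times S$, we have the pointwise identity $f\circ\psi_E=\widetilde{g}$; we will also use $\int_0^\infty\chi_{(0,1)}(r)\,r^{\mu_P-1}\,dr=1/\mu_P$, valid since $\mu_P>0$.

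First I would settle the measurability issue, showing that $f$ is Lebesgue measurable on $\mathbb{R}^d$ if and only if $g$ is $\Sigma_P$-measurable. Because $\{0\}$ is Lebesgue-null, $f$ is Lebesgue measurable iff its restriction to $\mathbb{R}^d\setminus\{0\}$ is $\mathcal{M}_d$-measurable, and since $\psi_E$ is a point isomorphism of $\left((0,\infty)\times S,(\mathcal{L}\times\Sigma_P)',\lambda_P\times\sigma_P\right)$ with $(\mathbb{R}^d\setminus\{0\},\mathcal{M}_d,m)$, this in turn is equivalent to $f\circ\psi_E=\widetilde{g}$ being $(\mathcal{L}\times\Sigma_P)'$-measurable. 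If $g$ is $\Sigma_P$-measurable, then $\widetilde{g}$ is $(\mathcal{L}\times\Sigma_P)$-measurable and hence $(\mathcal{L}\times\Sigma_P)'$-measurable. Conversely, if $\widetilde{g}$ is $(\mathcal{L}\times\Sigma_P)'$-measurable then, invoking the slice-measurability part of the Fubini--Tonelli theorem for the completion of a $\sigma$-finite product, the slice $\eta\mapsto\widetilde{g}(r,\eta)$ is $\Sigma_P$-measurable for $\lambda_P$-a.e.\ $r$; picking such an $r$ inside $(0,1)$---possible since $\lambda_P((0,1))=1/\mu_P>0$---exhibits $\mu_P g$, and therefore $g$, as $\Sigma_P$-measurable.

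With this common hypothesis in force, I would apply the non-negative case of \eqref{eq:BestIntegrationFormula} to $|f|$ and substitute $f(r^E\eta)=\mu_P\chi_{(0,1)}(r)g(\eta)$:
\[
\int_{\mathbb{R}^d}|f(x)|\,dx=\int_0^\infty\left(\int_S|f(r^E\eta)|\,\sigma_P(d\eta)\right)r^{\mu_P-1}\,dr=\mu_P\left(\int_0^1 r^{\mu_P-1}\,dr\right)\int_S|g(\eta)|\,\sigma_P(d\eta)=\int_S|g(\eta)|\,\sigma_P(d\eta).
\]
Thus $f\in L^1(\mathbb{R}^d)$ if and only if $g\in L^1(S,\Sigma_P,\sigma_P)$, and when these hold the complex-valued case of \eqref{eq:BestIntegrationFormula} applied to $f$ yields, by the same substitution,
\[
\int_{\mathbb{R}^d}f(x)\,dx=\int_0^\infty\left(\int_S f(r^E\eta)\,\sigma_P(d\eta)\right)r^{\mu_P-1}\,dr=\mu_P\left(\int_0^1 r^{\mu_P-1}\,dr\right)\int_S g(\eta)\,\sigma_P(d\eta)=\int_S g(\eta)\,\sigma_P(d\eta),
\]
which is \eqref{eq:ACharacterizationofsigma}.

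I expect the only genuine subtlety to be the direction ``$\widetilde{g}$ measurable $\Rightarrow$ $g$ measurable'' of the measurability equivalence: one must use the a.e.-slice-measurability statement for \emph{completed} product measures rather than the cleaner statement for the product $\sigma$-algebra itself. This can also be circumvented in the forward implication $f\in L^1(\mathbb{R}^d)\Rightarrow g\in L^1(S,\Sigma_P,\sigma_P)$ by applying Fubini's theorem for the completion directly to $\widetilde{g}=f\circ\psi_E\in L^1\left((0,\infty)\times S,(\mathcal{L}\times\Sigma_P)',\lambda_P\times\sigma_P\right)$, which at once gives $\Sigma_P$-measurability and integrability of $\eta\mapsto\int_0^\infty\widetilde{g}(r,\eta)\,\lambda_P(dr)=g(\eta)$; every other step is routine bookkeeping with the machinery of Theorem \ref{thm:BestIntegrationFormula}.
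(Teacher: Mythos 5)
Your proof is correct and follows essentially the same route as the paper: express $f$ via the factorization $f\circ\psi_E=\mu_P\,\chi_{(0,1)}(r)g(\eta)$, apply the non-negative case of \eqref{eq:BestIntegrationFormula} to $|f|$ to identify $\|f\|_{L^1(\mathbb{R}^d)}$ with $\|g\|_{L^1(S,\Sigma_P,\sigma_P)}$, and then repeat for $f$ itself. The one place where you depart from the paper's exposition is worth flagging: the paper's proof opens by labeling $k(r,\eta)=\chi_{(0,1)}(r)g(\eta)$ as $(\mathcal{L}\times\Sigma_P)'$-measurable and thereby deducing that $f$ is Lebesgue measurable, which tacitly assumes the $\Sigma_P$-measurability of $g$ at the outset; this is unproblematic in the direction $g\in L^1\Rightarrow f\in L^1$, but is left implicit in the converse. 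You fill that in explicitly by observing that the measurability of $f$ on $\mathbb{R}^d$ is equivalent, via the point isomorphism of Property \ref{property:BestPointIsomorphism}, to the $(\mathcal{L}\times\Sigma_P)'$-measurability of $\widetilde g=f\circ\psi_E$, and then recovering the $\Sigma_P$-measurability of $g$ from the almost-everywhere slice-measurability guaranteed by Fubini's theorem for completed $\sigma$-finite products (Rudin's Theorem 8.12, which applies since $(S,\Sigma_P,\sigma_P)$ is complete by Property \ref{property:Completion}). Your alternative route for the implication $f\in L^1\Rightarrow g\in L^1$---applying Fubini directly to $f\circ\psi_E\in L^1$ to obtain measurability and integrability of $\eta\mapsto\int_0^\infty\widetilde g(r,\eta)\,\lambda_P(dr)=g(\eta)$ simultaneously---is also valid and somewhat cleaner than the slice-by-slice argument. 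This is not a genuinely different proof, but it does make the ``if and only if'' airtight where the paper is informal.
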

\begin{proof}
Observe that, for the $(\mathcal{L}\times\Sigma_P)'$-measurable function $k(r,\eta)=\chi_{(0,1)}(r)g(\eta)$,
\begin{equation*}
    k\circ\psi_E^{-1}(x)=k(P(x),P(x)^{-E}x)=f(x)
\end{equation*}
for $x\in\mathbb{R}^d\setminus \{0\}$. By virtue of Theorem \ref{thm:BestIntegrationFormula}, it follows that $f$ is Lebesgue measurable and 
\begin{eqnarray*}
   \int_{\mathbb{R}^d}|f(x)|\,dx&=&\int_{S}\left(\int_{(0,\infty)}|k(r,\eta)|r^{\mu_P-1}\,dr\right)\sigma_P(d\eta)\\
    &=&\left(\int_S|g(\eta)|\sigma_P(d\eta)\right)\left(\int_0^1 \mu_Pr^{\mu_P-1}\,dr\right)\\
    &=&\int_S\abs{g(\eta)}\sigma_P(d\eta)
\end{eqnarray*}
and therefore $f\in L^1(\mathbb{R}^d)$ if and only if $g\in L^1(S,\Sigma_P,\sigma_P)$ and $\|f\|_{L^1(\mathbb{R}^d)}=\|g\|_{L^1(S)}$. By an analogous computation (for $f$ instead of $|f|$), we have
\begin{equation*}
    \int_{\mathbb{R}^d}f(x)\,dx=\int_S g(\eta)\sigma_P(d\eta),
\end{equation*}
by virtue of Property \ref{property:BestIntegrationFormula} of Theorem \ref{thm:BestIntegrationFormula}. 
\end{proof}

\noindent As it turns out, \eqref{eq:ACharacterizationofsigma} is characterizing in the sense that it can be used to construct $\sigma_P$ via the Riesz representation theorem. This is the approach taken, for example, in \cite{folland_hardy_1982} and \cite{baker_integration_1997}; in fact, it is asserted in \cite{baker_integration_1997} that viewing \eqref{eq:ACharacterizationofsigma} as a consequence of Theorem \ref{thm:BestIntegrationFormula} (in the classical setting) is folkloric. We have decided to follow this so-called folkloric approach, which avoids the Riesz representation theorem and is that suggested by \cite{rudin_real_1987} and \cite{folland_how_2001} (in the classical setting), mainly because it is constructive, illustrative, and allows us to precisely describe the $\sigma$-algebra $\Sigma_P$.\\

\noindent For the remainder of this introductory section, we outline the remainder of this article and herein describe the heuristics of our construction of $\sigma_P$. Section \ref{sec:Homogeneous} is a short section which contains a proof of Proposition \ref{prop:PositiveHomogeneousCharacterization}, outlines the basics of positive homogeneous functions, and introduces the related notion of subhomogeneous functions (see Subsection \ref{subsec:SubhomogeneousFunctions}). In Section \ref{sec:ConvolutionPowers}, we focus on the study of convolution powers of complex-valued function on $\mathbb{Z}^d$. After a short account of the study's history, we introduce the main result of that section, Theorem \ref{thm:ConvolutionPowerEstimate}. The theorem establishes sup-norm-type estimates for the iterative convolution powers $\phi^{(n)}$ of a complex-valued function $\phi$ whose Fourier transform satisfies certain hypotheses. Partially extending results of \cite{randles_convolution_2015} and \cite{randles_convolution_2017}, our result is new and its proof makes use of Theorem \ref{thm:BestIntegrationFormula} and the Van der Corput lemma. Subsection \ref{subsec:Examples} treats several concrete examples to which Theorem \ref{thm:ConvolutionPowerEstimate} applies. A forthcoming article will treat local limit theorems for complex-valued functions satisfying the hypotheses of Theorem \ref{thm:ConvolutionPowerEstimate}. Section \ref{sec:ProofofBest} focuses on the proof of Theorem \ref{thm:BestIntegrationFormula} and is broken into several subsections. In Subsection \ref{subsec:ConstructionofSigma}, we take $E\in\Exp(P)$ and define a measure $\sigma_{P,E}$ on $S$ by adapting the classical construction of the spherical measure, described in Remark 2 of \cite{folland_how_2001}, to our positive homogeneous setting. Specifically, for a given set $F\subseteq S$, we contract $F$ into a quasi-conical region of $B$ by putting $\widetilde{F}=\widetilde{F}_E=\{r^E\eta:0<r<1,\eta\in F\}$
and setting
\begin{equation*}
\sigma_{P,E}(F)=\mu_P \cdot m(\widetilde{F})
\end{equation*}
provided that $\widetilde{F}$ is Lebesgue measurable. Figures \ref{fig:level_set_F} and \ref{fig:level_set_F_3D} illustrate these quasi-conical regions in $\mathbb{R}^2$ and $\mathbb{R}^3$, respectively.

\begin{figure}[!htb]
    \centering
    \includegraphics[scale=0.7, trim={1cm 1cm 1cm 0.5cm},clip]{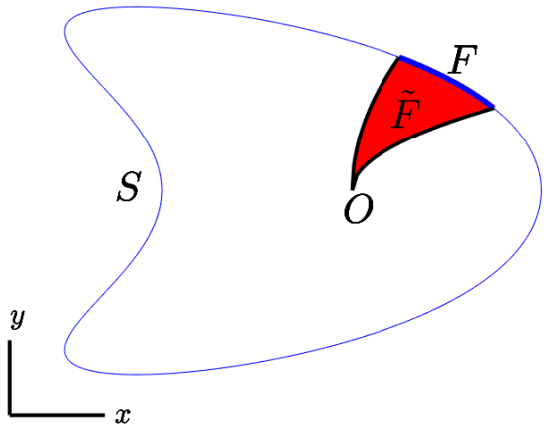}
    \caption{Quasi-conical region $\widetilde{F}=\widetilde{F}_E$ (in red) for $F\subseteq S$. Here, $S$ is the unital level set of $P=P_2$ from Example \ref{exp:Polynomial} and $E\in\Exp(P_2)$ has the standard representation $\diag(1/2,1/4)$.}
    \label{fig:level_set_F}
\end{figure}

\begin{figure}[!htb]
    \centering
    \includegraphics[scale=0.7, trim={1cm 3cm 1cm 2cm},clip]{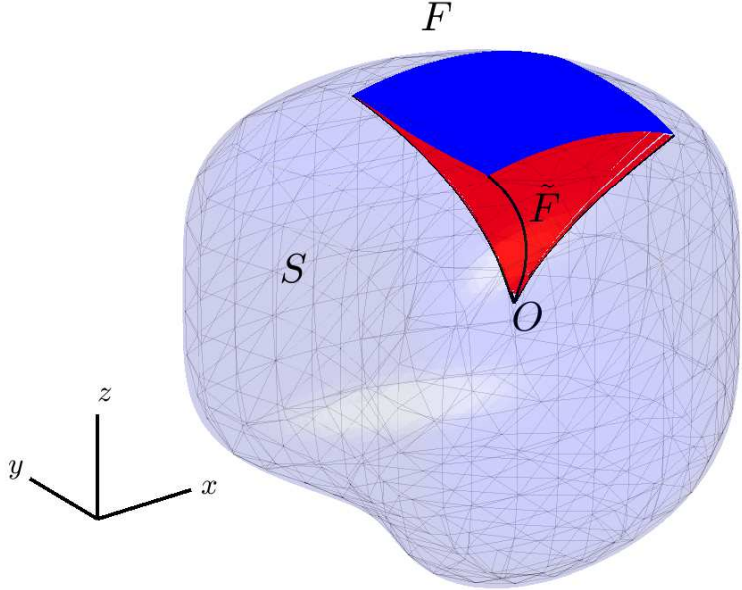}
    \caption{Quasi-conical region $\widetilde{F}=\widetilde{F}_E$ (in red) for $F\subseteq S$. Here, $S$ is the unital level set of $P(x,y,z) = x^2 + xy^2 + y^4 + z^4$ and $E\in\Exp(P)$ has standard representation $\diag(1/2,1/4,1/4)$. }
    \label{fig:level_set_F_3D}
\end{figure}

\noindent In Subsection \ref{subsec:ProductMeasure}, we turn our focus to an associated product measure $\lambda_P\times\sigma_{P,E}$ on $(0,\infty)\times S$ with which we are able to formulate and prove a generalization of \eqref{eq:StandardPolarIntegrationFormula} using the measure $\sigma_{P,E}$; this is Theorem \ref{thm:MainIntegrationFormula}. We then derive a number of corollaries of Theorem \ref{thm:MainIntegrationFormula}, including the result that $\sigma_{P,E}$ is a Radon measure on $S$. As everything done in Subsections \ref{subsec:ConstructionofSigma} and \ref{subsec:ProductMeasure} is done using the contracting group $\{r^E\}$ for a chosen $E\in\Exp(P)$, it isn't clear, \textit{a priori}, exactly how $\sigma_{P,E}$ is dependent on the choice of $E\in\Exp(P)$, if at all. In Subsection \ref{subsec:IndependentofE}, we prove that $\sigma_{P,E}$ is, in fact, independent of the choice of $E\in \Exp(P)$ and, upon writing $\sigma_P=\sigma_{P,E}$, we immediately obtain Theorem \ref{thm:BestIntegrationFormula}. All throughout Section \ref{sec:ProofofBest}, our construction uses only tools from point-set topology and measure theory.  In Section \ref{sec:SigmaForSmoothP}, we study the special case in which a positive homogeneous function $P$ is additionally smooth. In that case, we will find that $S$ is a smooth compact embedded hypersurface of $\mathbb{R}^d$ and the measure $\sigma_P$ is characterized by a differential form $d\sigma_P$ and closely related to the Riemannian volume on $S$; this is Theorem \ref{thm:RiemannLebesgue}. The section also contains a number of results helpful to the computation of integrals on $S$ with respect to $\sigma_P$. Finally, the appendix outlines some basic results concerning one-parameter contracting groups.

\section{Positive Homogeneous Functions}\label{sec:Homogeneous}

In this short section, we treat some basic results on positive homogeneous functions and introduce the useful and related concept of subhomogeneous functions. Several times throughout, we will appeal to results concerning contracting groups which can be found in the appendix.

\begin{proof}[Proof of Proposition \ref{prop:PositiveHomogeneousCharacterization}]
In the case that $d=1$, it is easy to see that every function satisfying the hypotheses is of the form
\begin{equation*}
P(x)=\begin{cases}
P(1)x^\alpha & \mbox{ for }x\geq 0 \\
P(-1)(-x)^\alpha &\mbox{ for }x<0
\end{cases}
\end{equation*}
for some $\alpha>0$ where $P(1),P(-1)>0$ and $\Exp(P)$ consists only of the linear function $x\mapsto x/\alpha$. In this setting, it is easy to see that Conditions \ref{cond:SisCompact}--\ref{cond:InfiniteLimit} are satisfied (always and) simultaneously. We shall therefore assume that $d>1$ for the remainder of the proof.

\begin{subproof}[$\ref{cond:SisCompact}\Rightarrow\ref{cond:PisAboveOne}$]
Given that $S$ is compact, it is bounded and so we have a positive number $M$ for which $P(x)\neq 1$ for all $|x|\geq M$. Observe that, if for two points $x_1,x_2\in \mathbb{R}^d\setminus\mathbb{B}_M$, $P(x_1)<1<P(x_2)$ or $P(x_2)<1<P(x_1)$, then by virtue of the path connectedness of $\mathbb{R}^d\setminus\mathbb{B}_M$ and the intermediate value theorem, we would be able to find  $x_0\in\mathbb{R}^d\setminus\mathbb{B}_M$ for which $P(x_0)=1$, an impossibility. Therefore, to show that Condition \ref{cond:PisAboveOne} holds, we must simply rule out the case in which $P(x)<1$ for all $|x|\geq M$. Let us therefore assume, to reach a contradiction, that this alternate condition holds. In this case, we take $E\in\Exp(P)$ and $y\in\mathbb{R}^d\setminus \{0\}$ and observe that
\begin{equation*}
\lim_{r\to\infty}P(r^Ey)=\lim_{r\to\infty}rP(y)=\infty.
\end{equation*}
By virtue of our supposition, we find that $|r^Ey|<M$ for all sufficiently large $r$. In particular, there exists a sequence $r_k\to\infty$ for which $|r_k^Ey|\leq M$ for all $k$ and 
\begin{equation*}
\lim_{k\to\infty}P(r_k^Ey)=\infty.
\end{equation*}
Because $\overline{\mathbb{B}_M}$, the closure of $\mathbb{B}_M$, is compact, $\{r_k^Ey\}$ has a convergent subsequence which we also denote by $\{r_k^Ey\}$ by a slight abuse of notation. In view of the continuity of $P$ at $\eta=\lim_{k\to\infty}r_k^Ey$, we have
\begin{equation*}
P(\eta)=\lim_{k\to\infty}P(r_k^Ey)=\lim_{k\to\infty}r_kP(y)=\infty,
\end{equation*}
which is impossible. Thus Condition \ref{cond:PisAboveOne} holds.
\end{subproof}
\begin{subproof}[$\ref{cond:PisAboveOne}\Rightarrow\ref{cond:Contracting}$]
We shall prove the contrapositive statement. Suppose that, for $E\in\Exp(P)$, $\{r^E\}$ is not contracting. In this case, by virtue of Proposition \ref{prop:ContractingCharacterization}, there exists $x\in\mathbb{R}^d\setminus\{0\}$ and a sequence $r_k\to 0$ for which $r_k^Ex$ does not converge to zero in $\mathbb{R}^d$. If our sequence $\{r_k^Ex\}$ is bounded, then it must have a convergent subsequence $\{r_{k_m}^Ex\}$ with non-zero subsequential limit $\eta=\lim_{m\to\infty}r_{k_m}^Ex$. By the continuity of $P$, we have
\begin{equation*}
P(\eta)=\lim_{m\to\infty}P(r_{k_m}^Ex)=\lim_{k\to\infty}r_{k_m}P(x)=0
\end{equation*}
which cannot be true for it would violate the positive definiteness of $P$. We must therefore consider the other possibility: The sequence $\{r_k^Ex\}$ is unbounded. In particular, there must be some $k_0$ for which $r_{k_0}<1/P(x)$ and $|r_{k_0}^Ex|>M$. Upon putting $y=r_{k_0}^Ex$, we have $|y|>M$ and  $P(y)=P(r_{k_0}^Ex)=r_{k_0}P(x)<1$ which shows that Condition \ref{cond:PisAboveOne} cannot hold.
\end{subproof}
\begin{subproof}[$\ref{cond:Contracting}\Rightarrow\ref{cond:ThereExistsContracting}$] This is immediate.
\end{subproof}
\begin{subproof}[$\ref{cond:ThereExistsContracting}\Rightarrow\ref{cond:InfiniteLimit}$]
Let $E\in\Exp(P)$ be such that the one-parameter group $\{r^E\}$ is contracting and let $\{x_k\}\subseteq\mathbb{R}^d$ be such that $\lim_{k\to\infty}|x_k|=\infty$. By virtue of Proposition \ref{prop:ScaleFromSphere}, there exist sequences $\{r_k\}\subseteq (0,\infty)$ and $\{\eta_k\}\in\mathbb{S}$ for which $r_k^E\eta_k=x_k$ for all $k$ and $\lim_{k\to\infty}r_k=\infty$. Given that $P$ is continuous and strictly positive on the compact set $\mathbb{S}$, we have $\inf_{\eta\in\mathbb{S}}P(\eta)>0$ and therefore
\begin{equation*}
\liminf_k P(x_k)=\liminf_k r_kP(\eta_k)\geq \liminf_k r_k\left(\inf_{\eta\in\mathbb{S}}P(\eta)\right)=\infty
\end{equation*}
showing that $\lim_{k\to\infty} P(x_k)=\infty$, as desired.
\end{subproof}
\begin{subproof}[$\ref{cond:InfiniteLimit}\Rightarrow\ref{cond:SisCompact}$]
Because $S$ is the preimage of the closed singleton $\{1\}$ under the continuous function $P$, it is closed. By virtue of Condition \ref{cond:InfiniteLimit}, $S$ is also be bounded and thus compact in view of the Heine-Borel theorem.
\end{subproof}
\end{proof}

\begin{proposition}\label{prop:SymCompact}
For each positive homogeneous function $P$, $\Sym(P)$ is a compact subgroup of $\Gl(\mathbb{R}^d)$. In particular, it is a subgroup of the orthogonal group $O(\mathbb{R}^d)$.
\end{proposition}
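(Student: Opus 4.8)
The plan is to show first that $\Sym(P)$ is a closed subset of $\Gl(\mathbb{R}^d)$ and then that it is bounded, hence compact by Heine--Borel (as a subset of $\End(\mathbb{R}^d)\cong\R^{d^2}$); a compact subgroup of $\Gl(\mathbb{R}^d)$ must preserve some inner product (by the usual averaging argument, or simply because any compact subgroup is conjugate into $\OdR$), but in fact once compactness is in hand we can argue more directly that $\Sym(P)\subseteq\OdR$, as outlined below. Throughout I would use the characterizations of positive homogeneous functions from Proposition \ref{prop:PositiveHomogeneousCharacterization}, especially that $S$ is compact, that $B_r=\{P<r\}$ has compact closure, and that $\lim_{x\to\infty}P(x)=\infty$.

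\textbf{Closedness.} Suppose $O_k\to O$ in $\End(\mathbb{R}^d)$ with each $O_k\in\Sym(P)$. For each fixed $x\in\R^d$, $O_kx\to Ox$, so by continuity of $P$ we get $P(Ox)=\lim_k P(O_kx)=\lim_k P(x)=P(x)$; thus $P(Ox)=P(x)$ for all $x$. It remains to check $O\in\Gl(\mathbb{R}^d)$, i.e. $O$ is invertible: if $Ox=0$ then $P(x)=P(Ox)=P(0)=0$, so $x=0$ by positive definiteness, hence $\Ker O=\{0\}$ and $O$ is invertible. Therefore $O\in\Sym(P)$ and $\Sym(P)$ is closed in $\End(\mathbb{R}^d)$.

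\textbf{Boundedness.} This is the main obstacle, since a priori the $O\in\Sym(P)$ could have large operator norm. The key observation is that any $O\in\Sym(P)$ maps the sublevel set $B_r$ bijectively onto itself for every $r>0$: indeed $P(Ox)<r\iff P(x)<r$ since $P(Ox)=P(x)$, and $O$ is a bijection of $\R^d$. Fix $r$ large enough that the Euclidean ball $\mathbb{B}_\rho$ is contained in $B_r$ for some $\rho>0$ (possible since $B_r$ is an open neighborhood of $0$), and also fix $R>0$ with $B_r\subseteq \mathbb{B}_R$ (possible since $\overline{B_r}$ is compact). Then for any unit vector $u$, the vector $\rho u\in\mathbb{B}_\rho\subseteq B_r$, so $O(\rho u)\in O(B_r)=B_r\subseteq\mathbb{B}_R$, giving $|Ou|\le R/\rho$. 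Taking the supremum over unit $u$ yields $\|O\|\le R/\rho$, a bound independent of $O$. Hence $\Sym(P)$ is bounded, and being closed it is compact.

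\textbf{Subgroup of $\OdR$.} Since $\Sym(P)$ is already known to be a subgroup of $\Gl(\mathbb{R}^d)$, it is a compact subgroup, and it is a standard fact that every compact subgroup $G$ of $\Gl(\mathbb{R}^d)$ is contained in the orthogonal group of some inner product: average the standard inner product over $G$ using Haar (probability) measure to obtain a $G$-invariant inner product $\langle\cdot,\cdot\rangle_G$. To conclude $\Sym(P)\subseteq\OdR$ with respect to the \emph{original} Euclidean inner product, I would instead invoke the cleaner elementary argument available for a bounded group: if $O\in\Sym(P)$ had some singular value $\ne 1$, then either $\|O^k\|\to\infty$ or $\|O^{-k}\|\to\infty$ as $k\to\infty$ (since $O^{-1}\in\Sym(P)$ too, we may assume the former), contradicting the uniform bound $\|O^k\|\le R/\rho$ established above. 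Hence every $O\in\Sym(P)$ has all singular values equal to $1$, i.e. $O\in\OdR$. This completes the proof.
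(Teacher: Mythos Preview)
Your compactness argument is correct and, in places, cleaner than the paper's. For closedness the paper takes a sequence $O_n\to O$ with $O\in\Gl(\mathbb{R}^d)$ assumed, while you correctly check invertibility of the limit via positive definiteness of $P$. For boundedness the paper argues by contradiction, using $\lim_{x\to\infty}P(x)=\infty$ to derive $P(\eta)=\infty$ along a convergent subsequence of unit vectors; your direct sandwich $\mathbb{B}_\rho\subseteq B_r\subseteq\mathbb{B}_R$, which exploits only that $B_r$ is an open neighborhood of $0$ with compact closure and is $\Sym(P)$-invariant, is an equally valid and arguably more transparent route.

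However, your argument for $\Sym(P)\subseteq\OdR$ contains a genuine gap. The claim ``if $O$ has some singular value $\neq 1$, then $\|O^k\|\to\infty$ or $\|O^{-k}\|\to\infty$'' is false: take
\[
O=\begin{pmatrix}0&2\\ 1/2&0\end{pmatrix},
\]
which has singular values $2$ and $1/2$ but satisfies $O^2=I$, so $\{O^k:k\in\mathbb{Z}\}=\{I,O\}$ is bounded. Boundedness of all powers forces the \emph{eigenvalues} of $O$ to lie on the unit circle, not the singular values. Your earlier remark is the correct one: averaging over Haar measure shows a compact subgroup of $\Gl(\mathbb{R}^d)$ lies in the orthogonal group of \emph{some} inner product, but there is no reason this must be the standard Euclidean one. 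Indeed, the matrix $O$ above lies in $\Sym(P)$ for the positive homogeneous function $P(x,y)=x^2/4+y^2$ (with $\tfrac{1}{2}I\in\Exp(P)$), yet $O\notin\OdR$. So the ``in particular'' clause, as written with the standard $\OdR$, cannot be established by your method---nor, in fact, does the paper's proof address it; the paper proves only compactness. What survives, and what is actually used downstream (e.g., for $|\det O|=1$), is that $\Sym(P)$ is compact and hence conjugate into $\OdR$.
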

\begin{proof}
As discussed in the introduction, it is clear that $\Sym(P)$ is a subgroup of $\Gl(\mathbb{R}^d)$. To see that $\Sym(P)$ is compact, it suffices to prove that $\Sym(P)$ is closed and bounded by virtue of the Heine-Borel theorem. To this end, let $\{O_n\}\subseteq\Sym(P)$ be a sequence converging to $O\in \Gl(\mathbb{R}^d)$. For each $x\in\mathbb{R}^d$, the continuity of $P$ guarantees that
\begin{equation*}
P(Ox)=P\left(\lim_{n\to\infty}O_nx\right)=\lim_{n\to\infty}P(O_nx)=\lim_{n\to\infty}P(x)=P(x).
\end{equation*}
Hence, $O\in\Sym(P)$ and so $\Sym(P)$ is closed.

We assume, to reach a contradiction, that $\Sym(P)$ is not bounded. In this case, there is a sequence $\{\eta_n\}\subseteq \mathbb{S}$ for which $\lim_{n\to\infty}|O_n\eta_n|=\infty$. Given that $\mathbb{S}$ is compact, by passing to a subsequence if needed, we may assume without loss in generality that $\lim_{n\to\infty}\eta_n=\eta\in\mathbb{S}$. By virtue of Proposition \ref{prop:PositiveHomogeneousCharacterization} and the continuity of $P$,
\begin{equation*}
P(\eta)=\lim_{n\to\infty}P(\eta_n)=\lim_{n\to\infty}P(O_n\eta_n)=\infty
\end{equation*}
which is impossible. Hence $\Sym(P)$ is bounded.
\end{proof}

\begin{corollary}\label{cor:TraceisInvariant}
Let $P$ be a positive homogeneous function, then
\begin{equation*}
\tr E=\tr E'>0
\end{equation*}
for all $E,E'\in\Exp(P)$.
\end{corollary}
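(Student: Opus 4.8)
The plan is to first establish positivity, $\tr E > 0$, using Proposition \ref{prop:ContractingTrace} from the appendix: since $P$ is positive homogeneous, Proposition \ref{prop:PositiveHomogeneousCharacterization} guarantees that each $E \in \Exp(P)$ generates a contracting group $\{r^E\}$, and for contracting groups the trace of the generator is strictly positive. So the heart of the matter is showing $\tr E = \tr E'$ for any two elements $E, E' \in \Exp(P)$.

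First I would observe that the difference $N := E - E'$ lies in the Lie algebra of $\Sym(P)$, or at least integrates to elements of $\Sym(P)$. Concretely, for $r > 0$, consider $O_r := r^E r^{-E'} = r^E (r^{E'})^{-1} \in \Gl(\mathbb{R}^d)$. For any $x \in \mathbb{R}^d$, writing $y = r^{-E'}x$ we have $P(O_r x) = P(r^E y) = r P(y) = r P(r^{-E'} x) = r \cdot r^{-1} P(x) = P(x)$, where I used that $E \in \Exp(P)$ and then that $E' \in \Exp(P)$ (applied with exponent $r^{-1}$, i.e. the homogeneity relation $P(r^{-E'}x) = r^{-1}P(x)$, which follows from \eqref{eq:IntroductionHomogeneous} by substituting $r \mapsto 1/r$). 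Hence $O_r \in \Sym(P)$ for every $r > 0$.

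Next I would exploit that $\Sym(P)$ is a compact subgroup of $\Gl(\mathbb{R}^d)$ by Proposition \ref{prop:SymCompact}. Since $O_r \in \Sym(P)$ and $\Sym(P)$ is bounded, the map $r \mapsto \det(O_r)$ is a bounded function of $r$ on $(0,\infty)$. But $\det(O_r) = \det(r^E)\det(r^{-E'}) = r^{\tr E} \cdot r^{-\tr E'} = r^{\tr E - \tr E'}$, using the standard identity $\det(\exp(B)) = \exp(\tr B)$ applied to $B = (\ln r) E$ and $B = -(\ln r) E'$. The function $r \mapsto r^{\tr E - \tr E'}$ is bounded on $(0,\infty)$ only if the exponent vanishes — if $\tr E - \tr E' > 0$ it blows up as $r \to \infty$, and if $\tr E - \tr E' < 0$ it blows up as $r \to 0$. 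Therefore $\tr E = \tr E'$, which together with the first paragraph gives $\tr E = \tr E' > 0$.

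I expect essentially no serious obstacle here; the one point requiring a little care is the verification that $O_r \in \Sym(P)$, specifically the correct bookkeeping of the homogeneity relation for $E'$ under the inverse dilation $r^{-E'}$ — one must be sure that $P(r^{-E'}x) = r^{-1}P(x)$ genuinely follows from the defining relation \eqref{eq:IntroductionHomogeneous} (it does, by replacing $r$ with $1/r$ and $x$ with $r^{E'}x$). An alternative to the boundedness argument, if one prefers to avoid invoking compactness of $\Sym(P)$ directly, is to note that a continuous group homomorphism $r \mapsto O_r$ from $(0,\infty)$ into the compact group $\Sym(P) \subseteq \OdR$ must have image in a compact subgroup, hence $\det O_r \in \{\pm 1\}$ and by continuity $\det O_r \equiv 1$, again forcing $\tr E = \tr E'$; but the boundedness phrasing above is the most economical.
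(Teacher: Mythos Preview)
Your proposal is correct and follows essentially the same route as the paper's proof: both establish positivity via Propositions \ref{prop:PositiveHomogeneousCharacterization} and \ref{prop:ContractingTrace}, then show $O_r = r^E r^{-E'} \in \Sym(P)$ and use the determinant identity $\det(O_r) = r^{\tr E - \tr E'}$ together with Proposition \ref{prop:SymCompact}. The only cosmetic difference is that the paper concludes $\det(O_r) = 1$ directly (since $\Sym(P) \subseteq \OdR$ forces $\det O_r \in \{\pm 1\}$, and $r^{\tr E - \tr E'} > 0$), whereas you argue via boundedness; both finish the argument equally well.
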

\begin{proof}
By virtue of Propositions \ref{prop:PositiveHomogeneousCharacterization} and \ref{prop:ContractingTrace}, $\tr E>0$ for all $E\in\Exp(P)$. It remains to show that the trace map is constant on $\Exp(P)$. To this end, let $E,E'\in\Exp(P)$. Then, for all $r>0$ and $x\in\mathbb{R}^d$,
\begin{equation*}
P(x)=r(1/r)P(x)=rP((1/r)^{E'}x)=P(r^E(1/r)^{E'}x)=P(r^{E}r^{-E'}x).
\end{equation*}
Thus $O_r=r^{E}r^{-E'}\in\Sym(P)$ for each $r>0$. In view of Proposition \ref{prop:SymCompact}, Proposition \ref{prop:ContinuousGroupProperties} and the homomorphism property of the determinant,
\begin{equation*}
1=\det(O_r)=\det(r^{E}r^{E'})=\det(r^{E})\det(r^{-E'})=r^{\tr E}r^{\tr E}=r^{\tr E-\tr E'}
\end{equation*}
for all $r>0$ and therefore $\tr E=\tr E'$.
\end{proof}

\noindent For a continuous, positive definite function $P$ for which $\Exp(P)$ is non-empty, the following proposition gives a sufficient condition for $P$ to be positive homogeneous. As discussed in Example \ref{exp:Weierstrass} above, it is this condition that was used to define ``positive homogeneous polynomial" in \cite{randles_convolution_2017}.

\begin{proposition}\label{prop:PosHomSufficientCondition}
If $P$ is continuous, positive definite and $\Exp(P)$ contains an $E\in\End(\mathbb{R}^d)$ with real spectrum, then $\{r^E\}$ is contracting and hence all of the above conditions are (simultaneously) met. 
\end{proposition}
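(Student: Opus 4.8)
The plan is to reduce the claim to a spectral statement about the real parts of eigenvalues of $E$ and then invoke the characterization of contracting groups from the appendix (Proposition \ref{prop:ContractingCharacterization} and its companions). First I would recall that, by Proposition \ref{prop:PositiveHomogeneousCharacterization}, it suffices to show that \emph{some} $E\in\Exp(P)$ generates a contracting group; here we are handed an $E$ with purely real spectrum, so I would work directly with that $E$. Writing $\|T_r\|=\|r^E\|=\|\exp((\ln r)E)\|$, the goal $\lim_{r\to 0}\|r^E\|=0$ is equivalent to $\lim_{s\to-\infty}\|\exp(sE)\|=0$ after the substitution $s=\ln r$, which is the standard condition that every eigenvalue of $E$ has strictly positive real part.

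Next I would establish that strict positivity of the spectrum of $E$ is a \emph{consequence} of $P$ being continuous, positive definite, and having $E\in\Exp(P)$ — this is the heart of the matter. Fix a real eigenvalue $\lambda$ of $E$ with eigenvector (or, if $E$ is not diagonalizable, a vector from the corresponding real generalized eigenspace) $v\neq 0$. Using the homogeneity relation $P(r^E v)=rP(v)$ and letting $r\to 0$, the left side behaves like $r^\lambda$ times polynomial-in-$\ln r$ corrections times a bounded factor (from the nilpotent part of $E$ on that generalized eigenspace), while the right side is exactly $rP(v)$ with $P(v)>0$ by positive definiteness. Matching the leading order forces $\lambda\le 1$; more importantly, if $\lambda\le 0$ then $r^E v$ would fail to approach $0$ as $r\to 0$ (indeed $|r^E v|$ would stay bounded below or blow up), and then continuity of $P$ together with positive definiteness would be contradicted exactly as in the proof of $\ref{cond:PisAboveOne}\Rightarrow\ref{cond:Contracting}$ and $\ref{cond:ThereExistsContracting}\Rightarrow\ref{cond:InfiniteLimit}$ above — a subsequential limit $\eta=\lim r_k^E v\neq 0$ would have $P(\eta)=\lim r_k P(v)=0$. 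So every real eigenvalue $\lambda$ of $E$ satisfies $\lambda>0$, and since the spectrum is assumed real, $\Re\mu>0$ for all $\mu\in\Spec(E)$.

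With $\Spec(E)\subseteq(0,\infty)$ in hand, I would conclude by the elementary linear-algebra fact (presumably recorded in the appendix alongside Proposition \ref{prop:ContractingTrace}) that a one-parameter group $\{r^E\}$ is contracting if and only if every eigenvalue of $E$ has positive real part: decompose $\mathbb{R}^d$ into generalized eigenspaces, on each of which $r^E$ acts as $r^{\lambda}$ times a unipotent factor $\exp((\ln r)N)$ with $N$ nilpotent, and note that $r^{\lambda}(\ln r)^k\to 0$ as $r\to 0$ whenever $\lambda>0$, for every fixed $k$. Hence $\|r^E\|\to 0$, i.e., $\{r^E\}$ is contracting, and then Proposition \ref{prop:PositiveHomogeneousCharacterization} (condition \ref{cond:ThereExistsContracting}) gives that all the listed equivalent conditions hold and $P$ is positive homogeneous.

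The main obstacle I anticipate is handling the non-diagonalizable case cleanly: one must be careful that the generalized-eigenvector asymptotics $r^E v \sim r^\lambda(\text{polynomial in }\ln r)$ really do contradict $P(r^E v)=rP(v)$ when $\lambda\le 0$, rather than hand-waving past the Jordan blocks. The cleanest route is probably to avoid fine asymptotics entirely and argue purely topologically as in the earlier proofs: if some eigenvalue had $\Re\lambda\le 0$, then for a suitable real vector $v$ in that generalized eigenspace the orbit $\{r^E v: 0<r<1\}$ is non-convergent to $0$ (bounded-away or unbounded), and either alternative is killed by continuity plus positive definiteness of $P$ exactly as in the $\ref{cond:PisAboveOne}\Rightarrow\ref{cond:Contracting}$ argument — so really this proposition is a near-immediate corollary of machinery already deployed, once the spectral reformulation of ``contracting'' is quoted from the appendix.
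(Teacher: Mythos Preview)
Your approach is correct and shares its core idea with the paper's proof: both ultimately show that a real eigenvalue $\lambda\le 0$ of $E$ is impossible by applying the identity $P(r^E v)=rP(v)$ to an actual eigenvector $v$. The packaging differs. The paper argues by global contradiction: assume $\{r^E\}$ is not contracting, recycle the bounded/unbounded dichotomy from \ref{cond:PisAboveOne}$\Rightarrow$\ref{cond:Contracting} to force an unbounded orbit, deduce via Jordan--Chevalley $E=D+N$ that some eigenvalue of $D$ is $\le 0$, then manufacture an honest eigenvector of $E$ as $N^k v_1$ and derive the contradiction. You instead go directly: show each eigenvalue is positive using its eigenvector, then conclude contracting from the generalized-eigenspace decomposition. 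Your route is more streamlined and avoids the roundabout reduction to an unbounded orbit; the paper's route has the advantage of never needing the implication ``positive real spectrum $\Rightarrow$ contracting'', which the appendix does \emph{not} record, so your final step genuinely requires the Jordan-block computation you sketch.

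One small gap to flag: when $\lambda<0$ with eigenvector $v$, the orbit $r^E v=r^\lambda v$ is \emph{unbounded} as $r\to 0$, so there is no subsequential limit $\eta$ and your ``$P(\eta)=0$'' contradiction does not apply; nor does the \ref{cond:PisAboveOne}$\Rightarrow$\ref{cond:Contracting} argument help, since there the unbounded case was dispatched by contradicting hypothesis \ref{cond:PisAboveOne}, which you do not have. The clean fix---exactly what the paper does in its final lines---is to send $r\to\infty$ instead: then $r^\lambda v\to 0$, so $P(r^E v)\to P(0)=0$ by continuity, while $P(r^E v)=rP(v)\to\infty$ since $P(v)>0$. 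With that repair your argument goes through, and working with an actual eigenvector (always available over $\mathbb{R}$ when the spectrum is real) sidesteps the Jordan-block subtleties you anticipated.
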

\begin{proof}
Since $\Spec(E)$ is real, the characteristic polynomial of $E$ factors completely over $\R$ and so we may apply the Jordan-Chevalley decomposition to write $E=D+N$ where $D$ is diagonalizable, $N$ is nilpotent, and $DN=ND$. Let $v_1,v_2,\dots,v_d \in \R^d$ be an eigenbasis of $D$ whose corresponding eigenvalues $\lambda_1,\lambda_2,\dots,\lambda_d$ satisfy $\lambda_k\leq \lambda_{k+1}$ for all $k=1,2\dots,d-1$.

Let us assume, to reach a contradiction, 
that $\{ r^E \}$ is not contracting. Repeating the same argument given in $\ref{cond:PisAboveOne}\Rightarrow\ref{cond:Contracting}$ in the proof of Proposition \ref{prop:PositiveHomogeneousCharacterization}, leaves us with only one possibility: There is a non-zero $x = \sum^d_{i=1}\alpha_i v_i \in\mathbb{R}^d$, and a sequence $r_k\to 0$ for which $|r_k^E x|\to\infty$. Let $n+1$ denote the index of $N$, then we have
\begin{equation*}
r^E_k x = r_k^{N+D} x 
= r_k^N r_k^D x 
= \sum_{j=0}^n\sum_{i=1}^d \f{r_k^{\lambda_i}(\log r_k)^j}{j!}   \alpha_iN^j v_i
\end{equation*}
for all $k$. Since $\abs{r_k^E x} \to \infty$ and $r_k \to 0$, at least one eigenvalue of $D$ must be non-positive. To see this, suppose $\lambda_i > 0$ for all $i = 1,2,\dots,d$, then in view of L'H\^{o}pital's rule we have
\begin{equation*}
    \lim_{r_k \to 0}(\log r_k)^j r_k^{\lambda_i} = 0  
\end{equation*}
for any $j =0, 1,2,\dots,n$ and $i =1,2,\dots,d$, which implies that $\abs{r^E_k x} \not\to \infty$ as $r_k \to 0$, contradicting our assumption. Thus, $\lambda_1 = \min\{ \Spec(D)\} \leq 0$. Let $k$ be such that $N^k v_1 \neq 0$ but $N^{k+1} v_1 = 0$, then
\begin{equation*}
    r^E N^k v_1 = r^D r^N N^k v_1 = r^D \sum_{j=0}^\infty \f{(\log r)^j}{j!}N^j N^k v_1 = r^D N^k v_1 = N^k r^D  v_1 =  r^{\lambda_{1}} N^k  v_1
\end{equation*}
where we have used the fact that $DN = ND$. If $\lambda_1= 0$, then 
\begin{equation*}
    \infty =  \lim_{r\to \infty} rP(N^k v_1)  = \lim_{r\to \infty} P( r^E N^k v_1) =  \lim_{r\to \infty}P(r^{0} N^k v_1)= \lim_{r\to \infty}P( N^k v_1) = P(N^k v_1)
\end{equation*}
which is impossible since $P$ is continuous at $N^k v_1$. On the other hand, if $\lambda_1 < 0$, then
\begin{equation*}
    \infty = \lim_{r\to \infty} rP(N^k v_1) = \lim_{r\to \infty} P(r^E N^k v_1) = \lim_{r\to \infty}P(r^{\lambda_1} N^k v_1) = P(0) = 0
\end{equation*}
which is also impossible. 
\end{proof}

\begin{proposition}\label{prop:ExpP}
For any  $O \in \Sym{(P)} $
\begin{equation*}
    \Exp(P) = O^* \Exp(P) O
\end{equation*}
where $O^*$ is the transpose of $O$. In other words, the set $\Exp(P)$ is invariant under conjugation by $\Sym(P)$.
\end{proposition}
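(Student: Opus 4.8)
The plan is to show the inclusion $O^*\,\Exp(P)\,O\subseteq\Exp(P)$ for every $O\in\Sym(P)$ and then obtain the reverse inclusion by replacing $O$ with $O^{-1}$, which again lies in $\Sym(P)$ since $\Sym(P)$ is a group. The main structural input is Proposition \ref{prop:SymCompact}: because $\Sym(P)$ is a subgroup of the orthogonal group $O(\mathbb{R}^d)$, we have $O^*=O^{-1}$ for each $O\in\Sym(P)$, and in particular $O^*\in\Sym(P)$ as well.

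First I would fix $E\in\Exp(P)$ and $O\in\Sym(P)$ and set $E'=O^*EO$. The key identity is the conjugation rule for the exponential, $r^{E'}=\exp((\ln r)\,O^*EO)=O^*\exp((\ln r)E)\,O=O^*r^EO$, valid for all $r>0$; this is the standard fact that $\exp(A^{-1}BA)=A^{-1}\exp(B)A$ applied with $A=O$ (using $O^*=O^{-1}$). Then for any $r>0$ and $x\in\mathbb{R}^d$,
\begin{equation*}
P\!\left(r^{E'}x\right)=P\!\left(O^*r^EOx\right)=P\!\left(r^E(Ox)\right)=rP(Ox)=rP(x),
\end{equation*}
where the second equality uses $O^*\in\Sym(P)$, the third uses $E\in\Exp(P)$, and the last uses $O\in\Sym(P)$. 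Hence $E'=O^*EO\in\Exp(P)$, which proves $O^*\,\Exp(P)\,O\subseteq\Exp(P)$.

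For the reverse inclusion, apply the inclusion just established to the element $O^*\in\Sym(P)$ in place of $O$: since $(O^*)^*=O$, this gives $O\,\Exp(P)\,O^*\subseteq\Exp(P)$. Conjugating both sides by $O^*$ on the left and $O$ on the right (and using $O^*O=OO^*=I$) yields $\Exp(P)\subseteq O^*\,\Exp(P)\,O$. Combining the two inclusions gives the claimed equality. I do not expect any genuine obstacle here; the only point requiring care is the appeal to Proposition \ref{prop:SymCompact} to guarantee $O^*=O^{-1}$, which is exactly what makes the conjugation identity for $r^E$ land back inside $\Sym(P)$-invariant computations.
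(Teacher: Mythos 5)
Your proposal is correct and follows essentially the same route as the paper: the paper also invokes Proposition \ref{prop:SymCompact} to conclude $O^*=O^{-1}\in\Sym(P)$ and then observes that $E\mapsto O^*EO$ maps $\Exp(P)$ bijectively to itself; you have simply spelled out the computation $P(r^{O^*EO}x)=P(O^*r^EOx)=P(r^E(Ox))=rP(Ox)=rP(x)$ and the reverse-inclusion bookkeeping that the paper compresses into ``it is easy to see.''
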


\begin{proof}
Since $\Sym(P)$ is a subgroup of the orthogonal group $O(\mathbb{R}^d)$, $O^* = O^{-1} \in \Sym{P}$ whenever $O\in\Sym(P)$. For a fixed $O\in\Sym(P)$, it is easy to see that the map $\Exp(P)\ni E\mapsto  O^* E O\in\Exp(P)$ is a bijection and hence $\Exp(P)=O^* \Exp(P) O$. 
\end{proof}

\subsection{Subhomogeneous functions}\label{subsec:SubhomogeneousFunctions}
In this subsection, we introduce the notions of subhomogeneous functions and strongly subhomogeneous functions with respect to a given endomorphism $E\in\End(\mathbb{R}^d)$. As discussed in the following section, these notions appear naturally in the study of convolution powers of complex-valued functions on $\mathbb{Z}^d$.


\begin{definition}\label{def:homogeneous_types}
Let $Q$ be a continuous and complex-valued function defined on an open neighborhood $\mathcal{O}$ of $0$ in $\mathbb{R}^d$ and let $E\in\End(\mathbb{R}^d)$ be such that $\{r^E\}$ is a contracting group.
\begin{enumerate}
\item We say that $Q$ is \textbf{subhomogeneous with respect to $E$} if, for each $\epsilon>0$ and compact set $K\subseteq\mathbb{R}^d$, there is a $\delta>0$ for which
\begin{equation*}
\abs{Q(r^E\xi)}\leq \epsilon r
\end{equation*}
for all $0<r<\delta$ and $\xi\in K$.
\item Given $l\geq 1$, we say that $Q$ is \textbf{strongly subhomogeneous with respect to $E$ of order $l$} if $Q\in C^l(\mathcal{O})$ and, for each $\epsilon>0$ and compact set $K\subseteq\mathbb{R}^d$, there is a $\delta>0$ for which
\begin{equation*}
    \abs{r^j\partial_r^j Q(r^E\xi)}\leq \epsilon r
\end{equation*}
for all $j=1,2,\dots,l$, $0<r<\delta$ and $\xi\in K$.
\end{enumerate}
When the endomorphism $E$ is understood (and fixed), we will say that $Q$ is subhomogeneous if it is subhomogeneous with respect to $E$. Also, we will say that $Q$ is $l$-strongly subhomogeneous if it is strongly subhomogeneous with respect to $E$ of order $l$.
\end{definition}
\noindent The following proposition, in particular, justifies our choice of vocabulary and give credence to the interpretation that $0$-strongly subhomogeneous is synonymous with subhomogeneous.

\begin{proposition}\label{prop:supersub_implies_sub}
Let $E\in\End(\mathbb{R}^d)$ be for which $\{r^E\}$ is contracting and suppose that, for some $l\geq 1$, $Q$ is strongly subhomogeneous with respect to $E$ of order $l$. If $Q(0)=0$, then $Q$ is subhomogeneous with respect to $E$.
\end{proposition}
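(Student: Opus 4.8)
The plan is to reduce the subhomogeneity estimate $|Q(r^E\xi)| \le \epsilon r$ to the strong subhomogeneity estimates $|r^j \partial_r^j Q(r^E\xi)| \le \epsilon r$ via the fundamental theorem of calculus, using the hypothesis $Q(0)=0$ to kill the boundary term. Fix $\epsilon > 0$ and a compact set $K \subseteq \mathbb{R}^d$. For $\xi \in K$ and $r > 0$ small, consider the function $g_\xi(r) = Q(r^E\xi)$; since $\{r^E\}$ is contracting, $r^E\xi \to 0$ as $r \to 0^+$, and by continuity of $Q$ together with $Q(0)=0$ we have $g_\xi(r) \to 0$ as $r \to 0^+$. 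The idea is then to write, for $0 < r < \delta$,
\begin{equation*}
g_\xi(r) = \int_0^r g_\xi'(s)\,ds = \int_0^r \frac{s\,g_\xi'(s)}{s}\,ds,
\end{equation*}
and to bound the integrand using the $j=1$ case of strong subhomogeneity: $|s\, g_\xi'(s)| = |s\,\partial_s Q(s^E\xi)| \le \epsilon' s$ for all $0 < s < \delta$, $\xi \in K$, where $\epsilon'$ is chosen appropriately (say $\epsilon' = \epsilon$). This gives $|g_\xi(r)| \le \int_0^r \epsilon'\,ds = \epsilon' r = \epsilon r$, which is exactly what we want.

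The one technical point requiring care is the convergence of the improper integral at $s = 0$: since $g_\xi'(s)$ need not be bounded near $0$ a priori, one should phrase the argument as $g_\xi(r) = \lim_{a \to 0^+} (g_\xi(r) - g_\xi(a)) = \lim_{a \to 0^+} \int_a^r g_\xi'(s)\,ds$, using $g_\xi(a) \to 0$, and then observe that the estimate $|g_\xi'(s)| \le \epsilon'$ on $(0,\delta)$ (which follows from $|s\,g_\xi'(s)| \le \epsilon' s$ upon dividing by $s > 0$) shows $g_\xi'$ is in fact bounded on $(0,\delta)$, so the integral converges absolutely and the limit manipulation is justified. Note $Q \in C^l(\mathcal{O})$ with $l \ge 1$ ensures $g_\xi$ is differentiable on a neighborhood of $(0,\delta]$ so the FTC applies; strictly, one needs $r^E\xi \in \mathcal{O}$ for the relevant range of $r$, which holds for $\delta$ small since $K$ is compact and $r^E\xi \to 0$ uniformly on $K$ (a consequence of the contracting property — compare the reasoning used with Proposition \ref{prop:ContractingCharacterization}).

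I do not expect a serious obstacle here; the statement is essentially a packaging of the $j=1$ strong subhomogeneity bound through integration, and the higher-order bounds ($j = 2,\dots,l$) are not even needed. The only thing to be careful about is matching the quantifiers correctly: given $\epsilon$ and $K$, apply the definition of strong subhomogeneity with this same $\epsilon$ and $K$ to extract $\delta$, then check that this $\delta$ works for the subhomogeneity conclusion with no further shrinking required (which it does, since the integral of $\epsilon$ over $[0,r]$ is $\epsilon r$ exactly). One should also make sure $\delta$ is taken small enough that $r^E \xi \in \mathcal{O}$ for all $0 < r < \delta$ and $\xi \in K$, which is where compactness of $K$ and the contracting property of $\{r^E\}$ enter.
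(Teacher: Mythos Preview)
Your proposal is correct and takes essentially the same approach as the paper: both use only the $j=1$ strong subhomogeneity bound along the ray $r\mapsto r^E\xi$, together with $Q(0)=0$ and the contracting property, to deduce $|Q(r^E\xi)|\le \epsilon r$. The only cosmetic difference is that the paper invokes the mean value inequality (there exists $c\in(0,r)$ with $|g_\xi(r)-g_\xi(0)|\le r|g_\xi'(c)|\le r\epsilon$) instead of integrating via the FTC; your handling of the improper integral at $s=0$ is fine, and in fact avoids the slight awkwardness of citing the mean value theorem for a complex-valued function.
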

\begin{proof}
Let $\epsilon>0$ and $K$ be a compact set.  In view of our supposition that $Q$ is strongly subhomogeneous with respect to $E$ or order $k$, let $\delta>0$ be given so that $\abs{r\partial_r Q(r^E\xi)}\leq \epsilon r$ for all $\xi\in K$ and $0<r< \delta$. Given that $r^E$ is a contracting group and $Q(0)=0$, it follows that, for each $\xi\in K$, $f_{\xi}:[0,\delta)\to\mathbb{C}$ defined by
\begin{equation*}
f_{\xi}(r)=\begin{cases}
Q(r^E\xi) & 0<r<\delta\\
0 & r=0
\end{cases}
\end{equation*}
is differentiable on $(0,\delta)$ and continuous on $[0,\delta)$, for each $\xi\in K$. Consequently, for every $0<r<\delta$ and $\xi\in K$, the mean value theorem guarantees a $c=c_{\xi,r}\in (0,r)$ for which 
\begin{equation*}
\abs{f_{\xi}(r)-f_{\xi}(0)}\leq r\abs{f_{\xi}'(c)}=r\abs{\partial_r Q(r^E\xi)\Big\vert_{r=c}}\leq r\epsilon.
\end{equation*}
Consequently, for all $0<r<\delta$ and $\xi\in K$,
\begin{equation*}
\abs{Q(r^E\xi)}=\abs{f_{\xi}(r)-f_{\xi}(0)}\leq r\epsilon.
\end{equation*}
\end{proof}

\begin{proposition}\label{prop:Subhomequivtolittleoh}
Let $P$ be positive homogeneous and $\widetilde{P}$ be complex-valued and continuous on a neighborhood of $0$ in $\mathbb{R}^d$. The following are equivalent:
\begin{enumerate}[label=(\alph*), ref=(\alph*)]
    \item\label{item:Subhomequivtolittleoh1} $\widetilde{P}(\xi)=o(P(\xi))$ as $\xi\to 0$.
    \item\label{item:Subhomequivtolittleoh2} For every $E\in\Exp(P)$, $\widetilde{P}$ is subhomogeneous with respect to $E$.
    \item\label{item:Subhomequivtolittleoh3} There exists $E\in\Exp(P)$ for which $\widetilde{P}$ is subhomogeneous with respect to $E$.
\end{enumerate}
\end{proposition}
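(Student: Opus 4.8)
The plan is to prove the cycle $\ref{item:Subhomequivtolittleoh1}\Rightarrow\ref{item:Subhomequivtolittleoh2}\Rightarrow\ref{item:Subhomequivtolittleoh3}\Rightarrow\ref{item:Subhomequivtolittleoh1}$, with the middle implication being trivial (and relying on the fact from Proposition \ref{prop:PositiveHomogeneousCharacterization} that every $E\in\Exp(P)$ generates a contracting group, so the notion of subhomogeneity with respect to $E$ even makes sense). The substantive content is in the first and last implications, and the key bridging device is the observation that, because $P$ is homogeneous with respect to $E$, we have $P(r^E\xi)=rP(\xi)$; so controlling $\widetilde P(r^E\xi)$ against $r$ is the same as controlling $\widetilde P(\xi')$ against $P(\xi')$ along the path $\xi'=r^E\xi$. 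The one place real care is needed is in passing between ``small $r$, $\xi$ in a compact set $K$'' and ``$\xi'\to 0$ in $\mathbb R^d$.''

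For $\ref{item:Subhomequivtolittleoh1}\Rightarrow\ref{item:Subhomequivtolittleoh2}$: fix $E\in\Exp(P)$, $\epsilon>0$, and a compact set $K$. Since $\widetilde P(\xi)=o(P(\xi))$ as $\xi\to 0$, there is $\rho>0$ such that $|\widetilde P(\xi')|\leq \epsilon'\, P(\xi')$ whenever $0<|\xi'|<\rho$, where $\epsilon'$ is a small constant to be chosen (I will take $\epsilon' = \epsilon / (1+\sup_{\xi\in K}P(\xi))$; note $\sup_K P<\infty$ by continuity of $P$ and compactness of $K$). Because $\{r^E\}$ is contracting and $K$ is compact, $\sup_{\xi\in K}|r^E\xi|\to 0$ as $r\to 0$ (this is a standard uniform-on-compacta consequence of $\|r^E\|\to 0$, as used repeatedly in the appendix material), so there is $\delta>0$ with $|r^E\xi|<\rho$ for all $0<r<\delta$ and $\xi\in K$; shrinking $\delta$ if necessary we may also assume $r^E\xi\neq 0$ there (indeed $r^E$ is invertible and $0\notin K$ may fail, but if $0\in K$ then $\widetilde P(0)=0$ trivially satisfies the bound, so we only need the estimate for $\xi\neq 0$). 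Then for such $r,\xi$, using $P(r^E\xi)=rP(\xi)$,
\begin{equation*}
|\widetilde P(r^E\xi)|\leq \epsilon'\,P(r^E\xi)=\epsilon'\, r\, P(\xi)\leq \epsilon'\Big(1+\sup_{\xi\in K}P(\xi)\Big)\, r=\epsilon r,
\end{equation*}
which is exactly subhomogeneity with respect to $E$. The implication $\ref{item:Subhomequivtolittleoh2}\Rightarrow\ref{item:Subhomequivtolittleoh3}$ is immediate since $\Exp(P)\neq\varnothing$.

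For $\ref{item:Subhomequivtolittleoh3}\Rightarrow\ref{item:Subhomequivtolittleoh1}$: let $E\in\Exp(P)$ witness subhomogeneity. Given $\epsilon>0$, apply the definition with $K=\mathbb S$ (the Euclidean unit sphere, compact) to get $\delta\in(0,1)$ with $|\widetilde P(r^E\eta)|\leq \epsilon\, r$ for all $0<r<\delta$ and $\eta\in\mathbb S$. Now I use the decomposition of nonzero vectors into scale-times-direction along $\{r^E\}$: by Proposition \ref{prop:ScaleFromSphere} (invoked already in the proof of Proposition \ref{prop:PositiveHomogeneousCharacterization}), every $\xi\neq 0$ can be written $\xi=r^E\eta$ with $r>0$, $\eta\in\mathbb S$, and $r\to 0$ as $\xi\to 0$; moreover $P(\xi)=rP(\eta)\geq r\,\inf_{\mathbb S}P =: cr$ with $c>0$ since $P$ is continuous and strictly positive on $\mathbb S$. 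Hence there is a neighborhood $U$ of $0$ such that every $\xi\in U\setminus\{0\}$ has its scale $r<\delta$, and on $U\setminus\{0\}$,
\begin{equation*}
\frac{|\widetilde P(\xi)|}{P(\xi)}=\frac{|\widetilde P(r^E\eta)|}{rP(\eta)}\leq\frac{\epsilon r}{cr}=\frac{\epsilon}{c}.
\end{equation*}
Since $\epsilon>0$ was arbitrary and $c$ is fixed, this gives $\widetilde P(\xi)/P(\xi)\to 0$ as $\xi\to 0$, i.e.\ $\widetilde P(\xi)=o(P(\xi))$.

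The main obstacle — really the only nontrivial point — is the careful interchange between the ``compact-set plus small-$r$'' formulation of subhomogeneity and the ``$\xi\to 0$'' formulation of the little-$o$ condition; in one direction this needs $\sup_{\xi\in K}|r^E\xi|\to 0$ (uniform contraction on compacta), and in the other it needs the surjectivity-and-continuity of the scale-direction decomposition $\xi\mapsto(r,\eta)$ together with the uniform lower bound $P(\eta)\geq\inf_{\mathbb S}P>0$. Both ingredients are already established in the paper's treatment of contracting groups and positive homogeneous functions, so the proof is a matter of assembling them correctly.
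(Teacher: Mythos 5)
Your proof is correct. The forward implication $\ref{item:Subhomequivtolittleoh1}\Rightarrow\ref{item:Subhomequivtolittleoh2}$ is essentially the paper's argument, including the same choice $\epsilon'=\epsilon/(1+\sup_{K}P)$ and the same appeal to uniform contraction of $r^E$ on compacta. Where you genuinely diverge is in $\ref{item:Subhomequivtolittleoh3}\Rightarrow\ref{item:Subhomequivtolittleoh1}$: you take $K=\mathbb{S}$, the Euclidean unit sphere, and invoke the scale-direction decomposition $\xi=r^E\eta$ with $\eta\in\mathbb{S}$ from Proposition~\ref{prop:ScaleFromSphere}, which picks up the fixed constant $c=\inf_{\mathbb{S}}P>0$ that you then absorb at the end using the arbitrariness of $\epsilon$. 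The paper instead takes $K=S=\{\eta:P(\eta)=1\}$, the unital level set, and uses the canonical decomposition $\xi=P(\xi)^{E}\bigl(P(\xi)^{-E}\xi\bigr)$: the polar radius is \emph{exactly} $r=P(\xi)$ and the angular variable $P(\xi)^{-E}\xi$ lands in $S$, so the estimate $|\widetilde P(\xi)|\le\epsilon P(\xi)$ drops out on the nose, with no stray constant and no need to check that all possible scales of $\xi$ near the origin are small (a point your argument quietly needs, since the $\mathbb{S}$-decomposition is not unique and Proposition~\ref{prop:ScaleFromSphere}(c) has to be applied in the contrapositive to get the uniform smallness of $r$ you assert). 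Your version is perfectly valid, but the paper's choice of reference ``sphere'' $S$ is the one adapted to $P$, and it makes the bookkeeping cleaner; it is also the same decomposition that drives the integration formula elsewhere in the paper, so the two parts of the argument hang together more tightly.
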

\begin{proof}
\begin{subproof}[\ref{item:Subhomequivtolittleoh1} $\Rightarrow$ \ref{item:Subhomequivtolittleoh2}] Let $\epsilon>0$, $K$ be a compact set and choose $E\in \Exp(P)$. Given our supposition that $\widetilde{P}(\xi)=o(P(\xi))$ as $\xi\to 0$, we can find an open neighborhood $\mathcal{O}$ of $0$ for which 
\begin{equation*}
\abs{\widetilde{P}(\xi)}\leq \frac{\epsilon}{1+\sup_{\eta\in K}P(\eta)}P(\xi)
\end{equation*}
for all $\xi\in \mathcal{O}$. Now, because $r^E$ is contracting in view of Proposition \ref{prop:PositiveHomogeneousCharacterization}, we can find a $\delta>0$ for which $r^E\xi\in \mathcal{O}$ for all $0<r<\delta$ and $\xi\in K$ by virtue of Proposition \ref{prop:ContractingCapturesCompact}. Consequently, for all $0<r<\delta$ and $\xi\in K$,
\begin{equation*}
\abs{\widetilde{P}(r^E\xi)}\leq \frac{\epsilon}{1+\sup_{\eta\in K}P(\eta)}P(r^E\xi)=\epsilon r\frac{P(\xi)}{1+\sup_{\eta\in K}P(\eta)}\leq r\epsilon.
\end{equation*}
\end{subproof}

\begin{subproof} [\ref{item:Subhomequivtolittleoh2} $\Rightarrow$ \ref{item:Subhomequivtolittleoh3}] This implication is trivial.
\end{subproof}
 
 \begin{subproof}[\ref{item:Subhomequivtolittleoh3} $\Rightarrow$ \ref{item:Subhomequivtolittleoh1}]  Let $\epsilon>0$. Choose $E\in \Exp(P)$ and let $S=\{\eta\in\mathbb{R}^d:P(\eta)=1\}$. Using the supposition that $\widetilde{P}$ is subhomogeneous with respect to $E$, we may choose $\delta>0$ for which
\begin{equation*}
\abs{\widetilde{P}(r^E\eta)}\leq \epsilon r 
\end{equation*}
 for all $0<r<\delta$ and $\eta\in S$. We remark that, in view of the continuity of $\widetilde{P}$ and the fact that $r^E$ is contracting, this inequality ensures that $\widetilde{P}(0)=0$. We fix $\mathcal{O}$ to be the open set $B_\delta=\{\xi\in\mathbb{R}^d:P(\xi)<\delta\}$. For each non-zero $\xi\in\mathcal{O}$, we observe that $\xi=r^E\eta$ where $0<r=P(\xi)<\delta$ and $\eta=P(\xi)^{-E}\xi\in S$ and therefore
\begin{equation*}
\abs{\widetilde{P}(\xi)}=\abs{\widetilde{P}(r^E\eta)}\leq r\epsilon=\epsilon  P(\xi)
\end{equation*}
If $\xi=0$, obviously, $\abs{\widetilde{P}(\xi)}=0=\epsilon P(0)=\epsilon P(\xi)$. Thus, for all $\xi\in\mathcal{O}$,
\begin{equation*}
\abs{\widetilde{P}(\xi)}\leq\epsilon P(\xi),
\end{equation*}
as desired.
\end{subproof}
\end{proof}

\begin{proposition}\label{prop:2StronglySubhomogeneous}
Let $E\in\End(\mathbb{R}^d)$ be for which $\{r^E\}$ is contracting and suppose that $Q$ is strongly subhomogeneous with respect to $E$ of order $2$. Given $\alpha>0$, set $F=\alpha E$. Then, for any $\epsilon>0$ and compact set $K$,
\begin{equation*}
    \abs{\theta\partial_\theta Q(\theta^F\eta)}\leq \epsilon \theta^\alpha
\end{equation*}
and
\begin{equation*}
    \abs{\theta^2\partial_\theta^2 Q(\theta^F\eta)}\leq \epsilon \theta^\alpha
\end{equation*}
for all $0<\theta\leq \delta^{1/\alpha}$ and $\eta\in K$.
\end{proposition}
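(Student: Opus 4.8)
The plan is to reduce both estimates to the order-$2$ strong subhomogeneity of $Q$ through the substitution $r=\theta^\alpha$, which is the natural move here since $\theta^F=\theta^{\alpha E}=(\theta^\alpha)^E=r^E$. First I would fix $\epsilon>0$ and a compact set $K$ and set $\epsilon'=\epsilon/C$, where $C=C(\alpha):=\max\{\alpha,\ \alpha^2+\alpha|\alpha-1|\}$; using the hypothesis that $Q$ is strongly subhomogeneous with respect to $E$ of order $2$ (Definition~\ref{def:homogeneous_types}) with tolerance $\epsilon'$ and compact set $K$ produces a $\delta>0$ with
\begin{equation*}
|r\,\partial_r Q(r^E\eta)|\le \epsilon' r\qquad\text{and}\qquad |r^2\,\partial_r^2 Q(r^E\eta)|\le \epsilon' r
\end{equation*}
for all $0<r<\delta$ and $\eta\in K$. (That $r^E\eta$ lands in the domain $\mathcal{O}$ of $Q$ for such $r$, uniformly over $\eta\in K$, is guaranteed by the contracting property of $\{r^E\}$, cf.\ Proposition~\ref{prop:ContractingCapturesCompact}.)

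Next, for fixed $\eta\in K$ I would write $h(r)=Q(r^E\eta)$ and $g(\theta)=Q(\theta^F\eta)=h(\theta^\alpha)$; on the range $\theta\in(0,\delta^{1/\alpha})$ the map $\theta\mapsto\theta^\alpha$ is smooth into $(0,\delta)$ and $h\in C^2$ there, so the chain rule applies and gives, with $r=\theta^\alpha$,
\begin{equation*}
\theta\,\partial_\theta g(\theta)=\alpha\, r\, h'(r),\qquad \theta^2\,\partial_\theta^2 g(\theta)=\alpha^2 r^2 h''(r)+\alpha(\alpha-1)\, r\, h'(r).
\end{equation*}
Feeding the two bounds from the first step into these identities, the triangle inequality yields $|\theta\,\partial_\theta Q(\theta^F\eta)|\le\alpha\epsilon'\,\theta^\alpha\le\epsilon\,\theta^\alpha$ and $|\theta^2\,\partial_\theta^2 Q(\theta^F\eta)|\le(\alpha^2+\alpha|\alpha-1|)\epsilon'\,\theta^\alpha\le\epsilon\,\theta^\alpha$, valid whenever $0<\theta<\delta^{1/\alpha}$. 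Replacing $\delta$ by a strictly smaller constant at the outset upgrades this to the closed range $0<\theta\le\delta^{1/\alpha}$ asserted in the statement.

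I do not expect any serious obstacle: the entire content is the identity $\theta^F=(\theta^\alpha)^E$ together with elementary calculus and a choice of constants. The only subtlety worth isolating is the first-order remainder $\alpha(\alpha-1)\,r\,h'(r)$ that appears in $\theta^2\,\partial_\theta^2 g$; controlling it is exactly why one needs the $j=1$ \emph{and} $j=2$ estimates of strong subhomogeneity of order $2$ available with a common $\delta$, rather than a standalone second-order bound.
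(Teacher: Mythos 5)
Your proof is correct and follows essentially the same route as the paper's: substitute $r=\theta^\alpha$ (so that $\theta^F=r^E$), apply the chain rule to obtain $\theta\partial_\theta Q(\theta^F\eta)=\alpha\, r\,\partial_r Q(r^E\eta)$ and $\theta^2\partial_\theta^2 Q(\theta^F\eta)=\alpha^2 r^2\partial_r^2 Q(r^E\eta)+\alpha(\alpha-1)\,r\,\partial_r Q(r^E\eta)$, and feed in the two order-$2$ strong subhomogeneity bounds with a suitably rescaled tolerance. The only (inessential) differences are your slightly tighter choice of constant $C$ versus the paper's $2\alpha^2+\alpha$, and your explicit remark about shrinking $\delta$ to accommodate the closed upper endpoint $\theta\le\delta^{1/\alpha}$, which the paper passes over silently.
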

\begin{proof}
Let $\epsilon>0$ and $K\subseteq\mathbb{R}^d$ be a compact set. By virtue of the strong subhomogeneity of $Q$, let $\delta>0$ be such that
\begin{equation*}
    \abs{r\partial_r Q(r^E\eta)}\leq \epsilon' r\hspace{1cm}\mbox{and}\hspace{1cm}\abs{r^2\partial_r^2Q(r^E\eta)}\leq \epsilon' r
\end{equation*}
for $0<r<\delta$ and $\eta\in K$ where $\epsilon'=\epsilon/(2\alpha^2+\alpha)$. We set $r=\theta^\alpha$ so that $r^E=\theta^F$ and observe that
\begin{equation*}
    \abs{\theta\partial_\theta Q(\theta^F\eta)}=\abs{\theta\partial_rQ(r^E\eta)\frac{\partial r}{\partial\theta}}= \abs{\theta\partial_rQ(r^E\eta)\alpha \theta^{\alpha-1}}=\alpha r\abs{\partial_rQ(r^E\eta)}\leq \alpha\epsilon' r<\epsilon \theta^\alpha
\end{equation*}
for all $0<\theta\leq \delta^{1/\alpha}$ and $\eta\in K$. Further, we have
\begin{eqnarray*}
    \abs{\theta^2\partial_\theta^2Q(\theta^F\eta)}&=&\theta^2\abs{\partial_r^2Q(r^E\eta)\left(\frac{\partial r}{\partial\theta}\right)^2+\partial_r Q(r^E\eta)\frac{\partial^2 r}{\partial \theta^2}}\\
    &=&\theta^2\abs{\partial_r^2 Q(r^E\eta) \alpha^2\theta^{2\alpha-2}+\partial_r Q(r^E\eta)\alpha(\alpha-1)\theta^{\alpha-2}}\\
    &\leq &\alpha^2\abs{r^2Q(r^E\eta)}+\abs{\alpha(\alpha-1)}\abs{r\partial_r Q(r^E\eta)}\\
    &< &\alpha^2 \epsilon' r+\abs{\alpha^2-\alpha}\epsilon' r\\
    &<&\epsilon\theta^\alpha
\end{eqnarray*}
for all $0<\theta<\delta^{1/\alpha}$ and $\eta\in K$.
\end{proof}

\section{An application: Estimates for convolution powers}\label{sec:ConvolutionPowers}

We denote by $\ell^1(\mathbb{Z}^d)$ the set of functions $\phi:\mathbb{Z}^d\to\mathbb{C}$ for which
\begin{equation*}
\|\phi\|_1:=\sum_{x\in\mathbb{Z}^d}\abs{\phi(x)}<\infty.
\end{equation*}
Given $\psi,\phi\in \ell^1(\mathbb{Z}^d)$, the convolution product $\psi\ast\phi\in\ell^1(\mathbb{Z}^d)$ is defined by
\begin{equation*}
\left(\psi\ast\phi\right)(x)=\sum_{y\in\mathbb{Z}^d}\psi(x-y)\phi(y)
\end{equation*}
for $x\in\mathbb{Z}^d$. For a given $\phi\in\ell^1(\mathbb{Z}^d)$, we are interested in the asymptotic behavior of its convolution powers $\phi^{(n)}\in\ell^1(\mathbb{Z}^d)$ defined iteratively by $\phi^{(n)}=\phi^{(n-1)}\ast\phi$ for $n\geq 1$ where $\phi^{(0)}=\phi$. In the special case that $\phi$ is non-negative (and satisfies mild summability conditions), the asymptotic behavior of $\phi^{(n)}$ is well understood and is the subject of the local (central) limit theorem. For accounts of this story and its connection to probability and random walk theory, we encourage the reader to see the excellent texts \cite{lawler_random_2010} and \cite{spitzer_principles_1964} (see also Section 7.6 of \cite{randles_convolution_2017}). When $\phi$ is generally complex-valued (or simply a real-valued function taking on both positive and negative values), its convolution powers can exhibit exotic behaviors not seen in the probabilistic setting. The problem of describing these behaviors dates back to Erastus L. De Forest in his study of data smoothing in the nineteenth century and was further pursued by Isaac J. Schoenberg and Thomas N. E.  Greville. In the 1960's, spurred by advancements in scientific computing, the study was reinvigorated by its application to numerical solutions to partial differential equations. The article \cite{diaconis_convolution_2014} provides a full account of this history and references to the literature. Regarding recent developments, mostly in the context of one spatial dimension, we encourage the reader to see the articles \cite{diaconis_convolution_2014,randles_convolution_2015, coulombel2020generalized, randles_convolution_2017}. Concerning global space-time estimates, the article \cite{diaconis_convolution_2014} establishes Gaussian and sub-Gaussian estimates for the convolution powers of finitely-supported complex-valued functions on $\mathbb{Z}$ whose Fourier transform (characteristic function) satisfies certain hypotheses and the article \cite{coulombel2020generalized} focuses on Gaussian estimates and extends results of \cite{diaconis_convolution_2014} and \cite{randles_convolution_2017}. The articles \cite{diaconis_convolution_2014} and \cite{randles_convolution_2015} treat local limit theorems and sup norm estimates for the convolution powers of complex-valued functions on $\mathbb{Z}$; the latter provides a complete description of local limit theorems and sup norm estimates for the class of finitely-supported complex-valued functions on $\mathbb{Z}$, essentially resolving \textit{De Forest's problem}. In the general context of $\mathbb{Z}^d$, \cite{randles_convolution_2017} treats local limit theorems, global space-time estimates and sup norm estimates for the convolution powers of complex-valued functions whose Fourier transform satisfies certain assumption discussed below.  \\

\noindent In this section, we focus on sup-norm-type estimates for convolution powers of complex-valued functions on $\mathbb{Z}^d$. Our main result, Theorem \ref{thm:ConvolutionPowerEstimate}, partially extends results of \cite{randles_convolution_2015} and \cite{randles_convolution_2017}, and its proof makes use of Theorem \ref{thm:BestIntegrationFormula} and the Van der Corput lemma. A forthcoming article will present a theory of local limit theorems for complex-valued functions on $\mathbb{Z}^d$ satisfying the hypotheses of Theorem \ref{thm:ConvolutionPowerEstimate}. The Fourier transform is essential to our analysis and is defined as follows: Given $\phi\in\ell^1(\mathbb{Z}^d)$, the Fourier transform of $\phi$ is the function $\widehat{\phi}:\mathbb{R}^d\to\mathbb{C}$ defined by
\begin{equation*}
    \widehat{\phi}(\xi)=\sum_{x\in\mathbb{Z}^d}\phi(x)e^{ix\cdot\xi}
\end{equation*}
for $\xi\in\mathbb{R}^d$. As in \cite{randles_convolution_2017}, we shall focus on the subspace $\mathcal{S}_d$ of $\ell^1(\mathbb{Z}^d)$ consisting of those $\phi:\mathbb{Z}^d\to\mathbb{C}$ for which
\begin{equation*}
    \|x^\beta \phi(x)\|_1=\sum_{x\in\mathbb{Z}^d}\abs{x^\beta\phi(x)}=\sum_{x\in\mathbb{Z}^d}\abs{(x^1)^{\beta_1}(x^2)^{\beta_2}\cdots(x^d)^{\beta_d}\phi(x^1,x^2,\dots,x^d)}<\infty
\end{equation*}
for each multi-index $\beta=(\beta_1,\beta_2,\dots,\beta_d)\in\mathbb{N}^d$; we remark that $\mathcal{S}_d$ contains all finitely supported complex-valued functions on $\mathbb{Z}^d$.  It is easy to see that $\widehat{\phi}\in C^\infty(\mathbb{R}^d)$ whenever $\phi\in \mathcal{S}_d$. As discussed in \cite{thomee_stability_1965,diaconis_convolution_2014,randles_convolution_2015,randles_convolution_2017}, the asymptotic behavior of the iterative convolution powers $\phi^{(n)}$ of $\phi\in\mathcal{S}_d$ is characterized by the local behavior of $\widehat{\phi}$ near points at which $\widehat{\phi}$ is maximized in absolute value. For simplicity of our analysis, we shall focus on those $\phi\in\mathcal{S}_d$ which have been suitably normalized so that $\sup_{\xi}|\widehat{\phi}(\xi)|=1$ and, in this case, we define
\begin{equation*}
    \Omega(\phi)=\left\{\xi\in \mathbb{T}^d:\abs{\widehat{\phi}(\xi)}=1\right\}
\end{equation*}
where $\mathbb{T}^d=(-\pi,\pi]^d$. For each $\xi_0\in \Omega(\phi)$, consider $\Gamma_{\xi_0}:\mathcal{U}\to\mathbb{C}$ defined by
\begin{equation*}\Gamma_{\xi_0}(\xi)=\log\left(\frac{\widehat{\phi}(\xi+\xi_0)}{\widehat{\phi}(\xi_0)}\right)
\end{equation*}
for $\xi\in \mathcal{U}$ where $\mathcal{U}\subseteq\mathbb{R}^d$ is a convex open neighborhood of $0$ which is small enough to ensure that $z\mapsto\log(z)$, the principal branch of the logarithm, is defined and continuous on $\{\widehat{\phi}(\xi+\xi_0)/\widehat{\phi}(\xi_0):\xi\in\mathcal{U}\}$. Because $\widehat{\phi}$ is smooth, $\Gamma_{\xi_0}\in C^{\infty}(\mathcal{U})$ and so we can use Taylor's theorem to approximate $\Gamma_{\xi_0}$ near $0$. More precisely, we can write
\begin{equation}\label{eq:GammaExpansion}
    \Gamma_{\xi_0}(\xi)=i\alpha_{\xi_0}\cdot\xi -i\left(Q_{\xi_0}(\xi)+\widetilde{Q}_{\xi_0}(\xi)\right)-\left(R_{\xi_0}(\xi)+\widetilde{R}_{\xi_0}(\xi)\right)
\end{equation}
where $\alpha_{\xi_0}\in\mathbb{R}^d$, $Q_{\xi_0}$ and $R_{\xi_0}$ are real-valued polynomials which vanish at $0$ and contain no linear terms, and $\widetilde{Q}_{\xi_0}$ and $\widetilde{R}_{\xi_0}$ are real-valued smooth functions on $\mathcal{U}$ which vanish at $0$. The fact that this expansion contains no real linear part is seen necessary because $\xi_0$ is a local maximum for $|\widehat{\phi}|$. The vector $\alpha_{\xi_0}\in\mathbb{R}^d$ is said to be the \textbf{drift}\footnote{In the case that $\phi$ defines a probability measure and a $\mathbb{Z}^d$-valued random vector $X$ has this measure as its distribution, then $\alpha_{\xi_0}$ is $X$'s mean. For a precise statement and details, see Proposition 7.4 of \cite{randles_convolution_2017}.} associated to $\xi_0$. Motivated by Thom\'{e}e \cite{thomee_stability_1965}, we introduce the following definition.

\begin{definition}\label{def:Types}
Let $\phi\in\mathcal{S}_d$ with $\sup_{\xi}|\widehat{\phi}(\xi)|=1$ and, given $\xi_0\in\Omega(\phi)$, consider the expansion \eqref{eq:GammaExpansion} above.
\begin{enumerate}
    \item We say that $\xi_0$ is of \textbf{positive homogeneous type} for $\widehat{\phi}$ if $R_{\xi_0}$ is positive homogeneous and, there exists $E\in \Exp(R_{\xi_0})$ for which $Q_{\xi_0}$ is homogeneous with respect to $E$ and both $\widetilde{R}_{\xi_0}$ and $\widetilde{Q}_{\xi_0}$ are subhomogeneous with respect to $E$. In this case, we will write  $\mu_{\xi_0}=\mu_{R_{\xi_0}}$.
\item We say that $\xi_0$ is of \textbf{imaginary homogeneous type} for $\widehat{\phi}$ if $|Q_{\xi_0}|$ and $R_{\xi_0}$ are both positive homogeneous and, there exists $E\in\Exp(|Q_{\xi_0}|)$ and $k>1$ for which $R_{\xi_0}$ is homogeneous with respect to $E/k$, $\widetilde{Q}_{\xi_0}$ is strongly subhomogeneous with respect to $E$ of order $2$, and $\widetilde{R}_{\xi_0}$ is strongly subhomogeneous with respect to $E/k$ of order $1$. In this case, we write $\mu_{\xi_0}=\mu_{|Q_{\xi_0}|}$.
\end{enumerate}
In either case, $\mu_{\xi_0}$ is said to be the homogeneous order associated to $\xi_0$.
\end{definition}

\noindent In his study of approximation schemes to solutions of parabolic partial differential equations, V. Thom\'{e}e introduced the notions of points of type $\gamma$ and $\beta$, arising in local approximations of the Fourier transforms of (schemes) $\phi:\mathbb{Z}\to\mathbb{C}$, to dichotomize the stability of approximation schemes in the $\ell^\infty$ norm\cite{thomee_stability_1965}. Thom\'{e}e's definition provided a key insight which led to the complete description of the asymptotic behavior of convolution powers of finitely supported functions on $\mathbb{Z}$ given in \cite{randles_convolution_2015}. The points of positive homogeneous type and imaginary homogeneous type in the definition above parallel (and generalize) Thom\'{e}e's points of type $\gamma$ and $\beta$ (and points of type 1 and type 2 of \cite{randles_convolution_2015}), respectively.\\

\noindent In Definition 1.3 of \cite{randles_convolution_2017}, a point $\xi_0\in\Omega(\phi)$ is said to be of positive homogeneous type for $\widehat{\phi}$ provided that the expansion for $\Gamma_{\xi_0}$ is of the form
\begin{equation}\label{eq:GammaInTermsofP}
    \Gamma_{\xi_0}(\xi)=i\alpha_{\xi_0}\cdot\xi-P_{\xi_0}(\xi)-\widetilde{P}_{\xi_0}(\xi)
\end{equation}
for $\xi\in\mathcal{U}$ where $P_{\xi_0}$ is a positive homogeneous polynomial in the sense of Example \ref{exp:Polynomial} and $\widetilde{P}_{\xi_0}(\xi)=o(R_{\xi_0}(\xi))$ as $\xi\to 0$ where $R_{\xi_0}=\Re P_{\xi_0}$. To put this into context with our definition above, let's write $P_{\xi_0}(\xi)=R_{\xi_0}(\xi)+iQ_{\xi_0}(\xi)$ and $\widetilde{P}_{\xi_0}(\xi)=\widetilde{R}_{\xi_0}(\xi)+i\widetilde{Q}_{\xi_0}(\xi)$ in which case \eqref{eq:GammaExpansion} coincides with \eqref{eq:GammaInTermsofP}. If $\xi_0$ is of positive homogeneous type for $\widehat{\phi}$ in the sense of the definition above, it follows that $P_{\xi_0}$ is a complex-valued polynomial which is homogeneous with respect to $E$ (and so $\Exp(P_{\xi_0})$ contains $E\in\End(\mathbb{R}^d)$ for which $\{r^E\}$ is contracting) and $R_{\xi_0}=\Re P_{\xi_0}$ is positive definite. In view of Remark \ref{rmk:PositiveHomogeneousPolynomialsVSFunctions}, this is consistent with (and perhaps generalizes) the assumption in which $P_{\xi_0}$ is a positive homogeneous polynomial. Further, the assumption that $\widetilde{Q}_{\xi_0}$ and $\widetilde{R}_{\xi_0}$ are subhomogeneous with respect to $E$ guarantees that $\widetilde{P}_{\xi_0}(\xi)=o(R_{\xi_0}(\xi))$ as $\xi\to 0$ by virtue of Proposition \ref{prop:Subhomequivtolittleoh}. With these two observations, we see that our definition, which is stated in terms of subhomogeneity, is consistent with that of \cite{randles_convolution_2017}. \\

\noindent The essential difference between the cases in Definition \ref{def:Types} 
concerns the nature of the dominant (at low order) term in the expansion. When $\xi_0$ is of positive homogeneous type for $\widehat{\phi}$, the dominant term $P_{\xi_0}$ contains the real-valued positive definite polynomial $R_{\xi_0}$. In this case, local limit theorems for $\phi^{(n)}(x)$ contains attractors/approximants of the form
\begin{equation*}
    H^n_{P_{\xi_0}}(x)=\frac{1}{(2\pi)^d}\int_{\mathbb{R}^d}e^{-nP_{\xi_0}(\xi)-ix\cdot\xi}\,d\xi
\end{equation*}
which can be seen, for example, in Theorem 1.5 of \cite{randles_convolution_2017}. These are necessarily Schwartz functions and appear as fundamental solutions to the higher-order partial differential equations discussed in \cite{randles_positive-homogeneous_2017}. When $\xi_0$ is of imaginary homogeneous type for $\widehat{\phi}$, the dominant term in the expansion is the purely imaginary polynomial $iQ_{\xi_0}(\xi)$ and its existence (without a real counterpart) profoundly affects the asymptotic behavior of $\phi^{(n)}(x)$ (e.g., see \cite{randles_convolution_2015}). In fact, as will be shown in a forthcoming article, local limit theorems for $\phi^{(n)}(x)$ will contain approximants/attractors which are (formally) given by the oscillatory integral
\begin{equation*}
    H_{iQ_{\xi_0}}^{n}(x)=\frac{1}{(2\pi)^d}\int_{\mathbb{R}^d}e^{-inQ_{\xi_0}(\xi)-ix\cdot \xi}\,d\xi
\end{equation*}
whose convergence is a delicate matter.\\

\noindent Our theorem will be stated under the assumption that, for $\phi\in\mathcal{S}_d$ with $\sup_\xi|\widehat{\phi}(\xi)|=1$, each $\xi_0\in\Omega(\phi)$ is either of positive homogeneous type or of imaginary homogeneous type for $\widehat{\phi}$. In both cases, the positive definiteness of $R_{\xi_0}$ guarantees that each $\xi_0\in\Omega(\phi)$ is an isolated point of $\mathbb{T}^d$. Consequently, if each $\xi_0\in\Omega(\phi)$ is of positive homogeneous or imaginary homogeneous type for $\widehat{\phi}$, the set $\Omega(\phi)$ is finite and we set
\begin{equation*}
    \mu_{\phi}=\min_{\xi\in\Omega(\phi)}\mu_{\xi}.
\end{equation*}

\begin{theorem}\label{thm:ConvolutionPowerEstimate}
Let $\phi\in\mathcal{S}_d$ be such that $\sup |\widehat{\phi}|=1$ and suppose that each $\xi_0\in\Omega(\phi)$ is of positive homogeneous or imaginary homogeneous type for $\widehat{\phi}$. If $\alpha_{\xi_0}=0$ and $\mu_{\xi_0}<1$ for each $\xi_0\in\Omega(\phi)$ which is of imaginary homogeneous type for $\widehat{\phi}$, then, for any compact set $K$, there is a constant $C_K>0$ for which
\begin{equation*}
    \left|\phi^{(n)}(x)\right|\leq\frac{C_K}{n^{\mu_\phi}}
\end{equation*}
for all $x\in K$ and $n\in\mathbb{N}_+$.
\end{theorem}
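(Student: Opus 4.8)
The plan is to begin with Fourier inversion, $\phi^{(n)}(x)=(2\pi)^{-d}\int_{\mathbb{T}^d}\widehat\phi(\xi)^{n+1}e^{-ix\cdot\xi}\,d\xi$, and to exploit that $|\widehat\phi|\le 1$ with equality exactly on the finite set $\Omega(\phi)$ of isolated points. For each $\xi_0\in\Omega(\phi)$ I would fix a small open neighborhood $\xi_0+V_{\xi_0}$ on which the expansion \eqref{eq:GammaExpansion} is valid, chosen pairwise disjoint and small enough for all the estimates below; on the complement of their union $\sup|\widehat\phi|\le 1-\epsilon_0<1$ by compactness, so that part of the integral is $O\big((1-\epsilon_0)^{n}\big)$, hence $\le C_K n^{-\mu_\phi}$ for every $n\ge 1$. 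It then suffices to bound, for each $\xi_0$ and uniformly for $x\in K$, the local piece $\big|\int_{V_{\xi_0}}e^{n\Gamma_{\xi_0}(\zeta)}e^{-ix\cdot\zeta}\,d\zeta\big|$ by $C_K\,n^{-\mu_{\xi_0}}$; since $\mu_{\xi_0}\ge\mu_\phi$ and $\Omega(\phi)$ is finite, this proves the theorem.

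When $\xi_0$ is of positive homogeneous type I would simply take absolute values. The modulus of the integrand is $e^{-n(R_{\xi_0}(\zeta)+\widetilde R_{\xi_0}(\zeta))}$, and since $\widetilde R_{\xi_0}(\zeta)=o(R_{\xi_0}(\zeta))$ as $\zeta\to 0$ by Proposition \ref{prop:Subhomequivtolittleoh}, after shrinking $V_{\xi_0}$ we have $R_{\xi_0}+\widetilde R_{\xi_0}\ge \tfrac12 R_{\xi_0}$ on $V_{\xi_0}$. The local piece is then at most $\int_{\mathbb{R}^d}e^{-nR_{\xi_0}(\zeta)/2}\,d\zeta$, and applying Theorem \ref{thm:BestIntegrationFormula} to $P=R_{\xi_0}$ with some $E\in\Exp(R_{\xi_0})$ (so $R_{\xi_0}(r^E\eta)=r$ on the unital level set $S_{R_{\xi_0}}$) turns this into $\sigma_{R_{\xi_0}}(S_{R_{\xi_0}})\int_0^\infty e^{-nr/2}\,r^{\mu_{\xi_0}-1}\,dr=2^{\mu_{\xi_0}}\Gamma(\mu_{\xi_0})\,\sigma_{R_{\xi_0}}(S_{R_{\xi_0}})\,n^{-\mu_{\xi_0}}$. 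No oscillation is needed here.

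The substantive case is $\xi_0$ of imaginary homogeneous type, where by hypothesis $\alpha_{\xi_0}=0$ and $\mu_{\xi_0}=\mu_{|Q_{\xi_0}|}<1$. Fix $E\in\Exp(|Q_{\xi_0}|)$ and $k>1$ as in the definition; since $|Q_{\xi_0}|$ is positive homogeneous and $Q_{\xi_0}$ is a polynomial one deduces $Q_{\xi_0}(r^E\zeta)=rQ_{\xi_0}(\zeta)$ and $R_{\xi_0}(r^E\zeta)=r^kR_{\xi_0}(\zeta)$. Taking $V_{\xi_0}=\{|Q_{\xi_0}|<\rho\}$ with $\rho$ small, Theorem \ref{thm:BestIntegrationFormula} (with $P=|Q_{\xi_0}|$ and $S=\{|Q_{\xi_0}|=1\}$) rewrites the local piece as $\int_S\big(\int_0^\rho a(r)\,e^{i\psi(r)}\,dr\big)\,\sigma_P(d\eta)$, where $a(r)=e^{-n(r^kR_{\xi_0}(\eta)+\widetilde R_{\xi_0}(r^E\eta))}\,r^{\mu_{\xi_0}-1}$ and $\psi(r)=-n\operatorname{sgn}(Q_{\xi_0}(\eta))\,r-n\widetilde Q_{\xi_0}(r^E\eta)-x\cdot r^E\eta$ (here $\alpha_{\xi_0}=0$ kills the term $n\alpha_{\xi_0}\cdot\zeta$). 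Positive definiteness of $R_{\xi_0}$ on the compact $S$ together with subhomogeneity of $\widetilde R_{\xi_0}$ with respect to $E/k$ gives $r^kR_{\xi_0}(\eta)+\widetilde R_{\xi_0}(r^E\eta)\ge\tfrac12 R_{\min}r^k\ge 0$ after shrinking $\rho$, so $|a(r)|\le r^{\mu_{\xi_0}-1}$. I would then split the $r$-integral at $r=c/n$: on $(0,c/n)$ the trivial bound gives $\int_0^{c/n}r^{\mu_{\xi_0}-1}\,dr=O(n^{-\mu_{\xi_0}})$; on $(c/n,\rho)$ the strong subhomogeneity of $\widetilde Q_{\xi_0}$ of order $2$ gives $|\partial_r\widetilde Q_{\xi_0}(r^E\eta)|\le\tfrac14$ while boundedness of $\{r^E\}$ gives $|x\cdot r^E E\eta|\le M_K$, so for $c=c_K$ large $|\psi'(r)|\ge n-\tfrac n4-M_K/r\ge \tfrac n2$ there, and a single integration by parts (using in addition $|\partial_r^2\widetilde Q_{\xi_0}(r^E\eta)|\le\epsilon/r$ and $|\partial_r^2(x\cdot r^E\eta)|\le C_K/r^2$) bounds this piece by a sum of terms each $O(n^{-\mu_{\xi_0}})$; this is where $\mu_{\xi_0}<1$ is essential, as it makes $\int_{c/n}^\rho r^{\mu_{\xi_0}-2}\,dr\asymp n^{1-\mu_{\xi_0}}$ and $\int_{c/n}^\rho r^{\mu_{\xi_0}-3}\,dr\asymp n^{2-\mu_{\xi_0}}$. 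Integrating over $S$ against the finite measure $\sigma_P$ yields the required bound $C_K n^{-\mu_{\xi_0}}$.

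The main obstacle is precisely this imaginary homogeneous case: the integrand is genuinely oscillatory, so absolute values are useless and one must run a van der Corput / non-stationary-phase argument along the radial variable after invoking Theorem \ref{thm:BestIntegrationFormula}. The factor $e^{-ix\cdot\zeta}$, which contributes a radial phase derivative of order $1/r$ that blows up at the origin, is exactly what forces the cut at $r\sim 1/n$ and makes both hypotheses indispensable — $\alpha_{\xi_0}=0$ so that the linear part of the imaginary phase (of size exactly $n$) is not cancelled for $r$ of order $1$, and $\mu_{\xi_0}<1$ so that the singular radial density $r^{\mu_{\xi_0}-1}$ and the integrals produced by the integration by parts still decay like $n^{-\mu_{\xi_0}}$. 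The rest is bookkeeping: keeping every constant uniform in $\eta\in S$ and $x\in K$, which is where compactness of $S$ and $K$ and the uniform-in-$\eta$ form of (strong) subhomogeneity enter.
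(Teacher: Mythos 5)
Your proof is correct and follows the paper's overall strategy: decompose the Fourier integral over disjoint neighborhoods of the finitely many points of $\Omega(\phi)$ plus an exponentially small remainder, bound the contribution at each positive homogeneous point by taking absolute values and applying Theorem~\ref{thm:BestIntegrationFormula} to $R_{\xi_0}$, and handle each imaginary homogeneous point by a polar decomposition followed by a non-stationary-phase estimate along the radial variable. The one place you genuinely diverge from the paper is in the choice of polar parametrization at an imaginary homogeneous point. You apply Theorem~\ref{thm:BestIntegrationFormula} with $P=|Q_{\xi_0}|$ and $E\in\Exp(|Q_{\xi_0}|)$, so the leading imaginary phase $-nQ_{\xi_0}(r^E\eta)=-n\operatorname{sgn}(Q_{\xi_0}(\eta))\,r$ is linear in $r$ and the singular weight $r^{\mu_{\xi_0}-1}$ stays in the amplitude; you then split at $r\sim 1/n$ and perform one integration by parts on $(c/n,\rho)$ using $|\psi'(r)|\gtrsim n$ there. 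The paper (Lemma~\ref{lem:EstImagHom}) instead applies Theorem~\ref{thm:BestIntegrationFormula} with $P=R_{\xi_0}$ and $G=E/k\in\Exp(R_{\xi_0})$, changes radial variable to $\theta=r^{\mu_{\xi_0}/k}$ so that the weight is absorbed into $d\theta$ and the amplitude $g_{n,\eta}$ is flat, while the leading phase becomes $\theta^{1/\mu_{\xi_0}}$; it then splits at $\theta=n^{-\mu_{\xi_0}}$ and invokes the Van der Corput lemma (Proposition~\ref{prop:VanderCorput}), with Lemmas~\ref{lem:PhaseDerivativeEstimate} and~\ref{lem:AmplitudeSobolevEstimates} supplying the phase-derivative lower bound and the amplitude Sobolev bounds. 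The split points are the same locus in $\xi$ (both amount to $R_{\xi_0}(\xi)\sim n^{-k}$), and a single integration by parts is exactly the first-derivative branch of Van der Corput, so the two are technically equivalent; yours is marginally more direct once Theorem~\ref{thm:BestIntegrationFormula} is in hand, trading a change of variable and two preparatory lemmas for the need to track a singular amplitude explicitly. The hypothesis $\mu_{\xi_0}<1$ is used differently but to the same effect: for you it makes $\int_{c/n}^\rho r^{\mu_{\xi_0}-2}\,dr\asymp n^{1-\mu_{\xi_0}}$ and $\int_{c/n}^\rho r^{\mu_{\xi_0}-3}\,dr\asymp n^{2-\mu_{\xi_0}}$, combining with $|\psi'|^{-1}\lesssim n^{-1}$ and $|\psi'|^{-2}\lesssim n^{-2}$ to give $n^{-\mu_{\xi_0}}$; in the paper it makes $1/\mu_{\xi_0}-1>0$, which in Lemma~\ref{lem:PhaseDerivativeEstimate} yields $|\partial_\theta f|\gtrsim n^{\mu_{\xi_0}}$ above the cut. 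The hypothesis $\alpha_{\xi_0}=0$ plays the same role in both: it removes the $O(n/r)$ phase derivative that would otherwise swamp the estimate, leaving only the $n$-independent $O(1/r)$ term from $e^{-ix\cdot\xi}$, which the cut at $r\sim 1/n$ controls.
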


\noindent The above result partially extends the analogous one-dimensional results of \cite{randles_convolution_2015} into the $d$-dimensional setting. Specifically, Theorem 3.6 of \cite{randles_convolution_2015} guarantees that, for each $\phi:\mathbb{Z}\to\mathbb{C}$ for which $\sup_{\xi\in\mathbb{T}}|\widehat{\phi}(\xi)|=1$ and whose support is finite and contains more than one point, there is a constant $C$ and a positive integer $m$ for which
\begin{equation*}
    \abs{\phi^{(n)}(x)}\leq Cn^{-1/m}
\end{equation*}
for all $x\in\mathbb{Z}$ and $n\in\mathbb{N}_+$. Under these hypotheses concerning $\phi$'s support, it follows from a basic result of complex analysis that every point $\xi_0$ of $\Omega(\phi)$ is necessarily\footnote{This is Proposition 2.2 of \cite{randles_convolution_2015}. It is easy to see that the assertion fails when $d>1$.} of positive homogeneous type or imaginary homogeneous type for $\widehat{\phi}$ with $\mu_{\xi_0}\leq 1/2<1$. In this way, we see that Theorem \ref{thm:ConvolutionPowerEstimate} partially extends Theorem 3.6 of \cite{randles_convolution_2015} in the sense that it guarantees a spatially uniform estimate over compact sets and is stated under the additional hypotheses that the drift is zero for each point of imaginary homogeneous type for $\widehat{\phi}$. Though we expect that this is not the final result on the matter, the more limited scope of Theorem \ref{thm:ConvolutionPowerEstimate} is not surprising in light of the natural complexity of $\mathbb{R}^d$ (and $\mathbb{Z}^d$).\\

\noindent Concerning the existing theory in $\mathbb{Z}^d$, Theorem \ref{thm:ConvolutionPowerEstimate} is stated under weaker hypotheses than is the analogous result in \cite{randles_convolution_2017}. Specifically, Theorem 1.4 of \cite{randles_convolution_2017} is stated under the assumption that, given $\phi\in\mathcal{S}_d$ with $\sup_{\xi}|\widehat{\phi}(\xi)|=1$, every point $\xi_0\in\Omega(\phi)$ is of positive homogeneous type for $\widehat{\phi}$ and, in this case, the theorem gives positive constants $C$ and $C'$, for which
\begin{equation}\label{eq:SupNormResultforPosHom}
    C'n^{-\mu_\phi}\leq \abs{\phi^{(n)}(x)}\leq C n^{-\mu_\phi}
\end{equation}
for all $x\in\mathbb{Z}^d$ and $n\in\mathbb{N}$. Though we have not stated it this way, our proof of Theorem \ref{thm:ConvolutionPowerEstimate} (see Lemma \ref{lem:EstPosHom}), guarantees the upper estimate in \eqref{eq:SupNormResultforPosHom} (with $K=\mathbb{Z}^d$) in the case that there are no points $\xi_0\in\Omega(\phi)$ of imaginary homogeneous type for $\widehat{\phi}$. It is the presence of points $\xi_0\in\Omega(\phi)$ of imaginary homogeneous type that makes the analysis significantly more difficult (even in one dimension) and leads to the slightly weaker conclusion. It is our belief that a uniform estimate of the type \eqref{eq:SupNormResultforPosHom} is valid when all points are either of positive homogeneous or imaginary homogeneous type for $\widehat\phi$ (perhaps still with some restriction on homogeneous order) but a resolution of such a conjecture will require further analysis and a thorough study of local limits. In Subsection \ref{subsec:Examples}, we give several examples illustrating the conclusion of Theorem \ref{thm:ConvolutionPowerEstimate}, none of which satisfy the hypotheses of Theorem 1.4 of \cite{randles_convolution_2017}.\\

\noindent Our proof of Theorem \ref{thm:ConvolutionPowerEstimate} will make use of the Fourier inversion formula
\begin{equation}\label{eq:FourierInversionConvolutionPower}
\phi^{(n)}(x)=\frac{1}{(2\pi)^d}\int_{\mathbb{T}_\phi^d}\widehat{\phi}^n(\xi)e^{-ix\cdot\xi}\,d\xi
\end{equation}
which is valid for all $n\in\mathbb{N}_+$ and $x\in\mathbb{R}^d$; here $\mathbb{T}_\phi^d=\mathbb{T}^d+\xi_\phi\subseteq\mathbb{R}^d$
is a representation of the $d$-dimensional torus chosen so that $\Omega(\phi)\subseteq \Interior(\mathbb{T}_{\phi}^d)$; this can always be arranged, i.e., some $\xi_\phi\in\mathbb{R}^d$ can be selected, because $\Omega(\phi)$ is a finite set (see Remark 3 of \cite{randles_convolution_2017}). As discussed in \cite{randles_convolution_2017}, the asymptotic behavior of $\phi^{(n)}$ is characterized by the contributions to the above integral produced by integration over neighborhoods of points $\xi_0\in\Omega(\phi).$ Specifically, we shall study integrals of the form
\begin{equation}\label{eq:LocalizedFourierInversionConvolutionPower}
\frac{1}{(2\pi)^d}\int_{\mathcal{O}_{\xi_0}}\widehat{\phi}^n(\xi)e^{-ix\cdot\xi}\,d\xi
\end{equation}
where $\mathcal{O}_{\xi_0}$ is some (small and to be determined) neighborhood of $\xi_0\in\Omega(\phi)$. When $\xi_0$ is of positive homogeneous type for $\widehat{\phi}$, such integrals are very well behaved (the integrand is dominated uniformly by $e^{-nR_{\xi_0}(\xi)/2}$, a member of the Schwartz class). When $\xi_0$ is of imaginary homogeneous type, such integrals are oscillatory in nature and therefore much more difficult to handle. Our first lemma below handles the ``easy" case in which $\xi_0$ is of positive homogeneous type for $\widehat{\phi}$. This lemma appears, essentially, as Lemma 4.3 of \cite{randles_convolution_2017}. For illustrative purposes, we have decided to present a distinct proof here which makes use of the polar coordinate integration formula in Theorem \ref{thm:BestIntegrationFormula}. 
\begin{lemma}\label{lem:EstPosHom}
Let $\xi_0\in\Omega(\phi)$ be of positive homogeneous type for $\widehat{\phi}$ with homogeneous order $\mu_{\xi_0}$. Then, there exists an open neighborhood $\mathcal{O}_{\xi_0}\subseteq\Interior(\mathbb{T}^d_\phi)$ of $\xi_0$, which can be taken as small as desired, and a constant $C=C_{\xi_0}$ for which
\begin{equation*}
    \left|\frac{1}{(2\pi)^d}\int_{\mathcal{O}_{\xi_0}}\widehat{\phi}^n(\xi)e^{-ix\cdot\xi}\,d\xi\right|\leq 
    C_{\xi_0} n^{-\mu_{\xi_0}}
\end{equation*}
for all $n\in\mathbb{N}_+$ and $x\in\mathbb{R}^d$.
\end{lemma}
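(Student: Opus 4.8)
The plan is to dominate $|\widehat{\phi}|$ on a small neighborhood of $\xi_0$ by a Gaussian-type bump built from $R_{\xi_0}$, and then to integrate that bump exactly using the polar-coordinate formula of Theorem \ref{thm:BestIntegrationFormula}. First I would record that, since $\xi_0\in\Omega(\phi)$, $|\widehat{\phi}(\xi_0)|=1$, and since $\widehat{\phi}(\xi+\xi_0)=\widehat{\phi}(\xi_0)e^{\Gamma_{\xi_0}(\xi)}$ on $\mathcal{U}$ with $\Gamma_{\xi_0}$ as in \eqref{eq:GammaExpansion} and all of $\alpha_{\xi_0},Q_{\xi_0},\widetilde{Q}_{\xi_0},R_{\xi_0},\widetilde{R}_{\xi_0}$ real-valued, one has $|\widehat{\phi}(\xi+\xi_0)|=e^{-(R_{\xi_0}(\xi)+\widetilde{R}_{\xi_0}(\xi))}$ for $\xi\in\mathcal{U}$. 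Because $\xi_0$ is of positive homogeneous type for $\widehat{\phi}$, $R_{\xi_0}$ is positive homogeneous and $\widetilde{R}_{\xi_0}$ is subhomogeneous with respect to some $E\in\Exp(R_{\xi_0})$; hence Proposition \ref{prop:Subhomequivtolittleoh} gives $\widetilde{R}_{\xi_0}(\xi)=o(R_{\xi_0}(\xi))$ as $\xi\to 0$. Consequently there is an open neighborhood $V$ of $0$, contained in $\mathcal{U}$ and small enough that $\xi_0+V\subseteq\Interior(\mathbb{T}^d_\phi)$, on which $|\widetilde{R}_{\xi_0}(\xi)|\le \tfrac{1}{2}R_{\xi_0}(\xi)$, so that $|\widehat{\phi}(\xi+\xi_0)|\le e^{-R_{\xi_0}(\xi)/2}$ there; I would then take $\mathcal{O}_{\xi_0}=\xi_0+V$, which may be shrunk at will.

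Next I would translate the integral: substituting $\xi\mapsto\xi+\xi_0$ and moving the absolute value inside gives
\[
\left|\frac{1}{(2\pi)^d}\int_{\mathcal{O}_{\xi_0}}\widehat{\phi}^n(\xi)e^{-ix\cdot\xi}\,d\xi\right|
\le \frac{1}{(2\pi)^d}\int_{V}|\widehat{\phi}(\xi+\xi_0)|^n\,d\xi
\le \frac{1}{(2\pi)^d}\int_{\R^d}e^{-nR_{\xi_0}(\xi)/2}\,d\xi,
\]
which is already uniform in $x$. It remains to estimate the last integral. Since $e^{-nR_{\xi_0}(\xi)/2}$ is continuous and nonnegative, I would apply the $f\ge 0$ case of the polar-coordinate formula \eqref{eq:BestIntegrationFormula} with $P=R_{\xi_0}$, the above $E$, and its unital level set $S=S_{R_{\xi_0}}$; using $R_{\xi_0}(r^E\eta)=r$ for $\eta\in S$,
\[
\int_{\R^d}e^{-nR_{\xi_0}(\xi)/2}\,d\xi
=\int_0^\infty\!\left(\int_S e^{-nr/2}\,\sigma_{R_{\xi_0}}(d\eta)\right)r^{\mu_{\xi_0}-1}\,dr
=\sigma_{R_{\xi_0}}(S)\int_0^\infty e^{-nr/2}r^{\mu_{\xi_0}-1}\,dr.
\]
A change of variable $s=nr/2$ turns the remaining one-dimensional integral into $2^{\mu_{\xi_0}}\Gamma(\mu_{\xi_0})\,n^{-\mu_{\xi_0}}$, which is finite because $\mu_{\xi_0}=\mu_{R_{\xi_0}}>0$ by Corollary \ref{cor:TraceisInvariant}. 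Collecting the constants yields the bound with $C_{\xi_0}=(2\pi)^{-d}2^{\mu_{\xi_0}}\Gamma(\mu_{\xi_0})\,\sigma_{R_{\xi_0}}(S)$, which depends on neither $x$ nor $n$.

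I do not expect a real obstacle: this is the \emph{easy} case, where the integrand is genuinely dominated by a Schwartz bump and no oscillation needs to be exploited. The only steps needing care are (i) invoking Proposition \ref{prop:Subhomequivtolittleoh} to upgrade subhomogeneity of $\widetilde{R}_{\xi_0}$ to the pointwise domination $|\widetilde{R}_{\xi_0}|\le\tfrac{1}{2}R_{\xi_0}$ on a neighborhood, which is what absorbs the remainder term; (ii) enlarging the domain from $V$ to $\R^d$ so the estimate becomes independent of how small $\mathcal{O}_{\xi_0}$ is chosen; and (iii) checking that the scaling computation produces a constant free of $n$ — this is exactly where the homogeneous order $\mu_{\xi_0}$ enters and produces the decay rate $n^{-\mu_{\xi_0}}$.
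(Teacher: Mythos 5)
Your argument is correct and follows essentially the same route as the paper's proof: dominate $|\widehat{\phi}(\xi+\xi_0)|^n$ by $e^{-nR_{\xi_0}(\xi)/2}$ via Proposition \ref{prop:Subhomequivtolittleoh}, translate, enlarge the domain to $\mathbb{R}^d$, and evaluate the resulting integral with the polar-coordinate formula of Theorem \ref{thm:BestIntegrationFormula}, arriving at the same constant $C_{\xi_0}=(2\pi)^{-d}2^{\mu_{\xi_0}}\Gamma(\mu_{\xi_0})\sigma_{R_{\xi_0}}(S)$. The only cosmetic addition is your explicit appeal to Corollary \ref{cor:TraceisInvariant} for $\mu_{\xi_0}>0$, which the paper leaves implicit.
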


\begin{proof}
For simplicity, we write $R=R_{\xi_0}$, $\widetilde{R}=\widetilde{R}_{\xi_0}$ and $\mu=\mu_{\xi_0}$. Given that $\xi_0$ is of positive homogeneous type for $\widehat{\phi}$, there is an open neighborhood $\mathcal{U}$ of $0$ for which
\begin{equation*}
    \left|\widehat{\phi}(\xi+\xi_0)\right|=\left|\widehat{\phi}(\xi_0)e^{\Gamma_{\xi_0}(\xi)}\right|=e^{-\left(R(\xi)+\widetilde{R}(\xi)\right)}
\end{equation*}
for $\xi\in \mathcal{U}$. Using the fact that $\widetilde{R}(\xi)=o(R(\xi))$ as $\xi\to 0$ in view of Proposition \ref{prop:Subhomequivtolittleoh}, we can further restrict $\mathcal{U}$ so that
\begin{equation*}
    \left|\widehat{\phi}(\xi+\xi_0)\right|\leq e^{-R(\xi)/2}
\end{equation*}
for all $\xi\in\mathcal{U}$. Take $E\in\Exp(R)$ and let $\sigma_R$ be the surface measure on  $S=\{\eta\in\mathbb{R}^d:R(\eta)=1\}$ guaranteed by Theorem \ref{thm:BestIntegrationFormula}. We fix an open neighborhood $\mathcal{O}_{\xi_0}$ of $\xi_0$ which is as small as desired and has the property that
\begin{equation*}
    \mathcal{O}:=\mathcal{O}_{\xi_0}-\xi_0\subseteq\mathcal{U}.
\end{equation*}
With this, we observe that
\begin{equation}\label{eq:WlogCenterAtZero}
\int_{\mathcal{O}_{\xi_0}}\widehat{\phi}^n(\xi)e^{-ix\cdot\xi}\,d\xi=\int_{\mathcal{O}}\widehat{\phi}^n(\xi+\xi_0)e^{-ix\cdot(\xi+\xi_0)}\,d\xi
\end{equation}
and therefore
\begin{eqnarray*}
    \left|\frac{1}{(2\pi)^d}\int_{\mathcal{O}_{\xi_0}}\widehat{\phi}^n(\xi)e^{-ix\cdot\xi}\,d\xi\right|&\leq& \frac{1}{(2\pi)^d}\int_{\mathcal{O}}\left|\widehat{\phi}^n(\xi+\xi_0)e^{-i x\cdot(\xi+\xi_0)}\right|\,d\xi\\
    &\leq& \frac{1}{(2\pi)^d}\int_{\mathcal{U}}e^{-nR(\xi)/2}\,d\xi\\
    &\leq&\frac{1}{(2\pi)^d}\int_{\mathbb{R}^d} e^{-(n/2)R(\xi)}\,d\xi
\end{eqnarray*}
for all $x\in\mathbb{R}^d$ and $n\in\mathbb{N}_+$. By virtue of Theorem \ref{thm:BestIntegrationFormula}, we have
\begin{equation*}
\int_{\mathbb{R}^d}e^{-(n/2)R(\xi)}\,d\xi=\int_S \int_0^\infty e^{-(n/2)r}r^{\mu-1}\,dr\,\sigma_R(d\eta)=\int_S \frac{2^\mu\Gamma(\mu)}{n^{\mu}}\,\sigma_R(d\eta)=2^\mu\Gamma(\mu)\sigma_R(S)n^{-\mu}
\end{equation*}
where $\Gamma$ denotes the Gamma function. Consequently,
\begin{equation*}
    \left|\frac{1}{(2\pi)^d}\int_{\mathcal{O}_{\xi_0}}\widehat{\phi}^n(\xi)e^{-ix\cdot\xi}\,d\xi\right|\leq C n^{-\mu}
\end{equation*}
for all $x\in\mathbb{R}^d$ and $n\in\mathbb{N}_+$ where $C=2^\mu \Gamma(\mu)\sigma_R(S)/(2\pi)^d.$
\end{proof}
\noindent We shall now focus on the case in which $\xi_0\in\Omega(\phi)$ is of imaginary homogeneous type for $\widehat{\phi}$. As discussed above,  \eqref{eq:LocalizedFourierInversionConvolutionPower} is oscillatory in nature; this is due to the fact that the ``principal" behavior of $\Gamma_{\xi_0}(\xi)$, for small $\xi$, is characterized by the purely imaginary polynomial $iQ_{\xi_0}$. Our main estimate is presented in Lemma \ref{lem:EstImagHom} and its proof makes use of \eqref{eq:BestIntegrationFormula} and the following version of the Van der Corput lemma.

\begin{proposition}\label{prop:VanderCorput}
Let $g\in C^1([a,b])$ be complex-valued and $f\in C^2([a,b])$ be real-valued and such that $f''(x)\neq 0$ for all $x\in [a,b]$. Then
\begin{equation*}
\abs{\int_a^b e^{if(x)}g(x)\,dx}\leq \min\left\{\frac{4}{\lambda_1},\frac{8}{\sqrt{\lambda_2}}\right\}\left(\|g\|_{L^\infty[a,b]}+\|g'\|_{L^1[a,b]}\right).
\end{equation*}
where $\lambda_1=\inf_{x\in[a,b]}\abs{f'(x)}$ and $\lambda_2=\inf_{x\in [a,b]}\abs{f''(x)}$.
\end{proposition}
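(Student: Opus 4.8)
The plan is to follow the two classical routes to Van der Corput estimates — an integration by parts for the first-derivative bound $4/\lambda_1$, and a decomposition along the slow set $\{|f'|<h\}$ for the second-derivative bound $8/\sqrt{\lambda_2}$ — and then take the minimum. Two preliminary remarks streamline everything. First, since $f''$ is continuous and nowhere vanishing on the compact interval $[a,b]$, it has constant sign there and $\lambda_2=\min_{[a,b]}|f''|>0$; thus the $8/\sqrt{\lambda_2}$ bound is always finite, and when $\lambda_1=0$ the asserted estimate reduces to that one. Second, replacing $f$ by $-f$ and $g$ by $\overline{g}$ changes neither $\bigl|\int_a^b e^{if}g\,dx\bigr|$, nor $\|g\|_{L^\infty[a,b]}$, nor $\|g'\|_{L^1[a,b]}$, nor $\lambda_1,\lambda_2$, so we may assume $f''>0$; then $f'$ is strictly increasing.

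For the first bound (now assuming $\lambda_1>0$, so $f'$ never vanishes and keeps a constant sign), I would write $e^{if}=(if')^{-1}(e^{if})'$ and integrate by parts:
\[
\int_a^b e^{if}g\,dx=\left[\frac{e^{if}g}{if'}\right]_a^b-\int_a^b e^{if}\left(\frac{g}{if'}\right)'dx .
\]
The boundary term is at most $2\|g\|_{L^\infty[a,b]}/\lambda_1$. Expanding $(g/if')'=g'/(if')-gf''/(i(f')^2)$, the remaining integral is at most $\lambda_1^{-1}\|g'\|_{L^1[a,b]}+\|g\|_{L^\infty[a,b]}\int_a^b|f''|/(f')^2\,dx$; since $x\mapsto 1/f'(x)$ is monotone and $f'(a),f'(b)$ share a common sign, the last integral equals $|1/f'(a)-1/f'(b)|\le 1/\lambda_1$. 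Collecting terms gives $(3\|g\|_{L^\infty[a,b]}+\|g'\|_{L^1[a,b]})/\lambda_1\le 4(\|g\|_{L^\infty[a,b]}+\|g'\|_{L^1[a,b]})/\lambda_1$; I would keep the sharper constant $3$ around, since it is what makes the second bound come out to exactly $8$.

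For the second bound, set $h=\sqrt{\lambda_2}$ and consider the slow set $J=\{x\in[a,b]:|f'(x)|<h\}$. Because $f'$ is strictly increasing, $J$ is a (possibly empty) subinterval, and combining $f'(\beta)-f'(\alpha)\ge\lambda_2|J|$ with $|f'|<h$ on $J$ forces $|J|\le 2h/\lambda_2=2/\sqrt{\lambda_2}$; hence $J$ contributes at most $\|g\|_{L^\infty[a,b]}|J|\le 2\|g\|_{L^\infty[a,b]}/\sqrt{\lambda_2}$. The complement $[a,b]\setminus J$ is a union of at most two subintervals, on each of which $|f'|\ge h$; applying the first bound (with constant $3$ and $\lambda_1$ replaced by $h$) on each, and using that $\|g\|_{L^\infty}$ over a subinterval is dominated by $\|g\|_{L^\infty[a,b]}$ while the two $\|g'\|_{L^1}$ contributions add to at most $\|g'\|_{L^1[a,b]}$, these pieces contribute at most $\tfrac{3}{h}\bigl(2\|g\|_{L^\infty[a,b]}+\|g'\|_{L^1[a,b]}\bigr)$. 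Summing, the total is at most $\bigl(\tfrac{6}{h}+\tfrac{2h}{\lambda_2}\bigr)\|g\|_{L^\infty[a,b]}+\tfrac{3}{h}\|g'\|_{L^1[a,b]}=\tfrac{8}{\sqrt{\lambda_2}}\|g\|_{L^\infty[a,b]}+\tfrac{3}{\sqrt{\lambda_2}}\|g'\|_{L^1[a,b]}\le\tfrac{8}{\sqrt{\lambda_2}}\bigl(\|g\|_{L^\infty[a,b]}+\|g'\|_{L^1[a,b]}\bigr)$, and taking the better of the two bounds finishes the proof. There is no genuine obstacle here — this is a standard lemma — so the main thing to watch is the constant bookkeeping: verifying that $J$ is an interval of length at most $2/\sqrt{\lambda_2}$, and tracking constants carefully enough that the pieces reassemble to exactly $8$ rather than something larger.
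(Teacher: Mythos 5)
The paper does not give its own proof of this proposition; it simply cites Chapter 8 of \cite{stein_harmonic_1993} and Lemma 3.4 of \cite{randles_convolution_2015}. Your argument is correct and is exactly the standard one found in those references: integration by parts with the $1/(if')$ factor for the $4/\lambda_1$ bound (including the observation that, once $f''$ has constant sign and $f'$ never vanishes, $\int_a^b |f''|/(f')^2\,dx = |1/f'(a)-1/f'(b)|\le 1/\lambda_1$), followed by splitting $[a,b]$ along the sublevel set $\{|f'|<\sqrt{\lambda_2}\}$ and invoking the first bound on the at-most-two complementary intervals to obtain $8/\sqrt{\lambda_2}$, with the constant bookkeeping ($|J|\le 2/\sqrt{\lambda_2}$, $6/h+2h/\lambda_2=8/\sqrt{\lambda_2}$) done correctly.
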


\noindent For a proof of the above proposition, we refer the reader to Chapter 8 of \cite{stein_harmonic_1993} or Section 3 of \cite{randles_convolution_2015} (see Lemma 3.4 therein).  To effectively make use of the proposition above to estimate \eqref{eq:LocalizedFourierInversionConvolutionPower} in the case that $\xi_0$ is of imaginary homogeneous type, we first treat two preliminary lemmas. 

\begin{lemma}\label{lem:PhaseDerivativeEstimate}
Let $Q:\mathbb{R}^d\to\mathbb{R}$ be a continuous function for which $\abs{Q}$ is positive homogeneous with $\mu:=\mu_{\abs{Q}}<1$. Given a compact subset $S$ of $\mathbb{R}^d$ for which $0\notin S$, set
\begin{equation*}
    \rho=\inf_{\eta\in S}|Q(\eta)|/3>0.
\end{equation*}
For an open neighborhood $\mathcal{O}$ of $0$ in $\mathbb{R}^d$, suppose that $\widetilde{Q}:\mathcal{O}\to\mathbb{R}$ is a twice continuously differentiable function which is strongly subhomogeneous with respect to $E$ of order $2$, set $F=E/\mu$, and define
\begin{eqnarray*}
f_{n,\eta,x}(\theta)&=&-nQ(\theta^F\eta)-n\widetilde{Q}(\theta^F\eta)-x\cdot \theta^F\eta\\
&=&-n\theta^{1/\mu}Q(\eta)-n\widetilde{Q}(\theta^{F}\eta)-x\cdot \theta^{F}\eta
\end{eqnarray*}
for $n\in\mathbb{N}_+$, $\eta\in S$, $x\in\mathbb{R}^d$ and $\theta>0$ sufficiently small so that $\theta^F\eta\in\mathcal{O}$. Then, given any compact set $K$, there is a $\delta>0$ for which $\partial_\theta^2 f_{n,x,\eta}(\theta)\neq 0$ and
\begin{equation*}
    |\partial_\theta f_{n,x,\eta}(\theta)|\geq \frac{\rho}{\mu} n^{\mu}
\end{equation*}
for all $n\in\mathbb{N}_+$, $\eta\in S$,  $x\in K$, and $\theta>0$ for which $n^{-\mu}\leq \theta\leq \delta^\mu$.
\end{lemma}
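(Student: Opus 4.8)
Here is how I would prove Lemma~\ref{lem:PhaseDerivativeEstimate}.

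\medskip

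\noindent\textbf{Strategy.} The plan is to differentiate $f_{n,x,\eta}$ once and twice in $\theta$, in each case factor out the power $\theta^{1/\mu-1}$ (respectively $\theta^{1/\mu-2}$) carried by the dominant term, and then show the remaining bracketed factor has modulus bounded below by a positive multiple of $n$. The stated estimates follow because the constraint $\theta\ge n^{-\mu}$ converts the surviving negative powers of $\theta$ into powers of $n$. First I would record the preliminary facts. Since $E\in\Exp(\abs Q)$ and $\abs Q$ is positive homogeneous, $\{r^E\}$ is contracting by Proposition~\ref{prop:PositiveHomogeneousCharacterization}, hence so is $\{\theta^F\}=\{(\theta^{1/\mu})^E\}$; in particular $\|\theta^F\|\to0$ and $\theta^F\eta\to0$ uniformly for $\eta\in S$ as $\theta\to0$. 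For fixed $\eta\in S$ the map $\theta\mapsto Q(\theta^F\eta)$ is continuous and non-vanishing on $(0,\infty)$, because $\abs{Q(\theta^F\eta)}=\theta^{1/\mu}\abs{Q(\eta)}\ge 3\rho\,\theta^{1/\mu}>0$, so it has constant sign and equals $\theta^{1/\mu}Q(\eta)$; this justifies the second displayed form of $f_{n,x,\eta}$. I will also use $\partial_\theta\theta^F=\theta^{-1}\theta^F F$ and $\partial_\theta^2\theta^F=\theta^{-2}\theta^F(F^2-F)$, and write $M_S=\sup_{\eta\in S}\abs\eta$ and $C_K=\|F\|\sup_{x\in K}\abs x$, $C_K'=\|F^2-F\|\sup_{x\in K}\abs x$. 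Finally, I fix a small $\epsilon>0$ (pinned down at the end) and apply Proposition~\ref{prop:2StronglySubhomogeneous} to $\widetilde Q$ with $\alpha=1/\mu$ and compact set $S$, obtaining $\delta_1>0$ with $\abs{\partial_\theta\widetilde Q(\theta^F\eta)}\le\epsilon\,\theta^{1/\mu-1}$ and $\abs{\partial_\theta^2\widetilde Q(\theta^F\eta)}\le\epsilon\,\theta^{1/\mu-2}$ for all $0<\theta\le\delta_1$ and $\eta\in S$, shrinking $\delta_1$ further if necessary so that also $\theta^F\eta\in\mathcal O$ for such $\theta,\eta$.

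\medskip

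\noindent\textbf{First derivative.} Differentiating and factoring,
\[
\partial_\theta f_{n,x,\eta}(\theta)=-\frac n\mu\,\theta^{1/\mu-1}Q(\eta)-n\,\partial_\theta\widetilde Q(\theta^F\eta)-\frac1\theta\,x\cdot(\theta^F F\eta)=\theta^{1/\mu-1}\Big[-\frac n\mu Q(\eta)-n\,\theta^{1-1/\mu}\partial_\theta\widetilde Q(\theta^F\eta)-\theta^{-1/\mu}\,x\cdot(\theta^F F\eta)\Big].
\]
In the bracket the first term has modulus $\ge 3\rho n/\mu$; the second has modulus $\le n\epsilon$ by the choice of $\delta_1$; and the third has modulus $\le\theta^{-1/\mu}C_KM_S\|\theta^F\|\le n\,C_KM_S\|\theta^F\|$, using $\theta^{-1/\mu}\le n$ when $\theta\ge n^{-\mu}$. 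Hence, if $\epsilon\le\rho/\mu$ and $\delta>0$ is chosen with $\delta\le\delta_1$ and small enough that $\|\theta^F\|\le\rho/(\mu C_KM_S)$ on $0<\theta\le\delta^{\,\mu}$ (possible since $\{\theta^F\}$ is contracting), the bracket has modulus at least $\rho n/\mu$, so for all $n^{-\mu}\le\theta\le\delta^{\,\mu}$, $\eta\in S$, $x\in K$,
\[
\abs{\partial_\theta f_{n,x,\eta}(\theta)}\ge\theta^{1/\mu-1}\cdot\frac{\rho n}\mu\ge n^{\mu-1}\cdot\frac{\rho n}\mu=\frac\rho\mu\,n^\mu,
\]
where I used $1/\mu-1>0$ together with $\theta\ge n^{-\mu}$.

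\medskip

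\noindent\textbf{Second derivative.} The same scheme gives
\[
\partial_\theta^2 f_{n,x,\eta}(\theta)=\theta^{1/\mu-2}\Big[-\frac n\mu\Big(\frac1\mu-1\Big)Q(\eta)-n\,\theta^{2-1/\mu}\partial_\theta^2\widetilde Q(\theta^F\eta)-\theta^{-1/\mu}\,x\cdot\big(\theta^F(F^2-F)\eta\big)\Big].
\]
Now in the bracket the first term has modulus $\ge 3\rho(1-\mu)n/\mu^2$, the second has modulus $\le n\epsilon$, and the third has modulus $\le\theta^{-1/\mu}C_K'M_S\|\theta^F\|\le n\,C_K'M_S\|\theta^F\|$. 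Shrinking $\epsilon$ and $\delta$ once more (taking the minimum with the earlier choices) so that $\epsilon\le\rho(1-\mu)/\mu^2$ and $\|\theta^F\|\le\rho(1-\mu)/(\mu^2C_K'M_S)$ on $0<\theta\le\delta^{\,\mu}$, the bracket has modulus at least $\rho(1-\mu)n/\mu^2>0$, so $\partial_\theta^2 f_{n,x,\eta}(\theta)\ne0$ on the stated range. This single $\delta$ then yields both conclusions (the range $n^{-\mu}\le\theta\le\delta^{\,\mu}$ being empty, hence the claim vacuous, for $n<1/\delta$).

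\medskip

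\noindent\textbf{Main obstacle.} I expect the only real difficulty to be the bookkeeping of powers of $\theta$: one must choose the factored-out power so that the dominant term becomes exactly a constant multiple of $n$ and the strong-subhomogeneity error is $O(n\epsilon)$ with no residual power of $\theta$, while recognizing that the $x$-dependent term — the one genuinely carrying $\theta^{-1/\mu}$ — is harmless precisely because $\theta\ge n^{-\mu}$ bounds $\theta^{-1/\mu}$ by $n$ and $\|\theta^F\|\to0$ as $\theta\to0$ absorbs the leftover factor. A secondary point needing care is the identity $Q(\theta^F\eta)=\theta^{1/\mu}Q(\eta)$, which rests on $Q$ being non-vanishing on $S$ (guaranteed by $\rho>0$) and a connectedness argument fixing the sign.
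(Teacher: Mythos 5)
Your proof is correct and takes essentially the same route as the paper: differentiate, isolate the dominant $Q$-term, bound the $\widetilde{Q}$-error via Proposition~\ref{prop:2StronglySubhomogeneous}, bound the $x$-term using that $\{\theta^F\}$ is contracting, and then exploit $\theta\geq n^{-\mu}$. The only cosmetic difference is that you factor out $\theta^{1/\mu-1}$ (resp.\ $\theta^{1/\mu-2}$) and absorb the $x$-term's $\theta^{-1/\mu}$ into a factor of $n$ right away, whereas the paper works with $|\theta\partial_\theta f|$ and $|\theta^2\partial_\theta^2 f|$ and concludes via monotonicity of the resulting expressions in $\theta$ — the two bookkeeping schemes yield the same estimate.
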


\begin{proof}
Let $E$ and $S$ be as in the statement of the lemma and write $f=f_{n,\eta,x}$. Because ${\theta^F}$ is contracting, let $\delta_1>0$ be such that
\begin{equation}\label{eq:PhaseDerivativeEstimate1}
    \abs{x\cdot \theta^F F \eta}\leq \frac{\rho}{\mu}\hspace{.5cm}\mbox{and}\hspace{.5cm}\abs{x\cdot \theta^F (F-I)F\eta}\leq \frac{\rho}{\mu}\left(\frac{1}{\mu}-1\right)
\end{equation}
for all $0<\theta<\delta_1^\mu$, $x\in K$ and $\eta\in S$. By virtue of Proposition \ref{prop:2StronglySubhomogeneous}, there exists $\delta_2>0$ such that
\begin{equation}\label{eq:PhaseDerivativeEstimate2}
    \abs{\theta\partial_\theta \widetilde{Q}(\theta^F\eta)}
    \leq\frac{\rho}{\mu}\theta^{1/\mu}
    \hspace{0.5cm}\mbox{and}\hspace{0.5cm}
    \abs{\theta^2\partial_{\theta}^2\widetilde{Q}(\theta^F\eta)}
    \leq\frac{\rho}{\mu}\left(\frac{1}{\mu}-1\right)\theta^{1/\mu}
\end{equation}
for all $0<\theta<\delta_2^{\mu}$ and $\eta\in S$. Set $\delta=\min\{\delta_1,\delta_2\}$. By virtue \eqref{eq:PhaseDerivativeEstimate1} and \eqref{eq:PhaseDerivativeEstimate2}, we have
\begin{eqnarray*}
    \abs{\theta^2\partial_{\theta}^2 f(\theta)}&=&\theta^2\abs{n\partial_\theta^2\left( \theta^{1/\mu}Q(\eta)+\widetilde{Q}(\theta^F\eta)\right)+\partial_\theta^2\left(x\cdot \theta^F\eta\right)}\\
    &=&\abs{\frac{n}{\mu}\left(\frac{1}{\mu}-1\right)\theta^{1/\mu}Q(\eta)+n\theta^2\partial_\theta^2\widetilde{Q}(\theta^F\eta)+\left(x\cdot \theta^F(F-I)F\eta\right)}\\
    &\geq& \frac{n}{\mu}\left(\frac{1}{\mu}-1\right)\theta^{1/\mu}\abs{Q(\eta)}-n\abs{\theta^2\partial_\theta^2\widetilde{Q}(\theta^F\eta)}-\abs{x\cdot\theta^F(F-I)F\eta}\\
    &\geq&\frac{3\rho n}{\mu}\left(\frac{1}{\mu}-1\right)\theta^{1/\mu}-\frac{\rho n}{\mu}\left(\frac{1}{\mu}-1\right)\theta^{1/\mu}-\frac{\rho}{\mu}\left(\frac{1}{\mu}-1\right)\\
    &\geq &\frac{\rho}{\mu}\left(\frac{1}{\mu}-1\right)\left(2n\theta^{1/\mu}-1\right)
\end{eqnarray*}
for all $n\in\mathbb{N}_+$, $\eta\in S$, $x\in K$ and $0<\theta<\delta^\mu$. Given that $\theta\mapsto \theta^{1/\mu}$ is increasing, it follows that
\begin{equation*}
    \abs{\theta^2\partial_\theta^2 f(\theta)}\geq \frac{\rho}{\mu}\left(\frac{1}{\mu}-1\right)(2n\theta^{1/\mu}-1)\geq \frac{\rho}{\mu}\left(\frac{1}{\mu}-1\right)(2n (n^{-\mu})^{1/\mu}-1)=\frac{\rho}{\mu}\left(\frac{1}{\mu}-1\right)>0
\end{equation*}
and, in particular, $\partial_\theta^2 f(\theta)\neq 0$ for all $n\in\mathbb{N}_+$, $\eta\in S$, $x\in K$ and $\theta>0$ for which $n^{-\mu}\leq\theta\leq\delta^{\mu}$. By another appeal to \eqref{eq:PhaseDerivativeEstimate1} and \eqref{eq:PhaseDerivativeEstimate2}, we find
\begin{eqnarray*}
    \abs{\partial_\theta f(\theta)} &=& \abs{n\partial_\theta \left(\theta^{1/\mu}Q(\eta)+\widetilde{Q}(\theta^F\eta)\right)+\partial_\theta\left(x\cdot \theta^F \eta\right)}\\
    &=&\abs{\frac{n}{\mu}\theta^{1/\mu-1}Q(\eta)+n\partial_\theta\widetilde{Q}(\theta^F\eta)+\theta^{-1}\left(x\cdot\theta^F F\eta\right)}\\
    &\geq &\frac{3\rho n}{\mu}\theta^{1/\mu-1}-n\theta^{-1}\abs{\theta\partial_\theta \widetilde{Q}(\theta^F\eta)}-\theta^{-1}\abs{x\cdot\theta^F F\eta}\\
    &\geq& \frac{3\rho n}{\mu}\theta^{1/\mu-1}-\frac{\rho n}{\mu}\theta^{1/\mu-1}-\frac{\rho}{\mu}\theta^{-1}\\
    &\geq &\frac{\rho}{\mu}\left(2n\theta^{1/\mu-1}-\theta^{-1}\right)
\end{eqnarray*}
for all $n\in\mathbb{N}_+$, $\eta\in S$, $x\in K$ and $0<\theta<\delta^{\mu}$. Given our supposition that $\mu<1$, $\theta\mapsto\left( 2n\theta^{1/\mu-1}-\theta^{-1}\right)$ is increasing for $\theta>0$ and therefore
\begin{equation*}
 \abs{\partial_\theta f(\theta)}\geq\frac{\rho}{\mu}\left(2n\theta^{1/\mu-1}-\theta^{-1}\right)\geq \frac{\rho}{\mu}\left(2n(n^{-\mu})^{1/\mu-1}-(n^{-\mu})^{-1}\right)=\frac{\rho}{\mu}n^{\mu}
\end{equation*}
for all for all $n\in\mathbb{N}_+$, $\eta\in S$, $x\in K$ and $\theta>0$ for which $n^{-\mu}\leq\theta\leq\delta^{\mu}$, as was asserted.
\end{proof}

\begin{lemma}\label{lem:AmplitudeSobolevEstimates}
Let $R$ be a positive homogeneous function with $G\in\Exp(R)$, $\widetilde{R}:\mathcal{O}\to\mathbb{R}$ be once continuously differentiable on a neighborhood $\mathcal{O}$ of $0$ which has $\widetilde{R}(0)=0$ and is strongly subhomogeneous with respect to $G$ of order $1$, and let $k$ and $\mu$ be positive real numbers. Set $F=(k/\mu) G$ and, for each $\eta\in S=\{\eta:R(\eta)=1\}$ and $n\in\mathbb{N}_+$, set
\begin{equation*}
    g_{n,\eta}(\theta)=e^{-n\left(R\left(\theta^F\eta\right)+\widetilde{R}\left(\theta^F\eta\right)\right)}
\end{equation*}
for $\theta>0$ which is sufficiently small so that $\theta^F\eta\in\mathcal{O}$. Then, for each $\beta>1$, there is $\delta>0$ for which 
\begin{equation*}
    \|g_{n,\eta}\|_{L^\infty[\theta_1,\theta_2]}\leq 1
\end{equation*}
and
\begin{equation*}
    \|\partial_\theta g_{n,\eta}\|_{L^1[\theta_1,\theta_2]}\leq \beta
\end{equation*}
uniformly for $\eta\in S$, $n\in\mathbb{N}$ and $0<\theta_1\leq\theta_2\leq \delta^{\mu}$.
\end{lemma}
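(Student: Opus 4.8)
\emph{Sketch of proof.} The plan is to reduce both estimates to the one-variable function $h_\eta(\theta):=R(\theta^F\eta)+\widetilde{R}(\theta^F\eta)$, exploiting that $R$ becomes a pure power along the curve $\theta\mapsto\theta^F\eta$ while $\widetilde{R}$ only perturbs it at lower order. Since $G\in\Exp(R)$, $F=(k/\mu)G$, and $\eta\in S$, one has $\theta^F=(\theta^{k/\mu})^G$, so $R(\theta^F\eta)=\theta^{k/\mu}R(\eta)=\theta^{k/\mu}$ and therefore $g_{n,\eta}(\theta)=e^{-nh_\eta(\theta)}$ with $h_\eta(\theta)=\theta^{k/\mu}+\widetilde{R}(\theta^F\eta)$. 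First I would quantify the smallness of $\widetilde{R}$ and of $\partial_\theta\widetilde{R}(\theta^F\eta)$ uniformly over the compact set $S$ (which is compact and avoids $0$ by Proposition~\ref{prop:PositiveHomogeneousCharacterization} and the positive definiteness of $R$). Since $\widetilde{R}(0)=0$ and $\widetilde{R}$ is strongly subhomogeneous with respect to $G$ of order $1$, Proposition~\ref{prop:supersub_implies_sub} gives that $\widetilde{R}$ is subhomogeneous with respect to $G$; combining this with the order-$1$ strong-subhomogeneity bound after the substitution $r=\theta^{k/\mu}$ (the relevant chain-rule computation being exactly that behind the first inequality of Proposition~\ref{prop:2StronglySubhomogeneous}, which uses only the order-$1$ estimate), both applied with the compact set $K=S$, I obtain: for every $\epsilon>0$ there is $\delta_\epsilon>0$, which I also shrink via Proposition~\ref{prop:ContractingCapturesCompact} so that $\theta^F\eta\in\mathcal{O}$, with
\begin{equation*}
\abs{\widetilde{R}(\theta^F\eta)}\le\epsilon\,\theta^{k/\mu}\qquad\text{and}\qquad\abs{\partial_\theta\widetilde{R}(\theta^F\eta)}\le\epsilon\,\frac{k}{\mu}\,\theta^{k/\mu-1}
\end{equation*}
for all $\eta\in S$, $n\in\mathbb{N}_+$, and $0<\theta^{k/\mu}<\delta_\epsilon$.

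Granting this, the two conclusions follow from short estimates. For the sup-norm bound I take $\epsilon=1/2$: then $h_\eta(\theta)\ge\theta^{k/\mu}-\tfrac12\theta^{k/\mu}=\tfrac12\theta^{k/\mu}\ge0$, hence $g_{n,\eta}(\theta)=e^{-nh_\eta(\theta)}\le1$ once $\delta$ is small enough that $\theta\le\delta^\mu$ forces $\theta^{k/\mu}<\delta_{1/2}$. For the $L^1$ bound, given $\beta>1$ I fix $\epsilon\in(0,1)$ with $\tfrac{1+\epsilon}{1-\epsilon}<\beta$ and choose $\delta$ so that $\theta\le\delta^\mu$ forces both displayed inequalities; then $h_\eta(\theta)\ge(1-\epsilon)\theta^{k/\mu}$ and $\abs{h_\eta'(\theta)}\le(1+\epsilon)\tfrac{k}{\mu}\theta^{k/\mu-1}$, so for $0<\theta_1\le\theta_2\le\delta^\mu$,
\begin{equation*}
\int_{\theta_1}^{\theta_2}\abs{\partial_\theta g_{n,\eta}(\theta)}\,d\theta=\int_{\theta_1}^{\theta_2}n\,\abs{h_\eta'(\theta)}\,e^{-nh_\eta(\theta)}\,d\theta\le\int_0^\infty n(1+\epsilon)\,\frac{k}{\mu}\,\theta^{k/\mu-1}\,e^{-n(1-\epsilon)\theta^{k/\mu}}\,d\theta,
\end{equation*}
and the substitution $v=\theta^{k/\mu}$ turns the last integral into $\int_0^\infty n(1+\epsilon)e^{-n(1-\epsilon)v}\,dv=\tfrac{1+\epsilon}{1-\epsilon}<\beta$, uniformly in $n\in\mathbb{N}$ and $\eta\in S$. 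Taking the smaller of the two values of $\delta$ then proves the lemma.

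I do not expect a genuine obstacle here: the whole content is the identity $g_{n,\eta}(\theta)=e^{-nh_\eta(\theta)}$ and the control of $h_\eta$ via subhomogeneity. The two points needing care are the uniformity of $\delta_\epsilon$ over $\eta\in S$, supplied by compactness of $S$, and the translation between the $G$- and $F$-parametrizations of the ray $\theta\mapsto\theta^F\eta$, carried out by the substitution $r=\theta^{k/\mu}$ as in Proposition~\ref{prop:2StronglySubhomogeneous}. The crude exponential bound produces the constant $\tfrac{1+\epsilon}{1-\epsilon}$, which can be driven below any prescribed $\beta>1$; this is why the statement is phrased with an arbitrary $\beta>1$ rather than with the sharp constant $1$.
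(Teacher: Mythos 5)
Your argument is correct and follows essentially the same route as the paper's proof: both exploit the identity $\theta^F=(\theta^{k/\mu})^G$ to reduce to the $r=\theta^{k/\mu}$ parametrization, use order-$1$ strong subhomogeneity together with Proposition~\ref{prop:supersub_implies_sub} to bound $R+\widetilde{R}$ below by $(1-\epsilon)r$ and its $r$-derivative above by $1+\epsilon$ uniformly on $S$, and then integrate to get the constant $\tfrac{1+\epsilon}{1-\epsilon}$. The only cosmetic difference is that the paper carries the computation out entirely in the $r$-variable while you keep $\theta$ and substitute at the end; the paper also sets $\epsilon=\tfrac{\beta-1}{\beta+1}$ to hit $\beta$ exactly rather than using a strict inequality, but both choices work.
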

\begin{proof}
By virtue of the strong subhomogeneity of $\widetilde{R}$, Proposition \ref{prop:supersub_implies_sub}, and the fact that $r^G$ is a contracting group, we may choose $\delta>0$ for which 
\begin{equation}\label{eq:AmplitudeSobolevEstimates1}
    R(r^G\eta)+\widetilde{R}\left(r^G\eta\right)=r+\widetilde{R}\left(r^G\eta\right)\geq (1-\epsilon)r>0
\end{equation}
and
\begin{equation}\label{eq:AmplitudeSobolevEstimates2}
    \abs{\partial_r\left(R(r^G\eta)+\widetilde{R}(r^G\eta)\right)}\leq 1+\epsilon
\end{equation}
for all $0<r<\delta^k$ and $\eta\in S$ where
\begin{equation*}
    \epsilon=\frac{\beta-1}{\beta+1}\in(0,1).
\end{equation*}
In view of \eqref{eq:AmplitudeSobolevEstimates1}, for any $0<\theta_1\leq\theta_2<\delta^{\mu}$, $\eta\in S$ and $n\in\mathbb{N}_+$,
\begin{equation*}
\|g_{n,\eta}\|_{L^\infty[\theta_1,\theta_2]}\leq\sup_{0<\theta\leq\delta^{\mu}}\abs{g_{n,\eta}(\theta)}=\sup_{0<r\leq\delta^k}\abs{g_{n,\eta}(r^{\mu/k})}\leq \sup_{0<r\leq \delta^k}e^{-nr(1-\epsilon)}= 1
\end{equation*}
where we have used the fact that $(r^{\mu/k})^F=r^{(\mu/k)F}=r^G$ for $r>0$. By virtue of \eqref{eq:AmplitudeSobolevEstimates1} and \eqref{eq:AmplitudeSobolevEstimates2}, we find that
\begin{eqnarray*}
\|\partial_{\theta}g_{n,\eta}\|_{L^1[\theta_1,\theta_2]}
&=&\int_{\theta_1}^{\theta_2}\abs{\p_\theta g_{n,\eta}(\theta)}\,d\theta\\
&=&\int_{\theta_1^{k/\mu}}^{\theta_2^{k/\mu}}\abs{\p_r \lp g_{n,\eta}(r^{\mu/k})\rp}\,dr\\
&=&\int_{\theta_1^{k/\mu}}^{\theta_2^{k/\mu}}\abs{\p_r \left(e^{-n\left(R(r^G\eta)+\widetilde{R}(r^G\eta)\right)}\right) }\,dr\\
&=&\int_{\theta_1^{k/\mu}}^{\theta_2^{k/\mu}}n\abs{\partial_r\left(R(r^G\eta)+\widetilde{R}(r^G\eta)\right)}\abs{e^{-nR(r^G\eta)+\widetilde{R}(r^G\eta)}}\,dr\\
&\leq&\int_{\theta_1^{k/\mu}}^{\theta_2^{k/\mu}}n(1+\epsilon)e^{-n(1-\epsilon)r}\,dr\\
&\leq &\frac{1+\epsilon}{1-\epsilon}\int_0^\infty e^{-r}\,dr=\beta
\end{eqnarray*}
for all $n\in\mathbb{N}_+$, $\eta\in S$ and $0<\theta_1\leq\theta_2\leq\delta^{\mu}$, as desired.
\end{proof}

\begin{lemma}\label{lem:EstImagHom}
Suppose that $\xi_0$ is of imaginary homogeneous type for $\widehat{\phi}$ with associated drift $\alpha_{\xi_0}$ and homogeneous order $\mu_{\xi_0}$. If $\alpha_{\xi_0}=0$ and $\mu_{\xi_0}<1$, then, for each compact set $K$, there is an open neighborhood $\mathcal{O}_{\xi_0}\subseteq\Interior(\mathbb{T}_\phi^d)$ of $\xi_0$, which can be taken as small as desired, and a constant $C_{\xi_0}$ for which
\begin{equation*}
    \abs{\f{1}{(2\pi)^d}\int_{\mathcal{O}_{\xi_0}}\widehat{\phi}^n(\xi)e^{-ix\cdot\xi}\,d\xi}\leq C_{\xi_0}n^{-\mu_{\xi_0}}
\end{equation*}
for all $x\in K$ and $n\in\mathbb{N}_+$.
\end{lemma}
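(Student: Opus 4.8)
The plan is to reduce the $d$-dimensional integral over $\mathcal{O}_{\xi_0}$ to a one-parameter family of one-dimensional oscillatory integrals, each controlled by the Van der Corput estimate of Proposition~\ref{prop:VanderCorput} through the two auxiliary Lemmas~\ref{lem:PhaseDerivativeEstimate} and~\ref{lem:AmplitudeSobolevEstimates}. The structural point that makes everything fit is to run the polar-coordinate formula of Theorem~\ref{thm:BestIntegrationFormula} with respect to $R_{\xi_0}$ rather than $|Q_{\xi_0}|$: with $\eta$ ranging over the unit level set of $R_{\xi_0}$, the modulus of the integrand becomes exactly the function $g_{n,\eta}$ of Lemma~\ref{lem:AmplitudeSobolevEstimates}, while a single reparametrization $r=\theta^{k/\mu}$ simultaneously puts the phase into the form $f_{n,x,\eta}$ of Lemma~\ref{lem:PhaseDerivativeEstimate}. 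The analytic content is entirely in those lemmas and in the Van der Corput bound; the work here is the bookkeeping that aligns them.

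For the setup, since $\xi_0$ is of imaginary homogeneous type, fix $E\in\Exp(|Q_{\xi_0}|)$ and $k>1$ as in Definition~\ref{def:Types}, and put $G=E/k\in\Exp(R_{\xi_0})$, $\mu=\mu_{\xi_0}=\mu_{|Q_{\xi_0}|}$, and $F=E/\mu=(k/\mu)G$. By Corollary~\ref{cor:TraceisInvariant} the homogeneous order of $R_{\xi_0}$ is $\mu_{R_{\xi_0}}=\tr G=\mu/k$. Because $R_{\xi_0}$ is positive homogeneous, $S=\{\eta:R_{\xi_0}(\eta)=1\}$ is compact with $0\notin S$, and since $|Q_{\xi_0}|$ is positive definite, $Q_{\xi_0}$ is continuous and nonvanishing off the origin, so $\rho:=\inf_{\eta\in S}|Q_{\xi_0}(\eta)|/3>0$. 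Apply Lemma~\ref{lem:PhaseDerivativeEstimate} (to $Q_{\xi_0}$, $\widetilde Q_{\xi_0}$, this $S$, this $\rho$, and the given compact set $K$) and Lemma~\ref{lem:AmplitudeSobolevEstimates} (with $R=R_{\xi_0}$, $G$, $\widetilde R=\widetilde R_{\xi_0}$, $k$, $\mu$, and $\beta=2$), and shrink the resulting constants further so as also to have $\overline{\xi_0+B'}\subseteq\Interior(\mathbb{T}_\phi^d)$ and $B'$ inside the domain of the expansion \eqref{eq:GammaExpansion}, where $B'=\{\xi:R_{\xi_0}(\xi)<\delta^k\}$. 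This produces one $\delta>0$ on which all conclusions of both lemmas hold for $0<\theta\le\delta^\mu$. Take $\mathcal{O}_{\xi_0}=\xi_0+B'$; it may be made as small as desired by decreasing $\delta$.

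Next, translate by $\xi_0$ and write $\widehat\phi(\xi+\xi_0)=\widehat\phi(\xi_0)e^{\Gamma_{\xi_0}(\xi)}$. Using $\alpha_{\xi_0}=0$ in \eqref{eq:GammaExpansion}, the integrand $\widehat\phi^n(\xi+\xi_0)e^{-ix\cdot(\xi+\xi_0)}$ has modulus $e^{-n(R_{\xi_0}(\xi)+\widetilde R_{\xi_0}(\xi))}$ and argument $c_{n,x}-n\bigl(Q_{\xi_0}(\xi)+\widetilde Q_{\xi_0}(\xi)\bigr)-x\cdot\xi$ for a $\xi$-independent constant $c_{n,x}$. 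Apply Theorem~\ref{thm:BestIntegrationFormula} to $R_{\xi_0}$ and $G$ to the $L^1$ function $\chi_{B'}(\xi)\widehat\phi^n(\xi+\xi_0)e^{-ix\cdot(\xi+\xi_0)}$; since $R_{\xi_0}(r^G\eta)=r$ for $\eta\in S$, the resulting $r$-integral runs over $(0,\delta^k)$. Substituting $r=\theta^{k/\mu}$ (so that $r^G=\theta^F$ and $r^{\mu/k-1}\,dr=(k/\mu)\,d\theta$) and using $Q_{\xi_0}(\theta^F\eta)=\theta^{1/\mu}Q_{\xi_0}(\eta)$ (valid by homogeneity of $|Q_{\xi_0}|$ with respect to $E$ and continuity, exactly as in Lemma~\ref{lem:PhaseDerivativeEstimate}), one arrives at
\[
\Bigl|\int_{\mathcal{O}_{\xi_0}}\widehat\phi^n(\xi)e^{-ix\cdot\xi}\,d\xi\Bigr|\le\frac{k}{\mu}\int_S\Bigl|\int_0^{\delta^\mu}e^{if_{n,x,\eta}(\theta)}\,g_{n,\eta}(\theta)\,d\theta\Bigr|\,\sigma_{R_{\xi_0}}(d\eta),
\]
where $f_{n,x,\eta}$ and $g_{n,\eta}$ are precisely the objects of Lemmas~\ref{lem:PhaseDerivativeEstimate} and~\ref{lem:AmplitudeSobolevEstimates}.

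Finally, bound the inner integral by $Cn^{-\mu}$ uniformly for $\eta\in S$ and $x\in K$. For the finitely many $n$ with $n<\delta^{-1}$, the trivial bound $|\,\cdot\,|\le m(B')$ together with $n^{-\mu}\ge\delta^\mu$ suffices. For $n\ge\delta^{-1}$, split $\int_0^{\delta^\mu}=\int_0^{n^{-\mu}}+\int_{n^{-\mu}}^{\delta^\mu}$: on $[0,n^{-\mu}]$ the estimate $\|g_{n,\eta}\|_\infty\le1$ from Lemma~\ref{lem:AmplitudeSobolevEstimates} gives a contribution at most $n^{-\mu}$; on $[n^{-\mu},\delta^\mu]$ Lemma~\ref{lem:PhaseDerivativeEstimate} gives $\partial_\theta^2 f_{n,x,\eta}\neq0$ and $|\partial_\theta f_{n,x,\eta}|\ge(\rho/\mu)n^\mu$, so Proposition~\ref{prop:VanderCorput} combined with $\|g_{n,\eta}\|_\infty\le1$ and $\|\partial_\theta g_{n,\eta}\|_{L^1}\le2$ (Lemma~\ref{lem:AmplitudeSobolevEstimates}) gives a contribution at most $(12\mu/\rho)n^{-\mu}$. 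Integrating the uniform bound over $S$ against the finite measure $\sigma_{R_{\xi_0}}$ and dividing by $(2\pi)^d$ produces the asserted estimate with $\mathcal{O}_{\xi_0}$ as above and $C_{\xi_0}$ the maximum of the two cases. I expect the only genuine obstacle to be organizational rather than computational: keeping straight the three dilation exponents $E$, $E/k$, and $E/\mu$, and choosing to carry out the polar decomposition with $R_{\xi_0}$ (so the amplitude lands on its unit level set) while the $r=\theta^{k/\mu}$ reparametrization makes the phase match Lemma~\ref{lem:PhaseDerivativeEstimate}; once this is arranged, the two lemmas and Proposition~\ref{prop:VanderCorput} do all the remaining work.
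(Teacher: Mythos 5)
Your proposal follows essentially the same route as the paper's own proof: translate to $\xi_0=0$, run the polar decomposition with respect to $R_{\xi_0}$ and $G=E/k$, reparametrize $r=\theta^{k/\mu}$ so the phase and amplitude become exactly the $f_{n,x,\eta}$ and $g_{n,\eta}$ of Lemmas~\ref{lem:PhaseDerivativeEstimate} and~\ref{lem:AmplitudeSobolevEstimates}, split the inner integral at $\theta=n^{-\mu}$, and apply Van der Corput uniformly in $\eta\in S$ and $x\in K$ before integrating against $\sigma_{R_{\xi_0}}$. The only (harmless) difference is that you handle $n<\delta^{-1}$ by a separate trivial bound, a detail the paper leaves implicit.
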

\begin{proof}
For simplicity of notation, we will write $Q=Q_{\xi_0}$, $\widetilde{Q}=\widetilde{Q}_{\xi_0}$, $R=R_{\xi_0}$, $\widetilde{R}=\widetilde{R}_{\xi_0}$ and $\mu=\mu_{\xi_0}$. We fix a compact set $K\subseteq\mathbb{R}^d$ and let $E$ and $k$ as given in Definition \ref{def:Types}. In studying the proof of Lemma \ref{lem:EstPosHom} and \eqref{eq:WlogCenterAtZero}, in particular, it is evident that we may assume $\xi_0=0$ and $\widehat{\phi}(0)=1$ without loss of generality. Given that $G:=E/k\in\Exp(R)$, set
\begin{equation*}
    F=(k/\mu)G=E/\mu.
\end{equation*} Using the positive homogeneous structure of $R$, let $\sigma_R$ be the measure on $S=\{\eta\in \mathbb{R}^d:R(\eta)=1\}$ as guaranteed by Theorem \ref{thm:BestIntegrationFormula}. By setting
\begin{equation*}
    \rho=\inf_{\eta\in S}|Q(\eta)|/3,
\end{equation*}
an appeal to Lemma \ref{lem:PhaseDerivativeEstimate} guarantees a $\delta_1>0$ for which 
\begin{equation}\label{eq:EstImagHom1}
    \abs{\partial_{\theta}f_{n,\eta,x}(\theta)}\geq \frac{\rho}{\mu}n^{\mu}\hspace{1cm}\mbox{and}\hspace{1cm}\partial_\theta^2 f_{n,\eta,x}(\theta)\neq 0
\end{equation} for all $n\in\mathbb{N}_+$, $\eta\in S$, $x\in K$ and $\theta>0$ for which $n^{-\mu}\leq \theta\leq \delta_1^\mu$. An appeal to Lemma \ref{lem:AmplitudeSobolevEstimates} guarantees $\delta_2>0$ for which
\begin{equation}\label{eq:EstImagHom2}
    \|g_{n,\eta}\|_{L^\infty[\theta_1,\theta_2]}
    +
    \|\p_\theta g_{n,\eta} \|_{ L^1[\theta_1,\theta_2]}
    \leq 3
\end{equation}
for all $n\in\mathbb{N}_+$, $\eta\in S$ and $0<\theta_1\leq\theta_2\leq\delta_2^{\mu}$. We set $\mathcal{O}=\{\eta\in\mathbb{R}^d:R(\eta)<\delta^k\}$
where $0<\delta\leq \min\{\delta_1,\delta_2\}$ is as small as desired; this is necessarily an open neighborhood of $0$. We have
\begin{eqnarray*}
    \int_{\mathcal{O}}\widehat{\phi}^n(\xi)e^{-ix\cdot\xi}\,d\xi
    &=&
    \int_S\int_0^{\delta^{k}}\widehat{\phi}^n(r^G\eta)e^{-ix\cdot r^G\eta}r^{\mu/k-1}\,dr \sigma_R(d\eta)\\
    &=&
    \frac{k}{\mu}\int_S \int_0^{\delta^{\mu}} \widehat{\phi}^n(\theta^{F} \eta) e^{-i x\cdot\theta^F \eta}  \,d\theta \,\sigma_R(d\eta)\\
    &=&
    \frac{k}{\mu}\int_S I_{n,x}(\eta)\,\sigma_R(d\eta)
\end{eqnarray*}
where we have made the change of variables $\theta=r^{\mu/ k}$ and set
\begin{eqnarray*}
    I_{n,x}(\eta)&=&\int_0^{\delta^{\mu}}\widehat{\phi}^n(\theta^F\eta)e^{-ix\cdot\theta^F\eta}\,d\theta.
\end{eqnarray*}
For each $n\in\mathbb{N}_+$, $\eta\in S$, and $x\in\mathbb{R}^d$, we have
\begin{eqnarray*}
\abs{I_{n,x}(\eta)}
&\leq & 
\abs{\int_{n^{-\mu}}^{\delta^{\mu}}\widehat{\phi}^n(\theta^F\eta)e^{-ix\cdot\theta^F\eta}\,d\theta} +\int_{0}^{n^{-\mu}}\abs{\widehat{\phi}^n(\theta^F\eta)}\,d\theta\\
&\leq& \abs{\int_{n^{-\mu}}^{\delta^\mu} e^{-i\left(nQ\left(\theta^F\eta\right)+n\widetilde{Q}\left(\theta^F\eta\right)+x\cdot\theta^F\eta\right)}e^{-n\left(R\left(\theta^F\eta\right)+\widetilde{R}\left(\theta^F\eta\right)\right)}\,d\theta}+n^{-\mu}\\
& &\hspace{1cm}=\abs{\int_{n^{-\mu}}^{\delta^\mu} e^{i f_{n,\eta,x}(\theta) } g_{n,\eta}(\theta)\,d\theta} 
+ n^{-\mu}.
\end{eqnarray*}
In view of \eqref{eq:EstImagHom1} and \eqref{eq:EstImagHom2}, an appeal to Proposition \ref{prop:VanderCorput} guarantees that, for any $n\in\mathbb{N}_+$, $\eta\in S$ and $x\in K$,
\begin{eqnarray*}
 \hspace{-1cm}\abs{\int_{n^{-\mu}}^{\delta^\mu}e^{if_{n,\eta,x}(\theta)}g_{n,\eta}(\theta)\,d\theta}
    &\leq& 
    4
    \frac{ 
    \|g_{n,\eta}\|_{L^\infty[n^{-\mu},\delta^{\mu}]}
    +
    \|\partial_{\theta}g_{n,\eta}\|_{L^1[n^{-\mu},\delta^{\mu}]}
    }{
    \inf_{n^{-\mu}\leq\theta\leq \delta^{\delta}}|\partial_{\theta}f_{n,x,\eta}(\theta)|
    }\\
    &\leq& 4\frac{3}{(\rho/\mu) n^{\mu}}=\frac{12\mu}{\rho}n^{-\mu}
\end{eqnarray*}
and so
\begin{equation*}
    \abs{I_{n,x}(\eta)}\leq \left(\frac{12\mu}{\rho}\right)n^{-\mu}+n^{-\mu}\leq \left(\frac{12\mu}{\rho}+1\right)n^{-\mu}.
\end{equation*}
Thus, for all $n\in\mathbb{N}_+$ and $x\in K$,
\begin{eqnarray*}
\abs{\f{1}{(2\pi)^d}\int_{\mathcal{O}}\widehat{\phi}^n(\xi)e^{-i\xi\cdot x}\,d\xi}
&=&\frac{1}{(2\pi)^d}\f{k}{\mu}\abs{\int_S I_{n,x}(\eta)\,\sigma_R(d\eta)} \\
&\leq& \frac{1}{(2\pi)^d}\f{k}{\mu}\int_S \abs{I_{n,x}(\eta)}\,\sigma_R(d\eta)\\
&\leq& Cn^{-\mu}
\end{eqnarray*}
where
\begin{equation*}
    C=\f{1}{(2\pi)^d} \f{k}{\mu} \left(\frac{12\mu}{\rho}+1\right)\sigma_R(S).
\end{equation*}
\end{proof}

\begin{proof}[Proof of Theorem \ref{thm:ConvolutionPowerEstimate}]
Let $K\subseteq\mathbb{R}^d$ be a compact set. As we discussed in the paragraph preceding the theorem, the set $\Omega(\phi)$ is finite and so we may write
\begin{equation*}
    \Omega(\phi)=\{\xi_1,\xi_2,\dots,\xi_N,\xi_{N+1},\xi_{N+2},\dots,\xi_M\}
\end{equation*}
where our labeling assumes that the points $\xi_1,\xi_2,\dots,\xi_N$ are of imaginary homogeneous type for $\widehat{\phi}$ and the points $\xi_{N+1},\xi_{N+2},\dots,\xi_M$ are of positive homogeneous type for $\widehat{\phi}$. In view of the theorem's hypotheses, for each $j=1,2,\dots,N$, the point $\xi_j$, which is of imaginary homogeneous type for $\widehat{\phi}$, has drift $\alpha_{\xi_j}=0$ and homogeneous order $\mu_j:=\mu_{\xi_j}<1$. Thus, for each $j=1,2,\dots,N$, an appeal to Lemma \ref{lem:EstImagHom} guarantees an open neighborhood $\mathcal{O}_j=\mathcal{O}_{\xi_j}\subseteq\Interior(\mathbb{T}_\phi^d)$ of $\xi_j$ and a constant $C_j=C_{\xi_j}$ for which
\begin{equation}\label{eq:ConvolutionPowerEstimate1}
    \abs{\frac{1}{(2\pi)^d}\int_{\mathcal{O}_j}\widehat{\phi}^n(\xi)e^{-ix\cdot\xi}\,d\xi}\leq C_j n^{-\mu_j}
\end{equation}
for all $n\in\mathbb{N}_+$ and $x\in K$. For each $j=N+1,N+2,\dots M$, an appeal to Lemma \ref{lem:EstPosHom} guarantees an open neighborhood $\mathcal{O}_j=\mathcal{O}_{\xi_j}\subseteq\Interior(\mathbb{T}_\phi^d)$ of $\xi_j$ and a constant $C_j=C_{\xi_j}$ for which 
\begin{equation}\label{eq:ConvolutionPowerEstimate2}
        \abs{\frac{1}{(2\pi)^d}\int_{\mathcal{O}_j}\widehat{\phi}^n(\xi)e^{-ix\cdot\xi}\,d\xi}\leq C_jn^{-\mu_j}
\end{equation}
for all $n\in\mathbb{N}_+$ and $x\in\mathbb{R}^d$ where $\mu_j:=\mu_{\xi_j}$ is the homogeneous order associated to $\xi_j$. As guaranteed by the lemmas, let us take this collection of open sets $\mathcal{O}_1,\mathcal{O}_2,\dots,\mathcal{O}_M\subseteq\mathbb{T}_{\phi}^d$ to be mutually disjoint and define
\begin{equation}
    \mathcal{G}=\mathbb{T}_{\phi}^d\setminus\left(\bigcup_{j=1}^M \mathcal{O}_j\right).
\end{equation}
Given that $\mathcal{G}$ is a closed set which contains no elements of $\Omega(\phi)$,
\begin{equation*}
s:=\sup_{\xi\in\mathcal{G}}\abs{\widehat{\phi}(\xi)}<1.
\end{equation*}
By virtue of \eqref{eq:FourierInversionConvolutionPower}, \eqref{eq:ConvolutionPowerEstimate1}, \eqref{eq:ConvolutionPowerEstimate2}, and the disjointness of the collection $\mathcal{O}_1,\mathcal{O}_2,\dots,\mathcal{O}_M$, we have
\begin{eqnarray}\label{eq:ConvolutionPowerEstimate3}\nonumber
    \abs{\phi^{(n)}(x)}
    &=&\abs{\lb \sum_{j=1}^M\frac{1}{(2\pi)^d}\int_{\mathcal{O}_j}\widehat{\phi}^n(\xi)e^{-x\cdot\xi}\,d\xi \rb
    +\frac{1}{(2\pi)^d}\int_{\mathcal{G}}\widehat{\phi}^n(\xi)e^{-x\cdot\xi}\,d\xi}\\\nonumber
    &\leq&\sum_{j=1}^M\abs{\frac{1}{(2\pi)^d}\int_{\mathcal{O}_j}\widehat{\phi}^n(\xi)e^{-x\cdot\xi}\,d\xi}+\abs{\frac{1}{(2\pi)^d}\int_{\mathcal{G}}\widehat{\phi}^n(\xi)e^{-x\cdot\xi}\,d\xi}\\
    &\leq&\sum_{j=1}^M C_jn^{-\mu_j}+s^n
\end{eqnarray}
for all $n\in\mathbb{N}_+$ and $x\in K$. Upon noting that $\mu_\phi=\min\{\mu_1,\mu_2,\dots,\mu_M\}$, we have
\begin{equation*}
    n^{-\mu_j}=O(n^{-\mu_\phi})
\end{equation*}
as $n\to\infty$ for each $j=1,2,\dots M$. Also, because $s<1$, $s^n=o(n^{-\mu_\phi})$ as $n\to \infty$. With these two observations, the theorem follows immediately from \eqref{eq:ConvolutionPowerEstimate3}.
\end{proof}

\subsection{Examples}\label{subsec:Examples}


In this subsection, we give a number of examples illustrating the results of Theorem \ref{thm:ConvolutionPowerEstimate}, all of which are beyond the scope of validity of the results of \cite{randles_convolution_2017}. First, we treat a useful proposition which gives sufficient conditions for a point $\xi_0\in\Omega(\phi)$ to be of positive homogeneous or imaginary homogeneous type for $\hat{\phi}$ in terms of the Taylor expansion for $\Gamma_{\xi_0}$.

\begin{proposition}\label{prop:ExpandGamma}
Let $\phi\in\mathcal{S}_d$ with $\sup_{\xi}|\widehat{\phi}(\xi)|=1$ and let $\xi_0\in\Omega(\phi)$. Suppose that there exists $\mathbf{m}\in \mathbb{N}^d_+$ and some $k \geq 1$ such that the Taylor expansion of $\Gamma_{\xi_0} : \mathcal{U}\to\mathbb{C}$ centered at $0$ is a series of the form
\begin{eqnarray}\label{eq:SemiEllipticImaginaryExpansion}\nonumber
    \Gamma_{\xi_0}(\xi) 
    &=& i\al_{\xi_0} \cdot \xi - i \left( \sum_{\abs{\be : 2\mathbf{m}} \geq 1} A_\be \xi^\be\right) - \sum_{\abs{\be : 2\mathbf{m}} \geq k} B_\be \xi^\be \\ \nonumber
    &=& i\al_{\xi_0} \cdot \xi - i \lp \sum_{\abs{\be : 2\mathbf{m}} = 1} A_\be \xi^\be + \sum_{\abs{\be : 2\mathbf{m}} > 1} A_\be \xi^\be\rp 
    - \lp \sum_{\abs{\be : 2\mathbf{m}} = k} B_\be \xi^\be + \sum_{\abs{\be : 2\mathbf{m}} > k} B_\be \xi^\be \rp \\
    &=&  i\al_{\xi_0} \cdot \xi - i\lp Q_{\xi_0}(\xi) + \widetilde{Q}_{\xi_0}(\xi)\rp - \lp R_{\xi_0}(\xi) + \widetilde{R}_{\xi_0}(\xi) \rp,
\end{eqnarray}
where $\al_{\xi_0} \in \mathbb{R}^d$;   $Q_{\xi_0}$ and $R_{\xi_0}$ are real-valued polynomials for which $R_{\xi_0}$ is positive definite; and  $\widetilde{Q}_{\xi_0},$ and $\widetilde{R}_{\xi_0}$ are real multivariate power series which are absolutely and uniformly convergent on $\mathcal{U}$. If $k=1$, then $\xi_0$ is of positive homogeneous type for $\widehat{\phi}$. If $k>1$ and $|Q_{\xi_0}|$ is positive definite, then $\xi_0$ is of imaginary homogeneous type for $\hat{\phi}$. In either case, $\xi_0$ has drift $\alpha_{\xi_0}$ and homogeneous order
\begin{equation*}
    \mu_{\xi_0}=\abs{\mathbf{1}:2\mathbf{m}}=\sum_{j=1}^d\frac{1}{2m_j}.
\end{equation*}
\end{proposition}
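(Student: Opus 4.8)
The strategy is to verify, for the particular decomposition of $\Gamma_{\xi_0}$ supplied by \eqref{eq:SemiEllipticImaginaryExpansion}, the structural conditions in Definition \ref{def:Types}, using throughout the single endomorphism $E_0\in\End(\mathbb{R}^d)$ with standard matrix $\diag(1/(2m_1),\dots,1/(2m_d))$ as the distinguished exponent (so $E_0$ plays the role of ``$E$'' and $E_0/k$ the role of ``$E/k$'' in that definition). The plan has three parts: (i) recording how $E_0$ interacts with the weighted-homogeneous blocks $Q_{\xi_0},R_{\xi_0}$; (ii) a lemma showing the remainders $\widetilde{Q}_{\xi_0},\widetilde{R}_{\xi_0}$ are (strongly) subhomogeneous; (iii) assembling these into the right case of Definition \ref{def:Types} and reading off $\mu_{\xi_0}$.

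For (i): since $2\mathbf{m}$ is a tuple of positive even integers, $\{r^{E_0}\}$ is contracting and $(r^{E_0}\xi)^\beta=r^{|\beta:2\mathbf{m}|}\xi^\beta$, so each block $\sum_{|\beta:2\mathbf{m}|=c}a_\beta\xi^\beta$ scales by $r^c$ under $r^{E_0}$. Hence $Q_{\xi_0}$ (weight $1$) is homogeneous with respect to $E_0$, $R_{\xi_0}$ (weight $1$ if $k=1$, weight $k$ in general) is homogeneous with respect to $E_0$ if $k=1$ and with respect to $E_0/k$ in general, and $|Q_{\xi_0}|$ is homogeneous with respect to $E_0$. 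Being continuous and positive definite (by hypothesis, and, for $|Q_{\xi_0}|$, under the extra hypothesis when $k>1$) with non-empty exponent set, both $R_{\xi_0}$ and $|Q_{\xi_0}|$ are positive homogeneous by Proposition \ref{prop:PositiveHomogeneousCharacterization}, and $\tr E_0=\sum_j 1/(2m_j)=|\mathbf{1}:2\mathbf{m}|$ is the homogeneous order of each. I would also record here the arithmetic gap that drives part (ii): the weights $|\beta:2\mathbf{m}|$ lie in $\tfrac1L\mathbb{Z}_{\ge 0}$ with $L=\operatorname{lcm}(2m_1,\dots,2m_d)$, and $L$ and $Lk$ are integers, so every monomial occurring in $\widetilde{Q}_{\xi_0}$ (resp.\ $\widetilde{R}_{\xi_0}$) has weight $\ge 1+\tfrac1L$ (resp.\ $\ge k+\tfrac1L$), while each linear monomial $\xi_j$ has weight $1/(2m_j)<1\le k$; in particular $Q_{\xi_0},R_{\xi_0}$ carry no linear terms and all four pieces vanish at $0$, so \eqref{eq:SemiEllipticImaginaryExpansion} is genuinely of the form \eqref{eq:GammaExpansion}.

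For (ii), the key lemma I would prove is: if $g=\sum_\beta c_\beta\xi^\beta$ is a real power series converging absolutely on a neighborhood of $0$ with $c_\beta=0$ whenever $|\beta:2\mathbf{m}|\le\mu$, where $\mu$ is $1$ or $k$, then $g$ is strongly subhomogeneous with respect to $E_0/\mu$ of every order. The proof regroups monomials by $q=L|\beta:2\mathbf{m}|\in\mathbb{N}$ (so $q\ge L\mu+1$ whenever $c_\beta\ne 0$), uses $|\beta|\le L|\beta:2\mathbf{m}|$ to see that the one-variable majorant $\sum_q\nu_q x^q$ with $\nu_q=\sum_{L|\beta:2\mathbf{m}|=q}|c_\beta|$ and all its derivatives converge near $0$, and then, on a compact set $K$, bounds $|\theta^{j}\partial_\theta^{j}g(\theta^{E_0/\mu}\eta)|$ (differentiating term by term in $\theta$, legitimate since $\theta^{E_0/\mu}\eta$ lies in the region of convergence for $\theta$ small, $r^{E_0}$ being contracting) by $\sum_q (q/(L\mu))^{j}\,\nu_q\,\big(\max(1,\lVert\eta\rVert_\infty)\,\theta^{1/(L\mu)}\big)^{q}$. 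Factoring out the smallest power $(\cdots)^{q_0}$ turns this into $C_{K,j}\,\theta^{q_0/(L\mu)}\le C_{K,j}\,\theta^{1+1/(L\mu)}$, which is $\le\epsilon\theta$ for $\theta$ small enough, uniformly for $\eta\in K$. I expect this lemma to be the crux: the series converges only near $0$, so the possibly large factor $\lVert\eta\rVert_\infty^{|\beta|}$ over a big compact set has to be absorbed into the small factor $\theta^{1/(L\mu)}$, and it is precisely the strict gap $q_0\ge L\mu+1$ — available because $\mathbf{m}$ is integral — that supplies the surplus power of $\theta$ needed to do this uniformly.

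Granting (i) and (ii), part (iii) is bookkeeping. If $k=1$: $R_{\xi_0}$ is positive homogeneous with $E_0\in\Exp(R_{\xi_0})$, $Q_{\xi_0}$ is homogeneous with respect to $E_0$, and $\widetilde{Q}_{\xi_0},\widetilde{R}_{\xi_0}$ are subhomogeneous with respect to $E_0$ (the lemma with $\mu=1$, using Proposition \ref{prop:supersub_implies_sub} to pass from strong subhomogeneity to subhomogeneity if desired); hence $\xi_0$ is of positive homogeneous type and $\mu_{\xi_0}=\mu_{R_{\xi_0}}=\tr E_0=|\mathbf{1}:2\mathbf{m}|$. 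If $k>1$ and $|Q_{\xi_0}|$ is positive definite: $|Q_{\xi_0}|$ and $R_{\xi_0}$ are positive homogeneous, $E_0\in\Exp(|Q_{\xi_0}|)$, $R_{\xi_0}$ is homogeneous with respect to $E_0/k$, $\widetilde{Q}_{\xi_0}$ is strongly subhomogeneous with respect to $E_0$ of order $2$ (the lemma with $\mu=1$), and $\widetilde{R}_{\xi_0}$ is strongly subhomogeneous with respect to $E_0/k$ of order $1$ (the lemma with $\mu=k$); hence $\xi_0$ is of imaginary homogeneous type and $\mu_{\xi_0}=\mu_{|Q_{\xi_0}|}=\tr E_0=|\mathbf{1}:2\mathbf{m}|$. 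In both cases the drift is the vector $\alpha_{\xi_0}$ appearing in \eqref{eq:SemiEllipticImaginaryExpansion}, by definition, which completes the argument.
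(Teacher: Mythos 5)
Your proof is correct and follows the same overall architecture as the paper's: read off the homogeneity of $Q_{\xi_0}$ and $R_{\xi_0}$ from the weighted grading, prove one lemma asserting strong subhomogeneity of the remainder power series, and then match the pieces against Definition~\ref{def:Types}. The difference lies in how you prove the subhomogeneity lemma. The paper splits the tail power series $Q=Q_1+Q_2$ at weight $2j+2$, bounds the polynomial piece $Q_1$ by a compactness argument (yielding a factor $r^{\rho}$ from the minimal weight gap $\rho>0$), and bounds the infinite tail $Q_2$ by a trick that absorbs the combinatorial coefficient $\mathcal{P}(\abs{\beta:\mathbf{n}},j)$ together with a quarter of the $r$-power into a bounded factor and rewrites the remaining monomial as $A_\beta(r^{E/4}\eta)^\beta$, so that uniform convergence of the original series at $r^{E/4}\eta$ closes the estimate. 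You instead pass to the one-variable majorant $\sum_q \nu_q x^q$ in the variable $x=\max(1,\norm{\eta}_\infty)\theta^{1/(L\mu)}$, use $\abs{\beta}\le L\abs{\beta:2\mathbf{m}}$ to control $\eta^\beta$, and factor out the smallest power $x^{q_0}$ with $q_0\ge L\mu+1$; the surplus $\theta^{1/(L\mu)}$ then gives the required $o(\theta)$ uniformly on $K$. Both proofs hinge on exactly the same arithmetic gap (your $1/L$ is the paper's $\rho$), but your majorant argument handles all orders $j$ and the two exponents $E_0$ and $E_0/k$ in a single uniform statement, whereas the paper's lemma is phrased only for $E$ with integer diagonal and then applied, with a small leap, to $E/k$. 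Your formulation with general $\mu\ge 1$ (and the observation that $Lk\in\mathbb{Z}$ because $R_{\xi_0}\ne 0$) makes that application cleaner. The rest of the argument — identifying the drift, observing $Q_{\xi_0},R_{\xi_0}$ carry no linear terms, computing $\mu_{\xi_0}=\tr E_0=\abs{\mathbf{1}:2\mathbf{m}}$, and using Proposition~\ref{prop:supersub_implies_sub} to downgrade strong subhomogeneity when $k=1$ — is the same as in the paper.
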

\noindent Before proving the proposition, we shall first take care of the following useful lemma.
\begin{lemma}
Given an open neighborhood $\mathcal{U}$ of $0$ in $\mathbb{R}^d$, suppose that $Q:\mathcal{U}\to\mathbb{C}$ is real-analytic on $\mathcal{U}$ with absolutely and uniformly convergent series expansion
\begin{equation*}
    Q(\xi)=\sum_{|\beta:\mathbf{n}|>1}A_\beta\xi^\beta
\end{equation*}
for some $\mathbf{n}\in\mathbb{N}_+^d$. Consider $E\in\End(\mathbb{R}^d)$ with standard representation $\diag(1/n_1,1/n_2,\dots,1/n_d)$. Then, for each $l\in\mathbb{N}_+$, $Q$ is strongly subhomogeneous with respect to $E$ of order $l$. 
\end{lemma}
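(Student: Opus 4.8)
The plan is to exploit the anisotropic homogeneity of monomials under $E=\diag(1/n_1,\dots,1/n_d)$: one has $(r^E\xi)^\beta=r^{|\beta:\mathbf{n}|}\xi^\beta$, hence
\[
Q(r^E\xi)=\sum_{|\beta:\mathbf{n}|>1}A_\beta\,r^{|\beta:\mathbf{n}|}\xi^\beta
\]
whenever $r>0$ is small enough that $r^E\xi$ lies in the polydisc of convergence. First I would note that $Q$, being real-analytic on $\mathcal{U}$, lies in $C^\infty(\mathcal{U})$, which disposes of the requirement $Q\in C^l(\mathcal{U})$ (and $\{r^E\}$ is contracting since the eigenvalues $1/n_i$ are positive). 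I then fix a closed polydisc $\overline{D}=\{\zeta:|\zeta^i|\le 2\rho_i\}\subseteq\mathcal{U}$ about $0$; absolute convergence at the point $(2\rho_1,\dots,2\rho_d)$ gives $\sum_\beta|A_\beta|(2\rho)^\beta<\infty$. A routine argument (on a compact subinterval $[c,d]\subseteq(0,\delta_0]$ the termwise $j$-th $r$-derivatives are bounded by $(|\beta|+j)^j\rho^\beta$ for the infinitely many $\beta$ with $|\beta:\mathbf{n}|>j$, the remaining finitely many $\beta$ contributing continuous terms with $r^{|\beta:\mathbf{n}|-j}$) justifies term-by-term differentiation and yields the identity
\[
r^j\partial_r^j Q(r^E\xi)=\sum_{|\beta:\mathbf{n}|>1}A_\beta\,[|\beta:\mathbf{n}|]_j\,r^{|\beta:\mathbf{n}|}\xi^\beta,\qquad [a]_j:=a(a-1)\cdots(a-j+1),
\]
valid for $\xi$ in a given compact set $K$ and $0<r\le\delta_0(K)$.

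The estimate then rests on two points. First, $\nu:=\inf\{|\beta:\mathbf{n}|:A_\beta\neq 0\}>1$: the set $\{|\beta:\mathbf{n}|:\beta\in\mathbb{N}^d\}$ lies in $\tfrac1N\mathbb{N}$ with $N:=n_1\cdots n_d$, so it is discrete and its part in $(1,\infty)$ has minimum at least $(N+1)/N$. Put $s:=(\nu-1)/(2\nu)\in(0,\tfrac12)$ and $\kappa:=(\nu-1)/2>0$. Given $K$, set $M:=1+\sup_{\xi\in K}\max_i|\xi^i|$ and shrink $\delta_0(K)\in(0,1]$ so that $\delta_0^{\,s/n_i}M\le\rho_i$ for every $i$. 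Then for $0<r\le\delta_0(K)$, $\xi\in K$ and any $\beta$, splitting each coordinate as $r^{1/n_i}|\xi^i|=r^{(1-s)/n_i}\bigl(r^{s/n_i}|\xi^i|\bigr)\le r^{(1-s)/n_i}\rho_i$ and multiplying over $i$ gives
\[
\bigl|r^{|\beta:\mathbf{n}|}\xi^\beta\bigr|\le r^{(1-s)|\beta:\mathbf{n}|}\rho^\beta\le r^{1+\kappa}\rho^\beta,
\]
using $(1-s)|\beta:\mathbf{n}|\ge(1-s)\nu=1+\kappa$ and $r\le1$. Second, for each $j$, $\widetilde S_j:=\sum_\beta|A_\beta|\,\bigl|[|\beta:\mathbf{n}|]_j\bigr|\,\rho^\beta<\infty$, since $\bigl|[|\beta:\mathbf{n}|]_j\bigr|\le(|\beta|+j)^j$ and $(|\beta|+j)^j2^{-|\beta|}$ is bounded in $\beta$, so the sum is dominated by a constant times $\sum_\beta|A_\beta|(2\rho)^\beta<\infty$.

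Combining these, for $0<r\le\delta_0(K)$, $\xi\in K$ and $1\le j\le l$,
\[
\bigl|r^j\partial_r^j Q(r^E\xi)\bigr|\le\sum_\beta|A_\beta|\,\bigl|[|\beta:\mathbf{n}|]_j\bigr|\,\bigl|r^{|\beta:\mathbf{n}|}\xi^\beta\bigr|\le r^{1+\kappa}\widetilde S_j\le\widetilde S\,r^{\kappa}\cdot r,
\]
with $\widetilde S:=\max_{1\le j\le l}\widetilde S_j$. Given $\epsilon>0$, taking $\delta:=\min\{\delta_0(K),(\epsilon/\widetilde S)^{1/\kappa}\}$ (the case $\widetilde S=0$, i.e.\ $Q\equiv0$, being trivial) makes the right-hand side at most $\epsilon r$ for all $0<r<\delta$, $\xi\in K$, $j=1,\dots,l$ — which is exactly strong subhomogeneity of $Q$ with respect to $E$ of order $l$.

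The main obstacle — and the step I would be most careful about — is extracting a genuine factor of $r$ (in fact $r^{1+\kappa}$) rather than merely some positive power of $r$: the naive bound $|r^{|\beta:\mathbf{n}|}\xi^\beta|\le\rho^\beta$ discards all decay, so one must retain the coordinate-wise $r$-weights and invoke the fact that the exponents $|\beta:\mathbf{n}|$ appearing in $Q$ are bounded below by $\nu>1$; this is precisely where the discreteness of $\{|\beta:\mathbf{n}|\}$ and the choice of the splitting parameter $s$ do the work. A secondary, more bookkeeping-type point is the justification of term-by-term differentiation, mildly complicated by the negative exponents $r^{|\beta:\mathbf{n}|-j}$ produced by the finitely many low-order $\beta$.
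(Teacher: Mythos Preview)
Your proof is correct and takes a genuinely different route from the paper's. The paper splits $Q=Q_1+Q_2$ into a low-order polynomial part ($1+\rho\le|\beta:\mathbf{n}|\le 2j+2$) and a high-order tail; the polynomial part is handled by compactness of $K$, while for the tail they use the inequality $|\mathcal{P}(q,j)|r^{q/4}\le 1$ (for $q>j$ and small $r$) together with the substitution $r\mapsto r^{E/4}\eta$ to dominate the differentiated tail by the original absolutely convergent series evaluated at a rescaled point. Your argument avoids this split entirely: by fixing a polydisc of radius $2\rho$ you reserve a factor $2^{|\beta|}$ to absorb the polynomial growth $(|\beta|+j)^j$ of the falling factorial, which makes $\widetilde S_j<\infty$ and lets you treat all terms uniformly; the splitting parameter $s=(\nu-1)/(2\nu)$ then extracts the surplus decay $r^{1+\kappa}$ in one stroke. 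Your approach is arguably cleaner and yields a single explicit bound $\bigl|r^j\partial_r^j Q(r^E\xi)\bigr|\le \widetilde S\,r^{1+\kappa}$, whereas the paper's decomposition keeps the argument more elementary for the finite part and avoids having to justify differentiation of the full series at once. Both exploit the same essential fact, namely that the exponents $|\beta:\mathbf{n}|$ lie in the discrete set $\tfrac{1}{N}\mathbb{N}$ and hence have a gap above $1$.
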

\begin{proof}
It suffices to show that, for each, $j\in\mathbb{N}_+$, $\epsilon>0$ and compact set $K\subseteq\mathbb{R}^d$, there is a $\delta>0$ for which
\begin{equation*}
    \abs{r^j\partial_r^jQ(r^E\eta)}\leq \epsilon r
\end{equation*}
for all $0<r<\delta$ and $\eta\in K$. To this end, we fix $j$, $\epsilon$, and $K$ as above and write $Q=Q_1+Q_2$ where
\begin{equation*}
Q_1(\xi)=\sum_{1+\rho\leq |\beta:\mathbf{n}|\leq 2j+2}A_\beta\xi^\beta
\hspace{0.5cm}\mbox{and}\hspace{0.5cm}
    Q_2(\xi)=\sum_{|\beta:\mathbf{n}|> 2j+2}A_{\beta}\xi^\beta
\end{equation*}
where $\rho:=\min\{|\beta:\mathbf{n}|:A_\beta\neq 0\}-1>0$.
For each $q\geq 1$ and $l\in \mathbb{N}_+$, define
\begin{equation*}
    \mathcal{P}(q,l)=q(q-1)(q-2)\cdots (q-(l-1)).
\end{equation*}
In this notation, we observe that 
\begin{equation*}
    \partial_r^j(r^E\xi)^\beta=\partial_r^j\left(r^{|\beta:\mathbf{n}|}\xi^\beta\right)=\mathcal{P}(|\beta:\mathbf{n}|,j)r^{|\beta:\mathbf{n}|-j}\xi^\beta
\end{equation*}
for $\xi\in\mathbb{R}^d$, $r>0$ and $\beta\in\mathbb{N}^d$.
Because $Q_1$ is a polynomial and $K$ is compact, we have
\begin{equation*}
    M_1:=\sup_{\eta\in K}\lp\sum_{1+\rho\leq |\beta:\mathbf{n}|\leq 2j+2}\abs{A_\beta \mathcal{P}(|\beta:\mathbf{n}|,j)\eta^\beta}\rp<\infty.
\end{equation*}
Given that $Q$ is absolutely and uniformly convergent on $\mathcal{U}$, let $\mathcal{O}\subseteq \overline{\mathcal{O}}\subseteq\mathcal{U}$ be an open neighborhood of $0$ for which
\begin{equation*}
    M_2:=\sup_{\xi\in \mathcal{O}}\lp \sum_{|\beta:\mathbf{n}|>2j+2}\abs{A_\beta\xi^\beta}\rp<\infty.
\end{equation*}
We now specify $\delta$. First, given that $\{r^E\}$ and $\{r^{E/4}\}$ are contracting and the set $K$ is compact, we may find a $0<\delta_1\leq 1$ for which $r^E\eta$ and $r^{E/4}\eta$ belong to $\mathcal{O}$ whenever $0<r<\delta_1$ and $\eta\in K$. Also, there exists $\delta_2>0$ for which
\begin{equation}\label{eq:PermuationEst}
    \abs{\mathcal{P}(q,j)}r^{q/4}\leq 1
\end{equation}
for all $q>j$ and $0<r\leq \delta_2$; it is sufficient to take $\delta_2=e^{-4j}$.  Finally, given that $\rho>0$, let $\delta_3>0$ be such that
\begin{equation*}
    M_1 r^\rho+M_2r<\epsilon
\end{equation*}
for all $0<r<\delta_3$. Set $\delta=\min\{\delta_1,\delta_2,\delta_3\}$ and observe that
for all $\eta\in K$ and $0<r<\delta$, we have
\begin{eqnarray*}
    \abs{r^j\partial_r^jQ_1(r^E\eta)}&=&r^j\abs{\sum_{1+\rho\leq|\beta:\mathbf{n}|\leq 2j+2}A_\beta \partial_r^j\lp r^E\eta\rp^{\beta}}\\
    &\leq&r^j\sum_{1+\rho\leq|\beta:\mathbf{n}|\leq 2j+2}\abs{A_\beta\mathcal{P}(|\beta:\mathbf{n}|,j)r^{|\beta:\mathbf{n}|-j}\eta^\beta}\\
    &\leq&r^{1+\rho}\sum_{1+\rho\leq |\beta:\mathbf{n}|\leq 2j+2}\abs{A_\beta \mathcal{P}(|\beta:\mathbf{n}|,j)\eta^\beta}\\
&\leq&r  M_1r^\rho.
\end{eqnarray*}
By virtue of \eqref{eq:PermuationEst}, for each $q=|\beta:\mathbf{n}|>2j+2$, we have
\begin{eqnarray*}
    \abs{\partial_r^j\lp A_\beta(r^E\eta)^\beta\rp}&=&\abs{A_\beta}\abs{\mathcal{P}(|\beta:\mathbf{n}|,j)}r^{|\beta:\mathbf{n}|-j}\abs{\eta^\beta}\\
    &=&r^{|\beta:\mathbf{n}|/2-j}\abs{A_\beta}\abs{\mathcal{P}(|\beta:\mathbf{n}|,j)r^{|\beta:\mathbf{n}|/4}}\abs{(r^{E/4}\eta)^\beta}\\
    &\leq& r\abs{A_\beta(r^{E/4}\eta)^\beta}
\end{eqnarray*}
for all $0<r<\delta\leq\delta_2$ and $\eta\in K$. It follows that
\begin{eqnarray*}
    \abs{\partial_r^jQ_2(r^E\eta)}&=&\abs{\sum_{|\beta:\mathbf{n}|> 2j+2}\partial_r^j\lp A_\beta(r^E\eta)^\beta\rp}\\
    &\leq & \sum_{|\beta:\mathbf{n}|>2j+2}\abs{\partial_r^j\lp A_\beta(r^E\eta)^\beta\rp}\\
    &\leq&\sum_{|\beta:\mathbf{n}|>2j+2}r\abs{A_\beta(r^{E/4}\eta)^\beta}\\
    &\leq &rM_2
\end{eqnarray*}
for all $0<r<\delta$ and $\eta\in K$. Therefore, for each $0<r<\delta$ and $\eta\in K$, we have
\begin{eqnarray*}
    \abs{r^j\partial_r^jQ(r^E\eta)}&\leq&\abs{r^j\partial_r^jQ_1(r^E\eta)}+\abs{r^j\partial_r^jQ_2(r^E\eta)}\\
    &\leq& rr^\rho M_1+r^{j+1}M_2\\
    &\leq& r(M_1r^\rho+M_2r)\\
    &<&r \epsilon.
\end{eqnarray*}
\end{proof}

\begin{proof}[Proof of Proposition \ref{prop:ExpandGamma}.]
It is easy to see that $E\in \Exp(Q_{\xi_0})\cap\Exp(\abs{Q_{\xi_0}})$ and $E/k\in\Exp(R_{\xi_0})$ for $E\in\End(\mathbb{R}^d)$ with standard matrix representation 
\begin{equation*}
\diag((2m_1)^{-1}, (2m_2)^{-1},\dots, (2m_d)^{-1}).
\end{equation*}
If $k=1$, $R_{\xi_0}$ is positive homogeneous with $E\in\Exp(R_{\xi_0})\cap\Exp(Q_{\xi_0})$. By virtue of the preceding lemma (with $\mathbf{n}=2\mathbf{m}$), $\widetilde{Q}_{\xi_0}$ and $\widetilde{R}_{\xi_0}$ are strongly subhomogeneous with respect to $E$ of order $1$ and so, in view of Proposition \ref{prop:supersub_implies_sub}, both are subhomogeneous with respect to $E$. In this case, we may conclude that $\xi_0$ is of positive homogeneous type for $\widehat{\phi}$ with drift $\alpha_{\xi_0}$ and homogeneous order
\begin{equation*}
    \mu_{\xi_0}=\tr E=|\mathbf{1}:2\mathbf{m}|=\sum_{j=1}^d\frac{1}{2m_j}.
\end{equation*}
If $k>1$, our supposition guarantees that $\abs{Q_{\xi_0}}$ is positive homogeneous with respect to $E$ and $R_{\xi_0}$ is positive homogeneous with respect to $E/k$. By virtue of the preceding lemma, $\widetilde{Q}_{\xi_0}$ is strongly subhomogeneous with respect to $E$ of order $2$ and $\widetilde{R}_{\xi_0}$ is strongly subhomogeneous with respect to $E/k$ of order $1$. Consequently, $\xi_0$ is of imaginary homogeneous type for $\widehat{\phi}$ with drift $\alpha_{\xi_0}$ and homogeneous order $\mu_{\xi_0}=\tr E=|\mathbf{1}:2\mathbf{m}|$ as in the previous case.
\end{proof}

\begin{example}\normalfont

Consider the function $\phi : \mathbb{Z}^2 \to \mathbb{C}$ defined by 
\begin{equation*}
    \phi(x,y) =
    \frac{1}{512}\times
    \begin{cases}
    372 - 96i &(x,y) = (0,0)\\
    56+32i &(x,y) = (\pm 1, 0)\mbox{ or }(0,\pm 1)\\
    -28-8i        &(x,y) = (\pm 2,0)\mbox{ or }(0,\pm 2)\\
    8       &(x,y) = (\pm 3,0)\mbox{ or }(0,\pm 3)\\
    -1        &(x,y) = (\pm 4,0)\mbox{ or }(0,\pm 4)\\
    0& \text{otherwise}.
    \end{cases}
\end{equation*}
It is easily verified that $\sup_{\xi}|\widehat{\phi}|=1$ and $\Omega(\phi) = \{\xi_0 \}$ where $\xi_0=(0,0)$. Since $\phi$ is finitely supported, $\Gamma_{0}=\Gamma_{\xi_0}$ is holomorphic and so its Taylor series converges absolutely and uniformly on an open neighborhood $\mathcal{U}\subseteq \mathbb{R}^2$ of $0$. By a straightforward computation, we find 
\begin{eqnarray*}
\Gamma_{0}(\xi)
&=& 
-i\lp \frac{\tau^4}{64} + \frac{\zeta^4}{64}   \rp
-i \sum_{|\beta:(4,4)|\geq 2}A_\beta \xi^\beta\\
&& 
\hspace{1.3cm}
-\lp \f{15\tau^8}{8192} - \f{\tau^4\zeta^4}{4096} + \f{15\zeta^8}{8192}   \rp 
- \sum_{|\beta:(4,4)|\geq 6}B_\beta \xi^\beta\\
&=&-i\lp Q_{0}(\xi)+\widetilde{Q}_{0}(\xi)\rp-\lp R_0(\xi)+\widetilde{R}_0(\xi)\rp
\end{eqnarray*}
where
\begin{equation*}
Q_{0}(\xi)=\sum_{|\beta:(4,4)|=1}A_\beta \xi^\beta=\frac{\tau^4}{64} + \frac{\zeta^4}{64} ,
\end{equation*}
\begin{equation*}
R_{0}(\xi)=\sum_{|\beta:(4,4)|=2}B_\beta\xi^\beta
=  \f{15\tau^8}{8192} - \f{\tau^4\zeta^4}{4096} + \f{15\zeta^8}{8192}  , 
\end{equation*}
\begin{equation*}
\widetilde{Q}_{0}(\xi)= \sum_{|\beta:(4,4)| \geq 3/2}A_\beta \xi^\beta= 
-\f{\tau^6+\zeta^6}{384}  + \f{\tau^8 + \zeta^8}{5120}+  \f{7(\tau^4\zeta^8+\tau^8\zeta^4)}{262144} +
\cdots, 
\end{equation*}
and
\begin{eqnarray*}
\widetilde{R}_{0}(\xi)=\sum_{|\beta:(4,4)|\geq 5/2}B_\beta \xi^\beta 
=\f{(\tau^4\zeta^6 + \tau^6\zeta^4)}{24576} - \f{\tau^6\zeta^6}{147456} - \f{(\tau^4\zeta^8 + \tau^8\zeta^4)}{327680}
\cdots, 
\end{eqnarray*}
for $\xi=(\tau,\zeta)\in\mathcal{U}$. Observe that this expansion is of the form \eqref{eq:SemiEllipticImaginaryExpansion} with $\alpha_0=(0,0)$, $\mathbf{m}=(2,2)$, and $k=2$. It is readily verified that $|Q_0|=Q_0$ and $R_0$ are positive definite and, by virtue of Proposition \ref{prop:ExpandGamma}, we conclude that $\xi_0=0$ is of imaginary homogeneous type for $\widehat\phi$ with drift $\alpha_0=0$ and homogeneous order
\begin{equation*}
    \mu_{\phi}=\mu_0=|\mathbf{1}:2\mathbf{m}|=\frac{1}{4}+\frac{1}{4}=\f{1}{2}.
\end{equation*}
By an appeal to Theorem \ref{thm:ConvolutionPowerEstimate} we obtain, to each compact set $K\subseteq\mathbb{R}^2$, a positive constant $C$ for which
\begin{equation}\label{eq:SecondOrderExampleDecay}
    |\phi^{(n)}(x,y)|\leq \frac{C}{n^{\mu_\phi}}=\frac{C}{n^{1/2}}
\end{equation}
for all $n\in\mathbb{N}_+$ and $(x,y)\in K$. To illustrate this result, we consider the compact set $K = [-700, 700] \times [-700, 700]$ and define $f(n)=f_{\phi,K}(n)=\max_{(x,y)\in K}\abs{\phi^{(n)}(x,y)}$. Figure \ref{fig:Conv_Pwr_0_new} illustrates this result by capturing the decay of $f(n)=f_{\phi,K}(n)=\max_{(x,y)\in K}\abs{\phi^{(n)}(x,y)}$ relative to that of $n^{-\mu_\phi}$. Also, Figure \ref{fig:Conv_Pwr_00_new} illustrates the graph of $\Re \phi^{(n)}(x,y)$ for $(x,y)\in K$ and $n=200$ and $n=1000$.

\begin{figure}[!htb]
    \begin{subfigure}{0.49\textwidth}
    \centering
    \includegraphics[scale=0.58]{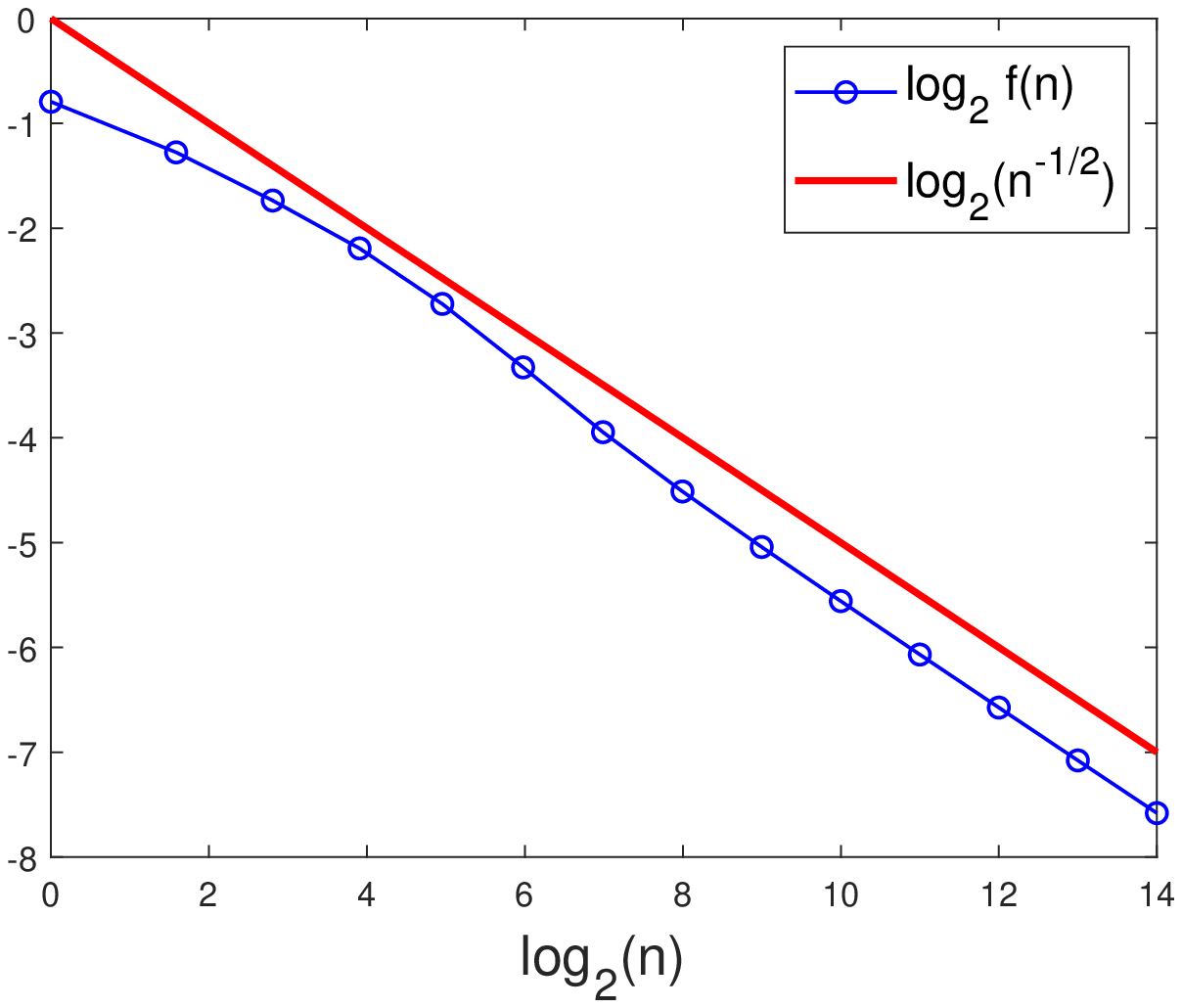}
    \caption{$\log_2 f(n)$ and $\log_2 n^{-1/2}$ versus $\log_2 n$}
    \end{subfigure}
    \begin{subfigure}{0.49\textwidth}
    \centering
    \includegraphics[scale=0.58]{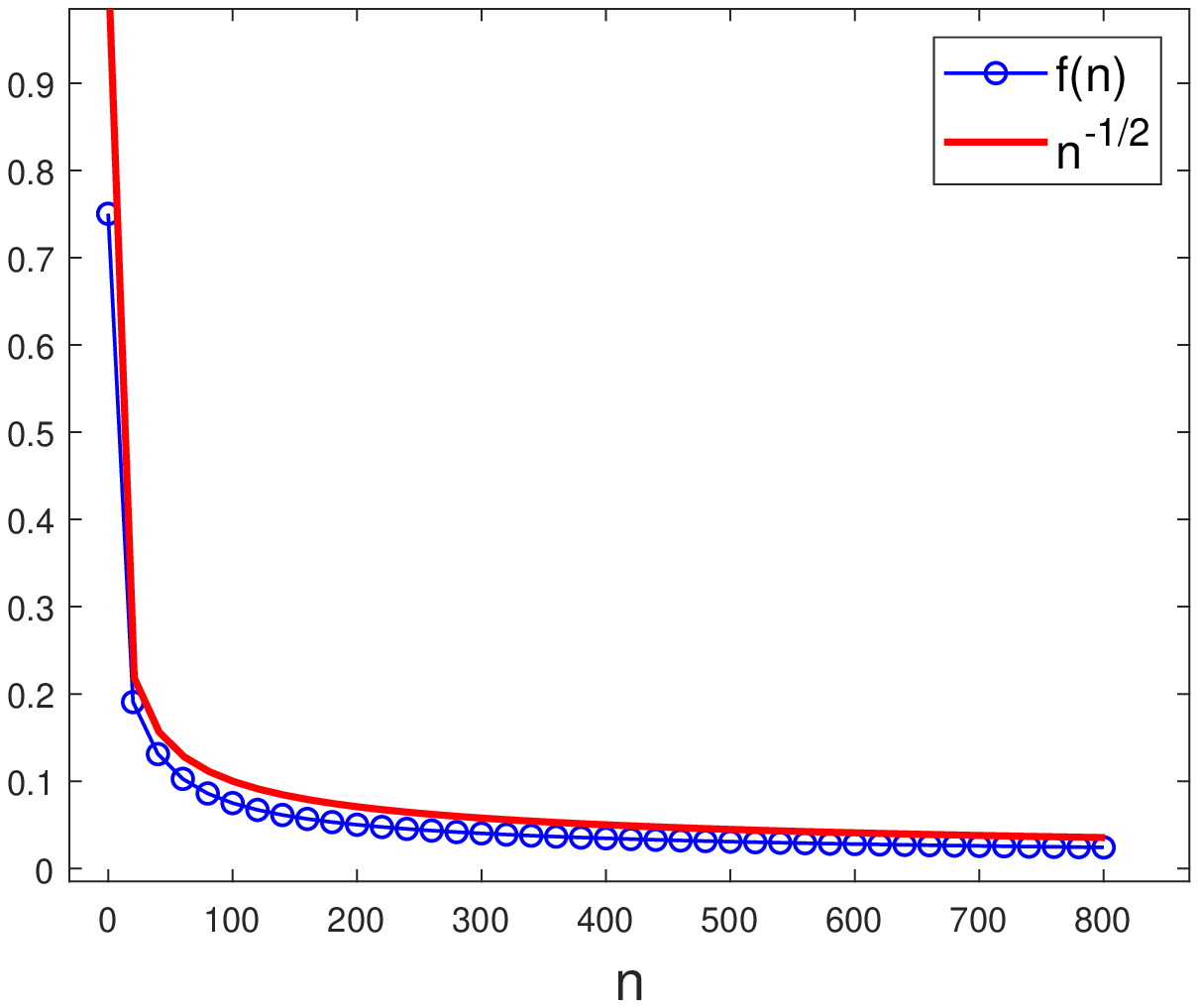}
    \caption{$f(n)$ and $n^{-1/2}$ versus $n$}
    \end{subfigure}
    \caption{Behavior of $f(n)=f_{\phi,K}(n)=\max_{(x,y)\in K}\abs{\phi^{(n)}(x,y)}$}
    \label{fig:Conv_Pwr_0_new}
\end{figure}

\begin{figure}[!htb]
    \begin{subfigure}{0.49\textwidth}
    \centering
    \includegraphics[scale=0.58]{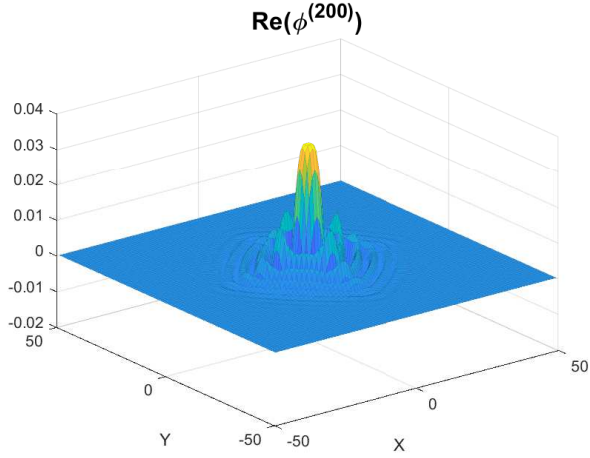}
    \caption{$n = 100$.}
    \label{fig:Conv_Pwr_00a}
    \end{subfigure}
    \begin{subfigure}{0.49\textwidth}
    \centering
    \includegraphics[scale=0.58]{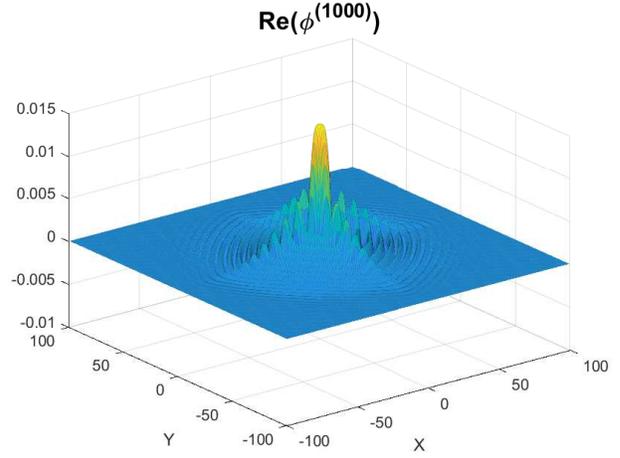}
    \caption{$n = 1000$.}
    \label{fig:Conv_Pwr_00b}
    \end{subfigure}
    \caption{$\Re\phi^{(n)}(x,y)$ for $n=200$ and $1000$}
    \label{fig:Conv_Pwr_00_new}
\end{figure}
\end{example}

\begin{example}\normalfont
Consider the function $\phi : \mathbb{Z}^2 \to \mathbb{C}$ defined by 
\begin{equation*}
    \phi(x,y) = 
    \f{1}{768}\times
    \begin{cases}
    602 - 112i &(x,y) = (0,0)\\
    56 + 32i   &(x,y) = (0,\pm 1)\mbox{ or }(-1,0)\\
    72 + 32i   &(x,y) = (1,0)\\
    -28 - 8i   &(x,y) = (0,\pm 2)\\
    -16        &(x,y) = (\pm 2,0)\\
    56         &(x,y) = (0,\pm 3)\\
    -1         &(x,y) = (0,\pm 4)\\
    4          &(x,y) = (-1,\pm 1)\\
    -4         &(x,y) = (1,\pm 1)\\
    0          &\text{otherwise}.
    \end{cases}
\end{equation*}
As with the preceding examples, it is easy to see that $\sup_{\xi}|\widehat{\phi}(\xi)|=1$, $\Omega(\phi)=\{\xi_0\}=\{(0,0)\}$ and $\Gamma_{0}=\Gamma_{\xi_0}$ has the absolutely and uniformly convergent Taylor expansion
\begin{equation*}
    \Gamma_{0}(\xi)=-i\left(Q_0(\xi)+\widetilde{Q}_0(\xi)\right)-\left(R_0(\xi)+\widetilde{R}_0(\xi)\right)
\end{equation*}
where
\begin{equation*}
    Q_0(\xi)=\sum_{|\beta:(2,4)|=1}A_\beta \xi^\beta=\frac{\tau^2}{24}-\frac{\tau\zeta^2}{96} +\frac{ \zeta^4}{96},
\end{equation*}
\begin{equation*}
    R_0(\xi)=\sum_{|\beta:(2,4)|=2}B_\beta \xi^\beta=\frac{23\tau^4}{1152}  + \frac{\tau^3\zeta^2}{2304}  - \frac{\tau^2\zeta^4}{2048} + \f{\tau\zeta^6}{9216}+ \frac{23\zeta^8}{18432},
\end{equation*}
\begin{equation*}
    \widetilde{Q}_0(\xi)=\sum_{|\beta:(2,4)|\geq 3/2}A_\beta \xi^\beta=- \frac{\tau^4}{288}+\frac{\tau\zeta^4}{1152} +\frac{\tau^3\zeta^2}{576}  -\frac{\tau^3\zeta^4}{6912} 
    + \cdots,
\end{equation*}
and
\begin{equation*}
    \widetilde{R}_0(\xi)=\sum_{|\beta:(2,4)|\geq 5/2}B_\beta \xi^\beta
    =  - \frac{\tau^3\zeta^4}{27648} + \frac{\tau^4\zeta^4}{18432} +  \frac{\tau^4\zeta^4}{18432} + \cdots
\end{equation*}
for $\xi=(\tau,\zeta)\in\mathcal{U}$ where $\mathcal{U}\subseteq\mathbb{R}^2$ is a neighborhood of $0$. In this case, the above expansion is of the form \eqref{eq:SemiEllipticImaginaryExpansion} with $\alpha_0=(0,0)$, $\mathbf{m}=(1,2)$ and $k=2$. Here, as with the previous examples, it is readily verified that $|Q_0|=Q_0$ and $R_0$ are positive definite and so an appeal to Proposition \ref{prop:ExpandGamma} guarantees that $\xi_0=(0,0)$ is of imaginary homogeneous type for $\widehat{\phi}$ with drift $\alpha_0=(0,0)$ and homogeneous order
\begin{equation*}
    \mu_\phi=\mu_0=|\mathbf{1}:2\mathbf{m}|=
    \frac{1}{2}+\frac{1}{4}=\frac{3}{4}.
\end{equation*}
By an appeal to Theorem \ref{thm:ConvolutionPowerEstimate}, we obtain, to each compact set $K$, a positive constant $C$ for which
\begin{equation}\label{eq:SecondOrderExampleDecay2}
    \abs{\phi^{(n)}(x,y)}\leq\frac{C}{n^{\mu_\phi}}=\frac{C}{n^{3/4}}
\end{equation}
for all $n\in\mathbb{N}$ and $(x,y)\in K$. Figure \ref{fig:Conv_Pwr_2} illustrates this result by capturing the decay of $f(n)=f_{\phi,K}(n)=\max_{(x,y)\in K}\abs{\phi^{(n)}(x,y)}$ where 
$K=[-300,300]\times [-300,300]$ relative to that of $n^{-\mu_\phi}$. Also, Figure \ref{fig:Conv_Pwr_20} illustrates the graph of $\Re \phi^{(n)}(x,y)$ for $(x,y)\in K$ and $n=300$ and $n=600$.

\begin{figure}[!htb]
    \begin{subfigure}{0.49\textwidth}
    \centering
    \includegraphics[scale=0.58]{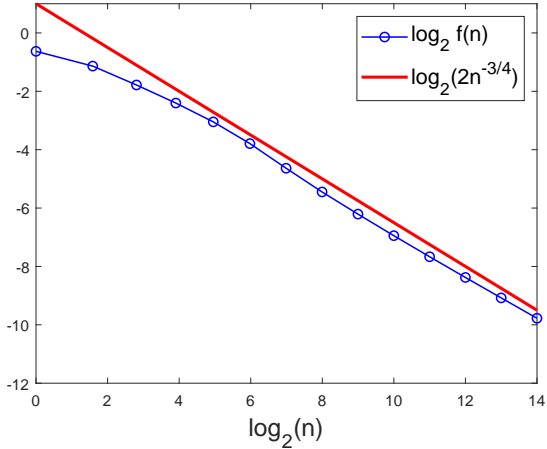}
    \caption{$\log_2 f(n)$ and  $\log_2 2n^{-3/4}$ versus $\log_2 n$.}
    \end{subfigure}
    \begin{subfigure}{0.49\textwidth}
    \centering
    \includegraphics[scale=0.58]{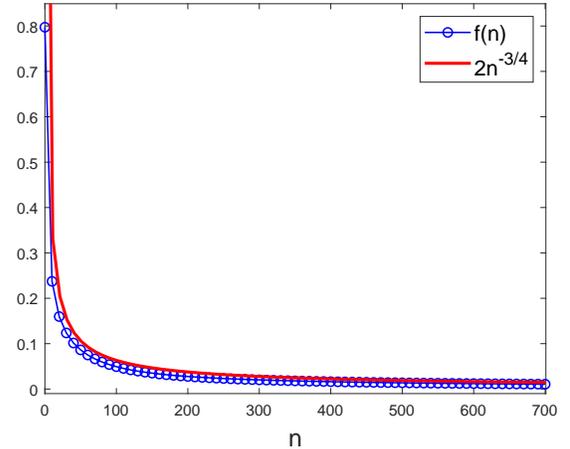}
    \caption{$f(n)$ and $2n^{-3/4}$ versus $n$}
    \end{subfigure}
    \caption{Behavior of $f(n)=f_{\phi,K}(n)=\max_{(x,y)\in K}\abs{\phi^{(n)}(x,y)}$.}
    \label{fig:Conv_Pwr_2}
\end{figure}

\begin{figure}[!htb]
    \begin{subfigure}{0.49\textwidth}
    \centering
    \includegraphics[scale=0.58]{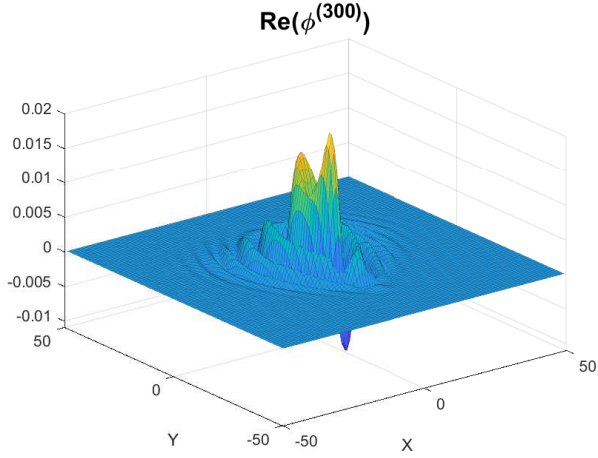}
    \caption{$n=300$.}
    \end{subfigure}
    \begin{subfigure}{0.49\textwidth}
    \centering
    \includegraphics[scale=0.58]{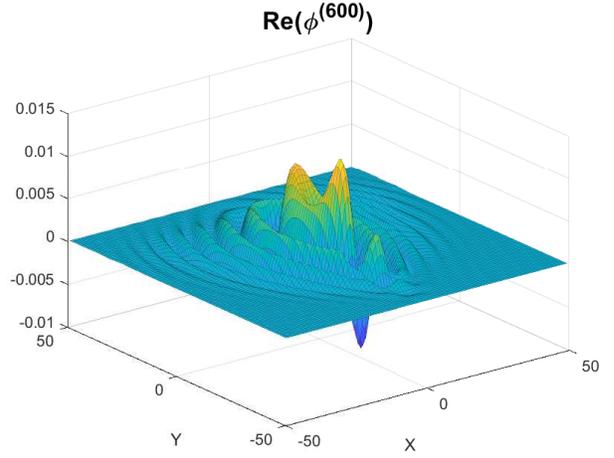}
    \caption{$n = 600$.}
    \label{fig:Conv_Pwr_2b}
    \end{subfigure}
    \caption{$\Re{\phi^{(n)}}$ for $n = 300$ and $n=600$}
    \label{fig:Conv_Pwr_20}
\end{figure}
\end{example}



\begin{example}\normalfont
This example illustrates a complex-valued function $\phi$ on $\mathbb{Z}^2$ whose Fourier transform is maximized in absolute value at two distinct points in $\mathbb{T}^2$, one of which is a point of imaginary homogeneous type for $\widehat{\phi}$ with homogeneous order $2/3$ and the other is a point of positive homogeneous type $\widehat{\phi}$ of homogeneous order $1$. We define $\phi: \mathbb{Z}^2 \to \mathbb{C}$ by $\phi=2^{-7}\phi_1-i2^{-11}\phi_2+2^{-21}\phi_3$ where
\begin{equation*}
    \phi_1(x,y)=\begin{cases}
    15 + 15i &(x,y) = (\pm 1,0)\\
    16 + 16i &(x,y) = (0, \pm 1)\\
     1 + 1i &(x,y) = (\pm 3,0)\\
    0 &\mbox{otherwise}
    \end{cases},
    \hspace{1cm}
    \phi_2(x,y) = 
    \begin{cases}
    682 &(x,y) = (0,0)\\
    152  &(x,y) = (\pm 2,0)\\
    -28  &(x,y) = (\pm 4,0)\\
    8 &(x,y) = (\pm 6, 0)\\
    -1 &(x,y) = (\pm 8, 0)\\
    60  &(x,y) = (0, \pm 2)\\
    -24 &(x,y) = (0,\pm 4)\\
    4 &(x,y) = (0,\pm 6)\\
    0 &\mbox{otherwise}
    \end{cases},
\end{equation*}
and
\begin{equation*}
    \phi_3(x,y) = 
    \begin{cases}
    1387004 &(x,y) = (0,0)\\
    -106722 &(x,y) = (\pm 2,0)\\
    3960 &(x,y) = (\pm 4,0)\\
    -1045 &(x,y) = (\pm 6, 0)\\
    138  &(x,y) = (\pm 8, 0)\\
    -9 &(x,y) = (\pm 10, 0)\\
    -131072 &(x,y) = (0, \pm 2)\\
    0 &\mbox{otherwise}
    \end{cases}
\end{equation*}
for $(x,y)\in\mathbb{Z}^2$. Though this example is slightly more complicated than the previous ones considered, it is straightforward to verify that $\sup_{\xi}|\widehat{\phi}(\xi)|=1$ and, in this case, the supremum is attained at two points in $\mathbb{T}^2$. Specifically, $\Omega(\phi)=\{\xi_1,\xi_2\}$ where $\xi_1=(0,0)$ and $\xi_2=(\pi,\pi)$. For $\xi_1$, $\Gamma_1=\Gamma_{\xi_1}$ has an absolutely and uniformly convergent Taylor series of the form
\begin{equation*}
    \Gamma_{1}(\xi)=-i\left(Q_{1}(\xi)-\widetilde{Q_1}(\xi)\right)-\left(R_1(\xi)+\widetilde{R_1}(\xi)\right)
\end{equation*}
for $\xi=(\tau,\zeta)\in\mathcal{U}_1$ where $\mathcal{U}_1\subseteq\mathbb{R}^2$ is an open neighborhood of $(0,0)$ and
\begin{equation*}
    Q_{1}(\xi)=\sum_{|\beta:(6,2)|=1}A_\beta \xi^\beta=\frac{\tau^6}{128}+\frac{\zeta^2}{8},
\end{equation*}
\begin{equation*}
    R_1(\xi)=\sum_{|\beta:(6,2)|=2}B_\beta\xi^\beta=\frac{111\tau^{12}}{32768}-\frac{\tau^6 \zeta^2}{1024}+\frac{3\zeta^4}{128},
\end{equation*}
\begin{equation*}
    \widetilde{Q_1}(\xi)=\sum_{|\beta:(6,2)|\geq 4/3}A_\beta \xi^\beta=-\f{65\tau^8}{512} -\f{\zeta^4}{96} + \frac{\tau^6\zeta^4}{8192} 
    +\cdots,
\end{equation*}
and
\begin{equation*}
    \widetilde{R_1}(\xi)=\sum_{|\beta:(6,2)|\geq 7/3}B_\beta\xi^\beta=\f{65\tau^8\zeta^2}{4096}  + \f{\tau^6 \zeta^4}{12288} + \cdots
\end{equation*}
for $\xi=(\tau,\zeta)\in\mathcal{U}_1$. It is straightforward to verify that $Q_1=\abs{Q_1}$ and $R_1$ are positive definite and so Proposition \ref{prop:ExpandGamma} guarantees that $\xi_1=(0,0)$ is of imaginary homogeneous type for $\widehat{\phi}$ with $\mathbf{m}_1=(3,1)$, $k_1=2$, drift $\alpha_{\xi_1}=(0,0)$ and homogeneous order
\begin{equation*}
    \mu_{\xi_1}=|\mathbf{1}:2\mathbf{m}_1|=\frac{1}{6}+\frac{1}{2}=\frac{2}{3}.
\end{equation*}
For $\xi_2 = (\pi,\pi)$, $\Gamma_2=\Gamma_{\xi_2}$ has an absolutely and uniformly convergent Taylor series of the form
\begin{equation*}
    \Gamma_2(\xi)=-i\left(Q_2(\xi)+\widetilde{Q_2}(\xi)\right)-\left(R_2(\xi)+\widetilde{R_2}(\xi)\right)
\end{equation*}
for $\xi=(\tau,\zeta)\in\mathcal{U}_2$ where $\mathcal{U}_2\subseteq\mathbb{R}^2$ is an open neighborhood of $(0,0)$ and 
\begin{equation*}
    Q_2(\xi)=\sum_{|\beta:(2,2)|=1}A_\beta \xi^\beta=-\left(\frac{3\tau^2}{8}+\frac{\zeta^2}{4}\right),
\end{equation*}
\begin{equation*}
    R_2(\xi)=\sum_{|\beta:(2,2)|=1}B_\beta\xi^\beta=\frac{\tau^2}{8}+\frac{3\zeta^2}{8},
\end{equation*}
\begin{equation*}
    \widetilde{Q_2}(\xi)=\sum_{|\beta:(2,2)|\geq 2}A_\beta\xi^\beta=\frac{\tau^4}{64}-\frac{9\tau^2\zeta^2}{64}+\frac{\zeta^4}{48}+\cdots,
\end{equation*}
and
\begin{equation*}
    \widetilde{R_2}(\xi)=\sum_{|\beta:(2,2)|\geq 2}B_\beta\xi^\beta=-\frac{\tau^4}{8}-\frac{3\tau^2\zeta^2}{64}-\frac{13\zeta^4}{384}+\cdots,
\end{equation*}
for $\xi=(\tau,\zeta)\in\mathcal{U}_2$. Thus, the expansion is of the form \eqref{eq:SemiEllipticImaginaryExpansion} with $\mathbf{m}_2=(1,1)$ and $k_2=1$. Since $R_2$ is clearly positive definite, Proposition \ref{prop:ExpandGamma} guarantees that $\xi_2=(\pi,\pi)$ is of positive homogeneous type for $\widehat{\phi}$ with drift $\alpha_{\xi_2}=(0,0)$ and homogeneous order
\begin{equation*}
    \mu_{\xi_2}=|\mathbf{1}:2\mathbf{m}_2|=\frac{1}{2}+\frac{1}{2}=1.
\end{equation*}
Upon noting that $\mu_{\phi}=\min\{\mu_{\xi_1},\mu_{\xi_2}\}=2/3$, an appeal to Theorem \ref{thm:ConvolutionPowerEstimate} guarantees, to each compact set $K$, a constant $C$ for which
\begin{equation}
    \abs{\phi^{(n)}(x,y)}\leq\frac{C}{n^{\mu_\phi}}=\frac{C}{n^{2/3}}
\end{equation}
for all $n\in\mathbb{N}$ and $(x,y)\in K$. Figure \ref{fig:Conv_Pwr_5} illustrates this result by capturing the decay of $f(n)=f_{\phi,K}(n)=\max_{(x,y)\in K}\abs{\phi^{(n)}(x,y)}$ where $K=[-500,500]\times [-500,500]$ relative to that of $n^{-\mu_\phi}$. Also, Figure \ref{fig:Conv_Pwr_50} illustrates the graph of $\Re \phi^{(n)}(x,y)$ for $(x,y)\in K$ and $n=200$ and $n=700$.

\begin{figure}[!htb]
    \begin{subfigure}{0.49\textwidth}
    \centering
    \includegraphics[scale=0.58]{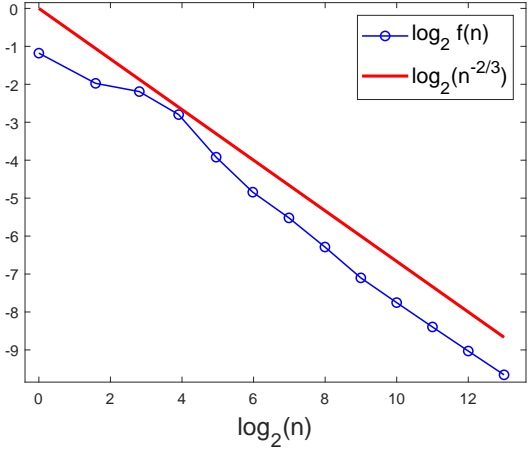}
    \caption{$\log_2 f(n)$ and $\log_2 n^{-2/3}$ versus $\log_2 n$}
    \end{subfigure}
    \begin{subfigure}{0.49\textwidth}
    \centering
    \includegraphics[scale=0.58]{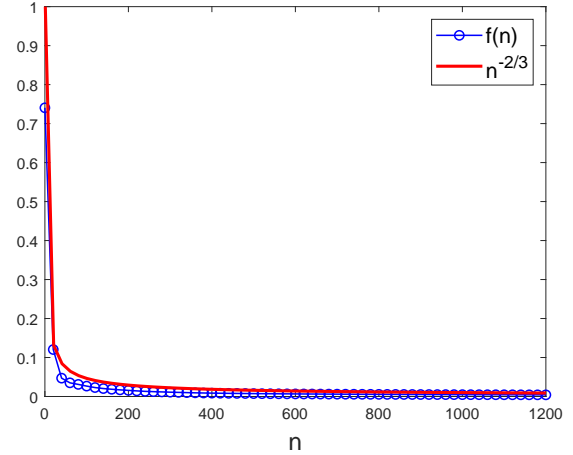}
    \caption{$f(n)$ and $n^{-2/3}$ versus $n$}
    \end{subfigure}
    \caption{Behavior of $f(n)=f_{\phi,K}(n)=\max_{(x,y)\in K}\abs{\phi^{(n)}(x,y)}$.}
    \label{fig:Conv_Pwr_5}
\end{figure}

\begin{figure}[!htb]
    \begin{subfigure}{0.49\textwidth}
    \centering
    \includegraphics[scale=0.58]{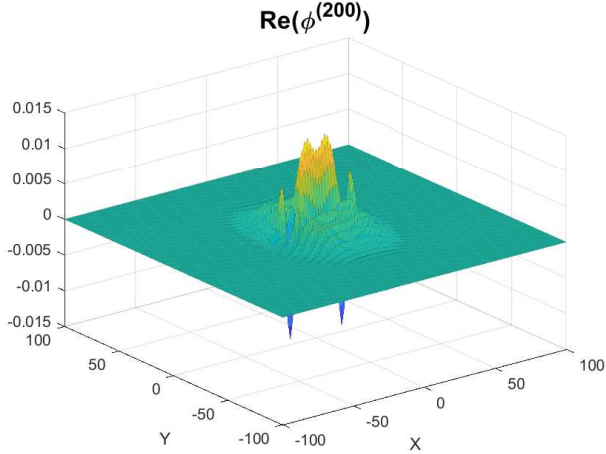}
    \caption{$n = 200$.}
    \label{fig:Conv_Pwr_5a}
    \end{subfigure}
    \begin{subfigure}{0.49\textwidth}
    \centering
    \includegraphics[scale=0.58]{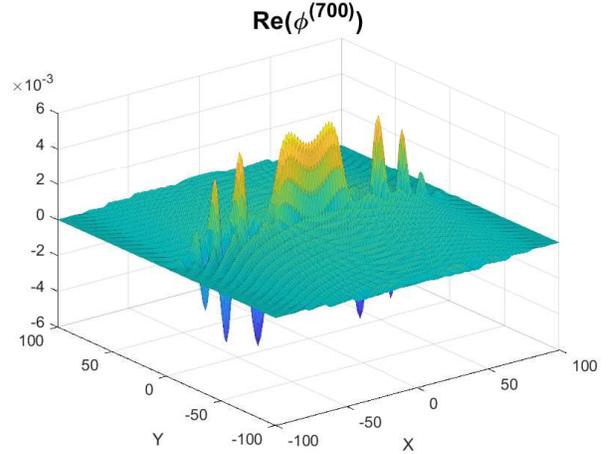}
    \caption{$n = 700$.}
    \label{fig:Conv_Pwr_5b}
    \end{subfigure}
    \caption{$\Re{\phi^{(n)}}$ for $n = 200$ and $n=700$.}
    \label{fig:Conv_Pwr_50}
\end{figure}

\end{example}


\section{Proof of Theorem \ref{thm:BestIntegrationFormula}}\label{sec:ProofofBest}

\subsection{Construction of $\sigma_{P,E}$}\label{subsec:ConstructionofSigma}

Throughout this section, we fix $E\in\Exp(P)$. Define $\psi_E:(0,\infty)\times S\to\mathbb{R}^d\setminus\{0\}$ by
\begin{equation}\label{eq:Homeomorphism}
\psi_E(r,\eta)=r^E\eta
\end{equation}
for $r>0$ and $\eta\in S$. As $\psi_E$ is the restriction of the continuous function $(0,\infty)\times \mathbb{R}^d\ni (r,x)\mapsto r^E x\in\mathbb{R}^d$ to $(0,\infty)\times S$, it is necessarily continuous. As the following proposition shows, $\psi_E$ is, in fact, a homeomorphism.

\begin{proposition}\label{prop:PsiHomeomorphism}
The map $\psi_E:(0,\infty)\times S\to\mathbb{R}^d\setminus\{0\}$ is a homeomorphism with continuous inverse $\psi_E^{-1}:\mathbb{R}^d\setminus\{0\}\to (0,\infty)\times S$ given by
\begin{equation*}
\psi_E^{-1}(x)=(P(x),(P(x))^{-E}x)
\end{equation*}
for $x\in\mathbb{R}^d\setminus\{0\}$.
\end{proposition}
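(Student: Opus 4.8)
The plan is to produce the claimed formula as a genuine two-sided inverse of $\psi_E$ and then to check that both maps are continuous; the homeomorphism claim follows at once. Define $\Phi\colon\mathbb{R}^d\setminus\{0\}\to(0,\infty)\times S$ by $\Phi(x)=(P(x),P(x)^{-E}x)$. First I would verify that $\Phi$ is well defined. Since $P$ is positive definite, $P(x)>0$ for every $x\neq 0$, so $P(x)^{-E}=(P(x))^{-E}\in\Gl(\mathbb{R}^d)$ is meaningful; and since $E\in\Exp(P)$, applying the homogeneity relation \eqref{eq:IntroductionHomogeneous} with $r=P(x)$ gives
\[
P\bigl(P(x)^{-E}x\bigr)=P(x)^{-1}P(x)=1,
\]
so $P(x)^{-E}x\in S$ and $\Phi$ indeed takes values in $(0,\infty)\times S$.

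Next I would check the two composition identities. For $(r,\eta)\in(0,\infty)\times S$, homogeneity gives $P(r^E\eta)=rP(\eta)=r$ because $\eta\in S$, and using the group law $\{r^E\}$ (Proposition \ref{prop:ContinuousGroupProperties}),
\[
\Phi\bigl(\psi_E(r,\eta)\bigr)=\bigl(P(r^E\eta),\,P(r^E\eta)^{-E}r^E\eta\bigr)=\bigl(r,\,r^{-E}r^E\eta\bigr)=(r,\eta).
\]
Conversely, for $x\in\mathbb{R}^d\setminus\{0\}$,
\[
\psi_E\bigl(\Phi(x)\bigr)=\psi_E\bigl(P(x),\,P(x)^{-E}x\bigr)=P(x)^E P(x)^{-E}x=x.
\]
Hence $\psi_E$ is a bijection with $\psi_E^{-1}=\Phi$ as a set map.

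Finally I would address continuity. The map $\psi_E$ is continuous, as already noted, being the restriction of $(r,x)\mapsto r^Ex$ to $(0,\infty)\times S$. For $\Phi$: the function $P$ is continuous by hypothesis and strictly positive on $\mathbb{R}^d\setminus\{0\}$; the one-parameter group $(0,\infty)\ni r\mapsto r^{-E}\in\Gl(\mathbb{R}^d)$ is continuous (it is the continuous group generated by $-E$, see the appendix); and matrix-vector multiplication $\Gl(\mathbb{R}^d)\times\mathbb{R}^d\to\mathbb{R}^d$ is continuous. Composing, $x\mapsto\bigl(P(x),P(x)^{-E}x\bigr)$ is continuous into $(0,\infty)\times\mathbb{R}^d$, and since it lands in the subspace $(0,\infty)\times S$ it is continuous for the subspace topology. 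Therefore $\psi_E$ is a homeomorphism with the stated inverse.

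I do not anticipate any serious obstacle here; the only steps requiring care are the well-definedness of $\Phi$ — which is precisely where homogeneity of $P$ with respect to $E$ is used — and the appeal to continuity of the one-parameter group $r\mapsto r^{-E}$ to get continuity of $\Phi$.
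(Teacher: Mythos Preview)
Your proof is correct and follows essentially the same approach as the paper: define the candidate inverse $\Phi$ (the paper calls it $\rho$), verify it lands in $(0,\infty)\times S$ using homogeneity, check both composition identities, and establish continuity of each map. The only cosmetic difference is that you spell out the continuity of $\Phi$ as a composition of continuous maps slightly more explicitly than the paper does.
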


\begin{proof}
Given that $P$ is continuous and positive definite, $P(x)>0$ for each $x\in \mathbb{R}^d\setminus\{0\}$ and the map $\mathbb{R}^d\setminus\{0\}\ni x \mapsto (P(x))^{-E}x\in \mathbb{R}^d$ is continuous. Further, in view of the homogeneity of $P$,
\begin{equation*}
P\left((P(x))^{-E} x \right)=P(x)^{-1}P(x)=1
\end{equation*}
for all $x\in\mathbb{R}^d\setminus\{0\}$. It follows from these two observations that
\begin{equation*}
\rho(x)=(P(x),(P(x))^{-E}x),
\end{equation*}
defined for $x\in\mathbb{R}^d\setminus\{0\}$, is a continuous function taking $\mathbb{R}^d\setminus\{0\}$ into $(0,\infty)\times S$. We have
\begin{equation*}
(\psi_E\circ \rho)(x)=\psi_E(P(x),(P(x))^{-E}x)=(P(x))^{E}(P(x))^{-E}x=x
\end{equation*}
for every $x\in \mathbb{R}^d\setminus \{0\}$ and
\begin{equation*}
(\rho\circ\psi_E)(r,\eta)=\rho(r^E\eta)=(P(r^{E}\eta),(P(r^{E}\eta))^{-E}(r^E\eta))=(rP(\eta),(rP(\eta))^{-E}(r^{E}\eta))=(r,\eta)
\end{equation*}
for every $(r,\eta)\in (0,\infty)\times S$. Thus $\rho$ is a (continuous) inverse for $\psi_E$ and so it follows that $\psi_E$ is a homeomorphism and $\rho=\psi_E^{-1}$.
\end{proof}

\noindent We shall now construct the $\sigma$-algebra $\Sigma_{P,E}$ on $S$; later, we will show that it is independent of our choice of $E$. As in the statement of Theorem \ref{thm:BestIntegrationFormula}, for each $F\subseteq S$, define
\begin{equation*}
\widetilde{F_E}=\bigcup_{0<r<1}\left(r^E F\right)=\{r^E\eta:0<r<1,\eta\in F\}. 
\end{equation*}
We shall denote by $\Sigma_{P,E}$ the collection of subsets $F$ of $S$ for which $\widetilde{F_E}\in\mathcal{M}_d$, i.e.,  
\begin{equation*}
\Sigma_{P,E}=\{F\subseteq S:\widetilde{F_E}\in\mathcal{M}_d\}.
\end{equation*}

\begin{proposition}\label{prop:BorelContainment}
$\Sigma_{P,E}$ is a $\sigma$-algebra on $S$ containing the Borel $\sigma$-algebra on $S$, $\mathcal{B}(S)$.
\end{proposition}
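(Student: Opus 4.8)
The plan is to verify the two assertions separately: first that $\Sigma_{P,E}$ is a $\sigma$-algebra on $S$, and then that it contains every Borel subset of $S$. For the $\sigma$-algebra structure, I would exploit the fact that the map $F\mapsto\widetilde{F_E}$ commutes with the basic set operations. Specifically, $\widetilde{S_E}=\psi_E((0,1)\times S)$ is an open (hence Lebesgue-measurable) subset of $\mathbb{R}^d\setminus\{0\}$ by Proposition \ref{prop:PsiHomeomorphism}, so $S\in\Sigma_{P,E}$. For complements, the key identity is $\widetilde{(S\setminus F)_E}=\widetilde{S_E}\setminus\widetilde{F_E}$: this follows because the radial fibres $\{r^E\eta:0<r<1\}$ over distinct $\eta\in S$ are pairwise disjoint (a consequence of $\psi_E$ being a bijection), so a point $r^E\eta\in\widetilde{S_E}$ lies in $\widetilde{F_E}$ if and only if $\eta\in F$. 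Since $\mathcal{M}_d$ is closed under relative complements, $F\in\Sigma_{P,E}$ implies $S\setminus F\in\Sigma_{P,E}$. For countable unions, one has $\widetilde{(\bigcup_n F_n)_E}=\bigcup_n\widetilde{(F_n)_E}$ directly from the definition, and $\mathcal{M}_d$ is closed under countable unions; hence $\Sigma_{P,E}$ is closed under countable unions. This establishes that $\Sigma_{P,E}$ is a $\sigma$-algebra.

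For the Borel containment, I would first show that every relatively open subset $U\subseteq S$ lies in $\Sigma_{P,E}$. This is immediate from Proposition \ref{prop:PsiHomeomorphism}: $\widetilde{U_E}=\psi_E((0,1)\times U)$, and $(0,1)\times U$ is open in $(0,\infty)\times S$, so its image under the homeomorphism $\psi_E$ is open in $\mathbb{R}^d\setminus\{0\}$, hence Borel, hence in $\mathcal{M}_d$. Since $\Sigma_{P,E}$ is a $\sigma$-algebra containing all relatively open subsets of $S$, and $\mathcal{B}(S)$ is by definition the smallest such $\sigma$-algebra, we conclude $\mathcal{B}(S)\subseteq\Sigma_{P,E}$.

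I expect the main subtlety to be the complement identity $\widetilde{(S\setminus F)_E}=\widetilde{S_E}\setminus\widetilde{F_E}$, since this is where injectivity of $\psi_E$ is genuinely used — without the disjointness of radial fibres one would only get a one-sided inclusion. Everything else is a routine unwinding of definitions together with the fact that $\psi_E$ is a homeomorphism (already proved) and that $\mathcal{M}_d$, the Lebesgue $\sigma$-algebra, contains the Borel sets and is closed under the relevant operations. One should also note in passing that $\widetilde{F_E}\subseteq\mathbb{R}^d\setminus\{0\}$ automatically, so there is no issue about the excluded origin; this is why the construction is carried out on $\mathbb{R}^d\setminus\{0\}$ rather than on all of $\mathbb{R}^d$.
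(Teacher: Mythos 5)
Your proof is correct and follows essentially the same route as the paper's: establish closure under the set operations via the disjointness of the radial fibres $\{r^E F\}_{0<r<1}$ (the paper phrases this as the identity $\widetilde{(F\setminus G)_E}=\widetilde{F_E}\setminus\widetilde{G_E}$ for nested $G\subseteq F$, you as $\widetilde{(S\setminus F)_E}=\widetilde{S_E}\setminus\widetilde{F_E}$, but these are the same idea), then deduce Borel containment from the homeomorphism property of $\psi_E$ established in Proposition~\ref{prop:PsiHomeomorphism}. Your remark that injectivity of $\psi_E$ is the genuine content behind the complement identity accurately identifies the crux of the argument.
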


\begin{proof}
Throughout the proof, we write $\Sigma=\Sigma_{P,E}$ and $\widetilde{F}=\widetilde{F_E}$ for each $F\subseteq S$.
We first show that $\Sigma$ is a $\sigma$-algebra. Since $\widetilde S=B\setminus\{0\}$, it is open in $\mathbb{R}^d\setminus\{0\}$ and therefore Lebesgue measurable. Hence $S\in \Sigma$. Let $G, F\in \Sigma$ be such that $G\subseteq F$. Then,
\begin{equation*}
\widetilde{F\setminus G}=\bigcup_{0<r<1}r^E\left(F\setminus G\right)=\bigcup_{0<r<1}\left(r^EF\setminus r^E G\right)=\left(\bigcup_{0<r<1}r^E F\right)\setminus\left(\bigcup_{0<r<1}r^E G\right)=\widetilde F\setminus \widetilde G
\end{equation*}
where we have used the fact that the collection $\{r^E F\}_{0<r<1}$ is mutually disjoint to pass the union through the set difference. Consequently $\widetilde F\setminus \widetilde{G}$ is Lebesgue measurable and therefore $F\setminus G\in \Sigma$.  Now, given a countable collection $\{F_n\}\subseteq \Sigma$, observe that
\begin{equation*}
    \widetilde{\bigcup_{n=1}^\infty F_n}= \bigcup_{0<r<1}r^E \left(\bigcup_{n=1}^\infty F_n\right)= \bigcup_{0 <r< 1}  \bigcup_{n=1}^\infty  r^E F_n =\bigcup_{n=1}^\infty \bigcup_{0 <r < 1}  r^E F_n =\bigcup_{n=1}^\infty \widetilde{F_n} \in \mathcal{M}_d
\end{equation*}
whence $\cup_n F_n\in \Sigma$. Thus $\Sigma$ is a $\sigma$-algebra. 

Finally, we show that
\begin{equation*}
\mathcal{B}(S)\subseteq\Sigma.
\end{equation*}
As the Borel $\sigma$-algebra is the smallest $\sigma$-algebra containing the open subsets of $S$, it suffices to show that $\mathcal{O}\in \Sigma$ whenever $\mathcal{O}$ is open in $S$. Armed with Proposition \ref{prop:PsiHomeomorphism}, this is an easy task: Given an open set $\mathcal{O}\subseteq S$, observe that
\begin{equation*}
\widetilde{\mathcal{O}}=\{r^E\eta:0<r<1,\eta\in\mathcal{O}\}=\psi_E((0,1)\times\mathcal{O}).
\end{equation*}
Upon noting that $(0,1)\times\mathcal{O}$ is an open subset of $(0,\infty)\times S$, Proposition \ref{prop:PsiHomeomorphism} guarantees that $\widetilde{\mathcal{O}}=\psi_E((0,1)\times\mathcal{O})\subseteq\mathbb{R}^d\setminus\{0\}$ is open and therefore $\widetilde{\mathcal{O}} \in \mathcal{M}_d$. Thus, $\mathcal{O}\in \Sigma$.
\end{proof}

\noindent We are now ready to specify a measure on the measurable space $(S,\Sigma_{P,E})$. For each $F\in \Sigma_{P,E}$, we define
\begin{equation*}
\sigma_{P,E}(F)=\mu_P\cdot m(\widetilde{F_E})
\end{equation*}
where $m$ is the Lebesgue measure on $\mathbb{R}^d$ and $\mu_P=\tr E>0$ is the homogeneous order associated to $P$.

\begin{proposition}\label{prop:sigmaisameaure}
$\sigma_{P,E}$ is a finite measure on $(S,\Sigma_{P,E})$.
\end{proposition}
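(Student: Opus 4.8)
The plan is to verify directly that $\sigma_{P,E}$ satisfies the two defining axioms of a measure and then to compute $\sigma_{P,E}(S)$; all three parts are obtained by transporting the problem to $\mathbb{R}^d\setminus\{0\}$ through the map $\psi_E$ and invoking the corresponding properties of Lebesgue measure $m$. As in the proof of Proposition \ref{prop:BorelContainment}, write $\widetilde{F}=\widetilde{F_E}$ for $F\subseteq S$. First note that $\sigma_{P,E}$ is well defined and $[0,\infty]$-valued: for $F\in\Sigma_{P,E}$ the set $\widetilde{F}$ lies in $\mathcal{M}_d$ by the very definition of $\Sigma_{P,E}$, so $m(\widetilde{F})$ makes sense, and $\mu_P=\tr E>0$ by Corollary \ref{cor:TraceisInvariant}. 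Since $\widetilde{\varnothing}=\varnothing$, we have $\sigma_{P,E}(\varnothing)=\mu_P\cdot m(\varnothing)=0$.

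For countable additivity, let $\{F_n\}_{n=1}^\infty\subseteq\Sigma_{P,E}$ be pairwise disjoint. The computation in the proof of Proposition \ref{prop:BorelContainment} already shows $\widetilde{\bigcup_n F_n}=\bigcup_n\widetilde{F_n}$. The key additional observation is that the sets $\widetilde{F_n}$ are themselves pairwise disjoint: writing $\widetilde{F_n}=\psi_E\big((0,1)\times F_n\big)$ and noting that the sets $(0,1)\times F_n$ are pairwise disjoint in $(0,\infty)\times S$ because the $F_n$ are pairwise disjoint, the injectivity of $\psi_E$ from Proposition \ref{prop:PsiHomeomorphism} forces the images $\widetilde{F_n}$ to be pairwise disjoint. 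Hence, by countable additivity of $m$,
\[
\sigma_{P,E}\Big(\bigcup_{n=1}^\infty F_n\Big)=\mu_P\cdot m\Big(\bigcup_{n=1}^\infty\widetilde{F_n}\Big)=\mu_P\sum_{n=1}^\infty m(\widetilde{F_n})=\sum_{n=1}^\infty\sigma_{P,E}(F_n),
\]
so $\sigma_{P,E}$ is countably additive, hence a measure on $(S,\Sigma_{P,E})$.

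Finally, for finiteness I would identify $\widetilde{S}$ explicitly. If $\eta\in S$ and $0<r<1$, then homogeneity gives $P(r^E\eta)=rP(\eta)=r<1$, so $\widetilde{S}\subseteq B$; conversely, any nonzero $x\in B$ satisfies $0<P(x)<1$ and $x=(P(x))^E\big((P(x))^{-E}x\big)$ with $(P(x))^{-E}x\in S$, so $B\setminus\{0\}\subseteq\widetilde{S}$. Thus $\widetilde{S}=B\setminus\{0\}$, and since $\{0\}$ is $m$-null, $\sigma_{P,E}(S)=\mu_P\cdot m(B)$. As noted in the introduction (using Proposition \ref{prop:PositiveHomogeneousCharacterization} and the continuity of $P$), $\overline{B}$ is compact, hence bounded, so $m(B)\leq m(\overline{B})<\infty$ and $\sigma_{P,E}$ is finite. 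The only step requiring any care is the disjointness of the sets $\widetilde{F_n}$, which is exactly where the bijectivity of $\psi_E$ enters; the remainder is routine bookkeeping with Lebesgue measure.
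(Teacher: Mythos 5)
Your proof is correct and follows essentially the same route as the paper's: verify $\sigma_{P,E}(\varnothing)=0$, reduce countable additivity to the countable additivity of Lebesgue measure by showing the quasi-conical sets $\widetilde{F_n}$ are pairwise disjoint, and deduce finiteness from the boundedness of $\widetilde{S}=B\setminus\{0\}$. The one cosmetic difference is that you obtain disjointness of the $\widetilde{F_n}$ by writing $\widetilde{F_n}=\psi_E\bigl((0,1)\times F_n\bigr)$ and citing the injectivity of $\psi_E$ from Proposition \ref{prop:PsiHomeomorphism}, whereas the paper rederives the same fact inline by applying $P$ to a hypothetical common point $r_n^E\eta_n=r_m^E\eta_m$ to recover first $r$ and then $\eta$; both hinge on the identical computation, so the arguments are equivalent.
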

\begin{proof}

\noindent Throughout the proof, we will write $\sigma=\sigma_{P,E}$, $\Sigma=\Sigma_{P,E}$, and, $\widetilde{F}=\widetilde{F_E}$ for each $F\subseteq S$. It is clear that $\sigma$ is non-negative and $\sigma(\varnothing)=0$. Let $\{ F_n  \}^\infty_{n=1} \subseteq \Sigma $ be a mutually disjoint collection. We claim that $\{ \widetilde{F_n} \}_{n=1}^\infty\subseteq\mathcal{M}_d$ is also a mutually disjoint collection. To see this, suppose that $x = r_n^E \eta_n = r_m^E \eta_m\in \widetilde{F_n}\cap\widetilde{F_m}$, where $r_n,r_m \in (0,1)$, $\eta_n \in F_n$, and $\eta_m \in F_m $. Then
\begin{equation*}
    r_n = P(r_n^E \eta_n) = P(x) = P(r_m^E \eta_m) = r_m,
\end{equation*}
implying that $\eta_n = \eta_m\in F_n\cap F_m$. Because $\{F_n\}_{n=1}^\infty$ is mutually disjoint, we must have $n=m$ which verifies our claim. By virtue of the countable additivity of Lebesgue measure, we therefore have
\begin{equation*}
\sigma\left(\bigcup_{n=1}^\infty F_n\right)
    = \mu_P\cdot m\left( \widetilde{\bigcup^\infty_{n=1} F_n } \right)=\mu_P\cdot m\left( \bigcup^\infty_{n=1}\widetilde{F_n} \right)
    = \mu_P\sum^\infty_{n=1} m(\widetilde{F_n})
    = \sum^\infty_{n=1}\sigma(F_n).
\end{equation*}
Therefore $\sigma$ is a measure on $(S,\Sigma)$. In view of Condition \ref{cond:PisAboveOne} of Proposition \ref{prop:PositiveHomogeneousCharacterization}, $\widetilde{S}=B\setminus\{0\}$ is a bounded subset of $\mathbb{R}^d\setminus\{0\}$ and hence $\sigma(S)=\mu_P\cdot m(B\setminus\{0\})<\infty$ showing that $\sigma$ is finite.
\end{proof}

\noindent By virtue of the two preceding propositions, $\sigma_{P,E}$ is a finite Borel measure on $S$. In fact, as a consequence of next subsection's main result, Theorem \ref{thm:MainIntegrationFormula}, we will see that $\sigma_{P,E}$ is independent of our choice of $E\in\Exp(P)$ and is a Radon measure; see Subsection \ref{subsec:IndependentofE}.

\subsection{Product Measure and Point Isomorphism}\label{subsec:ProductMeasure}

Throughout this subsection, $E\in\Exp(P)$ will remain fixed and $(S,\Sigma_{P,E},\sigma_{P,E})$ will denote the finite measure space of Proposition \ref{prop:sigmaisameaure}. We recall that $\mathcal{L}$ denotes the $\sigma$-algebra of Lebesgue measurable subsets of $(0,\infty)$ and  $\lambda_P$ denotes the measure on $(0,\infty)$ with $\lambda_P(dr)=r^{\mu_P-1}\,dr$, i.e., for each $L\in\mathcal{L}$,
\begin{equation*}
\lambda_P(L)=\int_0^\infty \chi_L(r)r^{\mu_P-1}\,dr.
\end{equation*}
It is easy to see that $\lambda_P$ is $\sigma$-finite and so, in view of the finiteness of the measure $\sigma_{P,E}$, there exists a unique product measure $\lambda_P\times\sigma_{P,E}$ on $(0,\infty)\times S$ equipped with the product $\sigma$-algebra $\mathcal{L}\times\Sigma_{P,E}$ which satisfies
\begin{equation*}
    (\lambda_P\times\sigma_{P,E})(L\times F)=\lambda_P(L)\sigma_{P,E}(F)
\end{equation*}
for all $L\in\mathcal{L}$ and $F\in\Sigma_{P,E}$. We shall denote by $((0,\infty)\times S,(\mathcal{L}\times\Sigma_{P,E})',\lambda_P\times\sigma_{P,E})$ the completion of the measure space $((0,\infty)\times S,\mathcal{L}\times\Sigma_{P,E},\lambda_P\times\sigma_{P,E})$. Our primary goal in this subsection is to prove the theorem below. We note that Properties \ref{item:MainIntegrationFormula1} and \ref{item:MainintegrationFormula2} in Theorem \ref{thm:MainIntegrationFormula} differ only from Properties \ref{property:BestPointIsomorphism} and \ref{property:BestIntegrationFormula} in Theorem \ref{thm:BestIntegrationFormula} in that, a priori, the $\sigma$-algebra $\Sigma_{P,E}$ and the measure $\sigma_{P,E}$ in Theorem \ref{thm:MainIntegrationFormula} depends on our choice of $E\in\Exp(P)$. As a consequence of Theorem \ref{thm:MainIntegrationFormula}, we shall see in Subsection \ref{subsec:IndependentofE} that $\Sigma_{P,E_1}=\Sigma_{P,E_2}$ and $\sigma_{P,E_1}=\sigma_{P,E_2}$ for all $E_1,E_2\in\Exp(P)$ and so this apparent dependence is superficial; this is Proposition \ref{prop:Endependence}. As a consequence of the proposition, we shall obtain Properties \ref{property:BestPointIsomorphism} and \ref{property:BestIntegrationFormula} of Theorem \ref{thm:BestIntegrationFormula} immediately from Properties of \ref{item:MainIntegrationFormula1} and \ref{item:MainintegrationFormula2} in Theorem \ref{thm:MainIntegrationFormula}.

\begin{theorem}\label{thm:MainIntegrationFormula}
Let $((0,\infty)\times S,(\mathcal{L}\times\Sigma_{P,E})',\lambda_P\times\sigma_{P,E})$ be as above.
\begin{enumerate}
\item\label{item:MainIntegrationFormula1} The map $\psi_E: (0,\infty)\times S\to\mathbb{R}^d\setminus\{0\}$, defined by \eqref{eq:Homeomorphism}, is a point isomorphism of the measure spaces $((0,\infty)\times S,(\mathcal{L}\times\Sigma_{P,E})',\lambda_P\times\sigma_{P,E})$ and $(\mathbb{R}^d\setminus\{0\},\mathcal{M}_d,m)$. That is
\begin{equation*}
\mathcal{M}_d=\{A\subseteq \mathbb{R}^d\setminus\{0\}:\psi_E^{-1}(A)\in(\mathcal{L}\times\Sigma_{P,E})'\}
\end{equation*}
and, for each $A\in\mathcal{M}_d$,
\begin{equation*}
m(A)=(\lambda_P\times\sigma_{P,E})(\psi_E^{-1}(A)).
\end{equation*}
\item\label{item:MainintegrationFormula2} If $f:\mathbb{R}^d\to\mathbb{C}$ is Lebesgue measurable, then $f\circ \psi_E$ is $(\mathcal{L}\times\Sigma_{P,E})'$-measurable and the following statements hold:
\begin{enumerate}
\item If $f\geq 0$, then
\begin{equation}\label{eq:MainIntegrationFormula}
\int_{\mathbb{R}^d}f(x)\,dx=\int_0^\infty\left(\int_S f(r^E\eta)\,\sigma_{P,E}(d\eta)\right)r^{\mu_P-1}\,dr=\int_S\left(\int_0^\infty f(r^E\eta)r^{\mu_P-1}\,dr\right)\,\sigma_{P,E}(d\eta).
\end{equation}
\item When $f$ is complex-valued, we have 
\begin{equation*}f\in L^1(\mathbb{R}^d)\hspace{.5cm}\mbox{if and only if}\hspace{0.5cm}f\circ\psi_E\in L^1((0,\infty)\times S,(\mathcal{L}\times \Sigma_{P,E})',\lambda_P\times\sigma_{P,E})
\end{equation*}
and, in this case, \eqref{eq:MainIntegrationFormula} holds.
\end{enumerate}
\end{enumerate}
\end{theorem}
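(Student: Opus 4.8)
The plan is to realize Lebesgue measure $m$ on $\mathbb{R}^d\setminus\{0\}$ as the pushforward of $\lambda_P\times\sigma_{P,E}$ under the homeomorphism $\psi_E$ of Proposition \ref{prop:PsiHomeomorphism}: first on measurable rectangles, then on the product $\sigma$-algebra $\mathcal{L}\times\Sigma_{P,E}$, then on its completion; the integration formula \eqref{eq:MainIntegrationFormula} will then follow by a routine change of variables under the resulting point isomorphism combined with Fubini--Tonelli on the completed product space. The one genuinely geometric input is a scaling identity: for $F\in\Sigma_{P,E}$ and $t>0$, the substitution $r=ts$ gives
\[
\psi_E((0,t)\times F)=\{r^E\eta:0<r<t,\ \eta\in F\}=t^E\,\widetilde{F_E},
\]
so this set lies in $\mathcal{M}_d$ and, since $\det(t^E)=t^{\tr E}=t^{\mu_P}$, it has $m$-measure $t^{\mu_P}m(\widetilde{F_E})=\mu_P^{-1}t^{\mu_P}\sigma_{P,E}(F)=(\lambda_P\times\sigma_{P,E})((0,t)\times F)$.

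From here I would argue as follows. Fixing $F\in\Sigma_{P,E}$, the family $\mathcal{C}_F=\{L\subseteq(0,\infty):\psi_E(L\times F)\in\mathcal{M}_d\}$ is a $\sigma$-algebra (complements use $\psi_E((0,\infty)\times F)=\bigcup_n n^E\widetilde{F_E}\in\mathcal{M}_d$, and countable unions use injectivity of $\psi_E$), and by the scaling identity it contains every interval $(0,t)$, hence $\mathcal{B}((0,\infty))$. On $\mathcal{B}((0,\infty))$ the two set functions $L\mapsto m(\psi_E(L\times F))$ and $L\mapsto\lambda_P(L)\sigma_{P,E}(F)$ are $\sigma$-finite measures agreeing on the generating $\pi$-system $\{(0,t):t>0\}$, hence they agree. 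A short null-set argument (using the case $F=S$ just established to see that $\psi_E(Z\times S)$ is $m$-null whenever $Z$ is Lebesgue-null) then extends both the measurability and the equality to all $L\in\mathcal{L}$, so that $\psi_E(L\times F)\in\mathcal{M}_d$ and $m(\psi_E(L\times F))=(\lambda_P\times\sigma_{P,E})(L\times F)$ for every measurable rectangle. Since measurable rectangles form a $\pi$-system generating $\mathcal{L}\times\Sigma_{P,E}$, a Dynkin-system argument — carried out inside each finite-measure slab $(0,n)\times S$ and then combined over $n$, using again that $\psi_E$ is a bijection so that images commute with complements, proper differences and increasing unions — promotes this to: for every $H\in\mathcal{L}\times\Sigma_{P,E}$, $\psi_E(H)\in\mathcal{M}_d$ and $m(\psi_E(H))=(\lambda_P\times\sigma_{P,E})(H)$.

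Passing to completions is then routine. Because $\psi_E$ is a homeomorphism it is a Borel isomorphism, so (using that $(0,\infty)\times S$ is second countable) it carries $\mathcal{B}((0,\infty))\otimes\mathcal{B}(S)\subseteq\mathcal{L}\times\Sigma_{P,E}$ onto $\mathcal{B}(\mathbb{R}^d\setminus\{0\})\subseteq\mathcal{M}_d$ and back; writing any $A\in\mathcal{M}_d$ as a Borel set together with a subset of a Borel $m$-null set, the measure identity above (applied to a Borel preimage) shows $\psi_E^{-1}(A)\in(\mathcal{L}\times\Sigma_{P,E})'$ with $(\lambda_P\times\sigma_{P,E})(\psi_E^{-1}(A))=m(A)$, and the symmetric decomposition of $A$ through completeness of $\mathcal{M}_d$ gives the reverse inclusion; this is Property \ref{item:MainIntegrationFormula1}. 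The claimed $(\mathcal{L}\times\Sigma_{P,E})'$-measurability of $f\circ\psi_E$ in Property \ref{item:MainintegrationFormula2} is then immediate by applying Property \ref{item:MainIntegrationFormula1} to $f^{-1}(V)\cap(\mathbb{R}^d\setminus\{0\})$ for Borel $V\subseteq\mathbb{C}$. Finally, since $\psi_E$ is a point isomorphism, the standard ladder (indicators $\to$ simple functions $\to$ monotone convergence) yields $\int_{\mathbb{R}^d}f\,dm=\int_{(0,\infty)\times S}(f\circ\psi_E)\,d(\lambda_P\times\sigma_{P,E})$ for $f\ge 0$, whence the $L^1$-equivalence and the same identity for integrable complex $f$; the Fubini--Tonelli theorem for completed $\sigma$-finite product measures (see \cite{bogachev_measure_2007}, \S\S3.6 and 9.2), applicable because $\sigma_{P,E}$ is finite and $\lambda_P$ is $\sigma$-finite, then rewrites the right-hand side as the two iterated integrals in \eqref{eq:MainIntegrationFormula} — with the usual proviso that in the completed setting the inner integrals are defined and measurable only for almost every value of the outer variable.

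I expect the main obstacle to be the measure-theoretic bookkeeping around the Lebesgue completions rather than the scaling identity itself: keeping track of the (superficial, but real at this stage) role of the non-Borel factor $\mathcal{L}$ in the product, verifying that null sets are matched correctly under $\psi_E$ in both directions, and invoking the completed — not the bare product $\sigma$-algebra — form of Fubini--Tonelli. It is also worth stating the result at exactly the level of generality used here, with $\Sigma_{P,E}$ and $\sigma_{P,E}$ visibly dependent on $E$, so that the $E$-independence established in Subsection \ref{subsec:IndependentofE} can quote this theorem directly.
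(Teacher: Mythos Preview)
Your proposal is correct and follows essentially the same strategy as the paper: establish the scaling identity on rectangles $(0,t)\times F$, extend to all measurable rectangles, promote to the product $\sigma$-algebra via a monotone/Dynkin-class argument, handle the reverse direction to capture $\mathcal{B}(\mathbb{R}^d\setminus\{0\})$, and then pass to completions before invoking Fubini--Tonelli. The one notable tactical difference is in the reverse direction: the paper proves an explicit combinatorial lemma (Lemma~\ref{lem:OpenRectangle}) showing that every open subset of $\mathbb{R}^d\setminus\{0\}$ is a countable union of sets $\psi_E(I\times\mathcal{O})$ with $I\times\mathcal{O}$ an open rectangle, whereas you bypass this by observing that $\psi_E$, being a homeomorphism between second-countable spaces, is automatically a Borel isomorphism carrying $\mathcal{B}((0,\infty))\otimes\mathcal{B}(S)$ onto $\mathcal{B}(\mathbb{R}^d\setminus\{0\})$. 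Your route is shorter; the paper's route is more self-contained and makes the structure of the open sets visible. The paper also packages the completion step as a standalone abstract lemma (Lemma~\ref{lem:PushforwardLemma}), while you sketch it inline; either way the content is the same.
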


\noindent To prove Theorem \ref{thm:MainIntegrationFormula}, we shall first treat several lemmas. These lemmas isolate and generalize several important ideas used in standard proofs of \eqref{eq:StandardPolarIntegrationFormula} (see, e.g., \cite{folland_real_2013} and \cite{stein_real_2009}). 

\begin{lemma}\label{lemma:Scaling}
Let $A\subseteq\mathbb{R}^d$ and $r>0$.  $A$ is Lebesgue measurable if and only if $r^E A=\{x=r^E a:a\in A\}$ is Lebesgue measurable and, in this case,
\begin{equation*}
m(r^E A)=r^{\mu_P}m(A).
\end{equation*}
\end{lemma}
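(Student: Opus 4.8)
The plan is to recognize $r^E$ as an element of $\Gl(\mathbb{R}^d)$ and then invoke the behaviour of Lebesgue measure under invertible linear transformations, reducing everything to a determinant computation. First I would note that, for each fixed $r>0$, the map $x\mapsto r^Ex$ is a linear automorphism of $\mathbb{R}^d$ whose inverse is $x\mapsto r^{-E}x=(1/r)^Ex$, again a linear automorphism. It is a classical fact (see, e.g., the change-of-variables results in \cite{folland_real_2013} or \cite{stein_real_2009}) that if $T\in\Gl(\mathbb{R}^d)$ and $A\subseteq\mathbb{R}^d$, then $A$ is Lebesgue measurable if and only if $TA$ is Lebesgue measurable, and in that case $m(TA)=\abs{\det T}\,m(A)$. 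Applying this with $T=r^E$ (for the ``only if'' direction) and with $T=r^{-E}$ applied to $r^EA$ (for the ``if'' direction) immediately yields the measurability equivalence, and gives $m(r^EA)=\abs{\det(r^E)}\,m(A)$.

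It then remains only to compute $\det(r^E)$. Here I would use the homomorphism property of the determinant together with the identity $\det(r^E)=r^{\tr E}$, which follows from Proposition \ref{prop:ContinuousGroupProperties} exactly as in the proof of Corollary \ref{cor:TraceisInvariant} (concretely, $\det(\exp((\ln r)E))=\exp((\ln r)\tr E)=r^{\tr E}$). Since $\tr E=\mu_P$ by Corollary \ref{cor:TraceisInvariant} and the definition of the homogeneous order, we get $\det(r^E)=r^{\mu_P}$. Because $r>0$, $r^{\mu_P}>0$, so $\abs{\det(r^E)}=r^{\mu_P}$ and therefore $m(r^EA)=r^{\mu_P}m(A)$, as claimed.

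There is no real obstacle in this argument: the only point requiring any care is making sure one quotes the linear change-of-variables theorem in its full strength (i.e., the version that asserts the two-way equivalence of Lebesgue measurability, not merely the volume-scaling formula for Borel or Lebesgue-measurable sets), so that the stated ``if and only if'' is genuinely covered. Everything else is the bookkeeping identity $\det(r^E)=r^{\tr E}=r^{\mu_P}$ already available from the earlier sections.
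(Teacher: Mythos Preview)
Your proposal is correct and follows essentially the same approach as the paper: both arguments recognize $r^E$ as a linear isomorphism, invoke the standard fact that Lebesgue measurability and Lebesgue measure behave predictably under such maps, and finish with the determinant computation $\det(r^E)=r^{\tr E}=r^{\mu_P}>0$ via Proposition~\ref{prop:ContinuousGroupProperties} and Corollary~\ref{cor:TraceisInvariant}. The paper writes out the change of variables explicitly using indicator functions, whereas you cite the linear change-of-variables theorem directly, but there is no substantive difference.
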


\begin{proof}
Because $x\mapsto r^E x$ is a linear isomorphism, $r^E A$ is Lebesgue measurable if and only if $A$ is Lebesgue measurable. Observe that $x\in r^E A$ if and only if $r^{-E}x\in A$ and therefore
\begin{equation*}
m(r^E A)=\int_{\mathbb{R}^d}\chi_{r^E A}(x)\,dx=\int_{\mathbb{R}^d}\chi_{A}(r^{-E}x)\,dx
\end{equation*}
where $\chi_{r^EA}$ and $\chi_{A}$ respectively denote the indicator functions of the sets $r^EA$ and $A$. Now, by making the linear change of variables $x\mapsto r^E x$, we have
\begin{equation*}
m(r^E A)=\int_{\mathbb{R}^d}\chi_A(x)|\det(r^E)|\,dx=r^{\mu_P}m(A),
\end{equation*}
because $\det(r^E)=r^{\tr E}=r^{\mu_P}>0$ by virtue of Proposition \ref{prop:ContinuousGroupProperties} and Corollary \ref{cor:TraceisInvariant}.
\end{proof}

\begin{lemma}\label{lem:SpecialRectangle}
Let $F\in\Sigma_{P,E}$. If $I\subseteq (0,\infty)$ is open, closed, $G_\delta$, or $F_\sigma$, then $\psi_E(I\times F)\in\mathcal{M}_d$ and
\begin{equation}\label{eq:SpecialRectangle}
m(\psi_E(I\times F))=(\lambda_P\times\sigma_{P,E})(I\times F)=\lambda_P(I)\sigma_{P,E}(F).
\end{equation}
\end{lemma}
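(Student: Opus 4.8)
The plan is to reduce the whole statement to the single base case $I=(0,b)$, $0<b<\infty$, and then to climb the Borel hierarchy --- open, then compact, then closed, then $F_\sigma$ and $G_\delta$ --- exploiting two facts: that $\psi_E$ is a \emph{bijection} (Proposition \ref{prop:PsiHomeomorphism}), so it commutes with arbitrary unions, intersections and set-differences; and that $m$ and $\lambda_P$ are (completions of) measures, hence enjoy continuity from below, and continuity from above along a sequence whose first term has finite measure. For the base case, the one-parameter group law gives $(bs)^E=b^Es^E$, so the substitution $r=bs$ yields $\psi_E((0,b)\times F)=\{b^E(s^E\eta):0<s<1,\ \eta\in F\}=b^E\,\widetilde{F_E}$. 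Since $F\in\Sigma_{P,E}$ means exactly $\widetilde{F_E}\in\mathcal{M}_d$, Lemma \ref{lemma:Scaling} gives $\psi_E((0,b)\times F)\in\mathcal{M}_d$ with $m(\psi_E((0,b)\times F))=b^{\mu_P}m(\widetilde{F_E})=b^{\mu_P}\sigma_{P,E}(F)/\mu_P=\lambda_P((0,b))\,\sigma_{P,E}(F)$, because $\lambda_P((0,b))=\int_0^b r^{\mu_P-1}\,dr=b^{\mu_P}/\mu_P$ and $\sigma_{P,E}(F)=\mu_P\,m(\widetilde{F_E})$ by definition. This is \eqref{eq:SpecialRectangle} for $I=(0,b)$; the remaining equality $(\lambda_P\times\sigma_{P,E})(I\times F)=\lambda_P(I)\sigma_{P,E}(F)$ in the statement is just the defining property of the product measure, valid whenever $I\in\mathcal{L}$ and $F\in\Sigma_{P,E}$.

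Next I would upgrade this to all open $I$. Writing $\psi_E((0,a]\times F)=\bigcap_n\psi_E((0,a+1/n)\times F)$ and using continuity from above (all measures finite, since $\sigma_{P,E}(F)\le\sigma_{P,E}(S)<\infty$) gives $\psi_E((0,a]\times F)\in\mathcal{M}_d$ with $m=\lambda_P((0,a])\,\sigma_{P,E}(F)$; subtracting, $\psi_E((a,b)\times F)=\psi_E((0,b)\times F)\setminus\psi_E((0,a]\times F)$ is measurable with $m=\lambda_P((a,b))\,\sigma_{P,E}(F)$ for all $0<a<b<\infty$. The unbounded intervals $(a,\infty)=\bigcup_n(a,n)$ and $(0,\infty)=\bigcup_n(0,n)$ are then handled by continuity from below (matching $\lambda_P((a,n))\uparrow\lambda_P((a,\infty))$). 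Finally, an arbitrary open $I\subseteq(0,\infty)$ is a countable disjoint union of such intervals $I=\bigsqcup_k I_k$; since $\psi_E$ is a bijection the sets $\psi_E(I_k\times F)$ are pairwise disjoint, and countable additivity of $m$ together with $\lambda_P(I)=\sum_k\lambda_P(I_k)$ gives the claim for $I$.

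Then I would do compact, closed, $F_\sigma$, $G_\delta$ in that order. If $K\subseteq(0,\infty)$ is compact, pick $0<a<b<\infty$ with $K\subseteq(a,b)$; then $(a,b)\setminus K$ is open, $\psi_E((a,b)\times F)=\psi_E(K\times F)\sqcup\psi_E(((a,b)\setminus K)\times F)$, and since the second summand (open case) and the left side (interval case) are measurable of finite measure, $\psi_E(K\times F)$ is measurable with $m=\lambda_P(K)\,\sigma_{P,E}(F)$. A closed $I\subseteq(0,\infty)$ is the increasing union $\bigcup_n(I\cap[1/n,n])$ of compacts, so continuity from below finishes it; an $F_\sigma$ set is an increasing union of (finite unions of) closed sets, likewise. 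For a $G_\delta$ set $I$, I would first treat the case $I\subseteq(a,b)$ with $0<a<b<\infty$: write $I=\bigcap_n U_n$ with the $U_n$ open, decreasing, and contained in $(a,b)$, so that $\lambda_P(U_n)\le\lambda_P((a,b))<\infty$ and the $\psi_E(U_n\times F)$ form a decreasing sequence of measurable sets of finite measure with intersection $\psi_E(I\times F)$; continuity from above (of $m$ along $\psi_E(U_n\times F)$ and of $\lambda_P$ along $U_n$) gives the result. A general $G_\delta$ set is then the increasing union $\bigcup_m\big(I\cap(2^{-m},2^m)\big)$ of bounded $G_\delta$ sets, and one more application of continuity from below closes the argument.

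The only real friction is bookkeeping of finiteness: continuity from above is available only when the first term of the decreasing sequence has finite measure, which is why the half-open intervals, the compact sets, and the bounded $G_\delta$ sets all must be ``localized'' into a bounded window $(a,b)$ or $[1/n,n]$ before any limit is taken, and then ``re-globalized'' by an increasing union and continuity from below --- this double move is most conspicuous in the $G_\delta$ case. Everything else is automatic once one keeps in mind that $\psi_E$ being a bijection (Proposition \ref{prop:PsiHomeomorphism}) lets every countable set operation pass through $\psi_E$, so that measurability and the value of $m(\psi_E(I\times F))$ can always be read off from a decomposition of $I$ into sets already treated, with $\lambda_P(I)$ decomposing in exactly the parallel way.
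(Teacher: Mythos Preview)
Your proof is correct and follows essentially the same approach as the paper: establish the base case $I=(0,b)$ via the identity $\psi_E((0,b)\times F)=b^E\widetilde{F_E}$ and Lemma~\ref{lemma:Scaling}, then climb through intervals, open sets, and finally $G_\delta$ and $F_\sigma$ sets using the bijectivity of $\psi_E$ and continuity of the measures. The paper is terser --- it simply invokes ``standard arguments'' for the passage from $(0,b)$ to general intervals and then to open, $G_\delta$, and $F_\sigma$ sets --- whereas you spell out explicitly the localize-then-globalize maneuver needed to ensure finiteness before each use of continuity from above; this is exactly the right care to take, and your ordering (compact before closed before $F_\sigma$, bounded $G_\delta$ before general $G_\delta$) is a perfectly good alternative to the paper's.
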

\begin{proof}
To simplify notation, we shall write $\lambda=\lambda_P$ and $\sigma=\sigma_{P,E}$ throughout the proof. We fix $F\in\Sigma_{P,E}$ and consider several cases for $I$. If $I=(0,b)$ for $0<b<\infty$, we have
\begin{equation*}
\psi_E(I\times F)=\{r^E\eta:0<r<b,\eta\in F\}=b^E\{r^E\eta:0<r<1,\eta \in F\}=b^E\widetilde{F_E}.
\end{equation*}
By virtue of Lemma \ref{lemma:Scaling}, it follows that $\psi_E(I\times F)\in\mathcal{M}_d$ and
\begin{eqnarray*}
(\lambda\times\sigma)(I\times F)&=&\lambda(I)\sigma(F)\\
&=&\left(\int_0^b r^{\mu_P-1}\,dr\right)\left(\mu_P\cdot m(\widetilde{F_E})\right)\\
&=&b^{\mu_P}m(\widetilde{F_E})\\
&=&m(b^{E}\widetilde{F_E})\\
&=&m(\psi_E(I\times F)).
\end{eqnarray*}
Using this result and the continuity of the measures $\lambda\times\sigma$ and $m$, standard arguments guarantee that $\psi_E(I\times F)\in\mathcal{M}_d$ and
\begin{equation*}
    (\lambda\times\sigma)(I\times F)=\left(b^{\mu_P}-a^{\mu_P}\right)\sigma(F)=m(\psi_E(I\times F))
\end{equation*}
whenever $I$ is an interval of the form $(a,b),\,(a,b],\,[a,b),$ and $[a,b]$ for $0\leq a\leq b\leq \infty$. Further, using the fact that every open subset of $(0,\infty)$ is countable union of disjoint open intervals, another standard argument using the continuity of the measures $\lambda\times \sigma$ and $m$ guarantees that $\psi_E(I\times F)\in\mathcal{M}_d$ and
\begin{equation*}
    (\lambda\times\sigma)(I\times F)=m(\psi_E(I\times F))
\end{equation*}
whenever $I$ is an open set. We then extend this to the case in which $I$ is a $G_\delta$ by virtue of the continuity of measure. Finally, by taking complements and using the continuity of measure, we find that the assertion holds whenever $I$ is an $F_\sigma$ set. 
\end{proof}

\begin{lemma}\label{lem:AllMeasurableRectangles} For any $L\in\mathcal{L}$ and $F\in \Sigma_{P,E}$, $\psi_E(L\times F)\in\mathcal{M}_d$ and 
\begin{equation*}
m(\psi_E(L\times F))=(\lambda_P\times\sigma_{P,E})(L\times F).
\end{equation*}
\end{lemma}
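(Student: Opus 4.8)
The plan is to deduce the general measurable $L$ from the $F_\sigma$ case already settled in Lemma \ref{lem:SpecialRectangle} by peeling off a Lebesgue-null set. First, using the inner regularity of Lebesgue measure on $(0,\infty)$ (approximation from inside by closed sets, applied to each of countably many bounded pieces of $L$ and then recombined), I would write $L=K\cup N$, where $K\subseteq L$ is an $F_\sigma$ set, $N:=L\setminus K$, and $m(N)=0$. Since the density $r\mapsto r^{\mu_P-1}$ defining $\lambda_P$ is strictly positive on $(0,\infty)$, the $\lambda_P$-null sets and the Lebesgue-null sets there coincide; hence $\lambda_P(N)=0$ and $\lambda_P(K)=\lambda_P(L)$.

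Because $K$ is $F_\sigma$ and $F\in\Sigma_{P,E}$, Lemma \ref{lem:SpecialRectangle} immediately gives $\psi_E(K\times F)\in\mathcal{M}_d$ with $m(\psi_E(K\times F))=\lambda_P(K)\sigma_{P,E}(F)=\lambda_P(L)\sigma_{P,E}(F)$. The step that needs a little care is the null piece $N\times F$: here I would choose a $G_\delta$ set $H\supseteq N$ in $(0,\infty)$ with $\lambda_P(H)=0$ and use the inclusion $\psi_E(N\times F)\subseteq\psi_E(H\times S)$. Since $S\in\Sigma_{P,E}$ (Proposition \ref{prop:BorelContainment}) and $H$ is $G_\delta$, Lemma \ref{lem:SpecialRectangle} applies once more and yields $\psi_E(H\times S)\in\mathcal{M}_d$ with $m(\psi_E(H\times S))=\lambda_P(H)\,\sigma_{P,E}(S)=0$, where the finiteness of $\sigma_{P,E}$ from Proposition \ref{prop:sigmaisameaure} is exactly what makes the product $0\cdot\sigma_{P,E}(S)$ vanish. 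By completeness of Lebesgue measure, $\psi_E(N\times F)$ is then itself in $\mathcal{M}_d$ with $m(\psi_E(N\times F))=0$.

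Finally, since $\psi_E$ is a bijection (Proposition \ref{prop:PsiHomeomorphism}) and $K$, $N$ are disjoint, $\psi_E(L\times F)=\psi_E(K\times F)\cup\psi_E(N\times F)$ is a disjoint union of Lebesgue measurable sets; hence $\psi_E(L\times F)\in\mathcal{M}_d$, and additivity of $m$ gives $m(\psi_E(L\times F))=\lambda_P(L)\sigma_{P,E}(F)=(\lambda_P\times\sigma_{P,E})(L\times F)$, as desired. The only genuinely delicate point is the null-set step: one must dominate $N\times F$ by a measurable rectangle of zero Lebesgue measure — which is precisely why the finiteness of $\sigma_{P,E}$ is invoked — and then appeal to completeness of $m$; everything else is bookkeeping built on Lemma \ref{lem:SpecialRectangle}.
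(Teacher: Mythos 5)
Your proof is correct and follows essentially the same route as the paper: decompose $L$ into an $F_\sigma$ piece plus a $\lambda_P$-null remainder, handle the $F_\sigma$ piece by Lemma \ref{lem:SpecialRectangle}, dominate the remainder by the image of a $G_\delta\times(\text{measurable})$ rectangle of zero measure, and invoke completeness of $m$. The only cosmetic difference is that you dominate the null piece by $\psi_E(H\times S)$ while the paper uses $\psi_E((L_\delta\setminus L_\sigma)\times F)$; both rely on the finiteness of $\sigma_{P,E}$ in the same way.
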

\begin{proof}
Fix $L\in\mathcal{L}$ and $F\in\Sigma_{P,E}$. It is easy to see that $\lambda_P$ and the Lebesgue measure $dr$ on $(0,\infty)$ are mutually absolutely continuous. It follows that $((0,\infty), \mathcal{L},\lambda_P)$ is a complete measure space and, further, that there exists an $F_\sigma$ set $L_\sigma\subseteq (0,\infty)$ and a $G_\delta$ set $L_\delta\subseteq (0,\infty)$ for which $L_\sigma\subseteq L\subseteq L_\delta$ and $\lambda_P(L_\delta\setminus L_\sigma)=0$. Note that, necessarily, $\lambda_P(L)=\lambda_P(L_\sigma)=\lambda_P(L_\delta)$. We have
\begin{equation}\label{eq:AllMeasurableRectangles1}
\psi_E(L\times F)=\psi_E( L_\sigma\times F)\cup\psi_E((L\setminus L_\sigma)\times F)
\end{equation}
where, by virtue of the preceding lemma, $\psi_E(L_\sigma\times F)\in \mathcal{M}_d$ and
\begin{equation}\label{eq:AllMeasurableRectangles2}
m(\psi_E(L_{\sigma}\times F))=(\lambda_P\times\sigma_{P,E})( L_\sigma\times F)=\lambda_P(L_\sigma)\sigma_{P,E}(F)=\lambda_P(L)\sigma_{P,E}(F)=(\lambda_P\times\sigma_{P,E})(L\times F).
\end{equation}
Observe that
\begin{equation*}
\psi_E((L\setminus L_\sigma)\times F)\subseteq \psi_E((L_{\delta}\setminus L_\sigma)\times F)
\end{equation*}
where, because $L_\delta\setminus L_\sigma$ is an $G_{\delta}$ set, the latter set is a member of $\mathcal{M}_d$ and
\begin{equation*}
m(\psi_E((L_\delta\setminus L_\sigma)\times F))=(\lambda_P\times\sigma_{P,E})((L_\delta\setminus L_\sigma)\times F)=\lambda_P(L_\delta\setminus L_\sigma)\sigma_{P,E}(F)=0
\end{equation*}
by virtue of the preceding lemma. Using the fact that $(\mathbb{R}^d\setminus\{0\},\mathcal{M}_d,m)$ is complete, we conclude that $\psi_E((L\setminus L_\sigma)\times F)\in \mathcal{M}_d$ and $m(\psi_E((L\setminus L_\sigma)\times F))=0$. It now follows from \eqref{eq:AllMeasurableRectangles1} and \eqref{eq:AllMeasurableRectangles2} that $\psi_E(L\times F)\in\mathcal{M}_d$ and
\begin{equation*}
m(\psi_E(L\times F))=m(\psi_E(L_\sigma\times F))+m(\psi_E((L\setminus L_\sigma)\times F))=(\lambda_P\times\sigma_{P,E})(L\times F),
\end{equation*}
as desired.
\end{proof}

\begin{lemma}\label{lem:OpenRectangle}
Every open subset $U\subseteq \mathbb{R}^d\setminus\{0\}$ can be written as a countable union of open sets of the form $\psi_E(\mathcal{U})$ where $\mathcal{U}=I\times\mathcal{O}$ is an open rectangle in $(0,\infty)\times S$.
\end{lemma}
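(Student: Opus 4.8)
The plan is to transport the problem to $(0,\infty)\times S$ via the homeomorphism $\psi_E$ of Proposition \ref{prop:PsiHomeomorphism} and then invoke second countability of the product. First I would set $V=\psi_E^{-1}(U)$; since $\psi_E$ is a homeomorphism, $V$ is an open subset of $(0,\infty)\times S$, and since $\psi_E$ is in particular a bijection and an open map, it suffices to exhibit $V$ as a countable union of open rectangles $I\times\mathcal{O}$, whereupon
\begin{equation*}
U=\psi_E(V)=\psi_E\!\left(\bigcup_{n}(I_n\times\mathcal{O}_n)\right)=\bigcup_{n}\psi_E(I_n\times\mathcal{O}_n),
\end{equation*}
with each $\psi_E(I_n\times\mathcal{O}_n)$ open in $\mathbb{R}^d\setminus\{0\}$, exactly as the statement requires.

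To produce such a decomposition I would recall that $(0,\infty)$ admits the countable basis $\mathcal{B}_1$ consisting of open intervals with rational endpoints (lying in $(0,\infty)$), and that $S$, being a subspace of the separable metrizable space $\mathbb{R}^d$, is itself separable and metrizable, hence second countable; fix a countable basis $\mathcal{B}_2$ for the topology of $S$. It is a standard fact about product topologies that $\{I\times\mathcal{O}:I\in\mathcal{B}_1,\ \mathcal{O}\in\mathcal{B}_2\}$ is then a (countable) basis for the product topology on $(0,\infty)\times S$, and each of its members is an open rectangle in the sense of the lemma. Consequently the open set $V$ equals the union of the (at most countably many) members of this basis that it contains, which is precisely a representation $V=\bigcup_n(I_n\times\mathcal{O}_n)$ of the kind needed above; pushing forward through $\psi_E$ as displayed completes the argument.

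There is no genuine obstacle here: the proof is pure point-set topology resting on the homeomorphism already established in Proposition \ref{prop:PsiHomeomorphism}. The only mild points to check are that finite products of basis elements form a basis for the product topology and that $S$ inherits second countability from $\mathbb{R}^d$ — both entirely routine — so the write-up will be short.
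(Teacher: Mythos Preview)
Your argument is correct. Both your proof and the paper's rest on the same idea---a countable family of open rectangles in $(0,\infty)\times S$ whose $\psi_E$-images cover $U$---but the executions differ in an instructive way.

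You work abstractly: pull $U$ back to $V=\psi_E^{-1}(U)$ via the homeomorphism of Proposition~\ref{prop:PsiHomeomorphism}, invoke second countability of $(0,\infty)\times S$ to write $V$ as a countable union of basic open rectangles, and push forward. The paper instead fixes an explicit countable collection $\mathcal{U}_{j,l,n}=\{|r-r_j|<1/n\}\times\{\eta\in S:|\eta-\eta_l|<1/n\}$ built from dense sequences in each factor, and then proves by a direct $\epsilon$--$\delta$ estimate on $|r_x^E\eta_x-r_y^E\eta_y|$ that for each $x\in U$ some $\psi_E(\mathcal{U}_{j,l,n})$ lands inside the Euclidean ball $\mathbb{B}_\delta(x)\subseteq U$. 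In effect, the paper re-derives the continuity of $\psi_E$ quantitatively rather than citing Proposition~\ref{prop:PsiHomeomorphism}. Your route is shorter and cleaner; the paper's has the minor advantage of being entirely self-contained and making the basis explicit, but nothing in the sequel uses that explicitness.
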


\begin{proof}
Let $\{r_k\}_{k=1}^\infty$ and $\{\eta_j\}_{j=1}^\infty$ be countably dense subsets of $(0,\infty)$ and $S$, respectively.  For each triple of natural numbers $j,l,n\in\mathbb{N}_+$, consider the open set
\begin{equation*}
\mathcal{U}_{j,l,n}=\{ \vert r - r_j \vert < 1/n \}\times \mathcal{O}_{l,n}\subseteq (0,\infty)\times S
\end{equation*}
where
\begin{equation*}
\mathcal{O}_{l,n}=\{\eta\in S: |\eta-\eta_l|<1/n\}.
\end{equation*}
Let $U\subseteq \mathbb{R}^d\setminus \{0\}$ be open. We will show that
\begin{equation}\label{eq:OpenRectangle}
U=\bigcup_{\substack{j,l,n\\ \psi_E(\mathcal{U}_{j,l,n})\subseteq U}}\psi_E(\mathcal{U}_{j,l,n}),
\end{equation}
where, in view of Proposition \ref{prop:PsiHomeomorphism}, each $\psi_E(\mathcal{U}_{j,l,n})$ is open. It is clear that any element of the union on the right hand side of \eqref{eq:OpenRectangle} belongs to some $\psi_E(\mathcal{U}_{j,l,n}) \subseteq U$ and so the union is a subset of $U$. To prove \eqref{eq:OpenRectangle}, it therefore suffices to prove that, for each $x\in U$, there exists a triple $j,l,n$ with
\begin{equation*}
x\in\psi_E(\mathcal{U}_{j,l,n})\subseteq U.
\end{equation*}
To this end, fix $x\in U$ and let $\delta>0$ be such that $\mathbb{B}_\delta(x)\subseteq U$. Consider $(r_x,\eta_x)=\psi_E^{-1}(x)\in (0,\infty)\times S$ and set $M=\|r_x^E\|>0$ and $C=\|E\|>0$. Observe that 
\begin{equation*}
\|I-\alpha^E\|=\left\|\sum_{k=1}^\infty \frac{(\ln \alpha)^k}{k!} E^k\right\|\leq \sum_{k=1}^\infty \frac{|\ln \alpha|^k}{k!} \|E\|^k=e^{(C|\ln \alpha|)}-1
\end{equation*}
for all $\alpha>0$. Since $\alpha\mapsto e^{(C|\ln \alpha|)}-1$ is continuous and $0$ at $\alpha=1$, we can choose $\delta'>0$ for which
\begin{equation*}
\|I-\alpha ^E\|< \frac{\delta}{2M (  |\eta_x|+2)}
\end{equation*}
whenever $|\alpha-1|<\delta'$. Fix an integer $n>\max \left\{1/\delta'r_x, 4M/\delta \right\}$ and, using the density of the collections $\{r_j\}$ and $\{\eta_l\}$, let $r_j$ and $ \eta_l$ be such that $\abs{r_j - r_x } < 1/n$ and $\abs{\eta_l - \eta_x} < 1/n$. It follows that the corresponding open set $\mathcal{U}_{j,l,n}$ contains $\psi_E^{-1}(x)$, or, equivalently, $x\in \psi_E(\mathcal{U}_{j,l,n})$. Thus, it remains to show that $\psi_E(\mathcal{U}_{j,l,n}) \subseteq \mathbb{B}_\delta(x)$. To this end, let $y=\psi_E(r_y,\eta_y)\in\psi_E(\mathcal{U}_{j,l,n})$ and observe that
\begin{eqnarray*}
| x - y | &\leq& \vert \psi_E(r_x,\eta_x,) - \psi_E(r_x,\eta_y) \vert 
    + \vert \psi_E(r_x,\eta_y) - \psi_E(r_y,\eta_y) \vert\\
    &=&  \vert r_x^E (\eta_x - \eta_y) \vert + \vert (r_x^E - r_y^E) \eta_y \vert\\
    &\leq& M\vert \eta_x - \eta_y \vert + \|{r_x^E - r_y^E}\|  \vert \eta_y \vert.
\end{eqnarray*}
Since both $(\eta_x,r_x),(\eta_y,r_y) \in \mathcal{U}_{j,l,n}$, we have
\begin{equation*}
    \vert \eta_x - \eta_y \vert \leq \vert \eta_x - \eta_j \vert + \vert \eta_j - \eta_y \vert < \frac{2}{n}
\hspace{1cm}\mbox{and}\hspace{1cm}
    \vert \eta_y \vert \leq \vert \eta_y - \eta_x \vert + \vert \eta_x \vert < \vert \eta_x \vert + \frac{2}{n}.
\end{equation*}
Also, since $|r_x-r_y|<1/n$, it follows that $r_y=\alpha r_x$ where $|1-\alpha|<1/nr_x < \delta'$ by our choice of $n$. Consequently,
\begin{eqnarray*}
    \vert x - y \vert 
    &< & \frac{2}{n} M+ \left( \vert \eta_x \vert + \frac{2}{n} \right) \|{r_x^E -  r_x^E \alpha^E}\|   \\ 
    &<& \frac{2}{n}M + \left( \vert \eta_x \vert + 2 \right)M\| I - \alpha^E\| \\
    &<&  \frac{2}{n}M +  \frac{\delta M \left( \vert \eta_x \vert + 2\right) }{2M (| \eta_x | + 2)}  \\
    &<& \frac{\delta}{2} + \frac{\delta}{2}=\delta 
\end{eqnarray*}
and so we have established \eqref{eq:OpenRectangle}. Finally, upon noting that $\{\mathcal{U}_{j,l,n}\}_{(j,l,n)\in\mathbb{N}_+^3}$ is a countable collection of open rectangles, the union in \eqref{eq:OpenRectangle} is necessarily countable and we are done with the proof.
\end{proof}

\noindent In our final lemma preceding the proof of Theorem \ref{thm:MainIntegrationFormula}, we treat a general measure-theoretic statement which gives sufficient conditions concerning two measure spaces to ensure that their completions are isomorphic. Though we suspect that this result is well-known, we present its proof for completeness.

\begin{lemma}\label{lem:PushforwardLemma}
Let $(X_1,\Sigma_1,\nu_1)$ and $(X_2,\Sigma_2,\nu_2)$ be measure spaces, let $\varphi:X_1\to X_2$ be a bijection and denote by $(X_i,\Sigma_i',\nu_i')$ the completion of the measure space $(X_i,\Sigma_i,\nu_i)$ for $i=1,2$. Assume that the following two properties are satisfied:
\begin{enumerate}
\item\label{property:PushforwardLemma1} For each $A_1\in\Sigma_1$, $\varphi(A_1)\in\Sigma_2'$ and $\nu_2'(\varphi(A_1))=\nu_1(A_1).$
\item\label{property:PushforwardLemma2} For each $A_2\in\Sigma_2$, $\varphi^{-1}(A_2)\in \Sigma_1'$ and $\nu_1'(\varphi^{-1}(A_2))=\nu_2(A_2)$.
\end{enumerate}
Then the measure spaces $(X_1,\Sigma_1',\nu_1')$ and $(X_2,\Sigma_2',\nu_2')$ are isomorphic with point isomorphism $\varphi$. Specifically,
\begin{equation}\label{eq:PushforwardLemma1}
\Sigma_2'=\{A_2\subseteq X_2: \varphi^{-1}(A_2)\in\Sigma_1'\}
\end{equation}
and
\begin{equation}\label{eq:PushforwardLemma2}
\nu_2'(A_2)=\nu_1'(\varphi^{-1}(A_2))
\end{equation}
for all $A_2\in\Sigma_2'$.
\end{lemma}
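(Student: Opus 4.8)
The plan is to bootstrap Properties~\ref{property:PushforwardLemma1} and \ref{property:PushforwardLemma2} from $\Sigma_1$, $\Sigma_2$ up to the completed $\sigma$-algebras $\Sigma_1'$, $\Sigma_2'$, and then read off \eqref{eq:PushforwardLemma1} and \eqref{eq:PushforwardLemma2} by a short formal argument using only that $\varphi$ is a bijection. Throughout I would use the standard description of the completion: $A\in\Sigma_i'$ exactly when $A=B\cup N$ for some $B\in\Sigma_i$ and some $N$ contained in a $\nu_i$-null set $Z\in\Sigma_i$, in which case $\nu_i'(A)=\nu_i(B)$. I would also use repeatedly that a bijection commutes with complements and with arbitrary unions and intersections, so that $\varphi(B\setminus A)=\varphi(B)\setminus\varphi(A)$, $\varphi(B\cup N)=\varphi(B)\cup\varphi(N)$, and $\varphi\circ\varphi^{-1}=\mathrm{id}$, $\varphi^{-1}\circ\varphi=\mathrm{id}$.

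First I would prove the upgraded form of Property~\ref{property:PushforwardLemma1}: for every $A\in\Sigma_1'$ one has $\varphi(A)\in\Sigma_2'$ and $\nu_2'(\varphi(A))=\nu_1'(A)$. Writing $A=B\cup N$ as above, $\varphi(A)=\varphi(B)\cup\varphi(N)$. Property~\ref{property:PushforwardLemma1} gives $\varphi(B)\in\Sigma_2'$ with $\nu_2'(\varphi(B))=\nu_1(B)$ and $\varphi(Z)\in\Sigma_2'$ with $\nu_2'(\varphi(Z))=\nu_1(Z)=0$; since $\varphi(N)\subseteq\varphi(Z)$ and $(X_2,\Sigma_2',\nu_2')$ is complete, $\varphi(N)\in\Sigma_2'$ with $\nu_2'(\varphi(N))=0$, hence $\varphi(A)\in\Sigma_2'$ and, by monotonicity and subadditivity, $\nu_2'(\varphi(A))=\nu_1(B)=\nu_1'(A)$. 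Running the identical argument with $\varphi^{-1}$ in place of $\varphi$ and the two spaces interchanged yields the upgraded form of Property~\ref{property:PushforwardLemma2}: for every $A\in\Sigma_2'$, $\varphi^{-1}(A)\in\Sigma_1'$ and $\nu_1'(\varphi^{-1}(A))=\nu_2'(A)$.

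With these two statements in hand the conclusion is immediate. If $A_2\in\Sigma_2'$, the upgraded Property~\ref{property:PushforwardLemma2} gives $\varphi^{-1}(A_2)\in\Sigma_1'$ and $\nu_2'(A_2)=\nu_1'(\varphi^{-1}(A_2))$, which is \eqref{eq:PushforwardLemma2}; conversely, if $A_2\subseteq X_2$ satisfies $\varphi^{-1}(A_2)\in\Sigma_1'$, then the upgraded Property~\ref{property:PushforwardLemma1} applied to $\varphi^{-1}(A_2)$ gives $\varphi(\varphi^{-1}(A_2))=A_2\in\Sigma_2'$, where bijectivity was used. This establishes \eqref{eq:PushforwardLemma1}, and \eqref{eq:PushforwardLemma2} then holds for all $A_2\in\Sigma_2'$ by the first half. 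The only step requiring genuine care is the bootstrap: one must keep in mind that $\varphi(B)$ generally lies in $\Sigma_2'$ rather than in $\Sigma_2$, and that it is completeness of $(X_2,\Sigma_2',\nu_2')$ — not of $(X_2,\Sigma_2,\nu_2)$, which is not assumed — that allows us to absorb $\varphi(N)$; everything else is bookkeeping with images and preimages under a bijection.
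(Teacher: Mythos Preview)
Your proof is correct and follows essentially the same approach as the paper: decompose an element of the completion as a measurable piece together with a subset of a null set, push forward (or pull back) each piece using the given Properties~\ref{property:PushforwardLemma1} and \ref{property:PushforwardLemma2}, and absorb the null piece using completeness of the target completed space. The only difference is organizational---you first isolate the ``upgraded'' versions of the two properties and then deduce \eqref{eq:PushforwardLemma1}--\eqref{eq:PushforwardLemma2}, whereas the paper weaves these two steps together---but the underlying argument is identical.
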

\begin{proof}
Let us first assume that $A_2\in\Sigma_2'$. By definition, $A_2=G_2\cup H_2$ where $G_2\in\Sigma_2$ and $H_2\subseteq G_{2,0}\in \Sigma_2$ with $\nu_2'(A_2)=\nu_2(G_2)$ and $\nu_2'(H_2)=\nu_2(G_{2,0})=0$. Consequently, $\varphi^{-1}(A_2)=\varphi^{-1}(G_2)\cup\varphi^{-1}(H_2)$ and $\varphi^{-1}(H_2)\subseteq \varphi^{-1}(G_{2,0})$. In view of Property \ref{property:PushforwardLemma2}, $\varphi^{-1}(G_2),\varphi^{-1}(G_{2,0})\in \Sigma_1'$ and we have
\begin{equation*}
\nu_1'(\varphi^{-1}(G_2))=\nu_2(G_2)=\nu_2'(A_2)\hspace{1cm}\mbox{and}\hspace{1cm}\nu_1'(\varphi^{-1}(G_{2,0}))=\nu_2(G_{2,0})=0.
\end{equation*}
In view of the fact that $(X_1',\Sigma_1',\nu_1')$ is complete, $\varphi^{-1}(H_2)\in\Sigma_1'$ and $\nu_1'(\varphi^{-1}(H_2))=0$. Consequently, we obtain $\varphi^{-1}(A_2)=\varphi^{-1}(G_2)\cup\varphi^{-1}(H_2)\in\Sigma_1'$ and
\begin{equation*}
\nu_2'(A_2)=\nu_1'(\varphi^{-1}(G_2))\leq\nu_1'(\varphi^{-1}(A_2))\leq\nu_1'(\varphi^{-1}(G_2))+\nu_1'(\varphi^{-1}(H_2))=\nu_2(G_2)+0=\nu_2'(A_2).
\end{equation*}
From this we obtain that $\Sigma_2'\subseteq \{A_2\subseteq X_2:\varphi^{-1}(A_2)\in\Sigma_1'\}$ and, for each $A_2\in\Sigma_2'$, $\nu_2'(A_2)=\nu_1'(\varphi^{-1}(A_2))$. It remains to prove that
\begin{equation*}
\{A_2\subseteq X_2:\varphi^{-1}(A_2)\in\Sigma_1'\}\subseteq \Sigma_2'.
\end{equation*}
To this end, let $A_2$ be a subset of $X_2$ for which $\varphi^{-1}(A_2)\in\Sigma_1'$. By the definition of $\Sigma_1'$, we have $\varphi^{-1}(A_2)=G_1\cup H_1$ where $G_1\in\Sigma_1$, $H_1\subseteq G_{1,0}\in\Sigma_1$ and $\nu_1'(H_1)=\nu_1(G_{1,0})=0$. In view of Property \ref{property:PushforwardLemma1}, $\varphi(G_1)\in\Sigma_2'$, $\varphi(H_1)\subseteq\varphi(G_{1,0})\in\Sigma_2'$ and $\nu_2'(\varphi(G_{1,0}))=\nu_1(G_{1,0})=0$. Because $(X_2',\Sigma_2',\nu_2')$ is complete, we have $\varphi(H_1)\in\Sigma_2'$ and so
\begin{equation*}
A_1=\varphi(\varphi^{-1}(A_2))=\varphi(G_1)\cup\varphi(H_1)\in \Sigma_2',
\end{equation*}
as desired.
\end{proof}

\noindent We are finally in a position to prove Theorem \ref{thm:MainIntegrationFormula}.

\begin{proof}[Proof of Theorem \ref{thm:MainIntegrationFormula}]
Denote by $\mathcal{C}$ the collection of sets $G\subseteq (0,\infty)\times S$ for which $\psi_E(G)\in \mathcal{M}_d$ and $m(\psi_E(G))=(\lambda_P\times\sigma_{P,E})(G).$ By virtue of Lemma \ref{lem:AllMeasurableRectangles}, it follows that $\mathcal{C}$ contains all elementary sets, i.e., finite unions of disjoint measurable rectangles. Using the continuity of measure (applied to the measures $m$ and $\lambda_P\times\sigma_{P,E}$) and the fact that $\psi_E$ is a bijection, it is straightforward to verify that $\mathcal{C}$ is a monotone class. By the monotone class lemma (Theorem 8.3 of \cite{rudin_real_1987}), it immediately follows that $\mathcal{L}\times\Sigma_{P,E}\subseteq\mathcal{C}$. In other words, for each $G\in\mathcal{L}\times\Sigma_{P,E}$,
\begin{equation}\label{eq:Good1}
\psi_E(G)\in\mathcal{M}_d\hspace{1cm}\mbox{and}\hspace{1cm}m(\psi_E(G))=(\lambda_P\times\sigma_{P,E})(G).
\end{equation}
We claim that, for each Borel subset $A$ of $\mathbb{R}^d\setminus\{0\}$, $\psi_E^{-1}(A)\subseteq \mathcal{L}\times\Sigma_{P,E}$. To this end, we write
\begin{equation*}
\psi_E(\mathcal{L}\times\Sigma_{P,E})=\{\psi_E(G):G\in\mathcal{L}\times\Sigma_{P,E}\}
\end{equation*}
for the $\sigma$-algebra on $\mathbb{R}^d\setminus\{0\}$ induced by $\psi_E$. In view of Lemma \ref{lem:OpenRectangle}, $\psi_E(\mathcal{L}\times\Sigma_{P,E})$ contains every open subset of $\mathbb{R}^d\setminus\{0\}$ and therefore
\begin{equation*}
\mathcal{B}(\mathbb{R}^d\setminus\{0\})\subseteq\psi_E(\mathcal{L}\times\Sigma_{P,E}).
\end{equation*}
where $\mathcal{B}(\mathbb{R}^d\setminus\{0\})$ denotes the $\sigma$-algebra of Borel subsets of $\mathbb{R}^d\setminus\{0\}$ thus proving our claim. 

Together, the results of the two preceding paragraphs show that, for each $A\in\mathcal{B}(\mathbb{R}^d\setminus\{0\})$, $\psi_E^{-1}(A)\subseteq \mathcal{L}\times\Sigma_{P,E}$ and $m(A)=(\lambda_P\times\sigma_{P,E})(\psi_E^{-1}(A))$. Upon noting that $\mathcal{L}\times\Sigma_{P,E}\subseteq (\mathcal{L}\times\Sigma_{P,E})'$, we immediately obtain the following statement: For each $A\in\mathcal{B}(\mathbb{R}^d\setminus\{0\})$,
\begin{equation}\label{eq:Good2}
\psi_E^{-1}(A)\in (\mathcal{L}\times\Sigma_{P,E})'\hspace{1cm}\mbox{and}\hspace{1cm}m(A)=(\lambda_P\times\sigma_{P,E})(\psi_E^{-1}(A)).
\end{equation}
In comparing \eqref{eq:Good1} and \eqref{eq:Good2} with Properties \ref{property:PushforwardLemma1} and \ref{property:PushforwardLemma2} of Lemma \ref{lem:PushforwardLemma} and, upon noting that $((0,\infty)\times S,(\mathcal{L}\times\Sigma_{P,E})',\lambda_P\times\sigma_{P,E})$ is the completion of $((0,\infty)\times S,\mathcal{L}\times\Sigma_{P,E},\lambda_P\times\sigma_{P,E})$ and $(\mathbb{R}^d\setminus\{0\},\mathcal{M}_d,m)$ is the completion of $(\mathbb{R}^d\setminus\{0\},\mathcal{B}(\mathbb{R}^d\setminus\{0\}),m)$, Property \ref{item:MainIntegrationFormula1} of Theorem \ref{thm:MainIntegrationFormula} follows immediately from Lemma \ref{lem:PushforwardLemma}.

It remains to prove Property \ref{item:MainintegrationFormula2}. To this end, let $f:\mathbb{R}^d\to\mathbb{C}$ be Lebesgue measurable. Because $\mathcal{M}_d=\{A\subseteq \mathbb{R}^d\setminus\{0\}:\psi_E^{-1}(A)\in(\mathcal{L}\times\Sigma_{P,E})'\}$, it follows that $f\circ\psi_E$ is $(\mathcal{L}\times\Sigma_{P,E})'$-measurable. In the case that $f\geq 0$, we may approximate $f$ monotonically by simple functions and, by invoking Property \ref{item:MainIntegrationFormula1} and the monotone convergence theorem, we find that
\begin{equation}\label{eq:ChangeofMeasure}
\int_{\mathbb{R}^d}f(x)\,dx=\int_{\mathbb{R}^d\setminus \{0\}}f(x)\,dx=\int_{(0,\infty)\times S}f\circ \psi_E\, d(\lambda_P\times\sigma_{P,E}).
\end{equation}
From this, \eqref{eq:MainIntegrationFormula} follows from Fubini's theorem (see, e.g., Part (a) of Theorem 8.8 and Theorem 8.12 of \cite{rudin_real_1987}). Finally, by applying the above result to $|f|\geq 0$, we obtain $f\in L^1(\mathbb{R}^d)$ if and only if $f\circ \psi_E\in L^1((0,\infty)\times S,(\mathcal{L}\times\Sigma_{P,E})',\lambda_P\times\sigma_{P,E})$. In this case, by applying \eqref{eq:ChangeofMeasure} to $\Re(f)_+,\Re(f)_-,\Im(f)_+$ and $\Im(f)_-$, we find that \eqref{eq:ChangeofMeasure} holds for our integrable $f$ and, by virtue Fubini's theorem (see, e.g., Part (b) of Theorem 8.8 and Theorem 8.12 of \cite{rudin_real_1987}), the desired result follows.
\end{proof}

\noindent Our next result, Proposition \ref{prop:Regular}, guarantees that, in particular, $\sigma_{P,E}$ is a Radon measure. 

\begin{proposition}\label{prop:Regular}
We have:
\begin{enumerate}
    \item\label{item:Complete} $(S,\Sigma_{P,E},\sigma_{P,E})$ is the completion of the measure space $(S,\mathcal{B}(S),\sigma_{P,E})$. In particular, the measure space $(S,\Sigma_{P,E},\sigma_{P,E})$ is complete and every $F\in \Sigma_{P,E}$ is of the form $F=G\cup H$ where $G$ is a Borel set and $H$ is a subset of a Borel set $Z$ with $\sigma_{P,E}(Z)=0$.
\item\label{item:Regular} For each $F\in\Sigma_{P,E}$,
\begin{equation}\label{eq:OuterRegular}
\sigma_{P,E}(F)=\inf\{\sigma_{P,E}(\mathcal{O}):F\subseteq\mathcal{O}\subseteq S\mbox{ and $\mathcal{O}$ is open}\}
\end{equation}
and
\begin{equation}
\sigma_{P,E}(F)=\sup\{\sigma_{P,E}(K):K\subseteq F\subseteq S\mbox{ and $K$ is compact}\}.
\end{equation}
\end{enumerate} 
\end{proposition}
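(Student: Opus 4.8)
The plan is to establish the two assertions separately, with Part \ref{item:Complete} feeding lightly into Part \ref{item:Regular}. For Part \ref{item:Complete}, completeness of $(S,\Sigma_{P,E},\sigma_{P,E})$ is immediate: if $H\subseteq Z$ with $Z\in\Sigma_{P,E}$ and $\sigma_{P,E}(Z)=\mu_P\,m(\widetilde{Z_E})=0$, then $\widetilde{H_E}\subseteq\widetilde{Z_E}$ has $m(\widetilde{Z_E})=0$, so completeness of Lebesgue measure forces $\widetilde{H_E}\in\mathcal{M}_d$, i.e. $H\in\Sigma_{P,E}$. Since $\mathcal{B}(S)\subseteq\Sigma_{P,E}$ by Proposition \ref{prop:BorelContainment}, the completion of $(S,\mathcal{B}(S),\sigma_{P,E})$ is contained in $\Sigma_{P,E}$, so the content of Part \ref{item:Complete} is the reverse inclusion: every $F\in\Sigma_{P,E}$ has the form $F=G\cup H$ with $G$ Borel and $H$ contained in a Borel set of $\sigma_{P,E}$-measure zero.

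To prove this reverse inclusion, fix $F\in\Sigma_{P,E}$. Then $\widetilde{F_E}\in\mathcal{M}_d$ is bounded (it lies in $B=B_1$), so by regularity of Lebesgue measure there are Borel sets $U,V\subseteq\mathbb{R}^d\setminus\{0\}$ with $U\subseteq\widetilde{F_E}\subseteq V$ and $m(V\setminus U)=0$; intersecting $V$ with the open set $B\setminus\{0\}$ we may assume $V\subseteq B\setminus\{0\}$. Since $\psi_E$ is a homeomorphism (Proposition \ref{prop:PsiHomeomorphism}) with $\psi_E^{-1}(\widetilde{F_E})=(0,1)\times F$ and $\psi_E^{-1}(B\setminus\{0\})=(0,1)\times S$, the sets $\psi_E^{-1}(U)\subseteq(0,1)\times F\subseteq\psi_E^{-1}(V)\subseteq(0,1)\times S$ are Borel in $(0,\infty)\times S$, hence lie in $\mathcal{L}\times\Sigma_{P,E}$. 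Their difference $N:=\psi_E^{-1}(V\setminus U)$ satisfies $(\lambda_P\times\sigma_{P,E})(N)=m(V\setminus U)=0$ by Theorem \ref{thm:MainIntegrationFormula}(\ref{item:MainIntegrationFormula1}), so Tonelli's theorem applied to the $\sigma$-finite product measure gives $\sigma_{P,E}(N_r)=0$ for $\lambda_P$-almost every $r$; as $\lambda_P((0,1))=1/\mu_P>0$ we may choose $r_0\in(0,1)$ with $\sigma_{P,E}(N_{r_0})=0$. Put $G:=(\psi_E^{-1}(U))_{r_0}$, a Borel subset of $S$. Then $G\subseteq((0,1)\times F)_{r_0}=F$, while $F\setminus G=\big(((0,1)\times F)\setminus\psi_E^{-1}(U)\big)_{r_0}\subseteq N_{r_0}$ with $N_{r_0}$ Borel and $\sigma_{P,E}(N_{r_0})=0$; this is the desired decomposition.

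For Part \ref{item:Regular}, I will first prove inner regularity by compact sets and then deduce outer regularity by open sets using that $\sigma_{P,E}$ is finite (Proposition \ref{prop:sigmaisameaure}). Given $F\in\Sigma_{P,E}$ and $\epsilon>0$, inner regularity of Lebesgue measure yields a compact $C\subseteq\widetilde{F_E}$ with $m(C)>m(\widetilde{F_E})-\epsilon$. Since $\psi_E^{-1}$ is continuous, $\psi_E^{-1}(C)$ is compact, so $K:=\mathrm{pr}_S(\psi_E^{-1}(C))$ (the image under the projection $(0,\infty)\times S\to S$) is a compact subset of $S$; because $\psi_E^{-1}(C)\subseteq(0,1)\times F$ we get $K\subseteq F$, and because $\psi_E^{-1}(C)\subseteq(0,1)\times K$ we get $C\subseteq\psi_E((0,1)\times K)=\widetilde{K_E}$, whence $\sigma_{P,E}(K)=\mu_P\,m(\widetilde{K_E})\geq\mu_P\,m(C)>\sigma_{P,E}(F)-\mu_P\epsilon$. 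Letting $\epsilon\to0$ and using monotonicity proves inner regularity. For outer regularity, apply the above to $S\setminus F$: given $\epsilon>0$ choose a compact $K\subseteq S\setminus F$ with $\sigma_{P,E}(K)>\sigma_{P,E}(S\setminus F)-\epsilon$; then $\mathcal{O}:=S\setminus K$ is open, contains $F$, and $\sigma_{P,E}(\mathcal{O})=\sigma_{P,E}(S)-\sigma_{P,E}(K)<\sigma_{P,E}(F)+\epsilon$, which together with monotonicity yields \eqref{eq:OuterRegular}.

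The main obstacle is the reverse inclusion in Part \ref{item:Complete}: one must produce a Borel subset of $S$ out of the Lebesgue-measurable set $\widetilde{F_E}\subseteq\mathbb{R}^d$, and the naive candidate $\mathrm{pr}_S(\psi_E^{-1}(\widetilde{F_E}))$ just returns $F$, while in general $\mathrm{pr}_S$ of a Borel set need not be Borel. The device above sidesteps this by trapping $(0,1)\times F$ between two honest Borel sets whose difference is $(\lambda_P\times\sigma_{P,E})$-null and then extracting a single good horizontal section via Tonelli; everything else is routine transport of structure through the homeomorphism $\psi_E$ and the point isomorphism of Theorem \ref{thm:MainIntegrationFormula}.
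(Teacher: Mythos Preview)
Your proof is correct and takes a genuinely different route from the paper's.

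The paper proves Item~\ref{item:Regular} first and then observes that Item~\ref{item:Complete} follows from it by the usual $G_\delta$/$F_\sigma$ sandwich argument. Its proof of outer regularity is indirect: given $F$, it takes an open $U\supseteq\widetilde{F_E}$ in $\mathbb{R}^d$ with $m(U\setminus\widetilde{F_E})$ small, forms the one-parameter family of open ``slices'' $\mathcal{O}_r=S\cap r^{-E}U$, and then uses the integration formula~\eqref{eq:MainIntegrationFormula} together with a contradiction/averaging argument to exhibit an $r_0$ for which $\sigma(\mathcal{O}_{r_0}\setminus F)$ is small. You instead go at inner regularity directly: pull a compact $C\subseteq\widetilde{F_E}$ back through the homeomorphism $\psi_E$ and project to $S$ to obtain a compact $K\subseteq F$ with $C\subseteq\widetilde{K_E}$; outer regularity then falls out by complementing in the finite space $S$. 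This is cleaner and avoids the averaging trick entirely, at the modest cost that the paper's argument advertises the utility of~\eqref{eq:MainIntegrationFormula}. For Item~\ref{item:Complete} you give a direct argument (sandwich $\widetilde{F_E}$ between Borel sets, pull back via the point isomorphism of Theorem~\ref{thm:MainIntegrationFormula}, and extract a good $r_0$-section by Tonelli), whereas the paper derives it from regularity; both are standard moves, and your use of the point-isomorphism half of Theorem~\ref{thm:MainIntegrationFormula} rather than the integration-formula half is a nice counterpoint to the paper's choice. The one fact you use implicitly---that Borel sets in $(0,\infty)\times S$ lie in $\mathcal{B}((0,\infty))\times\mathcal{B}(S)$, so that sections are Borel---is justified by second countability of the factors and is worth a parenthetical remark.
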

\begin{remark}
This proposition can be seen as an application of Proposition \ref{prop:BorelContainment} and Theorem 2.18 of \cite{rudin_real_1987}. The proof we give here is distinct and, we believe, nicely illustrates the utility of \eqref{eq:MainIntegrationFormula} of Theorem \ref{thm:MainIntegrationFormula}.
\end{remark}
\begin{proof}
Throughout the proof, we shall write $\sigma=\sigma_{P,E}$, $\Sigma=\Sigma_{P,E}$ and, for each $F\subseteq S$, $\widetilde{F}=\widetilde{F_E}$. We remark that, by standard arguments using $G_\delta$ and $F_\sigma$ sets, Item \ref{item:Complete} follows immediately from Item \ref{item:Regular}. Also, given that $S$ is compact and $\sigma$ is finite, it suffices to prove \eqref{eq:OuterRegular}, i.e., it suffices to prove the statement: For each $F\in \Sigma$ and $\epsilon>0$, there is an open subset $\mathcal{O}$ of $S$ containing $F$ for which 
\begin{equation*}
\sigma(\mathcal{O}\setminus F)<\epsilon.
\end{equation*}
To this end, let $F\in \Sigma$ and $\epsilon>0$. Given that $\widetilde{F}\in\mathcal{M}_d$ and $m$ is outer regular, there exists an open set $U\subseteq \mathbb{R}^d\setminus\{0\}$ for which $\widetilde{F}\subseteq U$ and $m(U\setminus\widetilde{F})<\epsilon/(2\mu_P)$. Since $\widetilde{F}$ is a subset of the open set $B\setminus\{0\}$, we may assume without loss of generality that $U\subseteq B\setminus\{0\}$ and so $m(\widetilde{F})\leq m(U)<\infty$ and
\begin{equation}\label{eq:LebesgueOuter}
m(U\setminus \widetilde{F})=m(U)-m(\widetilde{F})<\epsilon/(2\mu_P).
\end{equation}
For each $0<r<1$, consider the open set
\begin{equation*}
\mathcal{O}_r=S\cap\left( r^{-E}U\right)
\end{equation*}
in $S$. Observe that, for each $x\in F$, $r^E x\in \widetilde{F}\subseteq U$ and therefore $x\in \mathcal{O}_r$. Hence, for each $0<r<1$, $\mathcal{O}_r$ is an open subset of $S$ containing $F$. 

We claim that there is at least one $r_0\in (0,1)$ for which 
\begin{equation}\label{eq:GoodIneq}
m(\widetilde{\mathcal{O}_{r_0}})< m(U)+\epsilon/(2\mu_P).
\end{equation}
To prove the claim, we shall assume, to reach a contradiction, that 
\begin{equation*}
m(\widetilde{\mathcal{O}_{r}})\geq m(U)+\epsilon/(2\mu_P)
\end{equation*}
for all $0<r<1$. By virtue of \eqref{eq:MainIntegrationFormula} of Theorem \ref{thm:MainIntegrationFormula},
\begin{equation*}
m(U)=\int_{0}^\infty\left(\int_S \chi_{U}(r^E\eta)\,\sigma(d\eta)\right)r^{\mu_P-1}\,dr.
\end{equation*}
Upon noting that $U\subseteq B\setminus\{0\}$, it is easy to see that
\begin{equation*}
U=\bigcup_{0<s<1}s^E\mathcal{O}_s
\hspace{1cm}\mbox{and}\hspace{1cm}
r^E\eta\in \bigcup_{0<s<1}s^E\mathcal{O}_s
\end{equation*}
if and only if $0<r<1$ and $\eta\in \mathcal{O}_r$. Consequently,
\begin{equation*}
m(U)=\int_0^1\left(\int_S\chi_{\mathcal{O}_r}(\eta)\,\sigma(d\eta)\right)\,r^{\mu_P-1}\,dr=\int_0^1\sigma(\mathcal{O}_r)r^{\mu_P-1}\,dr=\int_0^1 \mu_P\cdot m(\widetilde{\mathcal{O}_r})\,r^{\mu_P-1}\,dr.
\end{equation*}
Upon making use of our supposition, we have
\begin{equation*}
\int_0^1\mu_P\cdot m(\widetilde{\mathcal{O}_r})r^{\mu_P-1}\,dr\geq \int_0^1\mu_P\cdot (m(U)+\epsilon/(2\mu_P))r^{\mu_P-1}\,dr=m(U)+\epsilon/(2\mu_P)
\end{equation*}
and so
\begin{equation*}
m(U)\geq m(U)+\epsilon/(2\mu_P),
\end{equation*}
which is impossible. Thus, the stated claim is true.

Given any such $r_0$ for which \eqref{eq:GoodIneq} holds, set $\mathcal{O}=\mathcal{O}_{r_0}$. As previously noted, $\mathcal{O}$ is an open subset of $S$ which contains $F$. In view of \eqref{eq:LebesgueOuter} and \eqref{eq:GoodIneq}, we have
\begin{equation*}
m(\widetilde{\mathcal{O}})-m(\widetilde{F})<m(U)-m(\widetilde{F})+\epsilon/(2\mu_P)<\epsilon/(2\mu_P)+\epsilon/(2\mu_P)=\epsilon/\mu_P
\end{equation*}
and therefore
\begin{equation*}
\sigma(\mathcal{O}\setminus F)=\sigma(\mathcal{O})-\sigma(F)=\mu_P(m(\widetilde{\mathcal{O}})-m(\widetilde{F}))<\epsilon,
\end{equation*}
as desired.
\end{proof}

\subsection{The construction is independent of $E\in\Exp(P)$.}\label{subsec:IndependentofE}

\noindent In this subsection, we show that the Radon measure $\sigma_{P,E}$ is independent of the choice of $E\in\Exp(P)$ and complete the proof of Theorem \ref{thm:BestIntegrationFormula}. To set the stage for our first result, let $E_1,E_2\in\Exp(P)$ and consider the associated (respective) measure spaces $(S,\Sigma_{P,E_1},\sigma_{P,E_1})$ and $(S,\Sigma_{P,E_2},\sigma_{P,E_2})$ produced via the construction in Subsection \ref{subsec:ConstructionofSigma}. 

\begin{proposition}\label{prop:Endependence}
These measure spaces are the same, i.e., $\Sigma_{P,E_1}=\Sigma_{P,E_2}$ and $\sigma_{P,E_1}=\sigma_{P,E_2}$.
\end{proposition}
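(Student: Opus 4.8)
\noindent\emph{Plan.}  For $E_1,E_2\in\Exp(P)$ the plan is to produce a single Lebesgue–measure–preserving homeomorphism of $\mathbb{R}^d\setminus\{0\}$ which carries the quasi‑conical region $\widetilde{F_{E_2}}$ onto $\widetilde{F_{E_1}}$ for \emph{every} $F\subseteq S$; since $\sigma_{P,E}(F)=\mu_P\cdot m(\widetilde{F_E})$ and $F\in\Sigma_{P,E}$ is by definition the condition $\widetilde{F_E}\in\mathcal M_d$, both the $\sigma$‑algebra identity and the measure identity then follow at once.  The obvious candidate is $T:=\psi_{E_1}\circ\psi_{E_2}^{-1}$, which by Proposition~\ref{prop:PsiHomeomorphism} is a homeomorphism of $\mathbb{R}^d\setminus\{0\}$, and which, because $\widetilde{F_{E_i}}=\psi_{E_i}((0,1)\times F)$ for all $F\subseteq S$, satisfies $T(\widetilde{F_{E_2}})=\widetilde{F_{E_1}}$.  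Using the explicit formula for $\psi_{E_2}^{-1}$ from Proposition~\ref{prop:PsiHomeomorphism} one computes $T(x)=P(x)^{E_1}P(x)^{-E_2}x=U(P(x))\,x$ with $U(r):=r^{E_1}r^{-E_2}$; since $E_1,E_2\in\Exp(P)$ one checks $P(U(r)x)=P(x)$, i.e.\ $U(r)\in\Sym(P)$ for all $r>0$, and $r\mapsto U(r)$ together with $r\mapsto U(r)^{-1}=r^{E_2}r^{-E_1}$ is continuous.

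\noindent So everything reduces to the following lemma, which I would isolate and prove: \emph{if $U\colon(0,\infty)\to\Sym(P)$ is continuous, then $T_U(x):=U(P(x))x$ is a homeomorphism of $\mathbb{R}^d\setminus\{0\}$ preserving Lebesgue measurability and Lebesgue measure.}  First treat the case where $U$ is a step function, constant equal to $U_i\in\Sym(P)$ on $[a_{i-1},a_i)$ for a partition $0=a_0<a_1<\cdots$ of $(0,\infty)$: since $U_i$ preserves $P$, the map $T_U$ carries each ``$P$‑shell'' $\{x:a_{i-1}\le P(x)<a_i\}$ bijectively onto itself, where it agrees with the orthogonal transformation $U_i$ (Proposition~\ref{prop:SymCompact}); the shells partition $\mathbb{R}^d\setminus\{0\}$ and orthogonal maps preserve $m$, so additivity gives $m(T_U(A))=m(A)$ for every Lebesgue set $A$.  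For general continuous $U$, pick step functions $U_n\colon(0,\infty)\to\Sym(P)$ with $U_n\to U$ uniformly on compact subintervals; then $T_{U_n}\to T_U$ and $T_{U_n}^{-1}=T_{U_n^{-1}}\to T_{U^{-1}}=T_U^{-1}$ uniformly on compact subsets of $\mathbb{R}^d\setminus\{0\}$ (inversion is uniformly continuous on the compact group $\Sym(P)$).  For open $V$ whose closure is a compact subset of $\mathbb{R}^d\setminus\{0\}$, the set $T_U(V)$ is open and $\chi_{T_U(V)}\le\liminf_n\chi_{T_{U_n}(V)}$ pointwise, so Fatou's lemma and the step case give $m(T_U(V))\le\liminf_n m(T_{U_n}(V))=m(V)$; the same applied to $T_U^{-1}$ gives $m(T_U^{-1}(V))\le m(V)$, and $V=T_U^{-1}(T_U(V))$ forces equality $m(T_U(V))=m(V)$.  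Exhausting an arbitrary open set by such $V$, then invoking uniqueness of $\sigma$‑finite Borel measures that agree on the open sets and completeness of Lebesgue measure, upgrades this to all Lebesgue sets, and the measurability statement is obtained in the same way.

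\noindent Granting the lemma, the proof is immediate: for $F\subseteq S$ one has $\widetilde{F_{E_2}}\in\mathcal M_d$ if and only if $T(\widetilde{F_{E_2}})=\widetilde{F_{E_1}}\in\mathcal M_d$, which says exactly $\Sigma_{P,E_1}=\Sigma_{P,E_2}$; and for $F$ in this common $\sigma$‑algebra $m(\widetilde{F_{E_1}})=m(T(\widetilde{F_{E_2}}))=m(\widetilde{F_{E_2}})$, whence $\sigma_{P,E_1}(F)=\mu_P\cdot m(\widetilde{F_{E_1}})=\mu_P\cdot m(\widetilde{F_{E_2}})=\sigma_{P,E_2}(F)$.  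I expect the only real difficulty to be the passage from step functions to general continuous $U$ in the lemma: locally uniform convergence of homeomorphisms does not by itself preserve pushforward measures, so one must use lower semicontinuity of $A\mapsto\chi_{T_{U_n}(A)}$ on open sets together with the fact that $T_U$ is simultaneously a locally uniform limit of measure‑preserving maps and the inverse of such a limit.  (An alternative, more in the spirit of the quoted remark that this is a consequence of Theorem~\ref{thm:MainIntegrationFormula}, is to derive from \eqref{eq:MainIntegrationFormula} the identity $\sigma_{P,E_2}(F)=\mu_P\int_0^1\sigma_{P,E_1}(O_r^{-1}F)\,r^{\mu_P-1}\,dr$ with $O_r=r^{-E_2}r^{E_1}\in\Sym(P)$ and average it over $\Sym(P)$ against normalized Haar measure to kill the $r$‑dependence; this route still requires the $\Sym(P)$‑invariance of $\sigma_{P,E}$, which is itself a special case of the lemma above with $U(r)=r^E O r^{-E}$.)
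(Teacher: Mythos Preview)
Your approach is correct and genuinely different from the paper's.  The paper proves $\sigma_{P,E_1}=\sigma_{P,E_2}$ on Borel sets by a soft analytic argument built on Theorem~\ref{thm:MainIntegrationFormula}: for a Borel $F$ it uses the regularity of both $\sigma_j$ (Proposition~\ref{prop:Regular}) to sandwich $F$ between compacta $K_n$ and open sets $\mathcal O_n$, picks Urysohn functions $\phi_n$ with $\chi_{K_n}\le\phi_n\le\chi_{\mathcal O_n}$, observes from the integration formula that the continuous functions $g_{n,j}(r)=\int_S\phi_n(r^{E_j}\eta)\,\sigma_j(d\eta)$ have the same $r^{\mu_P-1}\,dr$--integral over every interval $[s,t]$ for $j=1,2$, and then applies the fundamental theorem of calculus to conclude $g_{n,1}(1)=g_{n,2}(1)$; passing $n\to\infty$ gives $\sigma_1(F)=\sigma_2(F)$, and the $\sigma$-algebra identity then follows from Proposition~\ref{prop:Regular}.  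By contrast you bypass Theorem~\ref{thm:MainIntegrationFormula} entirely, observing that $T=\psi_{E_1}\circ\psi_{E_2}^{-1}$ acts on each level set $\{P=c\}$ by the orthogonal transformation $U(c)\in\Sym(P)$ and then proving measure preservation of such ``level-wise orthogonal'' homeomorphisms via a piecewise-constant approximation and a Fatou argument applied symmetrically to $T_U$ and $T_U^{-1}$.  Your route is more geometric, handles the $\sigma$-algebra identity and the measure identity in one stroke, and (as you note) yields $\Sym(P)$-invariance of $\sigma_{P,E}$ as an immediate corollary of the same lemma; the paper's route stays within the analytic machinery it has already assembled and avoids the limiting argument needed to push measure preservation from piecewise-orthogonal maps to their locally uniform limits when $P$ (hence $T$) is merely continuous.
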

\begin{proof}
Throughout the proof, we will write $\Sigma_i=\Sigma_{P,E_i}$ and $\sigma_i=\sigma_{P,E_i}$ for $i=1,2$. In view of the Proposition \ref{prop:Regular}, it suffices to show that 
\begin{equation*}
\sigma_1(F)=\sigma_2(F)
\end{equation*}
for all $F\in \mathcal{B}(S)\subseteq \Sigma_{1}\cap\Sigma_{2}$. To this end, we let $F\in\mathcal{B}(S)$ be arbitrary but fixed. 

Given $n\in\mathbb{N}_+$, using the regularity of the measures $\sigma_1$ and $\sigma_2$, select open sets $\mathcal{O}_{n,1},\mathcal{O}_{n,2}$ and compact sets $K_{n,1},K_{n,2}$ for which
\begin{equation*}
K_{n,j}\subseteq F\subseteq \mathcal{O}_{n,j}\hspace{1cm}\mbox{and}\hspace{1cm}\sigma_j(\mathcal{O}_{n,j}\setminus K_{n,j})<1/n
\end{equation*}
for $j=1,2$. Observe that $K_n=K_{n,1}\cup K_{n,2}$ is a compact set, $\mathcal{O}_n=\mathcal{O}_{n,1}\cap\mathcal{O}_{n,2}$ is an open set and $K_n\subseteq F\subseteq \mathcal{O}_n$. Furthermore, 
\begin{equation*}
\sigma_j(\mathcal{O}_n\setminus K_n)\leq \sigma_j(\mathcal{O}_{n,j}\setminus K_{n,j})<1/n
\end{equation*}
for $j=1,2$. Given that $\mathcal{O}_n$ is open in $S$, $\mathcal{O}_n=S\cup U_n$ where $U_n$ is an open subset of $\mathbb{R}^d$ and, because that $S$ is compact, $K_n=K_n\cap S$ is a compact subset of $\mathbb{R}^d$. By virtue of Urysohn's lemma, let $\phi_n:\mathbb{R}^d\to [0,1]$ be a continuous function which is compactly supported in $U_n$ and for which $\phi_n(x)=1$ for all $x\in K_n$. Using this sequence of functions $\{\phi_n\}$, we establish the following useful lemma.

\begin{lemma}\label{lem:IndepProof}
For $j=1,2$ and $n\in\mathbb{N}$, define $g_{n,j}:(0,\infty)\to\mathbb{R}$ by
\begin{equation*}
g_{n,j}(r)=\int_S\phi_n(r^{E_j}\eta)\,\sigma_j(d\eta).
\end{equation*}
for $r>0$. Then $g_{n,j}$ is continuous for each $n\in\mathbb{N}$ and $j=1,2$ and
\begin{equation*}
    \sigma_j(F)=\lim_{n\to\infty}g_{n,j}(1)
\end{equation*}
for $j=1,2$.
\end{lemma}
\begin{subproof}
First, we note that, for each $r\in (0,\infty)$, the above integral makes sense because $\eta\mapsto \phi_n(r^{E_j}\eta)$ is Borel measurable (because it's continuous on $S$) and non-negative. Let $\epsilon>0$ and $r_0\in (0,\infty)$ be arbitrary but fixed. It is clear that the function $(0,\infty)\times S\ni (r,\eta)\mapsto \phi_n(r^{E_j}\eta)$ is continuous on its domain and therefore, in view of the compactness of $S$, we can find a $\delta>0$ for which
\begin{equation*}
|\phi_n(r^{E_j}\eta)-\phi_n(r_0^{E_j}\eta)|\leq\frac{\epsilon}{2\sigma_j(S)}\hspace{1cm}\mbox{whenever}\hspace{1cm}|r-r_0|<\delta
\end{equation*}
for all $\eta\in S$. The triangle inequality guarantees that
\begin{equation*}
|g_{n,j}(r)-g_{n,j}(r_0)|\leq \int_S|\phi_n(r^{E_j}\eta)-\phi_n(r_0^{E_j}\eta)|\,\sigma_j(d \eta)\leq\epsilon/2<\epsilon
\end{equation*}
whenever $|r-r_0|<\delta$. Thus, $g_{n,j}$ is continuous.

We observe that
\begin{equation*}
g_{n,j}(1)=\int_{S}\phi_n(\eta)\,\sigma_j(d\eta)
\end{equation*}
because $1^{E_j}=I$. By construction, we have $\chi_{K_n}(\eta)\leq\phi_{n}(\eta)\leq \chi_{\mathcal{O}_n}(\eta)$ for all $\eta\in S$ and $n\in\mathbb{N}_+$ and therefore
\begin{equation*}
\sigma_j(K_n)\leq g_{n,j}(1)\leq \sigma_j(\mathcal{O}_n)
\end{equation*}
by the monotonicity of the integral. Since
\begin{equation*}
\sigma_j(F)=\lim_{n\to\infty}\sigma_j(K_n)=\lim_{n\to\infty}\sigma_j(\mathcal{O}_n)
\end{equation*}
in view of our choice of $\mathcal{O}_n$ and $K_n$, the remaining result follows immediately from the preceding inequality (and the squeeze theorem).
\end{subproof}

\noindent Let us now complete the proof of Proposition \ref{prop:Endependence}. Given any $0<s<1< t$ and $n\in\mathbb{N}$, consider the function $f=f_{n,s,t}:\mathbb{R}^d\to [0,1]$ given by
\begin{equation*}
f(x)=\phi_n(x)\chi_{[s,t]}(P(x))
\end{equation*}
for $x\in\mathbb{R}^d$. It is clear that $f$ is Lebesgue measurable on $\mathbb{R}^d$ and non-negative. By virtue of Theorem \ref{thm:MainIntegrationFormula} (applied to the two measures $\sigma_1$ and $\sigma_2$), we have
\begin{equation}\label{eq:SameMeasure1}
\int_0^\infty \int_Sf(r^{E_1}\eta)\,\sigma_1(d\eta)r^{\mu_P-1}\,dr=\int_{\mathbb{R}^d}f(x)\,dx=\int_0^\infty \int_Sf(r^{E_2}\eta)\,\sigma_2(d\eta)r^{\mu_P-1}\,dr
\end{equation}
Upon noting that
\begin{equation*}
f(r^{E_j}\eta)=\phi_n(r^{E_j}\eta)\chi_{[s,t]}\left((P(r^{E_j}\eta)\right)=\phi_n(r^{E_j}\eta)\chi_{[s,t]}(rP(\eta))=\chi_{[s,t]}(r)\phi_n(r^{E_j}\eta)
\end{equation*}
for $r\in (0,\infty)$, $\eta\in S$, and $j=1,2$, we have
\begin{equation*}
\int_0^\infty\int_S f(r^{E_j}\eta)\,\sigma_j(d\eta)r^{\mu_P-1}\,dr=\int_{[s,t]}\int_S\phi_n(r^{E_j}\eta)\,\sigma_j(d\eta) r^{\mu_P-1}\,dr=\int_{[s,t]}g_{n,j}(r)r^{\mu_P-1}\,dr
\end{equation*}
for $j=1,2$. By virtue of the Lemma \ref{lem:IndepProof}, $r\mapsto g_{n,j}(r)r^{\mu_P-1}$ is continuous and necessarily bounded on $[s,t]$ and so the final integral above can be interpreted as a Riemann integral. In this interpretation, we have
\begin{equation}\label{eq:SameMeasure2}
\int_s^tg_{n,j}(r)r^{\mu_P-1}\,dr=\int_0^\infty \int_S f(r^{E_j}\eta)\,\sigma_j(d\eta)r^{\mu_P-1}\,dr
\end{equation}
for $j=1,2$ and $0<s<1<t$. In view of \eqref{eq:SameMeasure1} and \eqref{eq:SameMeasure2}, we conclude that
\begin{equation*}
\int_s^t g_{n,1}(r)r^{\mu_P-1}\,dr=\int_s^t g_{n,2}(r)r^{\mu_P-1}\,dr
\end{equation*}
for all $0<s<1<t$. In view of continuity of the integrands, an application of the fundamental theorem of calculus now guarantees that $g_{n,1}(1)=g_{n,2}(1)$ for each $n\in\mathbb{N}$. Therefore
\begin{equation*}
\sigma_1(F)=\lim_{n\to\infty}g_{n,1}(1)=\lim_{n\to\infty}g_{n,2}(1)=\sigma_2(F)
\end{equation*}
by virtue of Lemma \ref{lem:IndepProof}.
\end{proof}

\noindent In view of Proposition \ref{prop:Endependence}, we will denote by $\Sigma_P$ and $\sigma_P$ the unique $\sigma$-algebra and measure on $S$ which, respectively, satisfy
\begin{equation*}
    \Sigma_P=\Sigma_{P,E}\hspace{1cm}\mbox{and}\hspace{1cm}\sigma_P=\sigma_{P,E}
\end{equation*}
for all $E\in\Exp(P)$. We will henceforth assume this notation.

\begin{proposition}\label{prop:SymInvariance}
For any $O\in\Sym(P)$ and $F\in\Sigma_P$, $OF\in\Sigma_P$ and 
\begin{equation*}
\sigma_P(O F)=\sigma_P(F).
\end{equation*} 
That is, the measure $\sigma_P$ is invariant under the action by $\Sym(P)$. 
\end{proposition}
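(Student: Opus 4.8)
The plan is to reduce everything to the defining formula $\sigma_P(F)=\mu_P\cdot m(\widetilde{F_E})$ from Property \ref{property:DefiningConditionofsigma} of Theorem \ref{thm:BestIntegrationFormula}, together with two structural facts already in hand: that $\Exp(P)$ is stable under conjugation by $\Sym(P)$ (Proposition \ref{prop:ExpP}), and that the construction of $\Sigma_{P,E}$ and $\sigma_{P,E}$ does not depend on the choice of $E\in\Exp(P)$ (Proposition \ref{prop:Endependence}). Note first that $OF\subseteq S$ for any $F\subseteq S$ and $O\in\Sym(P)$, since $OS\subseteq S$, so the set $\widetilde{(OF)_E}$ makes sense.

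First I would fix an arbitrary $E\in\Exp(P)$ and, given $O\in\Sym(P)$, set $E'=O^{*}EO=O^{-1}EO$; this lies in $\Exp(P)$ by Proposition \ref{prop:ExpP}, using that $O^{*}=O^{-1}$ because $\Sym(P)\subseteq O(\R^d)$ (Proposition \ref{prop:SymCompact}). The key identity is $r^{E'}=O^{-1}r^{E}O$ for all $r>0$, which follows at once from the power series $r^{E'}=\exp((\ln r)O^{-1}EO)=\sum_k \tfrac{(\ln r)^k}{k!}O^{-1}E^kO=O^{-1}r^{E}O$ (this is among the basic one-parameter-group facts collected in the appendix). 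Hence $O r^{E'}=r^{E}O$, and so
\[
\widetilde{(OF)_E}=\{\,r^{E}(O\eta'): 0<r<1,\ \eta'\in F\,\}=\{\,O(r^{E'}\eta'): 0<r<1,\ \eta'\in F\,\}=O\,\widetilde{F_{E'}}.
\]

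Next, the measurability assertion: since $F\in\Sigma_P=\Sigma_{P,E'}$ by Proposition \ref{prop:Endependence}, we have $\widetilde{F_{E'}}\in\mathcal{M}_d$, and because $O$ is a linear isomorphism of $\R^d$ it carries Lebesgue-measurable sets to Lebesgue-measurable sets; therefore $\widetilde{(OF)_E}=O\widetilde{F_{E'}}\in\mathcal{M}_d$, which by Property \ref{property:DefiningConditionofsigma} of Theorem \ref{thm:BestIntegrationFormula} is exactly the statement $OF\in\Sigma_{P,E}=\Sigma_P$. For the measure equality I would then compute
\[
\sigma_P(OF)=\mu_P\cdot m\big(\widetilde{(OF)_E}\big)=\mu_P\cdot m\big(O\widetilde{F_{E'}}\big)=\mu_P\,\abs{\det O}\cdot m\big(\widetilde{F_{E'}}\big)=\mu_P\cdot m\big(\widetilde{F_{E'}}\big)=\sigma_{P,E'}(F)=\sigma_P(F),
\]
where $\abs{\det O}=1$ since $O$ is orthogonal, and the last two equalities are the definition of $\sigma_{P,E'}$ and Proposition \ref{prop:Endependence}, respectively.

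There is no serious obstacle here; the only point demanding care is bookkeeping---keeping straight which exponent ($E$ versus $E'=O^{-1}EO$) is attached to each tilde-set, and invoking the $E$-independence of the construction at the right moment. A cosmetically different route would deduce the result from Corollary \ref{cor:IntegrateOnS} by pre-composing the test function there with $O$ and changing variables by $x\mapsto Ox$, but the direct argument from the definition is shorter and makes the role of Proposition \ref{prop:ExpP} transparent.
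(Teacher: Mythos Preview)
Your proof is correct and follows essentially the same approach as the paper: both set $E'=O^{*}EO\in\Exp(P)$ via Proposition \ref{prop:ExpP}, establish the identity $\widetilde{(OF)_E}=O\,\widetilde{F_{E'}}$, and then use the $E$-independence from Proposition \ref{prop:Endependence} together with the orthogonality of $O$ to conclude. The arguments are virtually interchangeable.
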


\begin{proof}
Let $O\in\Sym(P)$, $F\in\Sigma_P$ and, for $E\in \Exp(P)$, define $E'=O^* EO$. In view of Proposition \ref{prop:ExpP}, we note that $E'\in \Exp(P)$. Observe that
\begin{equation}\label{eq:TildeConjugation}
    \widetilde{(OF)_E}=\bigcup_{0<r<1}r^E (OF)=\bigcup_{0<r<1}O\left(O^* r^E O F\right)=O\left(\bigcup_{0<r<1} r^{E'}F\right)=O \widetilde{F_{E'}}
\end{equation}
thanks to Proposition \ref{prop:ContinuousGroupProperties}.
In view of Proposition \ref{prop:Endependence}, we have $O\widetilde{F_{E'}}\in \mathcal{M}_d$ because $F\in \Sigma_P=\Sigma_{P,E'}$ and $O$ is linear. Using \eqref{eq:TildeConjugation}, we find that $\widetilde{(OF)_E}\in\mathcal{M}_d$ and therefore  $OF\in\Sigma_{P,E}=\Sigma_P$ by virtue of Proposition \ref{prop:Endependence}. In view of the fact that $O$ is orthogonal,
\begin{equation*}
\sigma_{P,E}(OF)=\mu_P\cdot m\left(\widetilde{(OF)_E}\right)=\mu_P\cdot m\left(O \widetilde{F_{E'}}\right)=\mu_P\cdot m\left(\widetilde{F}_{E'}\right)=\sigma_{P,E'}(F)
\end{equation*}
and therefore, a final appeal to Proposition \ref{prop:Endependence} guarantees that
\begin{equation*}
    \sigma_P(OF)=\sigma_{P,E}(OF)=\sigma_{P,E'}(F)=\sigma_P(F),
\end{equation*}
as desired.
\end{proof}

\begin{proof}[Proof of Theorem \ref{thm:BestIntegrationFormula}]
Together, the results of Propositions \ref{prop:Regular}, \ref{prop:Endependence} and \ref{prop:SymInvariance}, guarantee that $\sigma_P$ is a Radon measure satisfying Properties \ref{property:Completion} and \ref{property:Invariance}. Property \ref{property:DefiningConditionofsigma} follows directly from Proposition \ref{prop:Endependence} and the definition of $\sigma_P$ in terms of $\sigma_{P,E}$ for any $E\in\Exp(P)$. Similarly, Properties \ref{property:BestPointIsomorphism} and \ref{property:BestIntegrationFormula} follow from Theorem \ref{thm:MainIntegrationFormula} by virtue of Proposition \ref{prop:Endependence}. 
\end{proof}

\section{Using a smooth structure on $S$ to compute $\sigma$.}\label{sec:SigmaForSmoothP}

\noindent In this section, we shall study the special case in which a positive homogeneous function $P$ on $\mathbb{R}^d$ is smooth\footnote{Many of the results in this section remain valid (with appropriate modification) under the weaker assumption that $P\in C^k(\mathbb{R}^d)$ for $k=1,2,\dots$. In this setting, $S$ is easily seen to be a $C^k$ manifold. Because working in the smooth category is sufficient for our purposes, we shall not pursue the greater level of generality but invite the reader to do so.}$^{\mbox{\tiny{,}}}$\footnote{To avoid trivialities, we assume that $d>1$ throughout.}.
Under this additional assumption, we shall find that $\grad P$ is everywhere non-vanishing on $S$ and so $S$ is a smooth compact embedded hypersurface in $\mathbb{R}^d$.\\

\noindent We first set up some notation: For a smooth manifold $M$, we shall denote by $\mathcal{A}(M)$ its unique maximal atlas. Also on $M$, the collection of smooth vector fields is denoted by $\mathfrak{X}(M)$ and, for each $k=1,2,\dots$, the set of (smooth) differential $k$-forms on $M$ will be denoted by $\Omega^k(M)$.  In this section, we integrate non-smooth differential forms and, for the generality needed here, we shall refer the reader to \cite{naber_topology_2011} for background (Another perspective is given in \cite{amann_analysis_2009}). To this end, let us denote the Lebesgue $\sigma$-algebra of measurable sets on $M$ by $\mathcal{L}(M)$. We note that $F\in\mathcal{L}(M)$ if and only if, \begin{equation*}
    \varphi(F\cap \mathcal{U})\in\mathcal{M}_d
\end{equation*}
for every chart $(\varphi,\mathcal{U})\in\mathcal{A}(M)$. An $n$-form $\omega$ on $M$, with $n=\dim(M)$, is said to be (Lebesgue) measurable, if in each coordinate system $(\varphi,\mathcal{U})$, the local representation
\begin{equation*}
    \omega=h_{\varphi}(x)dx^1\wedge dx^2\wedge \cdots\wedge dx^n
\end{equation*}
in the coordinates $\varphi=(x^1,x^2,\dots,x^n)$ has $h_{\varphi}(x)$ a Lebesgue measurable function on $U=\varphi(\mathcal{U})\subseteq\mathbb{R}^n$. The collection of measurable $n$-forms on $M$ is denoted by $\mathcal{L}(\Lambda^d(M))$ and, naturally, $\Omega^{d}(M)\subseteq \mathcal{L}(\Lambda^d(M))$. As standard, we shall use Einstein's summation convention without explicit mention.\\

\noindent We view $\mathbb{R}^d$ as smooth oriented Riemannian manifold with its standard Euclidean metric $\overline{g}$, oriented smooth atlas $\mathcal{A}_+(\mathbb{R}^d)$, and Riemannian volume form $ d\Vol_{\mathbb{R}^d}$. Given any $E\in\Exp(P)$, consider $\mathcal{E}_E\in \mathfrak{X}(\mathbb{R}^d)$ defined, at each $x\in\mathbb{R}^d$, by
\begin{equation*}
    (\mathcal{E}_E)_x(f)=\frac{d}{dt}f(x+t(Ex))\big\vert_{t=0}\hspace{1cm}
\end{equation*}
for $f\in C^\infty(\mathbb{R}^d)$. In the standard (global) chart with coordinates $x=(x^1,x^2,\dots,x^d)$, $(\mathcal{E}_E)_{x}\in T_{x}(\mathbb{R}^d)$ is given by
\begin{equation*}
    (\mathcal{E}_E)_{x}=(Ex)^{\alpha}\partial_{x^\alpha}=E^\alpha_\beta x^{\beta}\partial_{x^\alpha}
\end{equation*}
where $(E_\alpha^\beta)$ is the standard matrix representation for $E$ and  $\partial_{x^{\alpha}}=\partial/\partial x^\alpha$. By an abuse of notation, we shall write $\grad P$ to denote both the function
\begin{equation*}
\mathbb{R}^d\ni x\mapsto \grad P(x)=\left(\frac{\partial P}{\partial x^1},\frac{\partial P}{\partial x^2},\dots,\frac{\partial P}{\partial x^d}\right)\in\mathbb{R}^d,
\end{equation*}
where $\frac{\partial P}{\partial x^{\alpha}}=\frac{\partial P}{\partial x^{\alpha}}\vert_{x}$ for $\alpha=1,2\dots,d$, and its canonical identification $\grad P\in \mathfrak{X}(\mathbb{R}^d)$ given by \begin{equation*}
    \grad P_x=\overline{g}^{\alpha\beta}\frac{\partial P}{\partial x^{\alpha}}\partial_{x^{\beta}}=\delta^{\alpha\beta}\frac{\partial P}{\partial x^{\alpha}}\partial_{x^{\beta}}=\sum_{\alpha=1}^d\frac{\partial P}{\partial x^{\alpha}}\partial_{x^\alpha}
\end{equation*}
in standard Euclidean coordinates $x=(x^\alpha)$. Of course, for each $x\in\mathbb{R}^d$, the Riemannian norm $|\grad P_x |_{\overline{g}}$ of $\grad P_x \in T_x (\mathbb{R}^d)$ coincides with the Euclidean norm $| \grad P(x) |$ of $ \grad P(x) \in \mathbb{R}^d$. These equivalent quantities (functions) will be henceforth denoted by $|\grad P|$.

\begin{proposition}\label{prop:InnerProdIsOne}
For each $\eta\in S$, 
\begin{equation*}
    \overline{g}(\grad P,\mathcal{E})_{\eta}=\grad P(\eta)\cdot (E\eta)=1.
\end{equation*}
In particular, $\grad P$ (and $dP$) never vanishes on $S$ and so $S$ is a smooth compact embedded hypersurface in $\mathbb{R}^d$.
\end{proposition}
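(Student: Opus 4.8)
The plan is to differentiate the homogeneity relation $P(r^E\eta)=rP(\eta)$ with respect to $r$ and evaluate at $r=1$. Fix $\eta\in S$ and consider the smooth curve $\gamma(r)=r^E\eta$ for $r>0$. By the chain rule, $\frac{d}{dr}P(\gamma(r))=\grad P(\gamma(r))\cdot \gamma'(r)$. On the other hand, $P(\gamma(r))=rP(\eta)=r$ since $\eta\in S$, so $\frac{d}{dr}P(\gamma(r))=1$ for all $r>0$. It therefore remains to identify $\gamma'(1)$. Since $r^E=\exp((\ln r)E)$, we have $\frac{d}{dr}r^E=\frac{1}{r}\,r^E E=\frac{1}{r}E r^E$ (the factors commute), hence $\gamma'(r)=\frac{1}{r}E r^E\eta$ and in particular $\gamma'(1)=E\eta=(\mathcal{E}_E)_\eta$ in the standard identification. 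Combining these gives $\grad P(\eta)\cdot(E\eta)=1$, which is exactly $\overline{g}(\grad P,\mathcal{E})_\eta=1$.

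From this identity the remaining assertions are immediate. If $\grad P(\eta)=0$ for some $\eta\in S$, then $\grad P(\eta)\cdot(E\eta)=0\neq 1$, a contradiction; hence $\grad P$ (equivalently $dP$, since $dP_\eta$ vanishes iff $\grad P(\eta)=0$ in Euclidean coordinates) is nowhere zero on $S$. Consequently $1$ is a regular value of the smooth function $P$, so $S=P^{-1}(\{1\})$ is a smooth embedded hypersurface of $\mathbb{R}^d$ by the regular level set theorem; compactness of $S$ is already known from Proposition \ref{prop:PositiveHomogeneousCharacterization}. One should note that $E\eta\neq 0$ (otherwise $0=\grad P(\eta)\cdot 0=1$), which is also consistent with $E$ being invertible as the generator of a contracting group.

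I do not expect any serious obstacle here; the only point requiring a little care is the justification that $\frac{d}{dr}r^E=\frac{1}{r}Er^E$, which follows from term-by-term differentiation of the absolutely and locally uniformly convergent power series $r^E=\sum_k \frac{(\ln r)^k}{k!}E^k$ (or, equivalently, from the one-parameter group law $r^E=e^{sE}$ with $s=\ln r$ and the standard derivative of the matrix exponential along with $ds/dr=1/r$). Everything else is a direct application of the chain rule and the definition of $\mathcal{E}_E$ given just above the proposition.
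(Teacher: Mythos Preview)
Your proof is correct and follows essentially the same approach as the paper: differentiate the homogeneity relation $P(r^E\eta)=rP(\eta)$ with respect to $r$, evaluate at $r=1$, and then invoke the regular level set theorem. The paper writes the derivative as $\grad P(r^Ex)\cdot((r^{E-I}E)x)$ for general $x$ before specializing, while you phrase it via the curve $\gamma(r)=r^E\eta$, but the substance is identical.
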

\begin{proof}
Given that $E\in\Exp(P)$ and $P\in C^\infty(\mathbb{R}^d)$, we differentiate the identity $rP(x)=P(r^Ex)$ to find that
\begin{equation*}
    P(x)=\frac{d}{dr}P(r^Ex)=\grad P(r^Ex)\cdot\left(\left(r^{E-I}E\right)x\right)
\end{equation*}
for $r>0$ and $x\in\mathbb{R}^d$. In particular, when $r=1$ and $x=\eta\in S$, we have
\begin{equation*}
    1=\grad P(\eta)\cdot \left(E\eta\right)=\frac{\partial P}{\partial x^\alpha}(E\eta)^\alpha=\overline{g}_{\alpha \beta}\left(\grad P_\eta\right)^{\alpha}(\mathcal{E}_\eta)^\beta=\overline{g}(\grad P,\mathcal{E})_\eta.
\end{equation*}
Thus, our (necessarily) compact level set $S$ is a smooth embedded hypersurface of $\mathbb{R}^d$ in view of the regular level set theorem.
\end{proof}
\noindent We shall denote by $\iota:  S \hookrightarrow \mathbb{R}^d$ the canonical inclusion map and set $d'=d-1$. As an embedded submanifold of $\mathbb{R}^d$, $S$ is a Riemannian submanifold of $\mathbb{R}^d$ with metric $g^S$ given by
\begin{equation*}
    g^S(X,Y)=\overline{g}(\iota_*(X),\iota_*(Y))
\end{equation*}
for $X,Y\in\mathfrak{X}(S)$; here, for each $\eta\in S$,  $\iota_*:T_\eta(S)\to T_\eta(\mathbb{R}^d)$ is the pushforward of $\iota$.  In view of the preceding proposition, $N:=\grad P/|\grad P|\in \mathfrak{X}(\mathbb{R}^d)$ is a smooth unit normal vector field along $S$ and it determines an orientation on the Riemannian manifold $S$. Equipped with this orientation, $(S,g^S)$ is an oriented Riemannian manifold and we shall denote by $d\Vol_S$ the Riemannian volume form and by $\mathcal{A}_+(S)$ its corresponding (maximal) oriented atlas. By virtue of Proposition 15.21 of \cite{lee_introduction_2003}, $d\Vol_S=(N\iprod d\Vol_{\mathbb{R}^d})\vert_S$, i.e.,
\begin{eqnarray*}
    d\Vol_S(X_1,X_2,\dots,X_{d'})&=&d\Vol_{\mathbb{R}^d}(N,\iota_*(X_1),\iota_*(X_2),\dots,\iota_*(X_{d'}))\\
    &=&\frac{1}{|\grad P|}d\Vol_{\mathbb{R}^d}(\grad P,\iota_*(X_1),\iota_*(X_2),\dots,\iota_*(X_{d'}))
\end{eqnarray*}
for any collection $\{X_1,X_2,\dots,X_{d'}\}\in \mathfrak{X}(S)$. Beyond $d\Vol_S\in \Omega^{d'}(S)$, we consider the following smooth $d'$-form(s): Given $E\in \Exp(P)$, define $d\sigma_{P,E}\in\Omega^{d'}(S)$ by
\begin{equation*}
    d\sigma_{P,E}(X_1,X_2,\dots,X_{d'})=d\Vol_{\mathbb{R}^d}(\mathcal{E}_E,\iota_*(X_1),\iota_*(X_2),\dots,\iota_*(X_{d'}))
\end{equation*}
for $X_1,X_2,\dots,X_{d'}\in\mathfrak{X}(S)$. We have
\begin{proposition}\label{prop:FormRiemannRelation}
For any $E\in\Exp(P)$,
\begin{equation*}
    d\sigma_{P,E}=\frac{1}{|\grad P|}d\Vol_S.
\end{equation*}
In particular, $d\sigma_{P,E}$ is positively oriented and is independent of $E\in\Exp(P)$.
\end{proposition}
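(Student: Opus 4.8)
The plan is to verify the identity $d\sigma_{P,E}=\frac{1}{|\grad P|}d\Vol_S$ pointwise on $S$ by exploiting the fact that both sides are alternating $d'$-forms on the $d'$-dimensional spaces $T_\eta(S)$, so it suffices to compare them on a single (nonzero) decomposable $d'$-vector. The key geometric input is Proposition \ref{prop:InnerProdIsOne}, which tells us that $\grad P(\eta)\cdot(E\eta)=1$; in particular $\mathcal{E}_E$ is nowhere tangent to $S$ at points of $S$, so $(\mathcal{E}_E)_\eta$ together with $\iota_*(T_\eta S)$ spans $T_\eta(\mathbb{R}^d)$. First I would fix $\eta\in S$ and a positively oriented basis $X_1,\dots,X_{d'}$ of $T_\eta(S)$ (with respect to the orientation induced by the unit normal $N=\grad P/|\grad P|$), and evaluate
\begin{equation*}
d\sigma_{P,E}(X_1,\dots,X_{d'})=d\Vol_{\mathbb{R}^d}\bigl(\mathcal{E}_E,\iota_*X_1,\dots,\iota_*X_{d'}\bigr).
\end{equation*}

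Next I would decompose $(\mathcal{E}_E)_\eta$ into its normal and tangential parts relative to the splitting $T_\eta(\mathbb{R}^d)=\mathbb{R}\,N_\eta\oplus\iota_*(T_\eta S)$. Writing $(\mathcal{E}_E)_\eta=c\,N_\eta+T$ with $T\in\iota_*(T_\eta S)$, the tangential component $T$ contributes nothing to the determinant $d\Vol_{\mathbb{R}^d}(\mathcal{E}_E,\iota_*X_1,\dots,\iota_*X_{d'})$ since it is a linear combination of the remaining arguments $\iota_*X_1,\dots,\iota_*X_{d'}$. The normal coefficient is $c=\overline g(\mathcal{E}_E,N)_\eta=\overline g(\mathcal{E}_E,\grad P)_\eta/|\grad P(\eta)|=1/|\grad P(\eta)|$ by Proposition \ref{prop:InnerProdIsOne}. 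Hence
\begin{equation*}
d\sigma_{P,E}(X_1,\dots,X_{d'})=\frac{1}{|\grad P(\eta)|}\,d\Vol_{\mathbb{R}^d}\bigl(N,\iota_*X_1,\dots,\iota_*X_{d'}\bigr)=\frac{1}{|\grad P(\eta)|}\,d\Vol_S(X_1,\dots,X_{d'}),
\end{equation*}
where the last equality is exactly the characterization $d\Vol_S=(N\iprod d\Vol_{\mathbb{R}^d})\vert_S$ recorded above (Proposition 15.21 of \cite{lee_introduction_2003}). Since $X_1,\dots,X_{d'}$ was an arbitrary basis of $T_\eta(S)$ and $\eta$ was arbitrary, the identity of forms follows. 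The positive orientation of $d\sigma_{P,E}$ is then immediate: it is a strictly positive ($|\grad P|^{-1}>0$) multiple of the Riemannian volume form $d\Vol_S$, which is positively oriented by construction; and independence of $E\in\Exp(P)$ follows because the right-hand side $\frac{1}{|\grad P|}d\Vol_S$ makes no reference to $E$.

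The only mild subtlety — and the step I would present most carefully — is the bookkeeping in the normal/tangential decomposition: one must check that $\iota_*$ is injective on $T_\eta S$ (true since $\iota$ is an embedding), that the orientation conventions for $d\Vol_S$ in the excerpt match those used to call $d\Vol_S$ "positively oriented," and that multilinear alternation genuinely kills the tangential part $T$ (a standard determinant/column-operation argument, since $T\in\operatorname{span}\{\iota_*X_1,\dots,\iota_*X_{d'}\}$). None of this is deep, but it is where an error could hide, so I would spell out the decomposition $(\mathcal{E}_E)_\eta=c\,N_\eta+T$ and the vanishing of the tangential contribution explicitly rather than leaving it to the reader.
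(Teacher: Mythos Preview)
Your proposal is correct and is essentially the same argument as the paper's. The paper packages the key step as a separate linear-algebra lemma (for $w\perp v_1,\dots,v_{d'}$ and $z\cdot w=1$, one has $\det(z,v_1,\dots,v_{d'})=\frac{1}{|w|}\det(w/|w|,v_1,\dots,v_{d'})$, proved by writing $z=\frac{1}{|w|}n+\sum a_iv_i$ and using multilinearity), whereas you phrase the same decomposition geometrically as the normal/tangential splitting $(\mathcal{E}_E)_\eta=c\,N_\eta+T$; in both cases the tangential part is killed by alternation and the normal coefficient $c=1/|\grad P(\eta)|$ comes from Proposition~\ref{prop:InnerProdIsOne}.
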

\noindent Before proving the proposition, we first treat a lemma of a purely linear algebraic nature. 
\begin{lemma}\label{lem:determinants}
Let $v_1,v_2,\dots,v_{d'}$ be linearly independent vectors in $\mathbb{R}^d$ and suppose that $w\in\mathbb{R}^d \setminus\{0\}$ is such that $w\perp v_i$ for all $i$. Then, for any $z\in\mathbb{R}^d$ for which $z\cdot w=1$,
\begin{equation*}
\det(z, v_1,v_2,\dots,v_{d'})=\frac{1}{|w|}\det(n,v_1,v_2,\dots,v_{d'})=\frac{1}{|w|^2}\det(w,v_1,v_2,\dots,v_{d'}).
\end{equation*}
where $n:=w/|w|$.
\end{lemma}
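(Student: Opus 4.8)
The plan is to reduce everything to the standard multilinearity/alternating properties of the determinant on $\mathbb{R}^d$, viewed as the function $\det(u_1,\dots,u_d)$ that reads the columns of a $d\times d$ matrix. First I would fix an orthonormal basis $e_1,\dots,e_{d'}$ for the hyperplane $H=\operatorname{span}\{v_1,\dots,v_{d'}\}=w^\perp$, together with the unit normal $n=w/|w|$, so that $(n,e_1,\dots,e_{d'})$ is an orthonormal basis of $\mathbb{R}^d$. Any vector $u\in\mathbb{R}^d$ decomposes uniquely as $u=(u\cdot n)\,n + u_H$ with $u_H\in H$, and since $v_1,\dots,v_{d'}\in H$ the alternating property kills the $H$-component of the first slot: $\det(u,v_1,\dots,v_{d'})=(u\cdot n)\,\det(n,v_1,\dots,v_{d'})$ for every $u$. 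This is the one identity that does all the work.

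Applying this identity to $u=z$ gives $\det(z,v_1,\dots,v_{d'})=(z\cdot n)\,\det(n,v_1,\dots,v_{d'})$, and since $z\cdot w=1$ we have $z\cdot n = z\cdot(w/|w|)=1/|w|$, which yields the first claimed equality
\begin{equation*}
\det(z,v_1,\dots,v_{d'})=\frac{1}{|w|}\det(n,v_1,\dots,v_{d'}).
\end{equation*}
Applying the same identity to $u=w=|w|\,n$ and using homogeneity of $\det$ in the first slot gives $\det(w,v_1,\dots,v_{d'})=|w|\,\det(n,v_1,\dots,v_{d'})$, hence $\det(n,v_1,\dots,v_{d'})=\tfrac{1}{|w|}\det(w,v_1,\dots,v_{d'})$, and substituting this into the previous line produces the second equality. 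That completes the proof.

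I do not anticipate a genuine obstacle here; this is a routine linear-algebra lemma. The only point requiring a word of care is the decomposition step: one should note that $n\notin H$ (because $w\neq 0$ and $w\perp H$), so $(n,e_1,\dots,e_{d'})$ really is a basis, and that the alternating property legitimately discards the $H$-component of the first argument since that component is a linear combination of $v_1,\dots,v_{d'}$ — or, more simply, of $e_1,\dots,e_{d'}$, each of which, if one prefers, can be re-expanded — but in fact one does not even need $e_1,\dots,e_{d'}$: since $v_1,\dots,v_{d'}$ are linearly independent and span $H$, any $u_H\in H$ is a linear combination of the $v_i$ themselves, so $\det(u_H,v_1,\dots,v_{d'})=0$ directly. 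This keeps the argument self-contained and coordinate-free.
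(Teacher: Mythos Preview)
Your proof is correct and follows essentially the same approach as the paper: both decompose the first column along $n$ and the hyperplane $H=w^\perp=\operatorname{span}\{v_1,\dots,v_{d'}\}$, then kill the $H$-component via the alternating property of the determinant. Your version packages this as a single identity $\det(u,v_1,\dots,v_{d'})=(u\cdot n)\det(n,v_1,\dots,v_{d'})$ applied twice (to $u=z$ and $u=w$), whereas the paper writes out the decomposition $z=\tfrac{1}{|w|}n+\sum a_i v_i$ directly, but the substance is identical.
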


\begin{proof}
Given $z\in\mathbb{R}^d$ such that $z\cdot w=1$, it follows that 
\begin{equation*}
z=\frac{1}{|w|}n+a_1v_1+a_2v_2+\cdots a_{d'}v_{d'}.
\end{equation*}
By the multilinearity of the determinant map, we have
\begin{eqnarray*}
\det(z,v_1,v_2,\dots,v_{d'}) &=&\det\lp \frac{1}{|w|}n+a_1v_2+a_2v_2+\cdots a_{d'} v_{d'},v_1,v_2,\dots,v_{d'}\rp\\
    &=&\frac{1}{|w|}\det(n,v_1,v_2,\dots,v_{d'})+\det(a_1v_1+\cdots+a_{d'} v_{d'}, v_1,v_2,\dots,v_{d'})\\
&=&\frac{1}{|w|}\det(n, v_1,v_2,\dots,v_{d'})+0
\end{eqnarray*}
where we have used the fact that the columns of the matrix $(a_1v_1+\cdots+a_{d'} v_{d'}, v_1,v_2,\dots,v_{d'})$ are linearly dependent to conclude that the final determinant is zero.
\end{proof}
\begin{proof}[Proof of Proposition \ref{prop:FormRiemannRelation}]
We fix $E\in\Exp(P)$ and note that the assertion at hand is a local one. Thus, it suffices to verify that, for any $\eta\in S$ and $X_1,X_2,\dots,X_{d'}\in T_\eta(S)$, 
\begin{equation*}
    d\sigma_{P,E}(X_1,X_2,\dots,X_{d'})=\frac{1}{|\grad P_{\eta}|}d\Vol_{\mathbb{R}^d}(N_\eta,\iota_*(X_1),\iota_*(X_2),\dots,\iota_*(X_{d'})).
\end{equation*}
Fix $\eta\in S$ and let $(\mathcal{O},\varphi)$ be a coordinate chart centered at $\eta$ with local coordinates $u=(u^{\alpha})$. As usual, denote by $x=(x^{\alpha})$ the Euclidean coordinates on $\mathbb{R}^d$.  For each $i=1,2,\dots,{d'}$, \begin{equation*}
X_i=X_i^\alpha \partial_{u^{\alpha}}\hspace{1cm}\mbox{and}\hspace{1cm}\iota_*(X_i)=v_i^\beta\partial_{x^{\beta}}
\end{equation*}
where
\begin{equation*}
v_i^\beta =X_i^\alpha\frac{\partial x^\beta}{\partial u^\alpha}.
\end{equation*}
For each $i=1,2,\dots,d'$, we set $v_i=(v_i^1,v_i^2,\dots,v_i^{d'})\in\mathbb{R}^d$. Also, let $w=\grad P(\eta)\in\mathbb{R}^d$ with $|w|=|\grad P(\eta)|=|\grad P_\eta|$, set $n=w/|w|\in\mathbb{R}^d$ and note that
\begin{equation*}
    N_\eta=\frac{1}{|\grad P_\eta|}\sum_{k=1}^d\frac{\partial P}{\partial x^{k}}\partial_{x^k}=n^{\mu}\partial_{x^\mu}.
\end{equation*}Given that $\grad P$ is normal to $S$, we have
\begin{equation*}
    v_i\cdot w=\overline{g}_{\mu,\nu}v_i^\mu w^\nu= \overline{g}(\iota_*(X_i),\grad P)_\eta=0
\end{equation*}
and therefore $w\perp v_i$ for each $i=1,2,\dots,{d'}$. Upon recalling that $(\mathcal{E}_E)_\eta=(E\eta)^\alpha\partial_{x^{\alpha}}$, set $z=((E\eta)^1,(E\eta)^2,\dots,(E\eta)^d)\in\mathbb{R}^d$ and observe that $z\cdot w=1$ by virtue of Proposition \ref{prop:InnerProdIsOne}. An appeal to the lemma guarantees that
\begin{eqnarray*}
d\sigma_{P,E}(X_1,X_2,\dots,X_{d'})&=&d\Vol_{\mathbb{R}^d}(\mathcal{E}_E,\iota_*(X_1),\iota_*(X_2),\dots,\iota_*(X_{d'}))\\
&=&\det(z,v_1,v_2,\dots,v_{d'})\\
&=&\frac{1}{|w|}\det(n,v_1,v_2,\dots,v_{d'})\\
&=&\frac{1}{|\grad P_\eta|}d\Vol_{\mathbb{R}^d}(N_\eta,\iota_*(X_1),\iota_*(X_2),\dots,\iota_*(X_{d'}))\\
&=&\frac{1}{|\grad P|}d\Vol_S(X_1,X_2,\dots,X_{d'}).
\end{eqnarray*}
\end{proof}
\noindent By virtue of the preceding proposition, we shall denote by $d\sigma_P$ the unique smooth $d'$-form on $S$ which satisfies
\begin{equation}\label{eq:sigmaForm}
    d\sigma_P=d\sigma_{P,E}=\frac{1}{|\grad P|}d\Vol_S
\end{equation}
for all $E\in\Exp(P)$. In this notation, we have this section's central result.

\begin{theorem}\label{thm:RiemannLebesgue}
Let $P$ be a smooth positive homogeneous function and let $S=\{\eta\in\mathbb{R}^d:P(\eta)=1\}$. Then $S$ is a compact smooth embedded hypersurface of $\mathbb{R}^d$. Viewing $\mathbb{R}^d$ as an oriented Riemannian manifold with its usual orientation and metric $\overline{g}$, $N=\grad P/|\grad P|$ is a smooth unit normal vector field along $S$. As a submanifold of $\mathbb{R}^d$, $S$ is a oriented Riemannian manifold of dimension $d'=d-1$ with its induced Riemannian metric $g^S$, volume form $d\Vol_S\in\Omega^{d'}(S)$ and orientation determined by $N$. The $\sigma$-algebras $\Sigma_P$ and $\mathcal{L}(S)$ on $S$ coincide and the smooth $d'$-form $d\sigma_P\in\Omega^{d'}(S)$, defined by \eqref{eq:sigmaForm}, coincides with the measure $\sigma_P$ in the sense that
\begin{equation}\label{eq:FormsAndMeasures}
\int_S g(\eta)\,\sigma_P(d\eta)=\int_S g\,d\sigma_P
\end{equation}
for all $g\in L^1(S,\Sigma_P,\sigma_P)$; here, the left hand side represents the Lebesgue integral of $g$ with respect to $\sigma_P$ and the right hand side is the integral of the measurable $d'$-form $g\, d\sigma_P$. Furthermore, the measure $\sigma_P$ and the canonical Riemannian volume measure $\Vol_S$ on $S$ are mutually absolutely continuous.
\end{theorem}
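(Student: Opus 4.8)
## Proof proposal for Theorem \ref{thm:RiemannLebesgue}

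The plan is to break the statement into its constituent claims and dispatch each in turn, with the measure-theoretic identification \eqref{eq:FormsAndMeasures} as the centerpiece. The purely geometric assertions --- that $S$ is a compact smooth embedded hypersurface, that $N=\grad P/|\grad P|$ is a smooth unit normal, and that the induced data $(g^S, d\Vol_S, \mathcal{A}_+(S))$ are well-defined --- have already been established in Proposition \ref{prop:InnerProdIsOne} and the discussion preceding it, so I would simply recall them. Likewise, Proposition \ref{prop:FormRiemannRelation} gives that $d\sigma_P := d\sigma_{P,E} = |\grad P|^{-1}\,d\Vol_S$ is a well-defined positively-oriented smooth $d'$-form, independent of $E\in\Exp(P)$. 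So the work is concentrated in: (i) showing $\Sigma_P = \mathcal{L}(S)$; (ii) proving the integral identity \eqref{eq:FormsAndMeasures}; and (iii) deducing mutual absolute continuity of $\sigma_P$ and $\Vol_S$.

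For (i): fix $E\in\Exp(P)$, so that $\Sigma_P = \Sigma_{P,E} = \{F\subseteq S : \widetilde{F_E}\in\mathcal{M}_d\}$ by Proposition \ref{prop:Endependence} and the construction in Subsection \ref{subsec:ConstructionofSigma}. On the other hand $F\in\mathcal{L}(S)$ iff $\varphi(F\cap\mathcal{U})\in\mathcal{M}_{d'}$ for every chart $(\varphi,\mathcal{U})\in\mathcal{A}(S)$. I would use the homeomorphism $\psi_E : (0,\infty)\times S\to\mathbb{R}^d\setminus\{0\}$ of Proposition \ref{prop:PsiHomeomorphism} together with the fact that, since $S$ is now a smooth embedded hypersurface, $(0,\infty)\times S$ is a smooth manifold and $\psi_E$ is a diffeomorphism onto $\mathbb{R}^d\setminus\{0\}$ (it is smooth with smooth inverse $x\mapsto(P(x),P(x)^{-E}x)$, as $P$ is smooth and never zero off the origin, and $\grad P$ never vanishes on $S$). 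Diffeomorphisms carry the Lebesgue $\sigma$-algebra to the Lebesgue $\sigma$-algebra, so $\mathcal{L}((0,\infty)\times S)$ is exactly $\{G : \psi_E(G)\in\mathcal{M}_d\}$; and on the product manifold, $\mathcal{L}((0,\infty)\times S)$ agrees with the completion of $\mathcal{L}\times\mathcal{B}(S)$, hence with $(\mathcal{L}\times\Sigma_P)'$ by Proposition \ref{prop:Regular}\ref{item:Complete} and Property \ref{property:BestPointIsomorphism} of Theorem \ref{thm:BestIntegrationFormula}. Then $F\in\mathcal{L}(S)$ iff $(0,1)\times F\in\mathcal{L}((0,\infty)\times S)$ iff $\psi_E((0,1)\times F)=\widetilde{F_E}\in\mathcal{M}_d$ iff $F\in\Sigma_P$; the middle equivalence uses that $F\mapsto(0,1)\times F$ detects Lebesgue measurability in the product (Tonelli/Fubini for the complete product measure, together with a null-set argument as in Lemma \ref{lem:AllMeasurableRectangles}).

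For (ii), the identity \eqref{eq:FormsAndMeasures}: by linearity and the usual reduction (positive/negative, real/imaginary parts, monotone approximation by simple functions) it suffices to prove it for $g=\chi_F$ with $F\in\Sigma_P$, i.e. $\sigma_P(F)=\int_S \chi_F\,d\sigma_P$. I would compute both sides via the polar-coordinate formula. On the measure side, $\sigma_P(F)=\mu_P\, m(\widetilde{F_E})$ by Property \ref{property:DefiningConditionofsigma}. On the form side, I would invoke the smooth coarea / submersion-level-set integration formula: writing $d\Vol_{\mathbb{R}^d}$ on the region $B\setminus\{0\}$, foliated by the level sets $P^{-1}(r)=r^E S$, one has $d\Vol_{\mathbb{R}^d} = dr\wedge(\text{something})$ and integrating the pullback of $d\sigma_P$ over the fibers picks out precisely $|\grad P|^{-1}\,d\Vol_S$ rescaled. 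Concretely, since $\psi_E$ is an orientation-preserving diffeomorphism with Jacobian determinant $r^{\mu_P-1}$ relative to the product frame --- this is exactly the content of Lemma \ref{lem:determinants}/Proposition \ref{prop:FormRiemannRelation}, which says $\iota^*(\mathcal{E}_E\iprod d\Vol_{\mathbb{R}^d})=d\sigma_P$ --- the change-of-variables formula for differential forms gives $\psi_E^*(d\Vol_{\mathbb{R}^d}) = r^{\mu_P-1}\,dr\wedge d\sigma_P$ on $(0,\infty)\times S$. Then for $F\in\mathcal{B}(S)$,
\begin{equation*}
\mu_P\, m(\widetilde{F_E}) = \mu_P\int_{(0,1)\times F}\psi_E^*(d\Vol_{\mathbb{R}^d}) = \mu_P\int_0^1 r^{\mu_P-1}\,dr\int_F d\sigma_P = \int_F d\sigma_P,
\end{equation*}
which is the claim for Borel $F$; the general $F\in\Sigma_P$ follows from Proposition \ref{prop:Regular}\ref{item:Complete} since measurable forms assign measure zero to Borel null sets. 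Finally (iii): from \eqref{eq:sigmaForm}, $d\sigma_P = |\grad P|^{-1}\,d\Vol_S$ where $|\grad P|$ is continuous and strictly positive on the compact set $S$, hence bounded above and below by positive constants; therefore $c_1\,\Vol_S(F)\le\sigma_P(F)\le c_2\,\Vol_S(F)$ for all measurable $F$, giving mutual absolute continuity at once.

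I expect the main obstacle to be step (ii) --- specifically, making rigorous the passage $\psi_E^*(d\Vol_{\mathbb{R}^d}) = r^{\mu_P-1}\,dr\wedge d\sigma_P$ and the attendant integration-of-forms bookkeeping (orientations, partitions of unity, the comparison between integrating a $d$-form over $\mathbb{R}^d$ and the Lebesgue integral) while only assuming $P$ smooth, so that $d\sigma_P$ is genuinely a smooth form but $g$ and $\chi_F$ are merely measurable. The cleanest route is probably to prove the form identity pointwise on frames (already done in Proposition \ref{prop:FormRiemannRelation}) and then push the equality of the two sides of \eqref{eq:FormsAndMeasures} through the hierarchy Borel $\to$ completion $\to L^1$, leaning on Theorem \ref{thm:MainIntegrationFormula} to handle all measurability and Fubini issues rather than re-deriving them in the manifold setting. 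Everything else is either a direct citation of an earlier result or a routine absolute-continuity estimate.
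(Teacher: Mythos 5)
Your proposal tracks the paper's strategy closely in spirit, but the order in which you tackle the claims introduces a circularity in step (i) that the paper's proof is careful to avoid. You assert that $\mathcal{L}((0,\infty)\times S)$ (the Lebesgue $\sigma$-algebra on the product manifold, which is by definition the completion of its Borel $\sigma$-algebra with respect to a smooth volume form) coincides with $(\mathcal{L}\times\Sigma_P)'$ (the completion with respect to $\lambda_P\times\sigma_P$). But identifying these two completions requires knowing that $\sigma_P$ and $\Vol_S$ share null sets --- i.e., the mutual absolute continuity you only establish in step (iii). As written, step (i) therefore presupposes its own conclusion. The paper sidesteps this: it first proves only the \emph{inclusion} $\Sigma_P\subseteq\mathcal{L}(S)$, by applying Lemma \ref{lem:LocalIntegralFormula} to indicator functions $\chi_{F\cap\mathcal{U}}$ (this needs no comparison of measures), then proves \eqref{eq:FormsAndMeasures} and the two-sided estimate $C_1\Vol_S\leq\sigma_P\leq C_2\Vol_S$ on Borel sets, and only at the end concludes $\Sigma_P=\mathcal{L}(S)$ on the grounds that both are completions of $\mathcal{B}(S)$ with respect to now-known-to-be-equivalent Radon measures, invoking Proposition \ref{prop:Regular} for $\sigma_P$ and Proposition 1.5 of \cite[Ch.\ XII]{amann_analysis_2009} for $\Vol_S$. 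Your argument becomes correct if you adopt this ordering: inclusion, then the integral identity, then absolute continuity, then equality of $\sigma$-algebras.

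For step (ii), the pullback identity $\psi_E^*(d\Vol_{\mathbb{R}^d})=r^{\mu_P-1}\,dr\wedge d\sigma_P$ you propose is correct and is exactly the content, in coordinates, of \eqref{eq:JacobianRelation1} in Lemma \ref{lem:JacobianRelation}. The difference is one of packaging: rather than prove a global change-of-variables statement for pullbacks of a merely measurable top form $g\,d\sigma_P$ (which would force you to develop the bookkeeping you flag as the main obstacle), the paper works in local charts via $\rho_{E,\varphi}$, invokes the Euclidean change-of-variables theorem (Apostol Thm.\ 15.11), uses Corollary \ref{cor:IntegrateOnS} to reduce $\int_S g\,\sigma_P(d\eta)$ to a Lebesgue integral over $\mathbb{R}^d$, and then sums over a countable partition of unity with monotone/dominated convergence. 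This keeps all the measure-theoretic care confined to Lemma \ref{lem:LocalIntegralFormula} and the machinery already built in Section \ref{sec:ProofofBest}. Step (iii) in your proposal matches the paper's proof exactly.
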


\noindent Before we are able to prove the above theorem, we first treat two lemmas which we will find useful in computation.

\begin{lemma}\label{lem:JacobianRelation}
Let $(\mathcal{U},\varphi)\in \mathcal{A}(S)$ and $E\in\Exp(P)$. Set $U=\varphi(\mathcal{U})\subseteq\mathbb{R}^{d'}$, $V=(0,1)\times U$ and define $\rho_{E,\varphi}:V\to \mathbb{R}^d$ and $h_{E,\varphi}:U\to \mathbb{R}$, respectively, by
\begin{equation*}
    \rho_{E,\varphi}(y)=\psi_E(r,\varphi^{-1})=r^E\varphi^{-1}(u)
\end{equation*}
for $y=(r,u)\in V$ and
\begin{equation*}
    h_{E,\varphi}(u)=\det\left.\left(E\varphi^{-1}(u)\right\vert D_u\varphi^{-1}(u)\right)
\end{equation*}
for $u\in U$; here, the vertical bar separates the first column of the (necessarily) $d\times d$ matrix from the rightmost $d\times d'$ submatrix and $D_u$ denotes the Jacobian in the coordinates $u=(u^1,u^2,\dots,u^{d'})\in U$. Then, $\rho_{E,\varphi}$ is a diffeomorphism onto its image $\rho_{E,\varphi}(V)=\widetilde{\mathcal{U}_E}$ and its Jacobian matrix $D\rho_{E,\varphi}$ has
\begin{equation}\label{eq:JacobianRelation1}
    \det(D\rho_{E,\varphi}(y))=r^{\mu_P-1}h_{E,\varphi}(u)
\end{equation}
for all $y=(r,u)\in V$. Furthermore, $h_{E,\varphi}$ is everywhere non-zero, smooth and
\begin{equation}\label{eq:JacobianRelation2}
    d\sigma_P=h_{E,\varphi}(u)\,du^1\wedge du^2\wedge\cdots\wedge du^{d'}
\end{equation}
in the coordinates $u=(u^1,u^2,\dots,u^{d'})\in U$. 
\end{lemma}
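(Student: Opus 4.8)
\textbf{Proof plan for Lemma \ref{lem:JacobianRelation}.}

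The plan is to recognize $\rho_{E,\varphi}$ as the composite $\psi_E\circ(\mathrm{id}\times\varphi^{-1})$ restricted to $V=(0,1)\times U$, and to compute its Jacobian by differentiating in the $r$-direction and the $u$-directions separately. First I would note that $\varphi^{-1}:U\to\mathcal{U}\subseteq S$ is a smooth embedding into $\mathbb{R}^d$ and that $r\mapsto r^E$ is smooth with $\partial_r(r^E x)=r^{E-I}(Ex)$; hence $\rho_{E,\varphi}$ is smooth on $V$. By Proposition \ref{prop:PsiHomeomorphism}, $\psi_E$ is a homeomorphism, and since $(r,u)\mapsto(r,\varphi^{-1}(u))$ is a diffeomorphism of $V$ onto $(0,1)\times\mathcal{U}$, $\rho_{E,\varphi}$ is a continuous bijection onto $\psi_E((0,1)\times\mathcal{U})=\widetilde{\mathcal{U}_E}$; smoothness of the inverse will follow once we show the Jacobian determinant is nonzero, which is \eqref{eq:JacobianRelation1} together with the non-vanishing of $h_{E,\varphi}$.

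The core computation: write $\rho_{E,\varphi}(r,u)=r^E\varphi^{-1}(u)$. The first column of $D\rho_{E,\varphi}$ is $\partial_r(r^E\varphi^{-1}(u))=r^{E-I}E\varphi^{-1}(u)=r^{-1}r^E(E\varphi^{-1}(u))$, and the remaining $d'$ columns are $\partial_{u^j}(r^E\varphi^{-1}(u))=r^E(\partial_{u^j}\varphi^{-1}(u))$, i.e., $r^E D_u\varphi^{-1}(u)$. Factoring the common linear map $r^E$ out of every column and pulling the scalar $r^{-1}$ out of the first column, multilinearity of $\det$ gives
\begin{equation*}
\det(D\rho_{E,\varphi}(r,u))=r^{-1}\det(r^E)\det\!\left(E\varphi^{-1}(u)\,\middle|\,D_u\varphi^{-1}(u)\right)=r^{-1}r^{\mu_P}h_{E,\varphi}(u)=r^{\mu_P-1}h_{E,\varphi}(u),
\end{equation*}
using $\det(r^E)=r^{\tr E}=r^{\mu_P}$ from Proposition \ref{prop:ContinuousGroupProperties} and Corollary \ref{cor:TraceisInvariant}. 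Since $\widetilde{\mathcal{U}_E}$ is open in $\mathbb{R}^d\setminus\{0\}$ and $\rho_{E,\varphi}$ is a smooth bijection onto it, and since $r^{\mu_P-1}>0$, the nowhere-vanishing of the Jacobian is equivalent to $h_{E,\varphi}(u)\neq 0$ for all $u$. For this I would invoke Proposition \ref{prop:InnerProdIsOne}: the vector $E\varphi^{-1}(u)$ satisfies $(E\varphi^{-1}(u))\cdot\grad P(\varphi^{-1}(u))=1\neq 0$, while the columns of $D_u\varphi^{-1}(u)$ span the tangent space $T_{\varphi^{-1}(u)}S$, which is exactly the orthogonal complement of $\grad P(\varphi^{-1}(u))$; hence $E\varphi^{-1}(u)$ is not in that span, so the $d$ columns are linearly independent and $h_{E,\varphi}(u)\neq 0$. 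Smoothness of $h_{E,\varphi}$ is immediate since it is a polynomial in smooth functions of $u$. The inverse function theorem then upgrades $\rho_{E,\varphi}$ to a diffeomorphism onto $\widetilde{\mathcal{U}_E}$.

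For \eqref{eq:JacobianRelation2}, I would evaluate the defining formula for $d\sigma_{P,E}=d\sigma_P$ in the chart $(\mathcal{U},\varphi)$. By definition $d\sigma_P(X_1,\dots,X_{d'})=d\Vol_{\mathbb{R}^d}(\mathcal{E}_E,\iota_*X_1,\dots,\iota_*X_{d'})$; taking $X_j=\partial_{u^j}$ so that $\iota_*\partial_{u^j}$ has Euclidean-coordinate vector $\partial_{u^j}\varphi^{-1}(u)$, and recalling $(\mathcal{E}_E)_{\varphi^{-1}(u)}$ has coordinate vector $E\varphi^{-1}(u)$, the right-hand side is precisely $\det(E\varphi^{-1}(u)\mid D_u\varphi^{-1}(u))=h_{E,\varphi}(u)$ since $d\Vol_{\mathbb{R}^d}$ in standard coordinates is the determinant. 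Therefore $d\sigma_P=h_{E,\varphi}(u)\,du^1\wedge\cdots\wedge du^{d'}$ on $\mathcal{U}$, and independence of $E$ follows from Proposition \ref{prop:FormRiemannRelation}. I expect the only mildly delicate point to be the careful bookkeeping of which columns carry the factor $r^{-1}$ versus $r^E$ in the Jacobian, together with making the identification of tangent vectors with their Euclidean coordinate vectors precise; everything else is a direct unwinding of definitions and an application of already-established results.
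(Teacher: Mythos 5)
Your proof is correct, and the core Jacobian computation (factoring $r^E$ out of all columns, pulling $r^{-1}$ out of the first, and using $\det(r^E)=r^{\mu_P}$) is exactly the paper's. Where you diverge is in the logical order used to establish the diffeomorphism and the non-vanishing of $h_{E,\varphi}$. The paper exhibits the explicit smooth inverse $\rho_{E,\varphi}^{-1}(x)=\bigl(P(x),\varphi\bigl(P(x)^{-E}x\bigr)\bigr)$ (noting this needs $P$ smooth), concludes directly that $\rho_{E,\varphi}$ is a diffeomorphism onto $\widetilde{\mathcal{U}_E}$, and only then reads off the non-vanishing of $h_{E,\varphi}$ as a consequence of \eqref{eq:JacobianRelation1}. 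You instead prove $h_{E,\varphi}\neq 0$ first, by the geometric observation that the columns of $D_u\varphi^{-1}$ span $T_{\varphi^{-1}(u)}S=(\grad P)^\perp$ while $E\varphi^{-1}(u)\cdot\grad P(\varphi^{-1}(u))=1$ (Proposition \ref{prop:InnerProdIsOne}) forces $E\varphi^{-1}(u)$ to be transverse to $S$, so the $d$ columns are linearly independent; you then invoke the inverse function theorem to upgrade the smooth bijection onto the open set $\widetilde{\mathcal{U}_E}$ to a diffeomorphism. Both routes are sound; yours has the advantage of explaining geometrically \emph{why} $h_{E,\varphi}$ cannot vanish (the generating vector field $\mathcal{E}_E$ is everywhere transverse to $S$), while the paper's is shorter because the explicit inverse formula is already essentially available from Proposition \ref{prop:PsiHomeomorphism}. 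Your derivation of \eqref{eq:JacobianRelation2} matches the paper's.
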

\begin{proof}
The map $\rho_{E,\varphi}$ is smooth because $\varphi$ is smooth. By virtually the same argument made in the proof of Proposition \ref{prop:PsiHomeomorphism}, which here uses the fact that $P\in C^\infty(\mathbb{R}^d)$, we conclude that $\rho_{E,\varphi}$ is a diffeomorphism onto 
\begin{equation*}\rho_{E,\varphi}(V)=\bigcup_{0<r<1}r^E\mathcal{U}=\widetilde{\mathcal{U}_E}
\end{equation*}
with inverse $\rho_{E,\varphi}^{-1}(x)=(P(x),\varphi((P(x))^{-E}x))$. For $y=(r,u)\in V$, observe that
\begin{eqnarray*}
D\rho_{E,\varphi}(y)&=&\left.\left(\frac{d}{dr}(r^E\varphi^{-1}(u)) \right\vert D_u\left[r^E\varphi^{-1}(u)\right]\right)\\
&=&\left.\left(r^{E-I}E\varphi^{-1}(u)\right\vert r^E D_u\varphi^{-1}(u)\right)\\
&=&r^E\left.\left(\frac{1}{r}E\varphi^{-1}(u)\right\vert D_u\varphi^{-1}(u)\right).
\end{eqnarray*}
Using properties of the determinant, we have
\begin{eqnarray*}
    \det(D\rho_E(y))&=&\det(r^E)\det\left.\left(\frac{1}{r}E\varphi^{-1}(u)\right\vert D_u\varphi^{-1}(u)\right)\\
    &=&r^{\tr E}r^{-1}\det\left.\left(E\varphi^{-1}(u)\right\vert D_u\varphi^{-1}(u)\right)\\
    &=&r^{\mu_P-1}h_{E,\varphi}(u)
\end{eqnarray*}
for all $y=(r,u)\in V$ thus proving \eqref{eq:JacobianRelation1}. It is clear that $h_{E,\varphi}$ is smooth and, by virtue of the fact that $\rho_{E,\varphi}:(0,1)\times U\to \widetilde{\mathcal{U}_E}$ is a diffeomorphism, \eqref{eq:JacobianRelation1} guarantees that $h_{E,\varphi}$ is everywhere non-vanishing. Finally, 
in view of Proposition \ref{prop:FormRiemannRelation} (and \eqref{eq:sigmaForm}), we find that
\begin{eqnarray*}
    h_{E,\varphi}(u)&=&
    \det\left.\left(E\varphi^{-1}(u)\right\vert D_u\varphi^{-1}(u)\right)\\ \nonumber
    &=&
    (d\Vol_{\mathbb{R}^d})_{\varphi^{-1}(u)}(\mathcal{E}_E,\iota_*(\partial_{u^1}),\iota_*(\partial_{u^2}),\dots,\iota_*(\partial_{u^{d'}}))\\ \nonumber
    &=&\left(d\sigma_P\right)_{\varphi^{-1}(u)}(\partial_{u^1},\partial_{u^2},\dots,\partial_{u^{d'}})\\
\end{eqnarray*}
for $u\in U$ and so \eqref{eq:JacobianRelation2} is satisfied.
\end{proof}

\begin{lemma}\label{lem:LocalIntegralFormula}
Let $g\in L^1(S,\Sigma_P,\sigma_P)$ be supported on the domain of some chart on $S$. Then, for any $(\mathcal{U},\varphi)\in\mathcal{A}^+(S)$ such that $\supp(g)\subseteq\mathcal{U}$, the pushforward $(\varphi^{-1})^*(g)=g\circ\varphi^{-1}$ is Lebesgue measurable on $U=\varphi(\mathcal{U})\subseteq\mathbb{R}^{d'}$ and 
\begin{equation*}
\int_S g(\eta)\sigma_P(d\eta)=\int_{U}(\psi^{-1})^*(g\, d\sigma_P).
\end{equation*}
\end{lemma}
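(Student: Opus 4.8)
## Proof proposal for Lemma~\ref{lem:LocalIntegralFormula}

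The plan is to reduce the identity to the polar-coordinate integration formula of Theorem~\ref{thm:MainIntegrationFormula} applied to a carefully chosen Lebesgue-measurable function on $\mathbb{R}^d$, together with the change-of-variables formula for the diffeomorphism $\rho_{E,\varphi}$ constructed in Lemma~\ref{lem:JacobianRelation}. Fix $E\in\Exp(P)$ and fix a chart $(\mathcal{U},\varphi)\in\mathcal{A}^+(S)$ with $\supp(g)\subseteq\mathcal{U}$, and write $U=\varphi(\mathcal{U})$, $V=(0,1)\times U$, and let $\rho=\rho_{E,\varphi}$, $h=h_{E,\varphi}$ be as in Lemma~\ref{lem:JacobianRelation}. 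Since $\psi_E^{-1}(x)=(P(x),P(x)^{-E}x)$, the set $\widetilde{\mathcal{U}_E}=\rho(V)$ is an open subset of $\mathbb{R}^d\setminus\{0\}$ and hence Lebesgue measurable.

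First I would establish the measurability claim. Since $g\in L^1(S,\Sigma_P,\sigma_P)$ and $\Sigma_P$ contains $\mathcal{B}(S)$ with $(S,\Sigma_P,\sigma_P)$ the completion of $(S,\mathcal{B}(S),\sigma_P)$, the composition $g\circ\varphi^{-1}$ is measurable with respect to the completion of the Borel $\sigma$-algebra on $U$ pushed forward by the homeomorphism $\varphi^{-1}$; because $\varphi^{-1}$ is a diffeomorphism onto $\mathcal{U}\subseteq S$ and $\sigma_P$ is, via \eqref{eq:JacobianRelation2}, locally the measure with density $|h|$ against Lebesgue measure on $U$ (with $h$ smooth and non-vanishing), the pushforward measure is mutually absolutely continuous with Lebesgue measure on $U$; hence $g\circ\varphi^{-1}$ is Lebesgue measurable on $U$. (Alternatively, one can deduce this a posteriori from Theorem~\ref{thm:MainIntegrationFormula} once the integral identity is in place, but proving it directly here is cleaner.)

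Next, the computational heart. Define $f:\mathbb{R}^d\to\mathbb{C}$ by
\begin{equation*}
f(x)=\begin{cases}\mu_P\,\chi_{(0,1)}(P(x))\,g\big(P(x)^{-E}x\big)&x\neq 0,\\ 0&x=0,\end{cases}
\end{equation*}
which is exactly the function appearing in Corollary~\ref{cor:IntegrateOnS}; by that corollary $f\in L^1(\mathbb{R}^d)$ and $\int_{\mathbb{R}^d}f(x)\,dx=\int_S g(\eta)\,\sigma_P(d\eta)$. On the other hand, $f$ is supported in $\widetilde{\mathcal{U}_E}=\rho(V)$ since $g$ is supported in $\mathcal{U}$, so I would apply the change of variables $x=\rho(y)$, $y=(r,u)$, using \eqref{eq:JacobianRelation1}: $\det D\rho(y)=r^{\mu_P-1}h(u)$. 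Because $f(\rho(r,u))=\mu_P\,g(\varphi^{-1}(u))$ for $(r,u)\in V$ (here $P(\rho(r,u))=r\in(0,1)$ and $P(\rho(r,u))^{-E}\rho(r,u)=\varphi^{-1}(u)$), we get
\begin{equation*}
\int_{\mathbb{R}^d}f(x)\,dx=\int_V \mu_P\,g(\varphi^{-1}(u))\,r^{\mu_P-1}|h(u)|\,dr\,du=\int_U g(\varphi^{-1}(u))\,|h(u)|\,du,
\end{equation*}
the $r$-integral over $(0,1)$ contributing $1/\mu_P$. Finally, by \eqref{eq:JacobianRelation2} and the definition of the integral of a measurable top-degree form over the chart domain (with the orientation of $(\mathcal{U},\varphi)\in\mathcal{A}^+(S)$ making $h>0$, so $|h|=h$), the last expression equals $\int_U (\psi^{-1})^*(g\,d\sigma_P)$ — here $\psi^{-1}$ abbreviates $\varphi^{-1}$, i.e.\ we are pulling $g\,d\sigma_P$ back to $U$ via the parametrization. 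Combining with Corollary~\ref{cor:IntegrateOnS} yields the asserted identity.

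The main obstacle I expect is bookkeeping around orientations and absolute values: the change-of-variables theorem for Lebesgue integrals produces $|\det D\rho|$, while the integral of the form $d\sigma_P$ carries a sign determined by orientation, so I must verify that the chart $(\mathcal{U},\varphi)$ lies in the \emph{oriented} atlas $\mathcal{A}^+(S)$ precisely when $h_{E,\varphi}>0$ — this is where Proposition~\ref{prop:FormRiemannRelation} (stating $d\sigma_P$ is positively oriented) is needed, so that $h_{E,\varphi}(u)\,du^1\wedge\cdots\wedge du^{d'}$ is a positive multiple of the orientation form and hence $h_{E,\varphi}>0$ on $U$. A secondary, more routine point is justifying Fubini/Tonelli to split off the elementary $r$-integral, which is immediate since $f$ (equivalently $g$) is integrable and everything in sight is a genuine iterated Lebesgue integral over $V=(0,1)\times U$.
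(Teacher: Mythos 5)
Your computational core is precisely the paper's argument: move from $\int_S g\,d\sigma_P$ to $\int f\,dx$ via Corollary~\ref{cor:IntegrateOnS}, change variables along the diffeomorphism $\rho_{E,\varphi}$ using $\det D\rho_{E,\varphi}(r,u)=r^{\mu_P-1}h_{E,\varphi}(u)$, integrate out $r$ by Fubini, and invoke positive orientation to drop the absolute value on $h_{E,\varphi}$. You also correctly notice that $f\circ\rho_{E,\varphi}$ collapses to $\mu_P\,g\circ\varphi^{-1}$ because $P(\rho_{E,\varphi}(r,u))=r$ and $\rho_{E,\varphi}(r,u)$ rescales to $\varphi^{-1}(u)\in S$.

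The measurability paragraph, however, is circular as stated. You justify Lebesgue measurability of $g\circ\varphi^{-1}$ by asserting that ``$\sigma_P$ is, via \eqref{eq:JacobianRelation2}, locally the measure with density $|h|$ against Lebesgue measure on $U$.'' But \eqref{eq:JacobianRelation2} is a statement about the smooth $d'$-form $d\sigma_P$, which is defined independently of the measure $\sigma_P$; the assertion that the measure $\sigma_P$ has local density $|h_{E,\varphi}|$ against Lebesgue measure in the chart is exactly the content of this lemma (and, eventually, Theorem~\ref{thm:RiemannLebesgue}), not something you may assume at this stage. The repair is either (i) the paper's route, which is essentially your ``alternatively'' remark: after the change of variables, $V\ni y\mapsto f(\rho_{E,\varphi}(y))\,|\det D\rho_{E,\varphi}(y)|$ is Lebesgue measurable, and Fubini then forces almost every $r$-section $u\mapsto\mu_P\,r^{\mu_P-1}g(\varphi^{-1}(u))|h_{E,\varphi}(u)|$ to be Lebesgue measurable, whence $g\circ\varphi^{-1}$ is Lebesgue measurable since $h_{E,\varphi}$ is smooth and non-vanishing; or (ii) an elementary argument that for $F\subseteq\mathcal{U}$ one has $F\in\Sigma_P$ iff $\widetilde{F_E}\in\mathcal{M}_d$ iff $\rho_{E,\varphi}^{-1}(\widetilde{F_E})=(0,1)\times\varphi(F)$ is Lebesgue measurable iff $\varphi(F)\subseteq\mathbb{R}^{d'}$ is Lebesgue measurable, which uses only the diffeomorphism $\rho_{E,\varphi}$ and Fubini, not $d\sigma_P$. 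Either fixes the circularity; your ``direct'' version, as written, does not.
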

\begin{proof}
Let $(\mathcal{U},\varphi)$ be a chart on $S$ for which $\supp(g)\subseteq \mathcal{U}$ and assume the notation of Lemma \ref{lem:JacobianRelation}. Given $E\in\Exp(P)$, observe that
\begin{equation*}
    f(x)=\mu_P\, \chi_{(0,1)}(P(x))g(P(x)^{-E}x),
\end{equation*}
defined for $x\in\mathbb{R}^d\setminus\{0\}$, is supported on $\widetilde{\mathcal{U}_E}=\rho_{E,\varphi}(V)$. An appeal to Corollary \ref{cor:IntegrateOnS} guarantees that $f$ is absolutely integrable on $\rho_{E,\varphi}(V)$ and
\begin{equation}\label{eq:LocalIntegralFormula1}
\int_S g(\eta)\,\sigma_P(d\eta)=\int_{\rho_{E,\varphi}(V)}f(x)\,dx.
\end{equation}
Given that $\rho_{E,\varphi}$ is a diffeomorphism, an appeal to Theorem 15.11 of \cite{apostol_mathematical_1974} guarantees that
\begin{equation}\label{eq:LocalIntegralFormula2}
\int_{\rho_{E,\varphi}(V)}f(x)\,dx=\int_V f(\rho_{E,\varphi}(y))|\det(D\rho_{E,\varphi}(y))|\,dy;
\end{equation}
in particular, $V\ni y\mapsto f(\rho_{E,\varphi}(y))|\det(D\rho_{E,\varphi}(y))$ is Lebesgue measurable on $\mathbb{R}^{d}$. Let us now view the Lebesgue measure $dy$ on $\mathbb{R}^d$ as the completion of the product measure $dr\times du$ on the product space $\mathbb{R}\times\mathbb{R}^{d'}$. By virtue of Lemma \ref{lem:JacobianRelation}, we have
\begin{eqnarray}\label{eq:LocalIntegralFormula3}\nonumber
    f(\rho_{E,\varphi}(y))|\det(D\rho_{E,\varphi}(y))|
    &=& f(r^E\varphi^{-1}(u))\,r^{\mu_{P}-1}|h_{E,\varphi}(u)|\\ \nonumber
    &=& \mu_P\,\chi_{(0,1)}(r)g(\varphi^{-1}(u))\, r^{\mu_P-1}|h_{E,\varphi}(u)|\\ 
    &=& \mu_P\,r^{\mu_P-1}g(\varphi^{-1}(u))|h_{E,\varphi}(u)|
\end{eqnarray}
for $y=(r,u)\in V$. By virtue of Fubini's theorem, $dr$-almost every $r\in (0,1)$, the $r$-section 
\begin{equation*}
    U\ni u\mapsto \mu_P r^{\mu_P-1}g(\varphi^{-1}(u))|h_{E,\varphi}(u)|
\end{equation*}
is Lebesgue measurable and,  upon recalling that $h_{E,\varphi}$ is smooth and everywhere nonzero, we conclude that $(\varphi^{-1})^*(g)=g\circ\varphi^{-1}$ is Lebesgue measurable on $U$. In view of \eqref{eq:LocalIntegralFormula3}, Fubini's theorem also guarantees that
\begin{eqnarray}\label{eq:LocalIntegralFormula4}\nonumber
    \int_V f(\rho_{E,\varphi}(y))|\det(D\rho_{E,\varphi}(y))|\,dy
    &=&\int_{(0,1)}\int_U \mu_P\, r^{\mu_P-1}g(\varphi^{-1}(u))|h_{E,\varphi}(u)|\,du\,dr\\\nonumber
    &=&\int_0^1\mu_P\, r^{\mu_P-1}\left(\int_U g(\varphi^{-1}(u))|h_{E,\varphi}(u)|\,du\right)\,dr\\
    &=&\int_U g(\varphi^{-1}(u))|h_{E,\varphi}(u)|\,du
\end{eqnarray}
By combining \eqref{eq:LocalIntegralFormula1}, \eqref{eq:LocalIntegralFormula2} and \eqref{eq:LocalIntegralFormula4}, we have shown that $(\varphi^{-1})^*g=g\circ\varphi^{-1}$ is Lebesgue measurable on $U=\varphi(\mathcal{U})$ and
\begin{equation*}
    \int_S g(\eta)\sigma_P(d\eta)=\int_U g(\varphi^{-1}(u))|h_{E,\varphi}(u)|\,du.
\end{equation*}
Finally, if $(\mathcal{U},\varphi)\in\mathcal{A}_+(S)$, an appeal to Proposition \ref{prop:FormRiemannRelation} and \eqref{eq:JacobianRelation2} of Lemma \ref{lem:JacobianRelation} guarantees that
\begin{equation*}
    |h_{E,\varphi}(u)|=h_{E,\varphi}(u)=(d\sigma_P)_{\varphi^{-1}(u)}(\partial_{u^1},\partial_{u^2},\dots,\partial_{u^{d'}})>0
\end{equation*}
for all $u=(u^1,u^2,\dots,u^{d'})\in U$ and thus
\begin{eqnarray*}
        \int_S g(\eta)\sigma_P(d\eta)&=&\int_U g(\varphi^{-1}(u))h_{E,\varphi}(u)\,du\\
        &=&\int_U g(\varphi^{-1}(u))\,h_{E,\varphi}(u)\,du^1\wedge du^2\wedge\cdots \wedge du^{d'}\\
        &=&\int_U (\varphi^{-1})^*(g\cdot d\sigma_P),
\end{eqnarray*}
as desired. 
\end{proof}

\begin{remark}\label{rmk:LocalIntegralFormula}
In studying the proof of Lemma \ref{lem:LocalIntegralFormula}, we deduce the (slightly) more general statement: Given any chart $(\mathcal{U},\varphi)\in\mathcal{A}(S)$ and $g\in L^1(S,\Sigma_P,\sigma_P)$ for which $\supp(g)\in\mathcal{U}$, $(\varphi^{-1})^*g$ is Lebesgue measurable on $U=\varphi(\mathcal{U})$ and 
\begin{equation*}
    \int_S g(\eta)\sigma_P(d\eta)=\begin{cases}
    \displaystyle\int_U(\varphi^{-1})^*(g\,d\sigma_P) &\mbox{ if }(\mathcal{U},\varphi)\in\mathcal{A}_+(S)\\
    &\\
    -\displaystyle\int_U(\varphi^{-1})^*(g\,d\sigma_P)&\mbox{ if }(\mathcal{U},\varphi)\in\mathcal{A}_-(S)\coloneqq\mathcal{A}(S)\setminus\mathcal{A}_+(S).
    \end{cases}
\end{equation*}
\end{remark}
\begin{proof}[Proof of Theorem \ref{thm:RiemannLebesgue}]
In view of Proposition \ref{prop:InnerProdIsOne} and the discussion following its proof, it remains to prove the assertions in the last two sentences in the statement of the theorem. Given any $F\in \Sigma_P$ and chart $(\mathcal{U},\varphi)\subseteq \mathcal{A}(S)$, we have $\chi_{\varphi(F\cap \mathcal{U})}=(\varphi^{-1})^*\left(\chi_{F\cap\mathcal{U}}\right)$ is Lebesgue measurable on $U=\varphi(\mathcal{U})$ and so it follows (Exercise 4.6.2 of \cite{naber_topology_2011}) that $F\in\mathcal{L}(S)$. Consequently, $\mathcal{B}(S)\subseteq\Sigma_P\subseteq\mathcal{L}(S)$.

Let $g\in L^1(S,\Sigma_P,\sigma_P)$, which is necessarily Lebesgue measurable on $S$ in view of the results of the previous paragraph. Now, let  $\{(\mathcal{U}_j,\varphi_j)\}\subseteq\mathcal{A}_+(S)$ be a countable atlas on $S$ and let $\{\kappa_j\}$ be a smooth partition of unity subordinate to the cover $\{\mathcal{U}_j\}$. For each $j\in\mathbb{N}$, observe that $\kappa_j g\in L^1(S,\Sigma_P,\sigma_P)$ and has $\supp(\kappa_j g)\subseteq \mathcal{U}_j$. By virtue of Lemma \ref{lem:LocalIntegralFormula}, we have $(\varphi_j^{-1})^*(\kappa_j g\,d\sigma_P)$ is integrable on $U_j=\varphi(\mathcal{U}_j)$ and
\begin{equation}\label{eq:LebesgueRiemann1}
\int_S \kappa_j(\eta)g(\eta)\sigma_P(d\eta)=\int_{U_j}(\varphi_j^{-1})^*(\kappa_j g\,d\sigma_P)
\end{equation}
for each $j\in\mathbb{N}$. With the help of Proposition \ref{prop:FormRiemannRelation}, it is easy to see that $g\,d\sigma_P$ and $\kappa_j g\,d\sigma_P$ (for $j\in\mathbb{N}$) are Lebesgue measurable $d'$-forms on $S$. In view of (4.4.6) of \cite{naber_topology_2011}, \eqref{eq:LebesgueRiemann1} ensures that, for each $j\in\mathbb{N}$, $\kappa_j g\,d\sigma_P$ is integrable (in the sense of forms) on $S$ and
\begin{equation*}
\int_S \kappa_j(\eta)g(\eta)\sigma_P(d\eta)=\int_S \kappa_j g\,d\sigma_P.
\end{equation*}
By the monotone convergence theorem, it follows that
\begin{equation*}
\sum_{j=1}^\infty\left| \int_S \kappa_j g\,d\sigma_P\right|=\sum_{j=1}^\infty \left|\int_S \kappa_j(\eta)g(\eta)\sigma(d\eta)\right|\leq \sum_{j=1}^\infty\int_S \kappa_j(\eta)|g(\eta)|\sigma_P(d\eta)=\|g\|_{L^1(S,\Sigma_P,\sigma_P)}<\infty.
\end{equation*}
Therefore, in view of the construction on p. 242 of \cite{naber_topology_2011}, we conclude that the $d'$-form $g\,d\sigma_P$ is integrable and
\begin{equation*}
\int_S g(\eta)\sigma_P(d\eta)=\sum_{j=1}^\infty \int_S \kappa_j(\eta)g(\eta)\sigma_P(d\eta)=\sum_{j=1}^\infty \int_{S} \kappa_j g\,d\sigma_P=\int_S g\,d\sigma_P
\end{equation*}
by virtue of the dominated convergence theorem; this is \eqref{eq:FormsAndMeasures}.

Finally, given that $\abs{\grad P}$ is continuous and non-vanishing on the compact set $S$,
\begin{equation*}
    C_1 := \inf_S\frac{1}{|\grad P|}\quad\mbox{and}\quad C_2 := \sup_{S}\frac{1}{\abs{\grad P }}
\end{equation*}
are both positive real numbers. For each $F\in \Sigma_P\subseteq\mathcal{L}(S)$, \eqref{eq:FormsAndMeasures} guarantees that
\begin{equation*}
    \sigma_P(F)=\int_S\chi_F(\eta)\sigma_P(d\eta)=\int_S \chi_F\,d\sigma_P
\end{equation*}
By virtue of Proposition \ref{prop:FormRiemannRelation}, it follows that
\begin{equation*}
C_1\Vol_S(F)=C_1\int_S \chi_F\,d\Vol_S\leq \int_S \frac{\chi_F}{|\grad P|}d\Vol_S=\int_S \chi_F\,d\sigma_P=\sigma_P(F)
\end{equation*}
and
\begin{equation*}
    \sigma_P(F)=\int_S\chi_F d\sigma_P=\int_S\frac{\chi_F}{|\grad P|}\,d\Vol_S\leq C_2\int_S\chi_F\,d\Vol_S=C_2\Vol_S(F)
\end{equation*}
where we have used the definition of the Riemannian volume measure on $S$, c.f., \cite{amann_analysis_2009}. In short, there are positive constants $C_1$ and $C_2$ for which
\begin{equation}\label{eq:FormsAndMeasures2}
    C_1\Vol_S(F)\leq\sigma_P(F)\leq C_2\Vol_S(F)
\end{equation}
for all $F\in\Sigma_P$. In particular, \eqref{eq:FormsAndMeasures} holds for all $F\in\mathcal{B}(S)$ and so it follows that the completions of the $\sigma$-algebra $\mathcal{B}(S)$ with respect to $\sigma_P$ and $\Vol_S$ coincide. We know, however, that $\Vol_S$, which is defined on $\mathcal{L}(S)$, is a Radon measure (Proposition 1.5 \cite[Chapter XII]{amann_analysis_2009}) and, by virtue of Proposition \ref{prop:Regular}, it follows that $\Sigma_P=\mathcal{L}(S)$ and \eqref{eq:FormsAndMeasures2} holds for all $F$ in this common $\sigma$-algebra. Thus $\sigma_P$ and $\Vol_S$ are mutually absolutely continuous and the theorem is proved.
\end{proof}

\noindent We immediately obtain the following corollary which allows us to compute the Lebesgue integral with respect to $\sigma_P$ in coordinates.
\begin{corollary}\label{cor:IntegralFormula}
Let $g\in L^1(S,\mathcal{L}(S),\sigma_P)$. Then, given any  countable (or finite) atlas $\{(\mathcal{U}_j,\varphi_j)\}\subseteq\mathcal{A}_+(S)$,  smooth partition of unity $\{\kappa_j\}$ subordinate to $\{\mathcal{U}_j\}$, and $E\in\Exp(P)$,
\begin{equation*}
\int_S g(\eta)\sigma_P(d\eta)=\sum_{j}\int_S \kappa_jg\,d\sigma_P=\sum_j\int_{U_j}\kappa_j(\varphi^{-1}(u))g(\varphi^{-1}(u))h_{E,\varphi_j}(u)\,du
\end{equation*}
where, for each $j$, $U_j=\varphi_j(\mathcal{U}_j)\subseteq\mathbb{R}^{d'}$ and
\begin{equation*}
    h_{E,\varphi_j}(u)=\det(E\varphi^{-1}(u)\vert D_u\varphi^{-1})
\end{equation*}
for $u=(u^1,u^2,\dots,u^{d'})\in U_j$. 
\end{corollary}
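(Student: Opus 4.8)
The statement to prove is Corollary \ref{cor:IntegralFormula}, which follows almost immediately from Theorem \ref{thm:RiemannLebesgue} together with Lemmas \ref{lem:JacobianRelation} and \ref{lem:LocalIntegralFormula}, so the proof will be short. The plan is to decompose $g$ using the partition of unity, apply the already-established local integration formula to each piece, and reassemble using the additivity established inside the proof of Theorem \ref{thm:RiemannLebesgue}.

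First I would fix a countable atlas $\{(\mathcal{U}_j,\varphi_j)\}\subseteq\mathcal{A}_+(S)$, a subordinate smooth partition of unity $\{\kappa_j\}$, and $E\in\Exp(P)$ as in the statement. Since $g\in L^1(S,\mathcal{L}(S),\sigma_P)$ and $\mathcal{L}(S)=\Sigma_P$ by Theorem \ref{thm:RiemannLebesgue}, each $\kappa_j g$ lies in $L^1(S,\Sigma_P,\sigma_P)$ and has $\supp(\kappa_j g)\subseteq \mathcal{U}_j$. Then I would invoke Theorem \ref{thm:RiemannLebesgue} (specifically the identity \eqref{eq:FormsAndMeasures} applied to $g$, and the fact, established in its proof, that $\sum_j\int_S\kappa_j g\,d\sigma_P$ converges absolutely to $\int_S g\,d\sigma_P$) to write
\begin{equation*}
\int_S g(\eta)\,\sigma_P(d\eta)=\int_S g\,d\sigma_P=\sum_j\int_S\kappa_j g\,d\sigma_P.
\end{equation*}
This gives the first equality in the corollary.

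For the second equality, I would apply Lemma \ref{lem:LocalIntegralFormula} to each $\kappa_j g$, obtaining that $(\varphi_j^{-1})^*(\kappa_j g)=\kappa_j g\circ\varphi_j^{-1}$ is Lebesgue measurable on $U_j=\varphi_j(\mathcal{U}_j)$ and
\begin{equation*}
\int_S\kappa_j(\eta)g(\eta)\,\sigma_P(d\eta)=\int_{U_j}(\varphi_j^{-1})^*(\kappa_j g\,d\sigma_P).
\end{equation*}
Finally, I would use \eqref{eq:JacobianRelation2} of Lemma \ref{lem:JacobianRelation}, which says $d\sigma_P=h_{E,\varphi_j}(u)\,du^1\wedge\cdots\wedge du^{d'}$ in the coordinates $u$ of the chart $(\mathcal{U}_j,\varphi_j)$, together with the positivity of $h_{E,\varphi_j}$ on $U_j$ (guaranteed because $(\mathcal{U}_j,\varphi_j)\in\mathcal{A}_+(S)$, per the last lines of the proof of Lemma \ref{lem:LocalIntegralFormula}), to rewrite each form integral as an ordinary Lebesgue integral:
\begin{equation*}
\int_{U_j}(\varphi_j^{-1})^*(\kappa_j g\,d\sigma_P)=\int_{U_j}\kappa_j(\varphi_j^{-1}(u))\,g(\varphi_j^{-1}(u))\,h_{E,\varphi_j}(u)\,du.
\end{equation*}
Summing over $j$ and chaining the three displayed identities yields the corollary. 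There is no real obstacle here; the only point requiring a word of care is the justification that the sum $\sum_j\int_S\kappa_j g\,d\sigma_P$ may be interchanged with the integral, but this is exactly what was shown (via monotone and dominated convergence applied to $\sum_j\kappa_j|g|$) in the course of proving Theorem \ref{thm:RiemannLebesgue}, so I would simply cite that.
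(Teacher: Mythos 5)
Your proposal is correct and follows the exact route the paper intends: the corollary is presented there without a separate proof, introduced only by ``We immediately obtain,'' precisely because the needed identities are the ones already displayed in the proof of Theorem \ref{thm:RiemannLebesgue} (the absolutely convergent expansion $\int_S g\,\sigma_P(d\eta)=\sum_j\int_S\kappa_j g\,\sigma_P(d\eta)=\sum_j\int_S\kappa_j g\,d\sigma_P$) combined, term by term, with Lemma \ref{lem:LocalIntegralFormula} and \eqref{eq:JacobianRelation2} of Lemma \ref{lem:JacobianRelation}. Your observation that the argument in the proof of Theorem \ref{thm:RiemannLebesgue} works for an arbitrary countable positively oriented atlas and subordinate partition of unity, not merely the particular one fixed there, is the only point that really needs saying, and you said it.
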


\vspace{.4cm}
\noindent{\small\bf Acknowledgement:} We thank Laurent Saloff-Coste for helpful discussions.

\appendix
\section{Appendix} 

This appendix amasses some facts about continuous one-parameter subgroups of $\Gl(\mathbb{R}^d)$ and, in particular, those which are contracting.

\begin{proposition}[see Section 8 of \cite{randles_convolution_2017}]\label{prop:ContinuousGroupProperties}
Let $E,G\in\End(\mathbb{R}^d)$ and $A\in\Gl(\mathbb{R}^d)$. Also, let $E^*$ denote the adjoint of $E$. Then, for all $t,s>0$, the following statements hold:

\vspace{.3cm}
\begin{tabular}{lllll}
$\bullet$ $1^E=I$ &  $\bullet$ $t^{E^*}=(t^E)^*$ & $\bullet$ $(t^E)^{-1}=t^{-E}$ &   $\bullet$ If $EG=GE$, then $t^Et^G=t^{E+G}$\\
\vspace{.1cm}\\
$\bullet$ $(st)^E=s^Et^E$ & $\bullet$ $At^EA^{-1}=t^{AEA^{-1}}$&  $\bullet$ $\det\left(t^E\right)=t^{\tr E}$\\
\end{tabular}
\end{proposition}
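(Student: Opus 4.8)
\textbf{Proof proposal for Proposition \ref{prop:ContinuousGroupProperties}.}

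The plan is to verify each of the seven identities directly from the defining series $t^E=\exp((\ln t)E)=\sum_{k\geq 0}\frac{(\ln t)^k}{k!}E^k$, reducing everything to standard facts about the matrix exponential $\exp:\End(\mathbb{R}^d)\to\Gl(\mathbb{R}^d)$. For the first group of identities: $1^E=\exp(0)=I$ is immediate since $\ln 1=0$. The adjoint identity $t^{E^*}=(t^E)^*$ follows because the adjoint is a continuous linear involution, so $(t^E)^*=\sum_k\frac{(\ln t)^k}{k!}(E^k)^*=\sum_k\frac{(\ln t)^k}{k!}(E^*)^k=t^{E^*}$, using $(E^k)^*=(E^*)^k$. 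The identity $(st)^E=s^Et^E$ is the homomorphism property of $t\mapsto t^E$ (equivalently $\exp((\ln s+\ln t)E)=\exp((\ln s)E)\exp((\ln t)E)$, valid because $(\ln s)E$ and $(\ln t)E$ commute); from this, setting $s=1/t$ and using $1^E=I$ gives $(t^E)^{-1}=t^{-E}$, which also confirms $t^E\in\Gl(\mathbb{R}^d)$.

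For the conjugation identities, I would use that for any $A\in\Gl(\mathbb{R}^d)$ one has $A E^k A^{-1}=(AEA^{-1})^k$, so term-by-term $A t^E A^{-1}=\sum_k\frac{(\ln t)^k}{k!}AE^kA^{-1}=\sum_k\frac{(\ln t)^k}{k!}(AEA^{-1})^k=t^{AEA^{-1}}$. The commuting-generators identity $t^Et^G=t^{E+G}$ when $EG=GE$ is again the standard fact that $\exp(X)\exp(Y)=\exp(X+Y)$ when $XY=YX$, applied to $X=(\ln t)E$, $Y=(\ln t)G$, which commute. Finally, $\det(t^E)=t^{\tr E}$ follows from the general formula $\det(\exp(X))=e^{\tr X}$ applied to $X=(\ln t)E$, giving $\det(t^E)=e^{\tr((\ln t)E)}=e^{(\ln t)\tr E}=t^{\tr E}$; this identity already appears implicitly in the proof of Corollary \ref{cor:TraceisInvariant} and Lemma \ref{lemma:Scaling}.

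Since these are all classical properties of the matrix exponential, there is no genuine obstacle here — the proposition is a bookkeeping lemma recording normalizations. The only point requiring a word of care is justifying the term-by-term manipulations (interchange of the linear maps $X\mapsto X^*$, $X\mapsto AXA^{-1}$, and $\det$ with the absolutely convergent exponential series), which is routine given that $\End(\mathbb{R}^d)$ is finite-dimensional and the operator norm is submultiplicative. Accordingly, I would simply cite Section 8 of \cite{randles_convolution_2017} (as the statement already does) or a standard reference on one-parameter groups such as \cite{engel_one-parameter_2000,engel_short_2006}, and present at most a one-line indication of the exponential-series argument for each identity rather than a full derivation.
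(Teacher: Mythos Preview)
Your proposal is correct; each identity is indeed an immediate consequence of the matrix exponential series, and your justifications are sound. Note that the paper itself does not prove this proposition at all --- it simply states it with the citation to Section 8 of \cite{randles_convolution_2017} --- so your sketch is already more detailed than what appears in the paper.
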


\noindent We recall from the introduction that a continuous one-parameter group $\{T_t\}$ is said to be contracting if $\lim_{t\to 0}\|T_t\|=0$. The notion of a contracting group (or semigroup) is closely related to notion of asymptotic stability of solutions to linear systems \cite{braun_differential_1993}. By virtue of the Banach-Steinhaus theorem, we have the following useful characterization of contracting groups.
\begin{proposition}\label{prop:ContractingCharacterization}
Let $\{T_r\}$ be a continuous one-parameter group. Then $\{T_r\}$ is contracting if and only if
\begin{equation}\label{eq:ContractingSufficient}
\lim_{t\to 0}|T_rx|=0
\end{equation}
for all $x\in\mathbb{R}^d$.
\end{proposition}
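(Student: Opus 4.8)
The statement to prove is Proposition \ref{prop:ContractingCharacterization}: a continuous one-parameter group $\{T_r\}$ is contracting, meaning $\lim_{r\to 0}\|T_r\|=0$, if and only if $\lim_{r\to 0}|T_r x|=0$ for every $x\in\mathbb{R}^d$.

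\medskip

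\noindent\textbf{Proof proposal.} The forward implication is immediate: if $\|T_r\|\to 0$ as $r\to 0$, then for any fixed $x$ we have $|T_r x|\le \|T_r\|\,|x|\to 0$, so \eqref{eq:ContractingSufficient} holds. The substance is the converse, and the plan is to exploit the group structure to turn the pointwise (hence, after a compactness step, locally uniform) decay on the sphere into operator-norm decay. First I would reduce to a sequential statement: it suffices to show that for \emph{every} sequence $r_k\to 0^+$ one has $\|T_{r_k}\|\to 0$. Fix such a sequence. The idea is to write each small parameter multiplicatively in terms of a value bounded away from $0$ and $1$: using the homomorphism property $T_{st}=T_sT_t$ (Proposition \ref{prop:ContinuousGroupProperties}), for any $r\in(0,1)$ choose the unique integer $n=n(r)\ge 1$ and $\rho=\rho(r)\in[1/2,1)$ with $r=\rho\cdot 2^{-n}$ — concretely $n=\lceil \log_2(1/r)\rceil$ — so that $T_r = (T_{1/2})^{\,n}\,T_{\rho}$... but this only gives boundedness of $\|T_{1/2}^n\|$-type terms, not decay, unless $\|T_{1/2}\|<1$, which we do not know a priori.

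\medskip

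\noindent A cleaner route, and the one I would actually pursue, is via the Banach--Steinhaus (uniform boundedness) theorem, as the paper's remark hints. The key observation is that $\sup_{0<r\le 1}\|T_r\|<\infty$: indeed, suppose not, so there is a sequence $r_k\in(0,1]$ with $\|T_{r_k}\|\to\infty$; passing to a subsequence we may assume $r_k\to r_\infty\in[0,1]$. If $r_\infty>0$, continuity of $r\mapsto T_r$ contradicts $\|T_{r_k}\|\to\infty$. If $r_\infty=0$, then for each fixed $x$, $|T_{r_k}x|\to 0$ by hypothesis, so $\{T_{r_k}x\}_k$ is bounded for every $x$; Banach--Steinhaus then forces $\sup_k\|T_{r_k}\|<\infty$, a contradiction. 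Hence $M:=\sup_{0<r\le 1}\|T_r\|<\infty$. Now fix $\varepsilon>0$. For each $x$ in the unit sphere $\mathbb{S}$, hypothesis \eqref{eq:ContractingSufficient} gives $\delta_x>0$ with $|T_r x|<\varepsilon/(2M)$ for $0<r<\delta_x$. For $y$ near $x$, $|T_r y|\le |T_r x| + \|T_r\|\,|y-x| \le \varepsilon/(2M) + M|y-x|$, so on the ball $|y-x|<\varepsilon/(2M^2)$ we get $|T_r y|<\varepsilon/M$ for $0<r<\delta_x$. By compactness of $\mathbb{S}$, finitely many such balls cover $\mathbb{S}$; taking $\delta$ to be the minimum of the corresponding $\delta_x$'s yields $|T_r y|<\varepsilon/M$ for all $y\in\mathbb{S}$ and all $0<r<\delta$.

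\medskip

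\noindent Finally I would bootstrap from this locally uniform bound on $\mathbb{S}$ to genuine operator-norm decay using the semigroup law. For $0<r<\delta^2$, write $r = s\cdot t$ with $s=t=\sqrt r<\delta$; then $T_r = T_s T_t$, and for any $y\in\mathbb{S}$, $T_t y$ has norm $<\varepsilon/M\le 1$ (shrinking $\varepsilon$ if needed so $\varepsilon\le M$), so $|T_r y| = |T_s(T_t y)| \le \|T_s\|\cdot|T_t y| \le M\cdot(\varepsilon/M)=\varepsilon$. Taking the supremum over $y\in\mathbb{S}$ gives $\|T_r\|\le\varepsilon$ for all $0<r<\delta^2$, which is exactly $\lim_{r\to 0}\|T_r\|=0$. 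The main obstacle — and the step worth stating carefully — is the uniform boundedness argument establishing $M<\infty$; everything after that is a routine compactness-plus-semigroup manipulation, but that first step is where the pointwise hypothesis genuinely needs to be upgraded and where Banach--Steinhaus is essential. (One should double-check the edge case $d=1$ or $T_r$ not injective separately, though injectivity of $T_r$ is automatic since $\{T_r\}\subseteq\Gl(\mathbb{R}^d)$.)
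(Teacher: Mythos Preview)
Your proof is correct and follows essentially the same approach as the paper's: both use Banach--Steinhaus to obtain the uniform bound $M=\sup_{0<r\le 1}\|T_r\|<\infty$, then exploit compactness of $\mathbb{S}$ (you via a finite covering argument, the paper via a subsequence/contradiction argument). Note that your bootstrap step is redundant: once you have $|T_r y|<\varepsilon/M$ for all $y\in\mathbb{S}$ and $0<r<\delta$, you already have $\|T_r\|\le\varepsilon/M$, which is the desired operator-norm decay.
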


\begin{proof}It is clear that \eqref{eq:ContractingSufficient} is a necessary condition for $\{T_r\}$ to be contracting. We must therefore prove \eqref{eq:ContractingSufficient} also sufficient. To this end, we assume that the continuous one-parameter group $\{T_r\}$ satisfies \eqref{eq:ContractingSufficient}. By virtue of the continuity of $\{T_r\}$ and \eqref{eq:ContractingSufficient}, we have
\begin{equation*}
\sup_{0<r\leq 1}|T_r x|<\infty
 \end{equation*}
for each $x\in\mathbb{R}^d$. From the Banach-Steinhaus theorem, it follows that $\|T_r\|\leq C$ for all $0<r\leq 1$. Now, suppose that $\{T_r\}$ is not contracting. In this case, one can find a sequence $\{\eta_n\}\subseteq\mathbb{S}$ and a sequence $r_n\rightarrow 0$ for which $\lim_n|T_{r_n}\eta_n|>0$. But because the unit sphere is compact, $\{\eta_n\}$ has a convergence subsequence $\eta_{n_k}\rightarrow \eta$ with $|\eta|=1$ . Observe that, for all $n$,
 \begin{equation*}
 |T_{r_n}(\eta-\eta_n)|\leq C|\eta-\eta_n|
 \end{equation*}
 and so it follows that
 \begin{equation*}
 \lim_{k\rightarrow\infty}|T_{t_{r_k}}\eta|=\lim_{k\rightarrow\infty}|T_{r_{n_k}}\eta_{n_k}|>0,
 \end{equation*}
 a contradiction.
 \end{proof}

\begin{lemma}\label{lem:OperatorBoundsforContractingGroup}
Let $\{T_r\}\subseteq\GldR$ be a continuous one-parameter group and let $E\in\End(\mathbb{R}^d)$ be its generator, i.e., $T_r=r^E$ for all $r>0$. If $\{T_r\}$ is contracting, then $E\in\Gl(\mathbb{R}^d)$ and there is a positive constant $C$ for which
\begin{equation*}
\|T_r\|\leq C+r^{\|E\|}
\end{equation*}
for all $r>0$.
\end{lemma}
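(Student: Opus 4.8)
The plan is to establish the two claims separately, starting with the invertibility of $E$. If $E$ were not invertible, it would have a nonzero vector $v$ in its kernel, and then $T_r v = r^E v = \sum_{k\geq 0}\frac{(\ln r)^k}{k!}E^k v = v$ for all $r>0$, since $E^k v = 0$ for $k\geq 1$. This would force $|T_r v| = |v| \neq 0$, contradicting the characterization of contracting groups in Proposition \ref{prop:ContractingCharacterization} (which says $|T_r x|\to 0$ as $r\to 0$ for every $x$). Hence $\Ker E = \{0\}$ and $E\in\Gl(\mathbb{R}^d)$.

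For the norm bound, I would split into the two regimes $0 < r \leq 1$ and $r > 1$. On $0 < r \leq 1$, the contracting hypothesis together with continuity of $r\mapsto T_r$ gives $\sup_{0<r\leq 1}\|T_r\| =: C_0 < \infty$ — indeed, this is precisely the uniform bound already extracted in the proof of Proposition \ref{prop:ContractingCharacterization} via Banach--Steinhaus (or simply: $\|T_r\|\to 0$ as $r\to 0$ and $\|T_r\|$ is continuous on the compact interval, hence bounded there). On $r > 1$, I would use the series representation directly: $\|T_r\| = \|r^E\| = \left\|\sum_{k=0}^\infty \frac{(\ln r)^k}{k!}E^k\right\| \leq \sum_{k=0}^\infty \frac{(\ln r)^k}{k!}\|E\|^k = e^{(\ln r)\|E\|} = r^{\|E\|}$, where we used $\ln r > 0$ for $r>1$. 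Taking $C = C_0$, we get $\|T_r\| \leq C \leq C + r^{\|E\|}$ for $0 < r\leq 1$ and $\|T_r\|\leq r^{\|E\|} \leq C + r^{\|E\|}$ for $r > 1$, which is the claimed inequality for all $r > 0$.

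I do not anticipate any serious obstacle here: the statement is essentially bookkeeping combining the Banach--Steinhaus bound near $0$ (already available from the preceding proposition) with the crude termwise estimate of the exponential series for large $r$. The only mild subtlety is making sure the two regimes are glued correctly so that a single constant $C$ works uniformly — this is handled by observing that $r^{\|E\|} \geq 0$ everywhere, so the bound valid on $(0,1]$ with constant $C$ survives the addition of the (nonnegative) term $r^{\|E\|}$, and conversely on $(1,\infty)$ the pure power bound is dominated by $C + r^{\|E\|}$ for any $C\geq 0$. One should also note that the bound $r^{\|E\|}$ on $(1,\infty)$ does not by itself imply contraction or even boundedness as $r\to\infty$ (and indeed $\{T_r\}$ generally blows up as $r\to\infty$), so the lemma is only asserting an upper bound, consistent with its role in later arguments.
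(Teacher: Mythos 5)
Your proof is correct and follows essentially the same route as the paper's: the kernel argument to show $E$ is invertible, the exponential series bound $\|r^E\|\leq r^{\|E\|}$ for $r\geq 1$, and continuity of $r\mapsto\|T_r\|$ together with $\|T_r\|\to 0$ as $r\to 0$ to bound the group on $(0,1]$. You spell out the termwise estimate a bit more explicitly than the paper (which asserts it "follows immediately"), but the structure and ideas are identical.
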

\begin{proof}
If for some non-zero vector $\eta$, $E\eta=0$, then $r^E\eta=\eta$ for all $r>0$ and this would contradict our assumption that $\{T_r\}$ is contracting. Hence $E\in\Gl(\mathbb{R}^d)$ and, in particular, $\|E\|>0$. From the representation $T_r=r^E$, it follows immediately that $\|T_r\|\leq r^{\|E\|}$ for all $r\geq 1$ and so it remains to estimate $\|T_r\|$ for $r<1$. Given that $\{T_r\}$ is continuous and contracting, the map $r\mapsto \|T_r\|$ is continuous and approaches $0$ as $r\rightarrow 0$ and so it is necessarily bounded for $0<r\leq 1$.
\end{proof}

\begin{proposition}\label{prop:ContractingLimits}
Let $\{T_r\}_{r>0}\subseteq\Gl(\mathbb{R}^d)$ be a continuous one-parameter contracting group.  Then, for all non-zero $x\in\mathbb{R}^d$,
\begin{equation*}
\lim_{r\rightarrow 0}|T_r x|=0\hspace{.5cm}\mbox{ and }\hspace{.5cm}\lim_{r\rightarrow\infty}|T_r x|=\infty.
\end{equation*}
\end{proposition}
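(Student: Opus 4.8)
The plan is to handle the two limits separately, the first being essentially a restatement of the hypothesis and the second following from it by exploiting the group structure. For the first limit, $\lim_{r\to 0}|T_r x| = 0$ for every $x\in\mathbb{R}^d$: this is immediate from Proposition \ref{prop:ContractingCharacterization}, or, more directly, from the estimate $|T_r x|\le \|T_r\|\,|x|$ together with the defining property $\lim_{r\to 0}\|T_r\| = 0$ of a contracting group. Note that no hypothesis on $x$ is needed here.

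For the second limit I would use that $T$ is a homomorphism of the multiplicative group, so that $(T_r)^{-1} = T_{1/r}$ by Proposition \ref{prop:ContinuousGroupProperties}. Fix a nonzero $x\in\mathbb{R}^d$ and write, for each $r>0$,
\begin{equation*}
|x| = \bigl|T_{1/r}\bigl(T_r x\bigr)\bigr| \le \|T_{1/r}\|\,|T_r x|,
\end{equation*}
which rearranges to $|T_r x| \ge |x|/\|T_{1/r}\|$. Since $1/r\to 0$ as $r\to\infty$ and $\{T_r\}$ is contracting, $\|T_{1/r}\|\to 0$, and because $|x| > 0$ the right-hand side tends to $\infty$; hence $|T_r x|\to\infty$. (Equivalently, one could argue by contradiction: if $|T_{r_n}x|\le M$ along some $r_n\to\infty$, then $x = T_{1/r_n}(T_{r_n}x)$ has norm at most $M\|T_{1/r_n}\|\to 0$, forcing $x=0$.)

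There is no real obstacle here; the entire content is the observation that the contracting hypothesis controls $T_{1/r}$ for large $r$, and the only point requiring a moment's care is the use of the homomorphism property to rewrite the inverse $(T_r)^{-1}$ as the forward-time operator $T_{1/r}$, which is exactly what brings the hypothesis to bear on the $r\to\infty$ regime.
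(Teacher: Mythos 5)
Your proof is correct and uses essentially the same argument as the paper: the key inequality $|x| = |T_{1/r}(T_r x)| \le \|T_{1/r}\|\,|T_r x|$ combined with the contracting property is exactly what the paper does for the second limit, and the first limit is immediate from the definition in both.
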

\begin{proof}
The validity of the first limit is clear. Upon noting that $|x|=|T_{1/r}T_rx|\leq \|T_{1/r}\||T_r x|$ for all $r>0$, the second limit follows at once.
\end{proof}
\begin{proposition}\label{prop:ScaleFromSphere}
Let $\{T_r\}_{r>0}$ be a continuous one-parameter contracting group. There holds the following:
\begin{enumerate}[label=(\alph*), ref=(\alph*)]
\item\label{item:ScaleFromSphere_1} For each non-zero $x\in \mathbb{R}^d$, there exists $r>0$ and $\eta\in \mathbb{S}$ for which $T_r\eta=x$. Equivalently,
\begin{equation*}
\mathbb{R}^d\setminus\{0\}=\{T_r\eta:r>0\mbox{ and }\eta\in \mathbb{S}\}.
\end{equation*}
\item\label{item:ScaleFromSphere_2} For each sequence $\{x_n\}\subseteq\mathbb{R}^d$ such that $\lim_n|x_n|=\infty$, $x_n=T_{r_n}\eta_n$ for each $n$, where $\{\eta_n\}\subseteq \mathbb{S}$ and $r_n\rightarrow\infty$ as $n\rightarrow\infty$.
\item\label{item:ScaleFromSphere_3} For each sequence $\{x_n\}\subseteq\mathbb{R}^d$ such that $\lim_n|x_n|=0$, $x_n=T_{r_n}\eta_n$ for each $n$, where $\{\eta_n\}\subseteq \mathbb{S}$ and $r_n\rightarrow 0$ as $n\rightarrow\infty$.
\end{enumerate}
\end{proposition}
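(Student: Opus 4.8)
\textbf{Proof proposal for Proposition \ref{prop:ScaleFromSphere}.}

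The plan is to prove part \ref{item:ScaleFromSphere_1} first via a continuity/intermediate-value argument, and then to deduce parts \ref{item:ScaleFromSphere_2} and \ref{item:ScaleFromSphere_3} from \ref{item:ScaleFromSphere_1} together with the limiting behavior recorded in Proposition \ref{prop:ContractingLimits} and the uniform operator bound of Lemma \ref{lem:OperatorBoundsforContractingGroup}. For \ref{item:ScaleFromSphere_1}, fix a nonzero $x\in\mathbb{R}^d$. The idea is to project: for each $r>0$ the point $T_r^{-1}x = T_{1/r}x$ is nonzero (since $T_{1/r}\in\Gl(\mathbb{R}^d)$), so $\eta(r):=T_{1/r}x/|T_{1/r}x|$ is a well-defined continuous map $(0,\infty)\to\mathbb{S}$. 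We want some $r$ with $|T_{1/r}x|=1$, i.e. $T_{1/r}x\in\mathbb{S}$, for then $\eta:=T_{1/r}x\in\mathbb{S}$ and $T_r\eta = x$. So consider the continuous scalar function $g(r)=|T_{1/r}x|$ on $(0,\infty)$. By the first limit of Proposition \ref{prop:ContractingLimits} applied with the variable $s=1/r\to\infty$, we get $g(r)=|T_{1/r}x|\to\infty$ as $r\to 0^+$; by the ``$\lim_{s\to 0}|T_s x|=0$'' statement we get $g(r)\to 0$ as $r\to\infty$. Since $g$ is continuous and takes values both above and below $1$, the intermediate value theorem furnishes $r_0>0$ with $g(r_0)=1$, which is exactly what we need. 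This proves \ref{item:ScaleFromSphere_1}, and the displayed set equality is then immediate (the reverse inclusion being trivial since $T_r\eta\neq 0$).

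For \ref{item:ScaleFromSphere_2}: given $\{x_n\}$ with $|x_n|\to\infty$, in particular each $x_n\neq 0$ for $n$ large, and by part \ref{item:ScaleFromSphere_1} we may write $x_n=T_{r_n}\eta_n$ with $r_n>0$ and $\eta_n\in\mathbb{S}$ (for the finitely many small $n$ with $x_n=0$ we may discard them or note $|x_n|\to\infty$ forces $x_n\neq 0$ eventually). It remains to show $r_n\to\infty$. Suppose not; then a subsequence $\{r_{n_k}\}$ stays bounded, say $r_{n_k}\leq R$ for all $k$. By Lemma \ref{lem:OperatorBoundsforContractingGroup} there is a constant $C$ with $\|T_r\|\leq C+r^{\|E\|}$ for all $r>0$, hence $\sup_{0<r\leq R}\|T_r\|\leq C+R^{\|E\|}=:M<\infty$. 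Then $|x_{n_k}|=|T_{r_{n_k}}\eta_{n_k}|\leq M|\eta_{n_k}|=M$ for all $k$, contradicting $|x_n|\to\infty$. Therefore $r_n\to\infty$.

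For \ref{item:ScaleFromSphere_3}: given $\{x_n\}$ with $|x_n|\to 0$, again write $x_n=T_{r_n}\eta_n$ with $r_n>0$, $\eta_n\in\mathbb{S}$ by part \ref{item:ScaleFromSphere_1} (if some $x_n=0$, that case does not arise in the representation, but such terms may simply be omitted or handled by taking any $r_n$; the substantive claim concerns the tail). To see $r_n\to 0$, suppose not; then some subsequence $r_{n_k}$ is bounded away from $0$, say $r_{n_k}\geq\delta>0$. Writing $\eta_{n_k}=T_{1/r_{n_k}}x_{n_k}$ and using $1/r_{n_k}\leq 1/\delta$, Lemma \ref{lem:OperatorBoundsforContractingGroup} gives $\sup_{0<s\leq 1/\delta}\|T_s\|=:M'<\infty$, so $1=|\eta_{n_k}|=|T_{1/r_{n_k}}x_{n_k}|\leq M'|x_{n_k}|\to 0$, a contradiction. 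Hence $r_n\to 0$, completing the proof. I expect the only mildly delicate point to be the IVT setup in \ref{item:ScaleFromSphere_1} — specifically confirming the two boundary limits of $g(r)=|T_{1/r}x|$ translate correctly under the substitution $s=1/r$ — but this is a direct consequence of Proposition \ref{prop:ContractingLimits}; everything else is a compactness/Banach--Steinhaus bookkeeping argument.
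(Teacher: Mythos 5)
Your proof is correct and uses essentially the same ingredients as the paper: part \ref{item:ScaleFromSphere_1} via the intermediate value theorem together with the two limits in Proposition \ref{prop:ContractingLimits}, and parts \ref{item:ScaleFromSphere_2} and \ref{item:ScaleFromSphere_3} via the operator bound of Lemma \ref{lem:OperatorBoundsforContractingGroup}. The only cosmetic difference is that for \ref{item:ScaleFromSphere_2} and \ref{item:ScaleFromSphere_3} you argue by contradiction with a bounded (resp.\ bounded-away-from-zero) subsequence, whereas the paper runs the same estimate directly through a $\liminf$/$\limsup$ computation; both amount to the identical application of the lemma.
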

\begin{proof}
In view of Proposition \ref{prop:ContractingLimits}, the assertion \ref{item:ScaleFromSphere_1} is a straightforward application of the intermediate value theorem. For \ref{item:ScaleFromSphere_2}, suppose that $\{x_n\}\subseteq\mathbb{R}^d$ is such that $|x_n|\rightarrow \infty$ as $n\rightarrow\infty$. In view of \ref{item:ScaleFromSphere_1}, take $\{\eta_n\}\subseteq S$ and $\{r_n\}\subseteq (0,\infty)$ for which $x_n=T_{r_n}\eta_n$ for each $n$. In view of Lemma \ref{lem:OperatorBoundsforContractingGroup},
\begin{equation*}
\infty=\liminf_n |x_n|\leq\liminf_n \left(C+r_n^M\right)|\eta_n|\leq C+\liminf_n r_n^M,
\end{equation*}
where $C,M>0$ and therefore $r_n\rightarrow\infty$. If instead $\lim_n x_n=0$,
\begin{equation*}
\infty=\lim_{n\rightarrow\infty}\frac{|\eta_n|}{|x_n|}=\lim_{n\rightarrow\infty}\frac{|T_{1/r_n}x_n|}{|x_n|}\leq\limsup_n\|T_{1/r_n}\|\leq\limsup_n(C+(1/r_n)^M)
\end{equation*}
from which we see that $r_n\rightarrow 0$, thus proving \ref{item:ScaleFromSphere_3}.
\end{proof}

\begin{proposition}\label{prop:ContractingCapturesCompact}
Let $\{T_r\}$ be a continuous contracting one-parameter group. Then for any open neighborhood $\mathcal{O}\subseteq\mathbb{R}^d$ of the origin and any compact set $K\subseteq\mathbb{R}^d$, $K\subseteq T_r(\mathcal{O})$ for sufficiently large $r$.
\end{proposition}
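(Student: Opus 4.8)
The plan is to contrapose the contracting condition into a statement about the inverse group $\{T_{1/r}\}$ expanding every compact set, and then use the operator-norm bound already established for contracting groups. First I would fix an open neighborhood $\mathcal{O}$ of $0$ and a compact set $K\subseteq\mathbb{R}^d$, and choose $\rho>0$ small enough that the closed Euclidean ball $\overline{\mathbb{B}_\rho}\subseteq\mathcal{O}$; this is possible since $\mathcal{O}$ is open and contains $0$. Since $K$ is compact it is bounded, say $K\subseteq\overline{\mathbb{B}_R}$ for some $R>0$. The goal then reduces to showing that for all sufficiently large $r$, $T_{1/r}(K)\subseteq\mathbb{B}_\rho$, because $K\subseteq T_r(\mathcal{O})$ is equivalent to $T_{1/r}(K)=T_r^{-1}(K)\subseteq\mathcal{O}$, and $T_{1/r}(K)\subseteq\mathbb{B}_\rho\subseteq\overline{\mathbb{B}_\rho}\subseteq\mathcal{O}$ suffices.

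Next I would estimate $\sup_{x\in K}|T_{1/r}x|$ directly. For any $x\in K$ we have $|T_{1/r}x|\leq\|T_{1/r}\|\,|x|\leq R\,\|T_{1/r}\|$. Since $\{T_r\}$ is contracting, writing $s=1/r$ we have $\lim_{s\to 0}\|T_s\|=0$ by definition, hence $\lim_{r\to\infty}\|T_{1/r}\|=0$. Therefore there is an $r_0>0$ such that $\|T_{1/r}\|<\rho/R$ for all $r\geq r_0$, and for such $r$ we get $|T_{1/r}x|<\rho$ for every $x\in K$, i.e. $T_{1/r}(K)\subseteq\mathbb{B}_\rho\subseteq\mathcal{O}$. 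Applying $T_r$ to both sides (which is a bijection of $\mathbb{R}^d$) yields $K\subseteq T_r(\mathcal{O})$ for all $r\geq r_0$, as desired.

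There is essentially no hard step here: the only mild subtlety is making sure the reduction $K\subseteq T_r(\mathcal{O})\iff T_{1/r}(K)\subseteq\mathcal{O}$ is phrased correctly (it uses that $T_r$ is invertible with inverse $T_{1/r}$, which is part of Proposition \ref{prop:ContinuousGroupProperties}), and recalling that the definition of ``contracting'' is stated in terms of the operator norm $\|T_r\|\to 0$, so no appeal to Proposition \ref{prop:ContractingCharacterization} or the Banach--Steinhaus machinery is even needed. Alternatively, one could bypass the operator norm entirely and argue via Proposition \ref{prop:ScaleFromSphere}\ref{item:ScaleFromSphere_3} together with a compactness/covering argument on $K$, but the operator-norm route is shorter and cleaner, so that is the one I would write up.
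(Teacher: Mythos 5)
Your proof is correct and noticeably more direct than the paper's. The paper argues by contradiction: it supposes there are $x_n\in K$ and $r_n\to\infty$ with $x_n\notin T_{r_n}(\mathcal{O})$, passes to a convergent subsequence $\zeta_k\to\zeta$ in $K$, rewrites the non-membership as $T_{s_k}\zeta_k\notin\mathcal{O}$ with $s_k\to 0$, and then combines the contracting property with the uniform bound $\|T_s\|\le C$ for small $s$ (Lemma \ref{lem:OperatorBoundsforContractingGroup}) to force $|T_{s_k}\zeta_k|\to 0$, a contradiction. You instead notice that the definition of contracting already gives $\|T_{1/r}\|\to 0$ as $r\to\infty$, so $T_{1/r}(K)\subseteq\mathbb{B}_\rho\subseteq\mathcal{O}$ for large $r$ follows in one line from boundedness of $K$, and the equivalence $K\subseteq T_r(\mathcal{O})\iff T_{1/r}(K)\subseteq\mathcal{O}$ (via $T_r^{-1}=T_{1/r}$) closes the argument. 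The compactness of $K$ enters only through boundedness; no subsequence extraction is needed. This buys you a shorter, constructive proof that also quantifies $r_0$ explicitly in terms of $\rho$, $R$, and the rate of decay of $\|T_s\|$. The paper's route has the minor virtue of making no explicit use of the operator-norm form of the definition, relying only on the pointwise decay $|T_s\zeta|\to 0$ plus a uniform bound; but since the paper \emph{defines} contracting via $\|T_s\|\to 0$, that generality is not actually exploited, and your argument is the cleaner one.
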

\begin{proof}
Assume, to reach a contradiction, that there are sequences $\{x_n\}\subseteq K$ and $r_n\rightarrow\infty$ for which $x_n\notin T_{r_n}(\mathcal{O})$ for all $n$. Because $K$ is compact, $\{x_n\}$ has a subsequential limit and so by relabeling, let us take sequences $\{\zeta_k\}\subseteq K$ and $\{t_k\}\subseteq (0,\infty)$ for which $\zeta_k\rightarrow \zeta$, $t_k\rightarrow\infty$ and $\zeta_k\notin T_{t_k}(\mathcal{O})$ for all $k$. Setting $s_k=1/t_k$ and using the fact that $\{T_r\}$ is a one-parameter group, we have $T_{s_k}\zeta_k\notin\mathcal{O}$ for all $k$ and so $\liminf_{k}|T_{s_k}\zeta_k|>0$, where $s_k\rightarrow 0$. This is however impossible because $\{T_r\}$ is contracting and so
\begin{equation*}
\lim_{k\rightarrow\infty}|T_{s_k}\zeta_k|\leq\lim_{k\rightarrow \infty}|T_{s_k}(\zeta_k-\zeta)|+\lim_{k\rightarrow\infty}|T_{s_k}\zeta|\leq C\lim_{k\rightarrow\infty}|\zeta_k-\zeta|+0=0
\end{equation*}
in view of Lemma \ref{lem:OperatorBoundsforContractingGroup}.
\end{proof}

\begin{proposition}\label{prop:ContractingTrace}
Let $\{T_r\}\subseteq \Gl(\mathbb{R}^d)$ be a continuous one-parameter group with generator $E$. If $\{T_r\}$ is  contracting, then $\tr E>0$. 
\end{proposition}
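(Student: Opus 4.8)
The plan is to connect the trace of $E$ to the behavior of $\det(T_r) = \det(r^E) = r^{\tr E}$ (Proposition \ref{prop:ContinuousGroupProperties}) and exploit the fact that a contracting group shrinks volumes to zero. First I would recall that for any $x \in \mathbb{R}^d$ and $r > 0$, the linear map $T_r = r^E$ has $|\det(T_r)| = r^{\tr E}$, so $\det(T_r)$ is precisely the factor by which Lebesgue measure scales under $T_r$; that is, $m(T_r A) = r^{\tr E} m(A)$ for every measurable $A$.

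Next, I would fix the open unit ball $\mathbb{B}$ and use the contracting hypothesis. Since $\{T_r\}$ is contracting, $\|T_r\| \to 0$ as $r \to 0$, so for any $\epsilon > 0$ there is $r_0 > 0$ such that $\|T_r\| < \epsilon$ whenever $0 < r < r_0$; in particular $T_r(\mathbb{B}) \subseteq \mathbb{B}_\epsilon$ for such $r$. Taking $\epsilon < 1$ gives $T_r(\mathbb{B}) \subseteq \mathbb{B}$, hence $r^{\tr E} m(\mathbb{B}) = m(T_r(\mathbb{B})) \leq m(\mathbb{B})$, so $r^{\tr E} \leq 1$ for all sufficiently small $r > 0$. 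Since $0 < r < 1$ in this regime, $r^{\tr E} \leq 1$ forces $\tr E \geq 0$. To upgrade this to a strict inequality, I would argue that $\tr E = 0$ is impossible: if $\tr E = 0$ then $\det(T_r) = 1$ for all $r$, so each $T_r$ is volume-preserving. But then $m(T_r(\mathbb{B})) = m(\mathbb{B})$ for all $r$, while the inclusion $T_r(\mathbb{B}) \subseteq \mathbb{B}_\epsilon$ for small $r$ gives $m(\mathbb{B}) = m(T_r(\mathbb{B})) \leq m(\mathbb{B}_\epsilon) = \epsilon^d m(\mathbb{B}) \to 0$ as $\epsilon \to 0$, a contradiction since $m(\mathbb{B}) > 0$.

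An alternative, perhaps cleaner, route avoids the volume argument for the strict part: write $\tr E = \sum_j \Re \lambda_j$ where $\lambda_j$ are the (complex) eigenvalues of $E$. Contractivity of $\{r^E\}$ is equivalent, by standard linear ODE / Jordan form considerations, to $\Re \lambda_j > 0$ for every eigenvalue $\lambda_j$ (if some $\Re \lambda_j \leq 0$ one produces a vector $x$ with $r^E x \not\to 0$ as $r \to 0$, exactly as in the proof of Proposition \ref{prop:PosHomSufficientCondition}); summing gives $\tr E > 0$ immediately. I would likely present the volume-based argument as the main proof since it is self-contained and uses only facts already assembled in the excerpt (the determinant identity from Proposition \ref{prop:ContinuousGroupProperties} and the change-of-variables behavior of Lebesgue measure), mentioning the eigenvalue perspective only as a remark.

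The main obstacle is the passage from $\tr E \geq 0$ to $\tr E > 0$: the inequality $r^{\tr E} \leq 1$ alone is consistent with $\tr E = 0$, so one genuinely needs to rule out the volume-preserving case, and this is where the quantitative shrinking $T_r(\mathbb{B}) \subseteq \mathbb{B}_\epsilon$ with $\epsilon$ arbitrarily small (rather than just $\epsilon < 1$) does the work. Everything else is bookkeeping with the determinant formula and the definition of contractivity.
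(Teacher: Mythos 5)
Your volume argument is correct and proves the statement, but it is a somewhat longer route to the same underlying observation the paper makes in one line. The paper simply notes that contractivity means $r^E\to\mathbf{0}$ in operator norm, that $\det$ is continuous on $\End(\mathbb{R}^d)$, and that $\det(r^E)=r^{\tr E}$, so $\lim_{r\to 0}r^{\tr E}=\det(\mathbf{0})=0$, which forces $\tr E>0$ immediately. Your proof unpacks this into the Lebesgue-measure scaling $m(T_rA)=r^{\tr E}m(A)$ and the inclusion $T_r(\mathbb{B})\subseteq\mathbb{B}_\epsilon$, which is fine, but your two-stage structure (first $\tr E\geq 0$ from $\epsilon<1$, then ruling out $\tr E=0$ separately) is redundant: the inequality $r^{\tr E}m(\mathbb{B})\leq\epsilon^d m(\mathbb{B})$ you derive in the second stage already gives $\lim_{r\to 0}r^{\tr E}=0$ directly, and hence $\tr E>0$ in one step, rendering the first stage unnecessary. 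Your remarked alternative via the spectrum (contractivity $\Leftrightarrow$ $\Re\lambda_j>0$ for every eigenvalue, hence $\tr E=\sum_j\Re\lambda_j>0$) is genuinely different in flavor and also correct; it buys a sharper structural fact about $E$ at the cost of invoking the Jordan normal form, whereas both the paper's proof and your volume proof get away with nothing beyond the determinant identity and continuity. For a self-contained appendix entry, the paper's direct determinant-continuity argument is the cleanest, and you should streamline your version accordingly if you keep the measure-theoretic framing.
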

\begin{proof}
The supposition that $\{T_r\}$ is a contracting group implies that $r^E\to \mathbf{0}$ as $r\to 0$ in the operator-norm topology on $\End(\mathbb{R}^d)$; here $\mathbf{0}$ is zero transformation. Because the determinant is a continuous function from $\End(\mathbb{R}^d)$, equipped with operator-norm topology, into $\mathbb{R}$, we have
\begin{equation*}
0=\det(\mathbf{0})=\det\left(\lim_{r\to 0}r^E\right)=\lim_{r\to 0}\det\left(r^E\right)=\lim_{r\to 0}r^{\tr E}
\end{equation*}
in view of the preceding proposition. Therefore, $\tr E>0$.
\end{proof}

\bibliographystyle{abbrv}
\bibliography{GPIF}

\end{document}